\documentclass[english,10pt]{amsart}
\usepackage[english]{babel}

\usepackage[hidelinks]{hyperref}

\usepackage{verbatim}
\usepackage{amsfonts,amssymb,mathtools}
\usepackage{amsmath}
\usepackage{bbm}
\usepackage[foot]{amsaddr}

\usepackage{todonotes}

\usepackage{graphicx,xcolor}
\definecolor{green}{RGB}{89,169,58}
\definecolor{red}{RGB}{224,61,42}
\definecolor{blue}{RGB}{63,78,181}

\graphicspath{{fig/}}
\usepackage{subcaption, tikz}
\usetikzlibrary{decorations.markings}
\usetikzlibrary{arrows}

\newcommand{\N}{\ensuremath{\mathbb{N}}}
\newcommand{\R}{\ensuremath{\mathbb{R}}}
\newcommand{\Z}{\ensuremath{\mathbb{Z}}}
\newcommand{\E}{\ensuremath{\mathbb{E}}}
\renewcommand{\P}{\ensuremath{\mathbb{P}}}

\newcommand{\ind}[1]{\ensuremath{\mathbbm{1}_{\left\{#1\right\}}}}
\newcommand{\diff}{\mathop{}\mathopen{}\mathrm{d}}
\newcommand{\cal}[1]{\ensuremath{\mathcal{#1}}}
\newcommand\croc[1]{\left\langle #1\right\rangle}

\def\eps{\varepsilon}

\newtheorem{proposition}{Proposition}
\newtheorem{definition}[proposition]{Definition}
\newtheorem{lemma}[proposition]{Lemma}
\newtheorem{remark}[proposition]{Remark}
\newtheorem{theorem}[proposition]{Theorem}

\newtheorem{corollary}[proposition]{Corollary}

\newtheorem{assumption}{Assumption}

\numberwithin{proposition}{section}

\title{Bulk stability of bimolecular stochastic chemical reaction networks}

\author{Andrea Agazzi}
\email{andrea.agazzi@unibe.ch}
\author{Lucie Laurence}
\address{Institute of Mathematical Statistics and Actuarial Science, Department of Mathematics and Statistics, University of Bern, Alpeneggstrasse 22, 3012 Bern (CH)}
\email{lucie.laurence@unibe.ch}

\date{\today}

\begin{document}

\begin{abstract}
We establish criteria for stability in the bulk of planar stochastic chemical reaction networks with mass-action kinetics. Using Poincaré compactification, we characterize the asymptotic drift and show how stochastic fluctuations can prevent escape along transversally repulsive branches of the limiting deterministic flow. We construct a Lyapunov function by combining local estimates near these branches with dissipative estimates in the neighboring regions. The criteria cover a wide class of networks and in particular apply beyond weakly reversible and strongly endotactic networks. Together with compatible boundary estimates, these criteria imply positive recurrence.
\end{abstract}

\maketitle

\section{Introduction}\label{s:Intro}

Stability is a fundamental requirement for the study of the long-time behavior of chemical
reaction networks. For stochastic models, stability is often understood as (positive) recurrence, giving an invariant
probability measure on each closed irreducible class and allowing one to study the system at
stationarity. This is especially important in the multiscale setting, where the effective dynamics
of slow variables are often obtained by averaging over the stationary distribution of
a fast subsystem~\cite{Popovic06,Kang2}. Establishing such a reduction requires control
of the fast dynamics, including their stability and the relevant moment bounds.

Chemical reaction networks admit two classical descriptions. The deterministic model
follows concentrations through a system of mass-action ordinary differential equations.
The stochastic model tracks molecular copy numbers through a continuous-time Markov chain, retaining the fluctuations that are particularly important at low copy numbers~\cite{Gillespie1977}. Under the natural volume scaling, the deterministic equations arise as a law of large numbers limit of the stochastic dynamics on finite time intervals~\cite{Kurtz70,Kurtz71}. However, this connection does not, by itself, identify their long-time stability properties.

Stability in this context has been extensively studied in both deterministic and stochastic settings. Starting with the theory of
complex balance~\cite{HornJackson,Feinberg}, the deterministic literature has developed
structural conditions relating the reaction graph to persistence and permanence,
notably weak reversibility and endotacticity~\cite{CraciunNazarovPantea2013,Craciun2019, GopalkrishnanMillerShiu2014}.
Related conditions also play a central role in the stochastic theory. Complex balance
yields product-form stationary distributions \cite{Anderson1,Anderson9}, while the
geometry of strongly endotactic networks underlies stochastic Lyapunov estimates and
large-deviation results~\cite{Agazzi1,Agazzi2,Cappelletti4}.

The two descriptions can nevertheless behave very differently near the boundary of the
state space, where some species have low copy numbers. Extinction, trapping near the
boundary and changes of recurrence class can occur in the stochastic model without a
corresponding deterministic behavior~\cite{AndersonEncisoJohnston2014,Anderson8,Agazzi4,Cappelletti4}.
In the bulk, where all species have large copy numbers, the stochastic drift is instead
well approximated by the deterministic vector field. Much of the stability analysis in
this region has consequently treated the stochastic dynamics as a perturbation of the
deterministic one, using deterministic inward drift to construct stochastic Lyapunov
functions \cite{Agazzi2,Cappelletti4}. This suggests a close correspondence between the
two models in the bulk, but does not establish an equivalence of their stability
conditions.

In our previous article~\cite{Agazzi25}, we showed that this correspondence can fail
even without boundary effects. We constructed a network whose deterministic system contains a trajectory that diverges in finite time in the interior of the state space, while the stochastic model is positive recurrent. The divergent trajectory is repulsive in the transverse direction, and stochastic fluctuations push the process away towards regions where the dynamics dissipate. While an earlier example based on a special choice of rate constants appears in~\cite[Example~4]{Anderson9}, the system from~\cite{Agazzi25} displays the stabilization by noise for generic choices of such constants. These examples show that deterministic bulk stability is not necessary for stochastic stability.

This paper continues our program by giving general criteria for bulk stability of two-species networks. The goal is to give conditions on the deterministic vector field that guarantee sufficient stability (encoded through Lyapunov-type dissipation inequalities) in the bulk of the stochastic system. Building on the ideas of~\cite{Agazzi25}, we focus on its behavior at infinity: which curves lead to infinity, and whether they attract or repel neighboring trajectories. Our aim is to introduce new ideas and a new approach to studying stochastic dynamics in the bulk, without seeking a complete classification of bulk stability.

To obtain this information, we carry out a \emph{compactification} of the deterministic vector field, using Poincaré and Poincaré-Lyapunov compactifications. These compactifications are particularly well suited to polynomial planar vector fields, see~\cite{Dumortier2006}, and allow us to ``spread'' infinity along a circle, so that different asymptotic directions can be studied separately. 
Different scaling regimes must be analysed, determined by the graph of the CRN studied. 

The singular points at infinity of the compactified vector field describe the asymptotic behavior of invariant curves. We distinguish \emph{convergent} and \emph{divergent} curves, along which the flow goes towards smaller and larger states, respectively. Any such (convergent or divergent) curve can be further classified as \emph{attractive} or \emph{repulsive} according to whether neighboring trajectories move towards it or away from it. The example in~\cite{Agazzi25} shows that a repulsive divergent curve in the deterministic system can still allow stability of the stochastic system. Under the non-degeneracy and drift assumptions stated below, we show stability in the bulk when all attractive curves are convergent. 

To prove this result we construct, for each such system, an energy function dissipated in the bulk, which can be
combined with compatible boundary estimates to obtain positive recurrence. The proof of the dissipation inequality relies on a careful approximation of the infinitesimal generator of the process in each scaling regime. These approximations, given in Propositions~\ref{p:ReducedGenerator11} and~\ref{p:ReducedGeneratorNeq11}, are used throughout the paper, both to show pointwise dissipation of the energy function and to derive scaling limits for the process near repulsive curves. We then patch the local energy functions together to obtain dissipation throughout the bulk.

Our result significantly extends previous work on the stability of chemical reaction systems with stochastic mass-action kinetics by exploiting the purely stochastic phenomenon of stabilization by noise. In particular, when combined with complementary dissipation estimates on the boundary, it allows us to establish positive recurrence for networks outside the endotactic family, thereby extending the class of stochastically stable networks well beyond the deterministically stable ones. This further clarifies the relationship between the asymptotic behavior of deterministic and stochastic models of chemical reaction networks.

At the conceptual level, the main novelty of our approach lies in using the Poincaré compactification of the deterministic vector field to derive criteria for stability in the bulk of stochastic CRNs. 
The main technical challenge stems from the generality of our results: the broad family of networks considered gives rise to a number of qualitatively different cases requiring separate, ad hoc treatment. 
To address this challenge, we develop a principled approach based on a careful decomposition of the generator in various scaling regimes, allowing us to obtain the desired dissipation estimates for all the different asymptotic behaviors of the model. 
To combine these estimates, we also develop a conceptually straightforward but rather general patching lemma that may be of independent interest. We believe our assumptions to be close to optimal, apart from some asymptotically degenerate cases that we explicitly discuss and leave for future work.

The paper is organized as follows. Section~\ref{Model} presents the general setting of CRNs. Section~\ref{s:Poincare} introduces Poincaré compactification and illustrates it with two examples. The main results and their scope of application are presented in Section~\ref{s:Theorem}. The rest of the paper is devoted to the proof of the main theorem. In Sections~\ref{s:R11} and~\ref{s:RNeq11}, we construct energy functions near repulsive curves in the diagonal and sub-diagonal regions, respectively, and prove their dissipation. Section~\ref{s:A} deals with attractive curves, completing the analysis within each compactification. Section~\ref{s:Drift} treats the regions between the compactifications, called \emph{drift regions}. Finally, Section~\ref{Gluing} patches the local energy functions together to complete the proof.
\section{Model and notations}\label{Model}

\subsection{General notations}
We write $\Z$ for the integers and $\N=\Z_{\geq 0}$ for the nonnegative integers.
As $r\to\infty$, $O(r)$ denotes a quantity whose ratio to $r$ is bounded, and $o(r)$ one whose ratio to $r$ tends to zero. The notation $x\sim r$ means that $x$ is ``of order $r$'', i.e., $|x/r|$ is upper and lower bounded (away from 0) when $r\to+\infty$. 
We use the Euclidean norm $\|\cdot\|$ and scalar product $\croc{\cdot,\cdot}$.

\subsection{Chemical reaction networks}

We introduce here the standard notations used in the theory of chemical reaction networks (CRN). 

A CRN is a triplet of finite sets $(\cal{S}, \cal{C}, \cal{R})$ with 
\begin{itemize}
	\item $\cal{S}$ the set of species. We restrict the study to planar CRNs, with only two species. Therefore, $\cal{S}=\{S_1, S_2\}$. 
	\item $\cal{C}$ is the set of complexes. It is a subset of $\N^{|\cal{S}|}$, each coordinate giving the number of molecules of the corresponding species in the complex. 
	\item $\cal{R}$ is the set of reactions. It is a subset of $\{(y_1, y_2)\in \cal{C}^2: y_1\neq y_2\}$. 
\end{itemize}
A given CRN is usually represented as a directed graph, with vertices $\cal{C}$ and edges $\cal{R}$. 
An element $y=(y_1,y_2)\in \cal{C}$ is often written as $y_1S_1+y_2S_2$, and $(0,0)$ as $\emptyset$. An element $(y^-, y^+)\in \cal{R}$ is written $y^-\to y^+$.

We introduce the set of input vertices, or source complexes, 
\[
	\cal{C}^-:=\{y\in \cal{C}: \exists y\to y^+\in \cal{R}\}.
\]
We assume that each of the species $S_1$ and $S_2$ is present in at least one of the source complexes. 
For a reaction $\nu:y_\nu^-\to y_\nu^+$, we write $\xi^\nu=y_\nu^+-y_\nu^-$ for the reaction vector, giving the net change in molecule numbers when the reaction occurs.
We set $K_0:=\max_{\nu\in\cal{R}}\|\xi^\nu\|$. For $S\subset\Z_{\geq 0}^2$, its two-sided $K_0$-boundary is
\[
    \partial^{K_0}(S):=\{x\in\Z_{\geq 0}^2: \operatorname{dist}(x,S)\leq K_0,\ \operatorname{dist}(x,S^c)\leq K_0\},
\]
where $S^c=\Z_{\geq 0}^2\setminus S$. This contains both endpoints of any reaction jump crossing the boundary of $S$.

\subsubsection*{Deterministic system}

The state of the deterministic system is given by the concentrations $x(t)\in \R_{\geq 0}^2$. The kinetics of the reactions follow the law of mass action. Given a set of positive reaction rate constants $\kappa=(\kappa_\nu)_{\nu \in \cal{R}}$, the ODE associated to the system $(\cal{S}, \cal{C}, \cal{R}, \kappa)$ is 
\begin{equation}\label{ODE}
	\dot x = \Delta(x):=\sum_{\nu\in \cal{R}}\kappa_{\nu}x_1^{y_{\nu,1}^-}x_2^{y_{\nu,2}^-}\xi^\nu.  
\end{equation}
For $x\in \R_{\geq 0}^2$ and $y\in \cal{C}$, we use the notation $x^y:=x_1^{y_1}x_2^{y_2}$. 

\subsubsection*{Stochastic system}

The stochastic system tracks the number of molecules of each species. At time $t$, its state is given by $X(t)\in \Z_{\geq 0}^2$. The reaction kinetics again follow the law of mass action. 
The rate of a reaction $\nu$ depends on the state of the system, and is zero when there are not enough reactant molecules. Explicitly,
\begin{equation}\label{e:SDE}
	\lambda_\nu(x):=
	\begin{cases}
		0&\text{   if   }x_1<y_{\nu, 1}^- \text{  or  } x_2<y_{\nu, 2}^-\\
		\kappa_\nu \frac{x_1!}{(x_1-y_{\nu,1}^-)!}\frac{x_2!}{(x_2-y_{\nu, 2}^-)!} &\text{   otherwise}\\
	\end{cases}
\end{equation}
Equivalently, 
\begin{equation}
	\lambda_\nu(x)=\kappa_\nu x_1(x_1-1)\ldots(x_1-y_{\nu, 1}^-+1)x_2(x_2-1)\ldots(x_2-y_{\nu, 2}^-+1)
\end{equation}
for $x\in \Z_{\geq 0}^2$, with empty products equal to one.

Given a set of reaction rate constants $\kappa=(\kappa_\nu)_{\nu \in \cal{R}}$, the stochastic process associated to the system $(\cal{S}, \cal{C}, \cal{R}, \kappa)$ is a continuous time Markov chain whose infinitesimal generator acts on a test function $g$ as
\begin{equation}\label{e:IGOriginal}
	\cal{A}(g)(x)=\sum_{\nu\in\cal{R}}\lambda_\nu(x)(g(x+\xi^\nu)-g(x)). 
\end{equation}

\subsection{Polytope}

The source polytope is the convex hull of $\cal{C}^-$. To determine which reactions dominate at large states, we consider scalings $x_i\sim N^{\omega_i}$, with $N\to\infty$, and normalize the directions by $\max\{\omega_1,\omega_2\}=1$. Thus, we introduce
	\begin{equation*}
		\Omega=\{(1, \omega_2), \omega_2\in [0, 1]\}\cup \{(\omega_1, 1), \omega_1\in [0, 1]\}.
	\end{equation*}
	We order $\Omega$ in the trigonometric sense, i.e. for $\omega, \omega'\in \Omega$, 
	\[
		\omega\leq \omega' \text{   if and only if   } \omega_2\leq \omega_2' \text{  and  } \omega_1\geq \omega_1'. 
	\]
	Also, $\omega<\omega'$ if $\omega\leq \omega'$ and $\omega\neq \omega'$. 
	For $\omega^-\leq\omega^+$, 
	\begin{align*}
		[\omega^-, \omega^+]:= \{\omega\in \Omega: \omega^-\leq \omega\leq \omega^+\},\\
		(\omega^-, \omega^+):= \{\omega\in \Omega: \omega^-< \omega<\omega^+\}. 
	\end{align*}
	Finally, we introduce $\Omega_{\leq}=\{\omega\in\Omega: \omega\leq (1,1)\}$.

Along the scaling $x_i\sim N^{\omega_i}$, the source monomial $x^{y^-}$ has order $N^{\croc{y^-,\omega}}$. The largest exponent therefore identifies the dominant source complexes and their reactions.

\begin{definition}
	The dominant source complexes and reactions in direction $\omega\in \Omega$ are 
	\begin{align*}
		\cal{C}_\omega&:=\arg\max\{\croc{y^-, \omega}: y^-\in \cal{C}^-\}\\
		\cal{R}_\omega&:= \{y^-\to y^+\in \cal{R}: \, y^-\in \cal{C}_\omega\}. 
	\end{align*}
	The dominant order is 
	\begin{equation}\label{s:RhoDef}
		\rho_0(\omega):= \max\{\croc{y^-, \omega}: y^-\in \cal{C}^-\}. 
	\end{equation}
	If each species occurs in at least one source complex, then $\rho_0(\omega)\geq 1$ for all $\omega\in \Omega$. We will use the simplified notation $\rho_0=\rho_0(\omega)$ when $\omega$ is clear from the context.  
	
	We identify the nontrivial exposed faces of the source polytope with their directions,
	\begin{equation}\label{e:DefP}
		\cal{P}:= \{\omega\in \Omega: \, |\cal{C}_\omega|>1\}. 
	\end{equation}
\end{definition}

Note that $\cal{P}$ is finite. States at order of magnitude $N^\omega$ for $\omega\in \cal{P}$ have to be studied more carefully, because in this case several reactions compete. Reactions starting from the same complex have proportional rates and combine into a single drift vector. When this drift vector is $0$, the complex is called a \emph{ghost complex}. This cancellation requires specific values of $\kappa$, which we exclude.  

For $\omega\in \Omega$, we define the drift contributed by the dominant reactions, 
\begin{equation}\label{e:fDelta}
	\Delta^\omega(u, v):= \sum_{y^-\to y^+\in \cal{R}_\omega}\kappa_{y^-\to y^+} (y^+-y^-)u^{y_1^-}v^{y_2^-}. 
\end{equation}

At states of order $(N^{\omega_1}, N^{\omega_2})$ with $N\gg 1$, the leading deterministic drift is described by the ODEs
\begin{equation}\label{ODEOmega}
	\begin{cases}
		\dot{x}_1= \Delta_1^{\omega}(x_1, x_2)\\
		\dot{x}_2= \Delta_2^{\omega}(x_1, x_2)\\
	\end{cases}
\end{equation}
We finally introduce the sets of states used to separate these scaling regimes. 
For $\omega^-<\omega^+\in \Omega$ and $m=(m^-, m^+)\in\R_{>0}^2$, we set 
\begin{equation}\label{e:DefSetT}
	\cal{T}(\omega^-, \omega^+; m^-, m^+):= \left\{x\in \R^2_{>0}: \frac{x_2^{\omega_1^-}}{x_1^{\omega_2^-}}\geq m^- \text{  and  } \frac{x_2^{\omega_1^+}}{x_1^{\omega_2^+}}\leq m^+ \right\}
\end{equation}
The set $\cal{T}(\omega^-,\omega^+; m^-, m^+)$ lies between the curves $x_2^{\omega_1^-}=m^-x_1^{\omega_2^-}$ and ${x_2^{\omega_1^+}=m^+x_1^{\omega_2^+}}$. When no direction of $\cal{P}$ lies between $\omega^-$ and $\omega^+$, these are the ``in between'' states forming a \emph{drift region}, at which the process is essentially deterministic, and the generator is well approximated by its first order terms. Here, the leading vector field follows the direction of the sum of the reaction vectors starting from one single complex, weighted by their rate constants. 
In the following, we will sometimes use the concise notation $\cal{T}(\omega_-, \omega_+)$ for states at these orders of magnitude. 

For $\omega\in \Omega$ and $0<u_-<u_+<\infty$, we set 
\begin{equation}\label{e:DefSetH}
	\cal{H}_\omega(u_-, u_+):= \left\{x\in \R^2_{>0}: u_-\leq \frac{x_2^{\omega_1}}{x_1^{\omega_2}}\leq u_+\right\}
\end{equation}
Thus, $\cal{H}_\omega(u_-,u_+)$ is a strip around the scaling direction $\omega$. In particular, for $\omega=(1,\gamma)$, it is given by $u_-\leq x_2/x_1^\gamma\leq u_+$. For the specific $\omega\in \cal{P}$, we will need to partition the states of order $(N^{\omega_1}, N^{\omega_2})$ in these strips defined above, according to the directions of the vector field given by the dominant reactions at these states. In the following, we will sometimes use the concise notation $\cal{H}_\omega$ to denote states at these orders of magnitude. 

\subsection{Example}

We will use the following example throughout the paper to illustrate the results. It is (almost) the example given in~\cite{Agazzi25}. The graph of the CRN is as follows: 
\begin{equation}\label{e:ExCRN1}
	\begin{aligned}
		&3S_1+2S_2\stackrel{\kappa_1}{\rightarrow} 5S_1+S_2, \quad 2S_1+3S_2\stackrel{\kappa_2}{\rightarrow} S_1+5S_2,\\
		&4S_1\stackrel{\kappa_3}{\rightarrow} S_2, \quad 4S_2\stackrel{\kappa_4}{\rightarrow} S_1,\\
		&\emptyset\stackrel{\kappa_5}{\rightarrow} S_1+S_2
	\end{aligned}
\end{equation}
Here $\cal{S}=\{S_1, S_2\}$, $\cal{C}$ contains 10 complexes, including for example $\emptyset$ or $3S_1+2S_2$, and there are 5 different reactions. The $\kappa_i$ written on top of each arrow correspond to the reaction rate constants, all positive. In what follows, we will call reaction $i$ the reaction associated to $\kappa_i$. 

The associated deterministic system is defined by the following ODE (see Relation~\eqref{ODE}): 
\[
	\begin{aligned}
		\dot{x}_1(t)&=2\kappa_1x_1^3x_2^2-\kappa_2x_1^2x_2^3-4\kappa_3x_1^4+\kappa_4x_2^4+\kappa_5\\
		\dot{x}_2(t)&=-\kappa_1x_1^3x_2^2+2\kappa_2x_1^2x_2^3+\kappa_3x_1^4-4\kappa_4x_2^4+\kappa_5
	\end{aligned}
\]
In the stochastic representation, the associated Markov process has 5 transitions, one for each reaction, defined according to Relation~\eqref{e:SDE}. For example, the transition associated to reaction $1$ is $(x_1, x_2)\mapsto (x_1+2, x_2-1)$ and has rate
\[
	\kappa_1x_1(x_1-1)(x_1-2)x_2(x_2-1). 
\]
The faces of the polytope associated to this CRN, defined by the set of normal vectors in Relation~\eqref{e:DefP}, are 
\[
	\cal{P}:=\{(1,1), (1,1/2), (1/2, 1)\}.
\]
On the diagonal, i.e. for $\omega=(1,1)$, the dominant reactions are reactions $1$ and $2$: $\cal{C}_\omega=\{3S_1+2S_2, 2S_1+3S_2\}$ and $\rho_0(\omega)=5$. In this case, the reduced ODE is given by Relation~\eqref{e:fDelta}:
\[
	\dot{y}=\Delta^{(1,1)}(y)= \kappa_1y_1^3y_2^2\binom{2}{-1}+\kappa_2y_1^2y_2^3\binom{-1}{2}.
\]
For $\omega=(1, 1/2)$, the reactions $1$ and $3$ compete: $\cal{C}_\omega=\{3S_1+2S_2, 4S_1\}$ and $\rho_0(\omega)=4$. 

\subsection{Dissipation and energy function}
The main purpose of this article is, in the spirit of Lyapunov-type criteria (see Section~\ref{s:Lyapunov}), to show a dissipation inequality for an energy function constructed in the bulk of the state space. The energy will be defined separately on subsets of the state space of the form $\cal{H}_\omega$ and $\cal{T}(\omega_-, \omega_+)$ as introduced above in Relations~\eqref{e:DefSetH} and~\eqref{e:DefSetT}. One key property needed to patch the different sets together is that the functions ``match'' in terms of order of magnitude near the boundary between two sets. CRNs are known to dissipate functions with controlled growth: the Entropy function for example, see~\cite{Cappelletti4}, or linear and polynomial functions, see~\cite{Howells2025} or Section 5 of~\cite{LR24} for example. In this spirit, our functions are all ``of order $\|x\|^b$'' at the boundary of their set, hence the following definition: 

\begin{definition}\label{d:b-dominance}
	Let $b>0$. 
	\begin{itemize}
		\item A function $V$ defined on a set $\cal{H}_\omega(u_-, u')$, resp. on a set $\cal{T}(\omega, \omega'; u_-, u')$ is said to be $b$-dominated on its lower boundary if we can find $v_->u_-$ and some $C, c>0$, such that for $x\in \cal{H}_\omega(u_-, v_-)$ with $\|x\|$ large enough, 
		\[
			c\leq \frac{V(x)}{\|x\|^{b}}\leq C. 
		\]
		\item Symmetrically, a function $V$ defined on a set $\cal{H}_\omega(u', u_+)$, resp. on a set $\cal{T}(\omega', \omega; u', u_+)$ is said to be $b$-dominated on its upper boundary if we can find $v_+<u_+$ and some $C, c>0$, such that for $x\in \cal{H}_\omega(v_+, u_+)$ with $\|x\|$ large enough, 
		\[
			c\leq \frac{V(x)}{\|x\|^{b}}\leq C. 
		\]
		\item We say the function $V$ is $b$-dominated on its boundaries when both are true. 
	\end{itemize}
\end{definition}

Note that these ``boundaries'' are boundaries of the subsets, and are not related to the boundary of the state space which correspond to the two strips $\{x_1\leq K'\}$ and $\{x_2\leq K'\}$ for some $K'\geq K_0$.

\begin{remark}\label{r:SwitchSpecies}
	In the rest of the paper, we present the estimates and proofs for $\omega=(1,\gamma)$, $0<\gamma\leq1$. For $\omega=(\gamma,1)$, the same arguments apply after interchanging the two species and the corresponding coordinates of the reaction complexes. This gives the required bounds on the other side of the state space. 
\end{remark}
\section{The Poincar\'e compactification}\label{s:Poincare}

As mentioned in the introduction, the main goal of this article is to approach the stability properties of the stochastic process associated to a CRN through the behavior of the vector field given by the deterministic representation of the same CRN. We are interested only in what happens in the bulk of the system, i.e., when both species are present in large numbers, and all reactions can happen. The question of boundary behaviors is not dealt with here. 

In~\cite{Agazzi25}, we have seen that the deterministic vector field can contain one divergent trajectory, as long as it is \emph{repulsive}, in the sense that a neighboring trajectory tends to move away from it. In other words, this direction at infinity leads to an ``equilibrium at infinity'' which is unstable in the ``angular direction''. This is a property of the vector field in the limit of large states. One natural way to understand how the vector field behaves ``at infinity'' is to carry out a \emph{compactification} of $\R_{+}^2$. 
In the spirit of~\cite{Agazzi25}, we would like to be able to distinguish the different directions leading to infinity, and describe their attractive or repulsive character. Using the usual stereographic projection would not give us enough information on the system, because the ``infinity'' is represented as a single point, and therefore we can't make sense of a ``repulsive'' property of a point at infinity. Instead, we will use Poincar\'e-compactification (and Poincaré--Lyapunov compactification), for which the states at infinity are spread along a circle, see~\cite{Perko2013, Dumortier2006} and \cite{Poincare1881} for the historical reference. 
The choice of this compactification is actually quite natural with polynomial planar vector fields as ours are. To our knowledge, it hasn't been used for the study of CRN yet, except on the Brusselator system, see~\cite{Llibre2020}. 

For each face of the polytope, i.e., each $\omega\in \cal{P}$, a different compactification will be carried out, taking into account the different orders of magnitude of the states. For $\omega=(1,1)$, the classical Poincaré-compactification is sufficient, which roughly corresponds to making the change of variables $$(u,v)=(x_2/x_1, (x_1^2+x_2^2)^{-1/2}),$$ and the ``states at infinity'' are represented by the states $v=0$. For $\omega<(1,1)$, we will use the Poincaré--Lyapunov compactification instead, see~\cite{Dumortier2006}, that takes into account the difference in order of magnitude between $x_1$ and $x_2$. In this case, the relevant change of variables is 
$$(u, v)=(x_2/x_1^{\omega_2}, 1/x_1).$$
The goal of this article is to describe the stability of the stochastic CRN according to the data of its associated compactifications. 

This part is meant to introduce the main quantities defined by the compactification that will play a role throughout the article. The changes of variables and limits when $v\to 0$ are presented here at a heuristic level, to explain the origin of the functions used in the proofs.

\subsection{Poincaré compactification on the face $\omega=(1,1)$}
When $\omega=(1,1)$, we introduce the change of variables 
\[
	u(x)=\frac{x_2}{x_1} \quad\text{and}\quad v(x)=(x_1^2+x_2^2)^{-1/2}.
\]
We are interested in the states for which $0<u<+\infty$. When $u$ approaches $0$, to see what happens, we need to ``zoom in'' the zone $u=0$, which corresponds to carrying out a different compactification (taking into account a smaller order of magnitude for $x_2$). 
Making this change in variables in the dynamical system defined by equation~\eqref{ODE}, we get: 
\begin{align*}
	\dot u &= v\sqrt{1+u^2}\left(-u\Delta_1\left(\frac{1}{v\sqrt{1+u^2}},\frac{u}{v\sqrt{1+u^2}}\right)+\Delta_2\left(\frac{1}{v\sqrt{1+u^2}},\frac{u}{v\sqrt{1+u^2}}\right) \right),\\
	\dot v &= -\frac{v^2}{\sqrt{1+u^2}}\left(\Delta_1\left(\frac{1}{v\sqrt{1+u^2}},\frac{u}{v\sqrt{1+u^2}}\right)+u\Delta_2\left(\frac{1}{v\sqrt{1+u^2}},\frac{u}{v\sqrt{1+u^2}}\right) \right).
\end{align*}
When $v\to 0$, we can reduce this equation to the leading order terms, replacing $\Delta_1$ and $\Delta_2$ by $\Delta_1^{\omega}$ and $\Delta_2^{\omega}$. Both these polynomial functions are sums of monomials of the same degree, given by $\rho_0:=\rho_0(\omega)$. Therefore, we get: 
\begin{align*}
	\dot u &\approx \frac{1}{(1+u^2)^{(\rho_0+1)/2}v^{\rho_0-1}}(1+u^2)\left(-u\Delta_1^\omega\left(1, u\right)+\Delta_2^\omega\left(1,u\right) \right)\\
	\dot v &\approx -v\frac{1}{(1+u^2)^{(\rho_0+1)/2}v^{\rho_0-1}}\left(\Delta_1^\omega\left(1,u\right)+u\Delta_2^\omega\left(1,u\right) \right)
\end{align*}
and after a change in time, we get: 
\begin{align*}
	\dot u &\approx (1+u^2)(-u\Delta_1^\omega\left(1, u\right)+\Delta_2^\omega\left(1,u\right))\\
	\dot v &\approx -v\left(\Delta_1^\omega\left(1,u\right)+u\Delta_2^\omega\left(1,u\right) \right)
\end{align*}
Using this last ODE, we can classify the behavior of the system. Invariant lines of the leading homogeneous system are given by the equilibrium points $(u_*,0)$ of the compactified ODE (the equation of the line is then $x_2=u_*x_1$). Looking at the Jacobian matrix of the dynamical system, we can classify the lines as \emph{attractive} or \emph{repulsive}, and \emph{convergent} or \emph{divergent}. 

\begin{definition}\label{d:Comp11}
	The compactification at $\omega= (1,1)$ is given by the data of the two functions 
	\begin{align*}
			F_\omega(u)&:=(1+u^2)(-u\Delta_1^\omega(1, u) + \Delta_2^\omega(1, u))\\
			G_\omega(u)&:= \Delta_1^\omega(1, u) + u\Delta_2^\omega(1, u). 
	\end{align*}
\end{definition}

\subsection{Poincaré-Lyapunov compactification on the face $(1,0)<\omega<(1,1)$}
A similar calculation can be done for the faces where $\omega_2<1$. Let's fix $\omega=(1,\gamma)$ with $\gamma\in (0,1)$. In this case, the change of variable is given by 
\[
	u(x)=\frac{x_2}{x_1^{\gamma}} \quad\text{and}\quad v(x)=\frac{1}{x_1}.
\]
Similar calculations as in the previous section (reducing the ODE to the leading terms and changing time) lead to the equations for $(u,v)$:
\begin{align*}
	\dot u &\approx \Delta_2^\omega\left(1,u\right)\\
	\dot v &\approx -v^{2-\gamma}\Delta_1^\omega\left(1,u\right)
\end{align*}
As in the previous section, we define the compactification for $\omega\neq (1,1)$. 
\begin{definition}\label{d:CompNeq11}
	The compactification at $(1,0)< \omega< (1,1)$ is given by the data of the two functions 
	\begin{align*}
			F_\omega(u)&:=\Delta_2^\omega(1, u)\\
			G_\omega(u)&:= \Delta_1^\omega(1, u). 
	\end{align*}
\end{definition}

\subsection{Singular points and invariant lines/curves of the dynamical system}
We now introduce the \emph{singular points} of the compactifications, that describe the asymptotic directions of invariant curves of the leading order vector field. 
\begin{definition}\label{d:CompSingPoint}	
	Let $\omega\in ((1,0),(1,1)]$. A singular point of the compactification at $\omega$ is a point $(u_*, v=0)$ such that $F_\omega(u_*)=0$. 	
	We define the set of singular values of $u$ as 
	\[
		\cal{P}_\omega=\{u\in (0, +\infty): \, F_\omega(u)=0\}. 
	\]
	When $1\leq p:=|\cal{P}_\omega|<\infty$, we represent the set $\cal{P}_\omega$ as
	\[
		\cal{P}_\omega=\{u_1, \ldots, u_p\},
	\]
	where $0=: u_0<u_1<u_2<\ldots u_p<u_{p+1}:=\infty$. 
	The singular point $u_*\in \cal{P}_\omega$ is said to be 
	\begin{itemize}
		\item \emph{attractive} (resp. \emph{repulsive}) if $F'_\omega(u_*)<0$ (resp. $F'_\omega(u_*)>0$)
		\item \emph{convergent} (resp. \emph{divergent}) if $G_\omega(u_*)<0$ (resp. $G_\omega(u_*)>0$)
	\end{itemize}
	When $p=0$, the compactification is called \emph{trivial}. 
\end{definition}
For $\omega=(1,1)$, a singular point of the compactification corresponds to an invariant line of the (leading) system, defined by the equation $x_2=u_*x_1$. For $\omega<(1,1)$, it corresponds to an invariant \emph{curve}, whose equation verifies $x_2=(u_*+o(1))x_1^{\omega_2}$ when $x_1\to +\infty$. The line/curve associated to $u_*$ is divergent if the points on the invariant lines go towards infinity. In this case, in the compactified coordinates, $v\to 0$, hence the sign of $G_\omega(u_*)$. All these definitions can be very simply represented on a graph on the Poincar\'e Disk, see the following examples.
 
\begin{remark}\label{r:NonTrivialCompDrift}
	Intuitively, non-trivial compactifications can only occur when multiple reactions compete, i.e., on faces $\omega\in \cal{P}$, where $\cal{P}$ has been defined in Relation~\eqref{e:DefP}. This intuition is mainly true: for $\omega<(1,1)$, when $\omega\notin \cal{P}$ (and $\Delta_{2}^{\omega}(1,1)\neq 0$) we have $\cal{P}_\omega=\emptyset$. The vector field in the region $\cal{H}_\omega$ is dominated by a drift coming from \emph{one} dominant complex, and the compactification at $\omega$ is trivial. 
	However, in the case of $\omega=(1,1)\notin\cal{P}$, we can still have a non-trivial compactification at $\omega$: the drift has a direction $\Delta^\omega(1,1)$, and if both $\Delta_{i}^{(1,1)}(1,1)$ have the same sign, it is in a direction of an invariant line $x_2=u^*x_1$ of the plane. This can be seen in the expression of $F_\omega(u)$ given in Definition~\ref{d:Comp11}. When the system goes towards infinity, i.e., when $\Delta_{i}^{(1,1)}(1,1)>0$ for both $i=1,2$, the singular point in the compactification is attractive and divergent, which we will see makes the system unstable. 
\end{remark}

\subsection{First example}
For the CRN given in Relation~\eqref{e:ExCRN1}, we have two diagonal/sub-diagonal regions to study: $\omega=(1,1)$ and $\omega=(1,1/2)$. 
For $\omega=(1,1)$, the polynomials defined by the Poincar\'e compactification in Definition~\ref{d:Comp11} are: 
\begin{multline*}
	F_\omega(u)= \kappa_2(1+u^2)u^2(u^2-2(\alpha-1)u-\alpha)\\
	\text{and} \quad G_\omega(u)= \kappa_2u^2(2u^2-(\alpha+1)u+2\alpha), 
\end{multline*}
with $\alpha=\kappa_1/\kappa_2$. 
$F_\omega(u)$ has only one positive root, at $u_*= \alpha-1+\sqrt{(\alpha-1)^2+\alpha}$. We can show that $F'_\omega(u_*)>0$ and $G_\omega(u_*)>0$, and therefore the only singular point of this compactification is repulsive and divergent. In particular, the calculations made in~\cite{Agazzi25} correspond to the case $\kappa_1=\kappa_2=1$, where $u_*=1$, $F'_\omega(u_*)=4$ and $G_\omega(u_*)=2$. 

For $\omega=(1, 1/2)$, the polynomials defined by the Poincar\'e compactification in Definition~\ref{d:CompNeq11} are 
\[
	F_\omega(u)=\kappa_3-\kappa_1u^2 \quad \text{and}\quad G_\omega(u)=2\kappa_1u^2-4\kappa_3. 
\]
Here again, $F_\omega(u)$ has one single positive root at $u_*=\sqrt{\kappa_3/\kappa_1}$. $F'_\omega(u_*)=-2\sqrt{\kappa_1\kappa_3}$ and $G_\omega(u_*)=-2\kappa_3<0$: the singular point is attractive and convergent. 

This is represented concisely in Figure~\ref{fig:Agazzi25Comp2}.

\begin{figure}[h]
    \centering
    \begin{subfigure}{0.48\textwidth}
        \centering
				\begin{tikzpicture}[scale=3]
						\def\ts{50}  
						\draw[->, very thick] (0, 0) -- (1.2, 0) node[right] {$x_1$};
						\draw[->, very thick] (0, 0) -- (0, 1.2) node[above] {$x_2$};
						\draw[red, very thick,
								postaction={decorate, decoration={markings,
										mark=at position 0.7 with {\arrowreversed{stealth}}}}]
								(0,1) arc (90:\ts:1);
						\draw[red, very thick,
								postaction={decorate, decoration={markings,
										mark=at position 0.7 with {\arrowreversed{stealth}}}}]
								(1,0) arc (0:\ts:1);
						\draw[blue, very thick,
								postaction={decorate, decoration={markings,
										mark=at position 0.67 with {\arrow{stealth}}}}]
								(0,0) -- ({cos(\ts)}, {sin(\ts)});
						\draw[blue, thick] (0.2,0) arc (0:\ts:0.2);
						\node[blue] at (0.31, 0.13) {$\theta^*$};
						\node[red] at (0.18, 1.14) {$v = 0$};
						\draw[->, red, thick] (0.10, 1.09) to[out=220, in=75] (0.04, 1.01);
				\end{tikzpicture}
    \end{subfigure}
    \hfill
    \begin{subfigure}{0.48\textwidth}
        \centering
        \begin{tikzpicture}[scale=3]
						\def\ts{30}  
						\draw[->, very thick] (0, 0) -- (1.2, 0) node[right] {$x_1$};
						\draw[->, very thick] (0, 0) -- (0, 1.2) node[above] {$x_2$};
						\draw[red, very thick,
								postaction={decorate, decoration={markings,
										mark=at position 0.8 with {\arrow{stealth}}}}]
								(0,1) arc (90:\ts:1);
						\draw[red, very thick,
								postaction={decorate, decoration={markings,
										mark=at position 0.7 with {\arrow{stealth}}}}]
								(1,0) arc (0:\ts:1);
						\draw[blue, very thick,
								postaction={decorate, decoration={markings,
										mark=at position 0.67 with {\arrowreversed{stealth}}}}]
								(0,0) -- ({cos(\ts)}, {sin(\ts)});
						\draw[blue, thick] (0.2,0) arc (0:\ts:0.2);
						\node[blue] at (0.31, 0.1) {$\theta^*$};
						\node[red] at (0.18, 1.14) {$v = 0$};
						\draw[->, red, thick] (0.10, 1.09) to[out=220, in=75] (0.04, 1.01);
				\end{tikzpicture}
    \end{subfigure}
    \caption{Schematic view of the two Poincar\'e compactifications associated to CRN~\eqref{e:ExCRN1}. We plot $\theta=\arctan(u)$ and $v'=1-v$. Infinity lies on the red circle, and $\theta_*$ is the singular point.\\
		(Left: $\omega=(1,1)$; Right: $\omega=(1,1/2)$)}
		\label{fig:Agazzi25Comp2}
\end{figure}

\subsection{Second example}
We introduce the following CRN: 
\begin{equation}\label{e:ExCRN2}
	2S_1+S_2\stackrel{4}{\rightarrow} 4S_1,\quad S_1+3S_2\stackrel{5}{\rightarrow} 4S_2, \quad 5S_2\stackrel{1}{\rightarrow} 4S_2. 
\end{equation}
Here, $\cal{P}=\{(1,1/2)\}$, and for $\omega=(1,1/2)$, the polynomials of the Poincar\'e compactification are given by
\[
	F_\omega(u)=-4u+5u^3-u^5=-u(u^2-1)(u^2-4)\quad \text{and}\quad G_\omega(u)=8u-5u^3.
\]
There are two singular points: $u_1=1$ is repulsive and divergent, and $u_2=2$ is attractive and convergent.
 
The map on the Poincar\'e disk is given in Figure~\ref{fig:Example2}. 
\begin{figure}[h!]
	\centering
	\begin{tikzpicture}[scale=3.2]
			\def\tone{70}  
			\def\ttwo{45}  
			\draw[->, very thick] (0, 0) -- (1.2, 0) node[right] {$x_1$};
			\draw[->, very thick] (0, 0) -- (0, 1.2) node[above] {$x_2$};
			\draw[red, very thick,
					postaction={decorate, decoration={markings,
							mark=at position 0.5 with {\arrow{stealth}}}}]
					(0,1) arc (90:\tone:1);
			\draw[red, very thick,
					postaction={decorate, decoration={markings,
							mark=at position 0.5 with {\arrowreversed{stealth}}}}]
					({cos(\tone)},{sin(\tone)}) arc (\tone:\ttwo:1);
			\draw[red, very thick,
					postaction={decorate, decoration={markings,
							mark=at position 0.5 with {\arrow{stealth}}}}]
					({cos(\ttwo)},{sin(\ttwo)}) arc (\ttwo:0:1);
			\draw[blue, very thick,
					postaction={decorate, decoration={markings,
							mark=at position 0.67 with {\arrowreversed{stealth}}}}]
					(0,0) -- ({cos(\tone)},{sin(\tone)});
			\draw[violet, very thick,
					postaction={decorate, decoration={markings,
							mark=at position 0.67 with {\arrow{stealth}}}}]
					(0,0) -- ({cos(\ttwo)},{sin(\ttwo)});
			\draw[blue, thick] (0.15,0) arc (0:\tone:0.15);
			\node[blue] at (0.17, 0.27) {$\theta_2$};
			\draw[violet, thick] (0.22,0) arc (0:\ttwo:0.22);
			\node[violet] at (0.27, 0.1) {$\theta_1$};
			\node[red] at (0.16, 1.14) {$v = 0$};
			\draw[->, red, thick] (0.10, 1.09) to[out=220, in=75] (0.04, 1.01);
		\end{tikzpicture}
		\caption{Schematic view of the Poincar\'e compactification of CRN~\eqref{e:ExCRN2} at $\omega=(1,1/2)$. We draw $\theta_i=\arctan u_i$.  }
	\label{fig:Example2}
\end{figure} 
\section{Main results and strategy of the proof}\label{s:Theorem}

\subsection{Main result}
The main result consists of a dissipation inequality, i.e., in showing that one of the inequalities required for the Lyapunov criterion (Foster-Lyapunov or Filonov's criterion, see Section~\ref{s:Lyapunov}) is verified. 

The assumptions made on the system are of two types: Assumptions~\ref{a:OmegaNeq1} and~\ref{a:OmegaEq1} concern the non-trivial compactifications: using Definition~\ref{d:CompSingPoint}, all \emph{attractive} singular points of the compactifications must be \emph{convergent}.
Assumptions~\ref{a:PFinite},~\ref{a:Delta2Neq0} and~\ref{a:DeltaNeq0_11} concern the \emph{drift regions}, i.e., the regions in which the system is well approximated by its leading order ODE. We restrict our theorem to the study of systems for which the leading order term in the drift region does not leave the $S_2$ coordinate unchanged. \emph{The generality of these assumptions as well as their interpretation are discussed in Section~\ref{s:ScaleAppl}.} 

The first assumption guarantees that the relevant compactifications have only finitely many singular points:
\begin{assumption}\label{a:PFinite}
	For $\omega\in (\cal{P}\cap((1,0), (1,1)))\cup\{(1,1)\}$, $|\cal{P}_\omega|<+\infty$. 
\end{assumption}

We will see that the behavior of the system is more complicated in the region associated to $\omega=(1, 1)$, where both species $S_1$ and $S_2$ evolve at the same timescale. The assumptions related to this order of magnitude are stated seperately. 

\subsubsection{Sub-diagonal regions}
We first state the assumptions for the regions associated to $\omega<(1,1)$. 
\begin{assumption}\label{a:OmegaNeq1}
	For $\omega\in \cal{P}$ with $(1,0)<\omega<(1,1)$, for all $u\in \cal{P}_\omega$, $F'_\omega(u)G_\omega(u)\neq 0$ and 
	\begin{equation*}
		\text{either}\quad  G_\omega(u)<0,\quad \text{or}\quad F'_\omega(u)>0.
	\end{equation*}
\end{assumption}
This assumption states that any singular point in the compactification is either repulsive, or attractive \emph{and} convergent. 
 
For the directions in Assumption~\ref{a:PFinite}, $F_\omega$ and $G_\omega$ are polynomials with $F_\omega\not\equiv0$. Therefore, for $t=0^+$ and $t=+\infty$, we can find some $\ell_{\omega}(t)\in \R\cup\{\pm \infty\}$ such that
\[
	\lim_{\substack{u\to t\\u>0}} \frac{G_\omega(u)}{F_{\omega}(u)}=\ell_\omega(t).
\]
The following assumption concerns the drift regions.   
\begin{assumption}\label{a:Delta2Neq0}
	For all $\omega\in \cal{P}$ such that $(1,0)<\omega<(1,1)$, the limits $\ell_\omega(0)$ and $\ell_\omega(+\infty)$ are finite. 
\end{assumption}
This guarantees that at the boundary of a compactification, the leading drift does not keep $x_2$ constant, which allows it to ``move to the next region of compactification''. Together with Assumption~\ref{a:DeltaNeq0_11} and the absence of ghost complexes, it ensures that for $(1,0)<\omega<(1,1)$ and $\omega\notin\cal{P}$, $\Delta_{2}^\omega(1,1)\neq 0$, see Lemma~\ref{l:BetweenCompact}. The interpretation on the graph of the CRN is that the drift coming from the dominant complex at the end of the compactification is not horizontal. 

\subsubsection{Diagonal region}
The diagonal region $\cal{H}_\omega$ with $\omega=(1,1)$ requires specific assumptions. 
When the compactification at $\omega=(1,1)$ is not trivial, the requirements are similar to Assumption~\ref{a:OmegaNeq1},  but with an additional inequality that bounds how much the norm of the process has increased while getting away from a repulsive line.
Here $b>0$ is the exponent in the boundary growth condition of Definition~\ref{d:b-dominance}.
  
\begin{assumption}\label{a:OmegaEq1}
	For $\omega=(1,1)$, for all $i\in \{1, \ldots, p\}$, $F'_\omega(u_i)G_\omega(u_i)\neq0$ and either 
	\[
		G_\omega(u_i)<0
	\]
	or 
	\[
		 F'_\omega(u_i)>0 \quad \text{and}\quad \frac{F'_\omega(u_i)}{G_\omega(u_i)}>b
	\]
\end{assumption}
Similarly to Assumption~\ref{a:Delta2Neq0}, to ensure that the drift in the sub-diagonal regions is not horizontal, we state the following assumption. 
\begin{assumption}\label{a:DeltaNeq0_11}
	For $\omega=(1,1)$, the limit $\ell_\omega(0)$ is finite. 
\end{assumption}

\subsubsection{Main theorem}
We can now state the main theorem:
\begin{theorem}\label{t:Main}
	Let $b\geq 1$ and $M>0$. If the CRN with mass action kinetics $(\cal{S}, \cal{C}, \cal{R}, \kappa)$ has no ghost complexes and verifies Assumptions~\ref{a:PFinite},~\ref{a:OmegaNeq1}, \ref{a:Delta2Neq0}, \ref{a:OmegaEq1} and~\ref{a:DeltaNeq0_11}, then we can find a function $V$ that verifies the $b$-dominance on the boundary of the state space, some constants $K>K'>0$ and $\beta>0$, $C>0$, and an integrable stopping time $\tau\neq 0$, such that for $x\in \cal{T}((1, 0), (1,1); K', M)$ with $\|x\|\geq K$,  
	\[
		\E_x\left(V(X(\tau))- V(x)\right)\leq -\beta\E_x(\tau), \quad \text{and}\quad \E_x(\tau)\leq C,
	\] 
	and $(X(t\wedge \tau))$ hasn't left $\cal{T}((1, 0), (1,1); K', M)$.
	Moreover, the dissipation is point-wise on the boundary states, i.e., we can find some $\omega>(1,0)$ such that, for $x\in \cal{T}((1, 0), \omega; K', M)$ with $\|x\|\geq K$, 
	\[
		\cal{A}(V)(x)\leq -\beta. 
	\]
\end{theorem}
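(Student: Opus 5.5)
The plan is to build $V$ piecewise on a finite cover of the bulk $\cal{T}((1,0),(1,1);K',M)$. I would use strips $\cal{H}_\omega(u_-,u_+)$ around each singular point $u_i\in\cal{P}_\omega$, for every $\omega\in\cal{P}\cap((1,0),(1,1))$ and for $\omega=(1,1)$. Between consecutive singular points of the same compactification, and between consecutive compactifications, I would use drift regions of the form $\cal{H}_\omega$ or $\cal{T}(\omega^-,\omega^+)$. On each piece I would construct a local energy function that is $b$-dominated on its boundaries in the sense of Definition~\ref{d:b-dominance}, and prove a local dissipation inequality for it. A patching step then glues the pieces. Throughout, $\cal{A}(V)$ is evaluated through an expansion of the generator in each scaling regime: first-order (drift) terms of order $\|x\|^{\rho_0-1}$ times derivatives of $V$, plus a second-order (diffusion) correction. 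The drift is exactly the one encoded by $F_\omega$ and $G_\omega$ in the coordinates $(u,v)$.

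The local constructions, in order:

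\textbf{Drift regions.} Assumptions~\ref{a:Delta2Neq0} and~\ref{a:DeltaNeq0_11}, together with the absence of ghost complexes, give $\Delta_2^\omega(1,1)\neq0$ for $\omega\notin\cal{P}$. Between singular points, $F_\omega$ has constant sign, and the ratio $G_\omega/F_\omega$ is bounded near the ends. I would take $V=\|x\|^b h(u)$, choosing $h$ so that its logarithmic derivative solves $h'/h \approx -(b\,G_\omega/F_\omega)$ plus a negative correction. This makes $\cal{A}V\leq-\beta$ pointwise, because $u$ moves monotonically at speed $\sim\|x\|^{\rho_0-1}$ and dominates. Finiteness of $\ell_\omega(0)$ and $\ell_\omega(\infty)$ keeps $h$ bounded above and below, which gives $b$-dominance. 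On $\cal{T}((1,0),\omega;K',M)$ near the boundary the same argument gives pointwise dissipation, which is the second claim of the theorem.

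\textbf{Attractive singular points.} By Assumptions~\ref{a:OmegaNeq1} and~\ref{a:OmegaEq1} these are convergent, since $G_\omega(u_i)<0$. So $V=\|x\|^b$, or $x_1^b$ in the Poincaré--Lyapunov case, already has negative drift in a strip around $u_i$. This again gives pointwise dissipation.

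\textbf{Repulsive divergent singular points.} Here pointwise dissipation fails on the curve itself, so I would use the stopping-time formulation. Start from $x$ near $u_i$ and let $\tau$ be the exit time from a shrinking strip $|u-u_i|\leq\delta$, capped by a constant. The scaling limit of $u$ near $u_i$ is an unstable Ornstein--Uhlenbeck-type diffusion with rate $F'_\omega(u_i)>0$, so the exit time has bounded expectation, and the exit is driven by noise. Meanwhile $\log\|x\|$ grows at rate $G_\omega(u_i)$ in the rescaled time. For $\omega<(1,1)$ the timescales separate: $v$ moves like $v^{2-\gamma}$, so it is negligible. For $\omega=(1,1)$ the growth of $\|x\|^b$ during the escape is comparable to the angular escape. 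Solving the linearization, $\|X\|$ grows like $|u-u_i|^{-G/F'}$ relative to the starting distance. Hence the outer function $\|x\|^b h(u)$, with $h$ behaving like $|u-u_i|^{\,bG/F'}$-type compensation, is integrable against the exit law exactly when $F'_\omega(u_i)/G_\omega(u_i)>b$. Inside the strip I would define $V$ as $\E_x[V_{\text{out}}(X(\tau))]+\beta\E_x\tau$, and bound it using the scaling limits near repulsive curves. I expect this step to be the main obstacle: the generator approximation must be uniform down to the scale $\|x\|^{-1/2}$-type fluctuations in $u$, and the exit distribution must be controlled sharply enough to use the condition $F'/G>b$.

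\textbf{Patching.} With the local functions all comparable to $\|x\|^b$ on overlaps, I would glue them by taking a minimum, or by a regime-switching definition on overlapping strips. Local dissipation then transfers to a global stopping time $\tau$: either the exit time from the repulsive strip, or a single jump in the pointwise regions. This holds because the jumps are bounded by $K_0$ and the sets are separated by more than $\partial^{K_0}$ for large $\|x\|$. The integrability $\E_x\tau\leq C$ comes from the repulsive-strip estimate, and confinement to $\cal{T}((1,0),(1,1);K',M)$ holds since $\tau$ stops before reaching its boundary.
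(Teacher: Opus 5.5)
Your local ingredients match the paper's in spirit: the $F_\omega$/$G_\omega$ expansion of the generator, an unstable Ornstein--Uhlenbeck limit near repulsive curves, and $F'_\omega/G_\omega>b$ as the condition under which the exponential tail of the exit time beats the growth of $\|X\|^b$. The gap is the patching step, which does not work as stated. Comparability with $\|x\|^b$ on overlaps is not what gluing by a minimum (Lemma~\ref{l:GluyingJonathan}) requires. Across an interface, the upstream function must be the smaller one on the upstream side and the larger one on the downstream side, i.e.\ the ratio $V_{\mathrm{up}}/V_{\mathrm{down}}$ must increase along the flow. Rescaling only moves the crossing point; it cannot fix a ratio that is monotone the wrong way, and a maximum destroys dissipation. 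Your natural choices have exactly the wrong monotonicity. At an attractive curve you use $\lambda\|x\|^b$, which is flat in $u$, while the upstream function coming from the neighbouring repulsive curve ($r^b|u-u_*|^{-a}$ or anything similar) decreases along the flow. This is the situation in Example~\eqref{e:ExCRN2}, where the flow on $(1,2)$ runs from the repulsive $u_1=1$ to the attractive $u_2=2$. The same problem occurs between a steep repulsive outer function and the linear drift function below it.

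The paper handles this with two tools absent from your plan. First, the smoothing Lemma~\ref{l:SmoothingGluing} takes $\chi(u)V_1+(1-\chi(u))V_2$ and only requires $V_{\mathrm{up}}-V_{\mathrm{down}}\geq\beta_0r^b$ on the patch strip, which rescaling does give. Since $F_\omega$ has constant sign there, the cross term $(V_1-V_2)\chi'\widehat{\cal{A}}_\omega u$ adds dissipation, and the $\chi',\chi''$ errors are absorbed via $|\chi''|\leq C_n|\chi'|^{1-1/n}$. Second, at attractive convergent curves, Proposition~\ref{p:DissipAC} proves the contraction $\E_x\|X(\tau)\|^b\leq\eps^b\|x\|^b$ on the accelerated time scale $r^{1-\omega_2}$, with Freedman's inequality keeping $U$ near $u_*$. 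Hence any $b$-dominated function, even a discontinuous one, is dissipated there (Corollary~\ref{c:AttractivePatching}), and the attractive strip itself becomes the patching region. Pointwise dissipation of $\|x\|^b$ at attractive curves gives no such freedom. Without one of these devices, or an explicit reshaping of the local functions so that the downstream one is steeper at each interface, the global $V$ is not constructed.

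Several local steps also need repair.
\begin{itemize}
\item For $\omega=(1,\gamma)$ with $\gamma<1$, the OU limit must be centred on the corrected curve $h(r)=u_*+\sum_j\alpha_jr^{-\gamma_j}$ of Lemma~\ref{l:Defh}, not on $u_*$. The term $-\gamma ur^{\gamma-1}G_\omega(u)$ and the subdominant reactions in $\cal{R}'_\omega$ shift the invariant curve by about $r^{-\gamma_1}$. This exceeds the noise scale $r^{-\gamma/2}$ whenever $\gamma_1<\gamma/2$ (e.g.\ already when $\gamma>2/3$), so $(u-u_*)r^{\gamma/2}$ has a divergent drift.
\item Your inner definition $V=\E_x[V_{\mathrm{out}}(X(\tau))]+\beta\E_x\tau$ is circular if $\tau$ is capped, as you propose, since $X(\tau)$ may still lie in the strip. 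If $\tau$ is uncapped, finiteness needs exponential tail bounds on $\tau$, uniform in the start and valid for the prelimit process, because for $\omega=(1,1)$ $V_{\mathrm{out}}(X(\tau))$ grows like $e^{bG_\omega(u_*)\tau}$. Convergence in distribution on $[0,T]$ gives nothing of the sort. You would also need $V\leq V_{\mathrm{out}}$ at strip points reachable in one jump from outside.
\item The paper avoids both problems in the previous item by stopping at $\tau\wedge T$ with the explicit inner value $2^a\eta_0^{-a}r^{b+a/2}$ (Proposition~\ref{p:DissipRDFilo11}). There $F'_\omega/G_\omega>b$ is used to pick $a>1$ close to $1$ with $F'_\omega+G_\omega/2>(b+a/2)G_\omega$. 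The constraint $a>1$ is needed because the bound on $\E e^{\alpha'\tau_L}$ grows linearly in $\eta_1$.
\item In the sub-diagonal regions, your weight $h'/h\approx-bG_\omega/F_\omega$ is off by the factor $r^{\gamma-1}$. As written, it makes $\log h$ grow linearly as $u\to\infty$ when $\ell_\omega(\infty)\neq0$, contrary to your boundedness claim.
\item The expansions behind that weight hold only for $u$ in compact sets, so a single $u$ cannot cover $\cal{T}(\omega^-,\omega^+)$ across scales. The paper instead uses $\croc{c,x}^b$ with $c_2$ of sign opposite to the constant sign of $\Delta_2^\omega(1,1)$ (Lemma~\ref{l:PropDeltaDrift}).
\item Repulsive convergent curves (Proposition~\ref{p:DissipRC11}) are missing from your list.
\end{itemize}
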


We defer the discussion of the genericity of the assumptions to Section~\ref{s:ScaleAppl}. 

\subsection{Implications for positive recurrence}
Theorem~\ref{t:Main} shows dissipation of an energy function in the bulk of the state space, where both species are present in large numbers ($x_1, x_2\geq K'$). Combined with a dissipation on the boundary of the state space, this guarantees the positive recurrence of the process, using Lyapunov's criterion, see Section~\ref{s:Lyapunov} of the Appendix. 
The energy functions defined on the boundaries have to match the energy function we defined in the bulk. 

\begin{definition}
	For an energy function $V$, we say that the process is \emph{$V$-compatibly dissipative on the $S_2$-boundary} if we can find 
	\begin{itemize}
		\item some $\beta\in \R_{>0}$, $\gamma\in \R_{\geq 0}$ and $K_2\geq K'$
		\item some set $S_2\subset\Z_{\geq 0}^2$ that verifies
		\[
			\{z\in \Z_{\geq 0}^2: z_2\leq K_2\}\subset S_2\subset \{z\in \N^2: z_2\leq K_2+\gamma \log(1+z_1)\}
		\]
		\item some energy function $W$,  
		\[
			W: z\in \N^2\mapsto 
			\begin{cases}
				V(z) \quad\text{if   } z\notin S_2, \\
				W_2(z) \quad \text{if   } z\in S_2
			\end{cases}\, , 
		\]
	\end{itemize}
	and define a stopping time $\tau\leq \inf\{t>0: X_1(t)\leq X_2(t)\}$ such that 
	\begin{equation}\label{e:DissipBoundary}
	\forall z\in S_2, \quad \|z\|\geq K, \quad \E_z\left(W(X(\tau))\right)-W(z)\leq -\beta \E_z(\tau), 
	\end{equation}
	$\E(\tau)\leq C$ and for all $z\in \partial^{K_0}(S_2)$ with $\|z\|\geq K$, $\cal{A}(W)(z)\leq -1$. 
\end{definition}
Equation~\eqref{e:DissipBoundary} ensures dissipation near the boundary, while the final point-wise inequality ensures dissipation across the patching region.
The notion of \emph{compatibly dissipative on the $S_1$-boundary} is defined symmetrically.

\begin{corollary}
	If conditions of Theorem~\ref{t:Main} hold for the initial CRN and the CRN where the species are switched, and the process is also compatibly dissipative on the $S_2$-boundary and the $S_1$-boundary, then it is positive recurrent in its closed irreducible classes. 
\end{corollary}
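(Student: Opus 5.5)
The corollary asks for positive recurrence, and I will get it from a Filonov-type (stopping-time) Foster--Lyapunov criterion, as recalled in Section~\ref{s:Lyapunov}. The plan is to build one global energy function $\mathcal{V}$ on $\N^2$, defined piecewise:
\begin{itemize}
\item in the bulk region $\cal{T}((1,0),(1,1);K',M)$ below the diagonal, take the function $V$ given by Theorem~\ref{t:Main};
\item in the symmetric region above the diagonal, take the function $\tilde V$ from Theorem~\ref{t:Main} applied to the CRN with species switched (Remark~\ref{r:SwitchSpecies});
\item near the two axes, take the boundary functions $W_2$ and $W_1$ from compatible dissipativity.
\end{itemize}
Once $\mathcal{V}$ is built, I will show that outside a finite set $F$ there is an integrable stopping time $\tau_x>0$ with $\E_x(\mathcal{V}(X(\tau_x)))-\mathcal{V}(x)\le -\beta\E_x(\tau_x)$ and $\sup_x\E_x(\tau_x)<\infty$. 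Together with finiteness of transition rates from $F$, this gives positive recurrence of every closed irreducible class. Non-explosion comes out of the same Lyapunov bound.

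The first step is to match the two bulk pieces. Both $V$ and $\tilde V$ are $b$-dominated, so they are comparable to $\|x\|^b$ near the diagonal. I need to be careful about which region each piece is used in, since the theorem statement is proved on $\cal{T}((1,0),(1,1);K',M)$, which includes a strip $x_2\le Mx_1$ beyond the diagonal. I therefore set $\mathcal{V}=V$ for $x_2\le x_1$ and $\mathcal{V}=\tilde V$ for $x_2>x_1$. Starting from a state below the diagonal, I use the stopping time $\tau$ of Theorem~\ref{t:Main}, truncated at the first crossing into the other half-plane.

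The main obstacle is that $V$ and $\tilde V$ may disagree near the diagonal. I would handle this with the patching principle the paper develops in Section~\ref{Gluing}: replace $\mathcal{V}$ by $\min(V,\tilde V)$ (or by a smooth interpolation) on the overlap strip $\cal{H}_{(1,1)}(1/M',M)$. This works because both functions are well defined there and both satisfy the drift inequality up to their respective exit times. Since the paper's Theorem~\ref{t:Main} is stated on a region overlapping the diagonal, I expect this to reduce to the already-proved gluing lemma.

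Next, near the axes I use the definition of $V$-compatible dissipativity. On $S_2$ the energy $W$ coincides with $V$ off $S_2$ and satisfies \eqref{e:DissipBoundary}. The stopping time there stays below the diagonal, so it never interacts with $\tilde V$.

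For the transition zone between $S_2$ and the bulk, I will use the pointwise bounds. The theorem gives $\cal{A}(V)\le-\beta$ on $\cal{T}((1,0),\omega;K',M)$, and compatibility gives $\cal{A}(W)\le -1$ on $\partial^{K_0}(S_2)$. Every jump crossing $\partial S_2$ has both endpoints in $\partial^{K_0}(S_2)$, and $S_2$ lies inside $\{z_2\le K_2+\gamma\log(1+z_1)\}$, which sits inside the sub-diagonal cone $\cal{T}((1,0),\omega)$ for large $\|x\|$. So at states in $\partial^{K_0}(S_2)$ I take $\tau$ to be the first jump time, and the pointwise bound gives the required inequality with $\E\tau$ bounded. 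The $S_1$ side is symmetric.

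Finally I combine the pieces. Every $x$ with $\|x\|\ge K$ falls into one of four cases: $S_1$, $S_2$, a patching strip, or a bulk half. In each case I have an integrable stopping time with uniformly bounded mean and negative expected drift of $\mathcal{V}$. Applying the Filonov criterion with $F=\{\|x\|<K\}$, which is finite, yields positive recurrence on each closed irreducible class.
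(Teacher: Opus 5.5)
Your overall architecture is what the paper has in mind: a global energy function assembled from the two bulk functions and the two boundary functions, followed by Filonov's criterion. The paper states the corollary as an immediate consequence of Theorem~\ref{t:Main}, its species-swapped version and compatible boundary dissipation, with no separate proof.

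\textbf{The gap.} The one step that is not immediate is matching $V$ and $\tilde V$ across the diagonal, and both of your justifications for it fail.

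First, you cannot truncate the stopping time of Theorem~\ref{t:Main} at the first crossing $\sigma$ of the diagonal. The theorem only controls $\E_x V(X(\tau))$ at its own $\tau$, not $V(X(t))+\beta t$ along the path, so nothing is known about $\E_x V(X(\tau\wedge\sigma))$. This is not a technicality. In the inner cone of a repulsive divergent line at $\omega=(1,1)$ (Proposition~\ref{p:DissipRDFilo11}), the energy on the line is $2^a\eta_0^{-a}r^{b+a/2}$. Its drift is positive since $G_\omega(u_*)>0$, and it only drops once the process leaves the $\eta_1$-cone. For the CRN~\eqref{e:ExCRN1} with $\kappa_1=\kappa_2$, that line is $u_*=1$, i.e.\ the diagonal itself. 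There $\sigma$ is essentially the first jump, and the truncated inequality is false.

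Second, the gluing lemmas (Lemma~\ref{l:GluyingJonathan} and Lemma~\ref{l:SmoothingGluing}) require pointwise dissipation of both functions on the whole patching strip; the second also needs $\min_J|F_\omega|>0$. Theorem~\ref{t:Main} gives pointwise dissipation only near the $S_2$-boundary, and in the example above it genuinely fails near the diagonal. So neither a smooth interpolation nor a reduction to these lemmas is available.

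\textbf{The fix.} The $\min$ is the right object, but it must be justified by selecting stopping times, without truncation.
\begin{itemize}
\item Take $M>1$ in both applications of the theorem.
\item Set $\mathcal V=W$ on $\{x_2<x_1/M\}$, $\mathcal V=\tilde W$ on $\{x_2>Mx_1\}$, and $\mathcal V=\min(W,\tilde W)$ in between. Then $\mathcal V\le W$ on $\{x_2\le Mx_1\}$ and $\mathcal V\le\tilde W$ on $\{x_1\le Mx_2\}$.
\item At a state with $\mathcal V(x)=W(x)$, use the stopping time attached to $W$: the theorem's, the boundary one, or a single jump near $\partial^{K_0}(S_2)$. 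It keeps the process in $\{x_2\le Mx_1\}$; this is exactly the clause that $X(t\wedge\tau)$ does not leave $\cal{T}((1,0),(1,1);K',M)$. Hence $\E_x\mathcal V(X(\tau))\le\E_x W(X(\tau))\le W(x)-\beta\E_x(\tau)=\mathcal V(x)-\beta\E_x(\tau)$.
\item Argue symmetrically with $\tilde W$ at states where $\mathcal V(x)=\tilde W(x)$.
\end{itemize}
The same inequality $\mathcal V\le W$ is also what handles the $S_2$-boundary stopping time. That stopping time can enter the part of the overlap strip below the diagonal, where $\tilde V$ enters the minimum. This contradicts your claim that it never interacts with $\tilde V$, but it is harmless once the argument is set up this way.
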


The notion of $b$-dominance is introduced because the dissipation on the boundary sometimes requires a function at least of order $b$, the process being slowed down by boundary behaviors. See Section 5 of~\cite{LR23} for example. However, this study in the bulk shows that in some cases, the power $b$ can't be taken as large as we want, because of Assumption~\ref{a:OmegaEq1}.

The real number $b\in [1, +\infty)$ is fixed throughout. 

\begin{remark}
To keep the statements of the point-wise dissipation results simple, we do not explicitly distinguish states within one jump of the boundary of the regions considered. The functions and estimates remain valid for these states, up to a slight adjustment of the boundaries when needed. Boundary states are treated explicitly when they are required, i.e., in the patching results, Lemmas~\ref{l:GluyingJonathan} and~\ref{l:SmoothingGluing}.
\end{remark}

\subsection{Comments on the scale of application of the Assumptions}\label{s:ScaleAppl}
The main structural condition made on the CRN is that there is no lower-diagonal region where the leading drift is horizontal. This condition is required to have Assumptions~\ref{a:PFinite} for $\omega<(1,1)$, as well as Assumptions~\ref{a:Delta2Neq0} and~\ref{a:DeltaNeq0_11}.

This assumption can be relaxed, but we would need to carry out another careful classification of the different possible vector fields in the region where this drift dominates. We decided to focus on the classification related to the Poincar\'e compactifications, and therefore chose to make this strong assumption for the drift regions to avoid any additional complexity. 
The counterpart of this choice is that the original CRN from~\cite{Agazzi25} doesn't verify this assumption because the drift from reaction $4S_1\to \emptyset$, dominant for $\omega<(1,1/2)$, is horizontal. However, a slightly modified version of it, presented in Relation~\eqref{e:ExCRN1}, verifies the Assumption. 

For a fixed reaction graph, Assumption~\ref{a:PFinite} at $\omega=(1,1)$ holds for almost every choice of positive reaction rate constants. More generally, whenever $F_\omega$ is not identically zero, its positive roots are generically simple. 
\begin{proposition}\label{p:POmegaFinite}
	Fix a reaction graph. There exists a set $\cal{K}_0\subset\R_{>0}^{\cal{R}}$ of Lebesgue measure $0$ such that, for every $\kappa\in\R_{>0}^{\cal{R}}\setminus\cal{K}_0$,
	\begin{itemize}
		\item  $|\cal{P}_{(1,1)}|<+\infty$.  
		\item For every $\omega\in (\cal{P}\cap((0,1), (1,1)))\cup\{(1,1)\}$ such that $|\cal{P}_\omega|<+\infty$, each positive root is simple, i.e., $F'_\omega(u_*)\neq0$ for all $u_*\in\cal{P}_\omega$. 
	\end{itemize}
\end{proposition}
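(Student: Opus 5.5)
The plan is to show that each of the two failure conditions defines a proper real-algebraic subset of $\R^{\cal{R}}$, hence a closed set of Lebesgue measure zero, and to take $\cal{K}_0$ as the finite union over the finitely many relevant directions. Fix $\omega\in(\cal{P}\cap((1,0),(1,1)))\cup\{(1,1)\}$. By Definitions~\ref{d:Comp11} and~\ref{d:CompNeq11}, $F_\omega(u)$ is a polynomial in $u$ whose coefficients are linear forms in $\kappa$. Explicitly, we can write $F_\omega(u)=\sum_{k} c_k(\kappa)u^k$, where each $c_k$ is a finite sum of terms $\kappa_\nu\xi^\nu_j$ (times fixed integer coefficients coming from $(1+u^2)$ and $-u$). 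Importantly, the reaction graph determines $\cal{R}_\omega$ independently of $\kappa$, since $\cal{C}_\omega$ only depends on $\cal{C}^-$.

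\emph{First item.} The set $\cal{P}_{(1,1)}$ is infinite if and only if $F_{(1,1)}\equiv 0$, i.e.\ if and only if all coefficients $c_k(\kappa)$ vanish. This is a linear subspace of $\R^{\cal{R}}$. It is a proper subspace, and so has measure zero, unless the $c_k$ are all the zero form. In the latter case $F_{(1,1)}\equiv0$ for every $\kappa$. I expect to rule this out using the absence-of-ghost-complexes convention: $F_{(1,1)}\equiv 0$ means $\Delta^{(1,1)}(1,u)$ is parallel to $(1,u)$ for all $u$. I would argue that the extreme complex of $\cal{C}_{(1,1)}$, say the one with largest $y_2$, then has drift $\sum\kappa\xi$ collinear with both $(1,0)$ and $(0,1)$ contributions in a way forcing it to vanish identically in $\kappa$. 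Matching the top and bottom monomials shows the drift of the extreme complexes is a multiple of $(1,u)$ only through terms of different degree, so it must be zero, which is a ghost complex. This is excluded for generic $\kappa$, and is in fact vacuous for a graph unless all reaction vectors cancel. If this degenerate structural case cannot be excluded this way, I would restrict the claim to graphs where the linear forms are not all zero, noting that the statement implicitly assumes this. This is the delicate point.

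\emph{Second item.} For $\omega$ with $F_\omega\not\equiv0$, a multiple positive root means $F_\omega$ and $F'_\omega$ share a root. So I consider the discriminant-type condition $\mathrm{Res}_u(\tilde F_\omega,\tilde F_\omega')=0$, where $\tilde F_\omega$ is $F_\omega$ with the factors $u^{k}$ and $(1+u^2)$ removed. Removing these factors does not change positive roots or their multiplicity, because $1+u^2>0$ and $u>0$. This resultant is a polynomial $R_\omega(\kappa)$. The main obstacle is to show $R_\omega\not\equiv0$, for which it suffices to exhibit one $\kappa$ with simple roots. My approach is to perturb a single rate. Choose a source complex $y^-\in\cal{C}_\omega$ whose drift has nonzero contribution, and note that varying its rate constants adds $t\,\phi(u)$ to $F_\omega$, with $\phi(u)$ a monomial times a nonzero constant (or times $(1+u^2)$ and a linear factor). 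By a Sard/transversality argument applied to the map $(u,t)\mapsto F_\omega(u)+t\phi(u)$ on $u>0$, for all but finitely many $t$ the roots of $F_\omega+t\phi$ with $\phi(u)\neq0$ are simple. A separate check then handles roots where $\phi$ vanishes, which only occur at the fixed root $u_{\pm}$ of the linear factor and can be moved by perturbing a second complex.

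Hence the failure set for each $\omega$ lies in the zero set of a nonzero polynomial, or in a proper subspace, and so has measure zero. Taking the finite union over $\omega$ gives $\cal{K}_0$.
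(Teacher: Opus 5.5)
Your overall plan (the coefficients of $F_\omega$ are linear in $\kappa$, each failure set is null, and you take a finite union over $\omega$) is what the paper means by ``standard arguments''; it gives no further proof. However, two of your steps do not go through as written.

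\emph{First item.} You never establish that the linear forms $c_k$ are not all zero, and the route you sketch does not get there. Write $\delta_y(\kappa)=\sum_{y\to y'\in\cal{R}}\kappa_{y\to y'}(y'-y)$ for the net drift of $y\in\cal{C}_{(1,1)}$. Then $-u\Delta^{(1,1)}_1(1,u)+\Delta^{(1,1)}_2(1,u)=\sum_{y}u^{y_2}(\delta_{y,2}-u\,\delta_{y,1})$. Matching the extreme monomials only yields $\delta_{y^{\mathrm{top}},1}\equiv0$ and $\delta_{y^{\mathrm{bot}},2}\equiv0$, i.e.\ one component for each extreme complex. Once $|\cal{C}_{(1,1)}|\geq2$ this forces no ghost complex by itself. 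Absence of ghost complexes is in any case a condition on $\kappa$ and plays no role here. The missing observation is one line. Properness of the subspace $\{\kappa: c_k(\kappa)=0\ \forall k\}$ of $\R^{\cal{R}}$ can be tested at any real point, for instance a coordinate vector $\kappa=e_\nu$ with $\nu\in\cal{R}_{(1,1)}$. There $F_{(1,1)}(u)=(1+u^2)u^{y^-_{\nu,2}}(\xi^\nu_2-u\xi^\nu_1)\not\equiv0$, since $\xi^\nu\neq0$. No restriction of the statement is needed.

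\emph{Second item.} For $R_\omega=\mathrm{Res}_u(\tilde F_\omega,\tilde F_\omega')$ to be a polynomial in $\kappa$, the removed power $u^k$ and the formal degree must be fixed independently of $\kappa$. Then $R_\omega(\kappa)\neq0$ requires all \emph{complex} roots of $\tilde F_\omega$ to be simple, with no drop in degree. Your transversality argument lives on $u>0$, so it cannot show $R_\omega\not\equiv0$. With the natural choice $k=\min_{y\in\cal{C}_\omega}y_2$ this claim is in fact false. Take $\omega=(1,1/2)$ and the graph $4S_1\to5S_1$, $3S_1+2S_2\to2S_1+2S_2$, $2S_1+4S_2\to2S_1+3S_2$, with rates labelled in the order listed. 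Then $F_\omega(u)=-\kappa_3u^4$, so $R_\omega\equiv0$ although there is no positive root at all.

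The repair is to drop the resultant and apply your perturbation fiberwise.
\begin{itemize}
\item Pick $\nu\in\cal{R}_\omega$ with $\phi_\nu\not\equiv0$. If there is none, $F_\omega\equiv0$ for every $\kappa$ and $\omega$ is excluded. Write $F_\omega=F^0+\kappa_\nu\phi_\nu$, where $F^0$ depends only on the other rates.
\item On $\{\phi_\nu\neq0\}$ one has $F_\omega=\phi_\nu(\kappa_\nu-T)$ with $T=-F^0/\phi_\nu$. A double root there forces $\kappa_\nu=T(u)$ with $T'(u)=0$. This gives finitely many values unless $T$ is constant. In that case $F_\omega$ is either a multiple of $\phi_\nu$, whose positive roots are simple, or vanishes identically, which is excluded by $|\cal{P}_\omega|<\infty$.
\item At the positive zero $u_0$ of the linear factor of $\phi_\nu$ (only for $\omega=(1,1)$), one has $\phi_\nu'(u_0)\neq0$. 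So there is at most one further bad value of $\kappa_\nu$, and no second complex is needed.
\end{itemize}
Hence every fiber of the bad set is finite. The bad set is semialgebraic, hence measurable, so Fubini gives measure zero.
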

The proposition follows from standard arguments for polynomial families depending on the reaction rate constants. 

The main assumption that can be relaxed is the inequality $G_\omega(u_*)\neq 0$ (in Assumption~\ref{a:OmegaEq1} and~\ref{a:OmegaNeq1}). 
Making this assumption means that the drift towards smaller states or infinity is determined by the dominant reactions. In particular, CRN using lower order reactions to dissipate do not verify this inequality. Because of this, the theorems can't be applied to some simple weakly reversible CRNs (CRN for which each reaction belongs to a directed cycle, see~\cite{Anderson1} for example) such as the following: 
\[
	S_1+S_2\rightleftharpoons 2S_1, \quad S_2\rightleftharpoons \emptyset. 
\]
For this particular network, with the constants of reactions equal to $1$, for $\omega=(1,1)$, $F_\omega(u)=-(u-1)(u+1)(1+u^2)$ and $G_\omega(u)=-(u-1)^2$. The root $u_*=1$ verifies $F'_\omega(u_*)<0$ but $G_\omega(u_*)=0$. To see the dissipation, we have to take into account the next order in the rate of reaction.

One assumption that, to our knowledge, \emph{can't} be relaxed is: if $F_\omega'(u_i)<0$, stability requires $G_\omega(u_i)\leq 0$. In other words, if an invariant curve for the deterministic process is \emph{attractive}, it has to bring the system back towards a compact set. Otherwise, intuitively, if  $F_\omega'(u_i)<0$ and $G_\omega(u_i)> 0$, the system is attracted by the curve and will follow it to infinity (note that in this article, no result properly confirming this intuition is shown). 

We restrict the study to the states $x_2\leq M x_1$ because the problem is symmetrical with respect to a switch of the two coordinates. This is absolutely not restrictive in terms of application, see Remark~\ref{r:SwitchSpecies}. 

\subsection{Examples}
Example~\eqref{e:ExCRN1} verifies Assumption~\ref{a:OmegaNeq1}, and Assumption~\ref{a:OmegaEq1} for $b$ small enough. When $\kappa_1=\kappa_2=1$, we have $u_*=1$, $F'_\omega(u_*)=4$ and $G_\omega(u_*)=2$, and the theorem gives us a function that will verify the $b$-dominance criterion at the boundary of the state space, with $1\leq b<2$. The condition required for the sub-diagonal drift region is also verified. 

Example~\eqref{e:ExCRN2} verifies Assumption~\ref{a:OmegaNeq1}.
The diagonal is a drift region, where the only reaction that dominates is $5S_2\to 4S_2$, so Assumption~\ref{a:DeltaNeq0_11} is verified. However, when switching the two species, as mentioned in Remark~\ref{r:SwitchSpecies}, on the above-diagonal region, the dominant reaction is also $5S_2\to 4S_2$ which leads to a vertical drift, in contradiction with Assumption~\ref{a:DeltaNeq0_11} after switching the species. 

\subsection{Strategy of the proof}

The essential part of the proof relies on what happens in the regions where a non-trivial Poincar\'e compactification takes place. 
The dissipation in the rest of the state space is captured by a point-wise estimate, meaning that choosing the right energy function (which in the sub-diagonal drift regions is a power of a linear function), we can show rather easily that it is dissipated on average after one jump of the process, see Lyapunov's criterion in Section~\ref{s:Lyapunov}. 

In this part, we describe what happens in these regions, anticipating the results shown in Section~\ref{s:R11}, \ref{s:RNeq11} and~\ref{s:A}. 

We fix an $\omega\in \Omega_\leq$, $\omega=(1, \gamma)$ such that $p:= |\cal{P}_\omega|\geq 1$ and introduce its Poincar\'e compactification as in definition~\ref{d:CompSingPoint}: the functions $F_\omega$, $G_\omega$, and the ordered positive roots $u_1<\ldots<u_p$ of $F_\omega$, with $u_0=0$ and $u_{p+1}=+\infty$.  
We study the system in a region $\cal{H}_\omega(u_1/2, 2u_p)$, that contains every singular axis. 
The key to our approach is that the behavior of the system is almost \emph{entirely contained in the approximation of the generator} to the right order of magnitude in $r$.
Here $r=\|x\|$ when $\gamma=1$ and $r=x_1$ when $\gamma<1$. 
 
More precisely, we will see that following the steps from the compactification presented in Section~\ref{s:Poincare}, changing coordinates and making a time change, the generator takes the form 
\[
	\widehat{\cal{A}}_\omega(f)(u, r)\approx F_\omega(u)\frac{\partial f}{\partial u}+ r^\gamma G_\omega(u)\frac{\partial f}{\partial r} + \frac{\sigma_\omega(u)^2}{2r^\gamma}\frac{\partial^2 f}{\partial u^2} 
\]
where we recognize the functions $F_\omega(u)$ and $G_\omega(u)$ given by the Poincar\'e compactification (and $\sigma_\omega^2(u)$ is nonnegative and bounded on each compact interval of $u$).

To better see the different parts of the compactification, we further divide the cone $\cal{H}_\omega(u_1/2, 2u_p)$ into sub-cones, each containing one of the singular curves. Each set is centered at a $u_*=u_i$: for some $u_-, u_+$ such that $u_{i-1}<u_-<u_*<u_+<u_{i+1}$, we study the system in $\cal{H}_\omega(u_-, u_+)$. 

We can now push the approximation further and expand around $u_*$ where $F_\omega(u_*)=0$:  
\begin{equation}\label{e:SketchApproxA}
	\widehat{\cal{A}}_\omega(f)(u, r)\approx (u-u_*)F'_\omega(u_*)\frac{\partial f}{\partial u}+ r^\gamma G_\omega(u_*)\frac{\partial f}{\partial r} + \frac{\sigma_\omega(u_*)^2}{2r^\gamma} \frac{\partial^2 f}{\partial u^2}.  
\end{equation}
Using this approximation, we can understand the behavior of the system and choose the strategy we will use to show the dissipation. 
The sign of $F'_\omega(u_*)$ tells us if the curve $u_*$ is attractive (and the process will stay in the region) or repulsive (and the process will eventually exit the region). 
The sign of $G_\omega(u_*)$ tells us if the process goes towards larger or smaller states. If the curve $u_*$ is attractive, by Assumptions~\ref{a:OmegaEq1} and~\ref{a:OmegaNeq1}, the process has to go towards smaller states. If it is repulsive however, the process can grow until it exits the region. 
The second order term in derivative of $u$ stands for the noise. The scaling in $r^{-\gamma}$ means that when $|u-u_*|$ is large enough, the first order in derivative dominates in the infinitesimal generator, and the noise can be neglected. But when $|u-u_*|$ is small, more precisely when it is of the order $r^{-\gamma/2}$, the noise and the linearized drift both contribute at the leading order.

Three different cases are dealt with separately. We give here an overview of the proof. 
\begin{enumerate}
	\item When $F'_\omega(u_*)<0$ (and $G_\omega(u_*)<0$ by assumption), the region is ``stable'': the process is attracted by the curve at $u_*$, and then follows it to come back to smaller states. In these regions, showing some dissipation will not be difficult, since the process naturally decreases its norm. We show a stronger result than a point-wise dissipation, to facilitate the patching of the different regions, and show that after ``a while'', $r(t)\leq \eps r(0)$. See Proposition~\ref{p:DissipAC}. One main difficulty is that when $\gamma<1$, since $\dot{r}(t)\approx  G_\omega(u_*) r(t)^\gamma$, the norm doesn't decrease on the timescale $t\mapsto t$. We have to accelerate the time to see $r(t)$ reach $\eps r(0)$. More precisely, we look at the process on the timescale $t\mapsto t r(0)^{1-\gamma}$ where the decrease of $r(t)$ happens. The main difficulty is to ensure that $(u(t))$ stays sufficiently close to $u_*$ on this timescale. The proof uses the fast restoring drift that brings $u(t)$ towards $u_*$ to compensate its martingale fluctuations. 
	\item When $F'_\omega(u_*)>0$, the region is ``unstable'': the process is pushed away from the curve at $u_*$. In this case, even if the curve is divergent (i.e., if $G_\omega(u_*)>0$) we can show dissipation. When $|u-u_*|$ is large enough, the noise can be neglected and the process approximately behaves deterministically, $(u(t)-u_*)\approx (u(0)-u_*)e^{F_\omega'(u_*) t}$. In this case, taking an energy function that decreases when $|u-u_*|$ increases allows a point-wise dissipation. 
	However, when the noise can't be neglected, we have to study further the behavior of the deviation to $u_*$ to show some dissipation. We will show that after rescaling, this deviation behaves like a divergent Ornstein-Uhlenbeck process, which confirms the fact that the process escapes a neighborhood of the axis, even when starting on it. 
	\item Finally, the approximation given in Relation~\eqref{e:SketchApproxA} omits terms that must be retained when $\gamma<1$. Some other terms are present between the first order terms and the noise term, which adds a difficulty to this case, see Lemma~\ref{l:Defh}.
	Moreover, as mentioned in the case $F'_\omega(u_*)<0$, when $\gamma<1$, the norm moves at a slower timescale than $u(t)$. This time it makes things easier, because the norm of the process hasn't had time to grow before it exits the region. However, when $\gamma=1$, we have to make sure that the norm hasn't increased too much at the exit time. 
	For these two reasons, we separate the study of the cases $\gamma=1$ and $\gamma<1$ for the repulsive axis, see Sections~\ref{s:R11} and \ref{s:RNeq11}.
\end{enumerate}

\section{Repulsive curve, $\omega=(1,1)$}\label{s:R11}
The goal of this section is to show the following proposition: 
\begin{proposition}\label{p:RepLine11}
	Let $b>0$. Assume $\omega=(1,1)$, and that $u_*=u_i\in \cal{P}_\omega$ is such that 
	\begin{equation}\label{e:RepLine11}
		F'(u_*)>0\quad \text{and}\quad F'(u_*)>bG(u_*).
	\end{equation}
	Then for any choice of $u_-, u_+$ such that $u_{i-1}<u_-<u^*<u_+<u_{i+1}$, we can find an energy function $V_{u^*}^\omega$ that verifies the $b$-dominance at the boundary of the set (see Definition~\ref{d:b-dominance}), an integrable stopping time $\tau$ and some constants $\beta, R>0$ such that, \\
	for all $x\in \cal{H}_\omega(u_-, u_+)$ with $\|x\|\geq R$, 
	\[
		\E_x\left( V_{u^*}^\omega(X(\tau))-V_{u^*}^\omega(x)\right)\leq -\beta\E_x(\tau), 
	\]
	and $X(t\wedge\tau)\in \cal{H}_\omega(u_-, u_+)$ for all $t\geq 0$. 
\end{proposition}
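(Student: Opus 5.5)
We work in the coordinates $u=x_2/x_1$, $r=\|x\|$ and use the reduced generator of Proposition~\ref{p:ReducedGenerator11}. After the time change by $r^{\rho_0-1}$ it reads
\[
\widehat{\cal{A}}(f)\approx F(u)\partial_u f+rG(u)\partial_r f+\tfrac{\sigma(u)^2}{2r}\partial_u^2 f .
\]
Near $u_*$ we linearise, writing $a:=F'(u_*)>0$ and $g:=G(u_*)$.

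The energy function will have the form $V(x)=r^b\,\phi(u)$. Here $\phi$ is positive, decreasing in $|u-u_*|$, and constant near $u_-$ and $u_+$, which gives the $b$-dominance at both boundaries of $\cal{H}_\omega(u_-,u_+)$. On the outer zone $|u-u_*|\geq \delta$ the estimate will be pointwise. Take $\phi(u)=|u-u_*|^{-\eta}$ near $u_*$, with $\eta$ chosen so that $bg<\eta a$; this is possible because $a>bg$ by \eqref{e:RepLine11}, even when $g\leq 0$. Then
\[
\widehat{\cal{A}}V\approx r^b\phi\bigl(-\eta F(u)/(u-u_*)+bG(u)\bigr),
\]
which is bounded by $-c\,r^b$ for $|u-u_*|$ small. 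The noise term $\sigma^2\eta(\eta+1)r^{b-1}|u-u_*|^{-\eta-2}$ stays negligible as long as $|u-u_*|\gg r^{-1/2}$. On the remaining part $\delta\leq|u-u_*|\leq$ boundary, $F$ has the sign of $u-u_*$ with no other zero, so $\phi$ can be glued to a decreasing function whose derivative beats $bG$. Undoing the time change turns $-c\,r^b$ into $-c\,r^{b+\rho_0-1}\leq-\beta$.

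The inner zone $|u-u_*|\leq Lr^{-1/2}$ is where pointwise dissipation fails. There we set $\tau$ to be the exit time of the set $\{|u-u_*|\leq 2Lr^{-1/2}\}\cap\cal{H}_\omega(u_-,u_+)$, possibly capped. Inside the inner zone we redefine $V=r^b\psi\bigl(\sqrt{r}\,(u-u_*)\bigr)$ with $\psi$ bounded, matched to the outer function at $|z|=L$. This matching rescales $\phi$ by $r^{\eta/2}$, so in fact $V\approx r^{b+\eta/2}\tilde\phi$ overall. We therefore refine the design: use $V=r^b\Phi(z)$ in the inner zone, with $z=\sqrt{r}(u-u_*)$, and $\Phi(z)\sim |z|^{-\eta}$ for large $|z|$ in the outer zone. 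The outer part then equals $r^{b-\eta/2}|u-u_*|^{-\eta}$, which is still $b$-dominated only after a smooth cap at fixed $\delta$, where it is of order $r^{b-\eta/2}$.

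To handle this mismatch, we let the exponent of $r$ vary with $u$ instead. Using the rescaled variable $z=\sqrt{r}(u-u_*)$ together with $r$, the generator becomes $r^{b}\bigl[\tfrac12\sigma_*^2\Phi''+a z\Phi'\bigr]+bg\,r^b\Phi+$ errors, a divergent Ornstein--Uhlenbeck generator in $z$. We take $\Phi$ as a Lyapunov function for $L_{OU}+bg$ with $L_{OU}\Phi+bg\Phi\leq -c\Phi$. Such a $\Phi$ exists because the OU operator $\tfrac12\sigma_*^2\partial_z^2+az\partial_z$ admits positive functions with rate up to $-a\eta$ behaving like $|z|^{-\eta}$; concretely, the principal-type eigenfunction $\Phi(z)=\int_0^\infty e^{-a\eta s}\cdots$, i.e.\ an explicit confluent hypergeometric function, which is positive and decays like $|z|^{-\eta}$. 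The dissipation then reads $\widehat{\cal{A}}V\leq -c\,r^b\Phi$ uniformly in $z$, once $bg<a\eta$. The remaining requirement is to make $\Phi$ decay slowly so that $V$ is comparable to $r^b$ at the edges. We achieve this by taking $\eta$ small: pick $\eta$ with $bg/a<\eta$, then cap at $|u-u_*|=\delta$ by a bounded gluing. Where the outer order $r^{b-\eta/2}$ falls short of $r^b$ at the edges, we switch to the stopping-time formulation.

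The resulting plan is as follows. We take $\tau$ to be the exit time of $\cal{H}_\omega(u_*-\delta,u_*+\delta)$, and use the divergent OU scaling limit to show $\E_x\tau\leq C\log r$ (in the original time, $\E_x\tau\leq C$). We then estimate $\E_x r(\tau)^b\lesssim r^b\,\E e^{bg\tau}$, with the convention $u(\tau)=u_*\pm\delta$. The exit time is $\tau\approx a^{-1}\log(\delta\sqrt r/|Z|)$, where $Z$ is Gaussian-like, so $r(\tau)^b\approx r^b(\delta\sqrt r/|Z|)^{bg/a}$. Since $bg/a<1$, the quantity $\E|Z|^{-bg/a}$ is finite, and a $V$ in the inner zone of order $r^{b(1+g/(2a))}$ dominates. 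We then set $V=r^{b}\cdot(\text{weight})$ with $V$ equal to $r^{b+bg/(2a)\cdot\theta}$-type corrections that vanish at $|u-u_*|=\delta$. The main obstacle is this step: proving the moment bound on $r(\tau)$ uniformly in the starting point near $u_*$, which requires $a>bg$ exactly for the integrability of $|Z|^{-bg/a}$ via the OU limit. For it we would use a coupling or martingale comparison of $\sqrt r(u-u_*)$ with the OU process, based on Proposition~\ref{p:ReducedGenerator11}.
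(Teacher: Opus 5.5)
Your outer-zone argument is sound and coincides with the paper's: the weight $r^b|u-u_*|^{-\eta}$ with $\eta F'>bG$ (all evaluated at $u_*$), a steeper exponent away from $u_*$, and noise that is negligible once $|u-u_*|\gg r^{-1/2}$.

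The gap is the inner zone $\sqrt r\,|u-u_*|\lesssim 1$. Your final plan stops at the exit of the $\delta$-cone. It therefore needs $\E_x r(\tau)^b\lesssim r^{b+bG/(2F')}$ uniformly over such starting points. You list this as ``the main obstacle'' and leave it to an unspecified coupling, but it is the heart of the inner-zone argument, and the tool you invoke cannot deliver it. In the changed clock the $\delta$-cone is left at time $\approx F'^{-1}\log(\delta\sqrt r)\to\infty$. The OU limit (Lemma~\ref{l:OU11}), by contrast, is a weak limit on a fixed horizon $[0,T]$, stopped when $|\sqrt r(u-u_*)|$ reaches a fixed $\eta_1$. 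It controls neither $r(\tau)$ nor the small-$|Z|$ tail that your $\E|Z|^{-bG/F'}$ heuristic relies on. In addition, your energy function is never pinned down (``$r^{b+bg/(2a)\cdot\theta}$-type corrections''), so the dissipation inequality cannot even be checked.

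The paper avoids long times altogether. Its inner energy is the constant $2^a\eta_0^{-a}r^{b+a/2}$ on $\sqrt r|u-u_*|\le\eta_0/2$, and $r^b|u-u_*|^{-a}=r^{b+a/2}|z|^{-a}$ beyond. It is therefore $\approx R^{b+a/2}\eta_1^{-a}$ when $z=\sqrt R(U-u_*)$ leaves $[-\eta_1,\eta_1]$. The stopping time is that exit capped at a fixed $T$, after which the pointwise outer estimate takes over. Only the finite-horizon OU limit and the Gaussian bounds of Lemma~\ref{l:ConvOU} are then needed, with $\alpha=F'+G/2>\alpha'=(b+a/2)G$ and $a>1$ close to $1$, so that $C_L(\eta_1+1)\eta_1^{-a}$ is small.

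Alternatively, the eigenfunction idea you discarded closes the proof pointwise for $G(u_*)>0$ (the case $G(u_*)<0$ is trivial, as in the paper), once two slips are fixed:
\begin{itemize}
\item $z$ has drift $\alpha z$ with $\alpha=F'+G/2$, not $F'z$, because $\sqrt r$ grows at rate $G/2$.
\item The prefactor must be $r^{b+\eta/2}$, not $r^b$.
\end{itemize}
Take $\Phi(z)=M(\eta/2,1/2,-\alpha z^2/\sigma_*^2)=e^{-\alpha z^2/\sigma_*^2}M((1-\eta)/2,1/2,\alpha z^2/\sigma_*^2)>0$, where $M$ is Kummer's function and $\sigma_*^2=(1+u_*^2)^{3/2}D_\omega(u_*)$. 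It solves $\frac{\sigma_*^2}{2}\Phi''+\alpha z\Phi'=-\alpha\eta\Phi$ and behaves like $C|z|^{-\eta}$ for $0<\eta<1$.

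Then $V=r^{b+\eta/2}\Phi(z)=(C+o(1))\,r^b|u-u_*|^{-\eta}$ for $|z|\gg1$. So $V$ is $b$-dominated at $|u-u_*|=\delta$ and glues to the steeper outer weight. Proposition~\ref{p:ReducedGenerator11} gives $\widehat{\cal{A}}_\omega V=r^{b+\eta/2}\Phi(z)\bigl(bG-\eta F'+O(\delta)+o(1)\bigr)$ uniformly on $|u-u_*|\le\delta$. This is negative for $\eta\in(bG/F',1)$, an interval that is nonempty exactly when $F'>bG$. This route needs no scaling limit and no stopping time.
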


The proof is carried out in several steps. In a first part, we give the Taylor expansion of the generator valid in the cone, and apply it to the function $f_a^{u_*}$ used throughout the proof: for $a>0$, we define for $(u, r)\in (0, +\infty)^2$ with $u\neq u_*$, 
\begin{equation}\label{d:fauStar}
	f_{a}^{u_*}(u, r):= \frac{r^b}{|u-u^*|^a}. 
\end{equation}
Instead of working with $v\to 0$ as in the compactification, we will use the variable $r(x)=\|x\|_2\to +\infty$, but the two approaches are equivalent. 
Two separate proofs are given according to the sign of $G(u_*)$, which gives the leading growth rate of the norm $r(x)$. When $G(u_*)<0$, the decrease of $r$ gives point-wise dissipation near the line; when $G(u_*)>0$, the inner cone requires a stopped estimate based on the Ornstein-Uhlenbeck limit.

\subsection{Change of variables and infinitesimal generator}
\begin{definition}[Change of coordinates and generator]
	We introduce the following change of coordinates:
	\begin{equation}\label{e:ChangeVar11}
	(u, r): (x_1, x_2)\mapsto \left(\frac{x_2}{x_1}, \sqrt{x_1^2+x_2^2}\right). 
\end{equation}
	Reversely, we define the function $\underline{x}(u,r)$ as 
	\begin{equation}\label{e:RevChangeVarNot11}
		\underline{x}: (u, r)\mapsto \left(\frac{r}{\sqrt{1+u^2}}, \frac{ru}{\sqrt{1+u^2}}\right). 
	\end{equation}
	We define the generator $\widehat{\mathcal A}_\omega$ of the process $(u(X(t)), r(X(t)))$ after a time-change. For any function $f\in \cal{C}^3$, and $(u, r)\in (0, +\infty)^2$, we set 
	\begin{multline}\label{e:GenUR11}
			 \widehat{\cal{A}}_\omega f(u,r):=\frac{(1+u^2)^{(\rho_0+1)/2}}{r^{\rho_0-1}} \\
			 \sum_{\nu\in\mathcal R}\lambda_\nu(\underline{x}(u, r))\left[f\left(u(\underline{x}(u, r)+\xi_\nu),r(\underline{x}(u, r)+\xi_\nu)\right)-f(u,r)\right]. 
	\end{multline}
\end{definition}
Because $\rho_0\geq 1$ (see Relation~\eqref{s:RhoDef}), this time change \emph{slows down} the system. Therefore, a dissipation at this timescale can easily be translated to original timescale of the process. See Section~\ref{s:TimeChange} for the rigorous argument.

The next proposition gives an approximation of this ``local'' generator in the limit ${r\to +\infty}$, with $u$ in a compact set. The first order involves the functions $F_\omega(u)$ and $G_\omega(u)$ given by the Poincaré compactification, the second order involves a function $D_\omega(u)$ defined as 
\begin{equation*}
	D_\omega(u)=\sum_{r=y^-\to y^+\in \cal{R}_\omega}\kappa_ru^{y_2^-}((y_2^+-y_2^-)-u(y_1^+-y_1^-))^2,
\end{equation*}

This proposition is the central result on which the proof of dissipation holds. We will see that this approximation of the generator contains all the information we need on the behavior of the process in this region of the space. Applied to the right function, it will be enough to show some point-wise dissipation when far enough from the repulsive line. Near the line (in an inner cone of width $1/\sqrt{r}$), it will be used to show that the deviation to the axis converges in distribution to a divergent Ornstein Uhlenbeck. 
\begin{proposition}\label{p:ReducedGenerator11}
Let $u_-<u_+\in (0, +\infty)$. 
For every $f\in C^3((0,+\infty)\times(1, +\infty))$, for $u\in [u_-, u_+]$ and $r\geq 1$, 
\begin{multline}\label{e:IGReduced}
    \widehat{\cal{A}}_\omega f(u,r)
		=F_\omega(u)\frac{\partial f}{\partial u}(u,r)
		+rG_\omega(u)\frac{\partial f}{\partial r}(u,r)\\
		+\frac{(1+u^2)^{3/2}}{2r}D_\omega(u)\frac{\partial^2 f}{\partial u^2}(u,r)+\mathcal E_f(u,r),
\end{multline}
where $\mathcal E_f$ contains correction terms, of lower order when $r\to +\infty$.

It can be bounded as follows: there exists $C_K>0$ such that 
\begin{multline*}
    |\cal{E}_f(u, r)|\leq C_K\big(
		r^{-1}\left|\frac{\partial f}{\partial u}\right|
    +\left|\frac{\partial f}{\partial r}\right|
		+r^{-2}\left|\frac{\partial^2 f}{\partial u^2}\right|
    \\
		+\sum_{\substack{(i, j)\in\{0, \ldots 3\}^2\setminus\{(2,0)\}:\\ 2\leq i+j\leq 3}} r^{1-i}\left|\frac{\partial^{i+j} f}{\partial u^{i}\partial r^{j}}\right|
		\big).
\end{multline*}
\end{proposition}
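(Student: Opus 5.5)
The plan is a third-order Taylor expansion of each jump increment in the coordinates $(u,r)$, followed by a separation of the rates into their leading homogeneous part and lower-order corrections. Fix $x=\underline{x}(u,r)$ with $u\in[u_-,u_+]$, so that $x_1,x_2\sim r$ uniformly. For each reaction $\nu$ set $\delta u_\nu:=u(x+\xi^\nu)-u$ and $\delta r_\nu:=r(x+\xi^\nu)-r$. Direct computation gives
\[
\delta u_\nu=\frac{\xi^\nu_2-u\xi^\nu_1}{x_1}+O(r^{-2}),\qquad \delta r_\nu=\frac{\croc{x,\xi^\nu}}{r}+O(r^{-1}),
\]
with $x_1=r/\sqrt{1+u^2}$. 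Hence $\delta u_\nu=O(r^{-1})$ and $\delta r_\nu=O(1)$, uniformly in $u\in[u_-,u_+]$. Since $f\in C^3$, Taylor's formula with remainder gives
\[
f(u+\delta u_\nu,r+\delta r_\nu)-f(u,r)=\sum_{1\le i+j\le 2}\frac{\partial_u^i\partial_r^j f}{i!\,j!}\,\delta u_\nu^i\,\delta r_\nu^j+R_3,
\]
where $R_3$ is bounded by $\sum_{i+j=3}|\partial_u^i\partial_r^jf|\,|\delta u_\nu|^i|\delta r_\nu|^j$, with the third derivatives evaluated at an intermediate point. Since the increments are $O(r^{-1})$ and $O(1)$, the intermediate point stays within $O(1)$ of $(u,r)$, so these derivatives can be replaced by their values at $(u,r)$ up to adjusting $C_K$.

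Next I would handle the rates. The difference $\lambda_\nu(x)-\kappa_\nu x^{y^-_\nu}$ is a polynomial of degree at most $|y^-_\nu|-1\le\rho_0-1$. Non-dominant sources have $\croc{y^-,(1,1)}\le\rho_0-1$. Therefore, after multiplying by the prefactor $(1+u^2)^{(\rho_0+1)/2}r^{1-\rho_0}$, each rate becomes $\kappa_\nu(1+u^2)\,u^{y_2^-}\,r\,\ind{\nu\in\cal R_\omega}+O(1)$, using $x^{y^-}=r^{\rho_0}u^{y_2^-}(1+u^2)^{-\rho_0/2}$ for dominant sources. In words, the time-changed rates are $O(r)$ and the leading part comes only from $\cal R_\omega$.

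I would then collect terms by order.
\begin{itemize}
\item \textbf{First $u$-derivative.} The leading term is $\sum_{\cal R_\omega}\kappa_\nu(1+u^2)u^{y_2^-}r\cdot\sqrt{1+u^2}\,(\xi_2-u\xi_1)/r$. This equals $(1+u^2)^{3/2}(\Delta_2^\omega-u\Delta_1^\omega)(1,u)$. Here I must double-check the power of $(1+u^2)$ against Definition~\ref{d:Comp11}, since the prefactor was chosen to produce exactly $F_\omega(u)$. If a mismatch appears, I would recheck $x^{y^-}$ and the normalization; the definition is presumably consistent. The corrections coming from $O(1)$ rates times $O(r^{-1})$ increments, and from $O(r)$ rates times $O(r^{-2})$ increments, are both $O(r^{-1})|\partial_uf|$.
\item \textbf{First $r$-derivative.} The leading term is $\sum r\kappa_\nu(1+u^2)u^{y_2^-}\croc{x,\xi^\nu}/r$, with $\croc{x,\xi}=x_1(\xi_1+u\xi_2)$. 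This gives $rG_\omega(u)$ up to the same normalization check. The error is $O(1)|\partial_rf|$, since the $O(r^{-1})$ part of $\delta r_\nu$ times rates of order $r$ is $O(1)$.
\item \textbf{Second $u$-derivative.} The leading term is $\tfrac12\sum r\kappa_\nu(1+u^2)u^{y_2^-}(1+u^2)(\xi_2-u\xi_1)^2/r^2$, which reproduces $\frac{(1+u^2)^{3/2}}{2r}D_\omega(u)$ up to the normalization power. Its error is $O(r^{-2})|\partial_u^2f|$.
\item \textbf{Remaining derivatives.} For each remaining pair $(i,j)$ with $2\le i+j\le 3$, the contribution is bounded by an $O(r)$ rate times $|\delta u|^i|\delta r|^j=O(r^{-i})$, which gives $r^{1-i}|\partial_u^i\partial_r^jf|$. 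This is exactly the sum appearing in the bound on $\cal E_f$.
\end{itemize}

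The main obstacle is purely bookkeeping: getting the powers of $(1+u^2)$ to match $F_\omega$, $G_\omega$ and the diffusion coefficient exactly, and checking that the next-order expansions of $\delta u_\nu$ and $\delta r_\nu$ contribute only at the claimed orders. All constants are uniform on the compact interval $[u_-,u_+]$ because the remainder terms are continuous functions of $u$ there. This uniformity gives the constant $C_K$.
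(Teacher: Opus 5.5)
Your route is the paper's route: expand the increments $\delta u_\nu=(\xi^\nu_2-u\xi^\nu_1)\sqrt{1+u^2}/r+O(r^{-2})$ and $\delta r_\nu=(\xi^\nu_1+u\xi^\nu_2)/\sqrt{1+u^2}+O(r^{-1})$, Taylor-expand $f$, and split the time-changed rates into an $O(r)$ dominant part and an $O(1)$ remainder. Your term-by-term error bookkeeping reproduces the bound on $\cal{E}_f$.

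\textbf{The rate normalization is wrong.} This is exactly the point where you left the identification of the leading coefficients unresolved. From your own formula $x^{y^-}=r^{\rho_0}u^{y_2^-}(1+u^2)^{-\rho_0/2}$, the prefactor gives
\[
\widehat{\lambda}_\nu(u,r)=\frac{(1+u^2)^{(\rho_0+1)/2}}{r^{\rho_0-1}}\lambda_\nu(\underline{x}(u,r))=\kappa_\nu\, r\sqrt{1+u^2}\,u^{y_{\nu,2}^-}\bigl(1+O(r^{-1})\bigr),\qquad \nu\in\cal{R}_\omega .
\]
The exponent of $(1+u^2)$ is $1/2$, not $1$. With the correct power the three leading terms are
\begin{align*}
\sum_{\nu\in\cal{R}_\omega}\kappa_\nu r\sqrt{1+u^2}\,u^{y_{\nu,2}^-}\,\frac{(\xi^\nu_2-u\xi^\nu_1)\sqrt{1+u^2}}{r}&=(1+u^2)\bigl(\Delta_2^\omega-u\Delta_1^\omega\bigr)(1,u)=F_\omega(u),\\
\sum_{\nu\in\cal{R}_\omega}\kappa_\nu r\sqrt{1+u^2}\,u^{y_{\nu,2}^-}\,\frac{\xi^\nu_1+u\xi^\nu_2}{\sqrt{1+u^2}}&=rG_\omega(u),\\
\frac12\sum_{\nu\in\cal{R}_\omega}\kappa_\nu r\sqrt{1+u^2}\,u^{y_{\nu,2}^-}\,\frac{(1+u^2)(\xi^\nu_2-u\xi^\nu_1)^2}{r^2}&=\frac{(1+u^2)^{3/2}}{2r}D_\omega(u).
\end{align*}
So there is no mismatch with Definition~\ref{d:Comp11}. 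As written, each of your three bullets carries an extra factor $\sqrt{1+u^2}$. The statement is therefore not established until this is corrected, and "presumably consistent" is not a substitute for the one-line computation.

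\textbf{The intermediate-point step.} You claim that third derivatives at the intermediate Taylor point can be replaced by their values at $(u,r)$ ``up to adjusting $C_K$''. This is false for a general $C^3$ function. Take $f(u,r)=(r-r_0)^4$. Every derivative appearing in the main terms and in the bound vanishes at $(u,r_0)$. Yet $\widehat{\cal{A}}_\omega f(u,r_0)=\sum_\nu\widehat{\lambda}_\nu\,\delta r_\nu^4$, which is positive as soon as one jump changes $r$.

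The bound on $\cal{E}_f$ must be read with suprema of the derivatives over the $O(r^{-1})\times O(1)$ neighbourhood reachable in one jump. The paper's proof leaves this implicit as well. It is harmless in the applications: there $f$ is of the form $r^b|u-u_*|^{-a}$, and the requirement $r|u-u_*|\to\infty$ makes derivatives at nearby points comparable.
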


\begin{proof}
	This is simply a careful Taylor expansion of the generator. Let $f(u,r)$ be a $\cal{C}^3$ function. 
	
	For $\xi\in \R^2$, $\|\xi\|\leq K_0$, 
	\begin{align*}
		&u(x(u, r)+\xi)-u=  (\xi_2-\xi_1u)\frac{\sqrt{1+u^2}}{r}+\frac{\eps_u(u,r)}{r^2}\\
		&r(x(u,r)+\xi)-r= \frac{\xi_1+\xi_2u}{\sqrt{1+u^2}} +\frac{\eps_r(u,r)}{r}
	\end{align*}
	where $\eps_u(u,r)$ and $\eps_r(u,r)$ are bounded on $(u, r)\in [u_-, u_+]\times [1, +\infty)$. 
	
	Therefore, using Taylor's expansion, we get 
	\begin{multline*}
		\sqrt{1+u^2}r[f(u(x(u, r)+\xi), r(x(u,r)+\xi))-f(u, r)]\\
		 = (\xi_1+\xi_2u)r\frac{\partial f}{\partial r}+ (1+u^2)(\xi_2-\xi_1u)\frac{\partial f}{\partial u}\\
		 + \frac{1}{2r}(1+u^2)^{3/2}(\xi_2-\xi_1u)^2 \frac{\partial^2 f}{\partial u^2} + \cal{E}_{f, \xi}(u,r) 
	\end{multline*}
	where for some $C>0$ large enough, we have 
	\begin{multline*}
		|\cal{E}_{f, \xi}(u,r)|
		\leq C\left(\frac{1}{r}\left|\frac{\partial f}{\partial u}\right| + \left|\frac{\partial f}{\partial r}\right| 
		+\frac{1}{r^2}\left|\frac{\partial^2 f}{\partial u^2}\right|\right) \\
		+C\sum_{\substack{(i,j)\in\{0, \ldots 3\}^2\setminus\{(2,0)\}:\\ 2\leq i+j\leq 3}} r^{1-i}\left|\frac{\partial^{i+j} f}{\partial u^i\partial r^{j}}\right|
	\end{multline*}
	For a reaction $\nu=y_-\to y_+$, we define the rate after the time change:
	\[
		\widehat{\lambda}_\nu(u, r):= \frac{(1+u^2)^{(\rho_0+1)/2}}{r^{\rho_0-1}} \lambda_\nu(x(u, r))
	\]
	If $\nu\in \cal{R}_\omega$ and $\nu'\in \cal{R}\setminus \cal{R}_\omega$ it verifies 
	\[
		\widehat{\lambda}_\nu(u, r)= \kappa_\nu r\sqrt{1+u^2} u^{y_2^-}(1+\frac{\eps_\nu(u, r)}{r})\quad \text{and}\quad \widehat{\lambda}_{\nu'}(u,r)=\eps_{\nu'}(u, r),
	\]
	with $\eps_\nu(u,r)$ and $\eps_{\nu'}(u,r)$ bounded.
	Combining these results, we get the approximation. 
\end{proof}

We now apply this approximation to the function $f_a^{u_*}$ from Definition~\ref{d:fauStar}.
\begin{lemma}
If $u_-<u_*<u_+\in (0, +\infty)$ and $a>0$, uniformly for $u\in [u_-, u_+]\setminus\{u^*\}$, when $r\to +\infty$, provided that $r|u-u_*|\to\infty$, 
\begin{multline}\label{e:InfGenfau11}
    \widehat{\cal{A}}_\omega f_a^{u_*}(u,r)=\frac{r^b}{|u-u^*|^a}
		\bigg[-a\frac{F_\omega(u)}{u-u^*}+bG_\omega(u)  \\
		+a(a+1)\frac{(1+u^2)^{3/2}}{2r(u-u^*)^2}D_\omega(u)
		+O\left(\frac{1}{r|u-u^*|}+\frac{1}{r^2|u-u^*|^3}\right)\bigg]
\end{multline}
holds. 
\end{lemma}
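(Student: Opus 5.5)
The plan is to apply Proposition~\ref{p:ReducedGenerator11} directly to $f=f_a^{u_*}(u,r)=r^b|u-u_*|^{-a}$ on $[u_-,u_+]\setminus\{u_*\}$. This function is $\cal{C}^\infty$ away from $u=u_*$, so we can work separately on $[u_-,u_*)$ and $(u_*,u_+]$; the Taylor expansion in the proof of Proposition~\ref{p:ReducedGenerator11} only involves values of $f$ at points $(u',r')$ with $|u'-u|=O(1/r)$. The hypothesis $r|u-u_*|\to\infty$ guarantees that such points stay on the same side of $u_*$, at distance comparable to $|u-u_*|$. Consequently, the bounds on derivatives evaluated at intermediate points are the same, up to constants, as those at $(u,r)$. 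I would state this explicitly as the first step, because it is the only place where the condition $r|u-u_*|\to\infty$ enters, and it is the main (mild) obstacle.

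Next we compute the derivatives. Write $\delta=u-u_*$. Then
\begin{align*}
\partial_u f&=-a\,\mathrm{sgn}(\delta)\,r^b|\delta|^{-a-1}=-a\,\frac{f}{\delta},\qquad \partial_r f=\frac{b}{r}f,\\
\partial_u^2 f&=a(a+1)\,\frac{f}{\delta^{2}},
\end{align*}
and in general $|\partial_u^i\partial_r^j f|\le C\,f\,|\delta|^{-i}r^{-j}$. Substituting the three leading terms of \eqref{e:IGReduced} gives exactly
\[
f\Bigl[-a\tfrac{F_\omega(u)}{\delta}+bG_\omega(u)+a(a+1)\tfrac{(1+u^2)^{3/2}}{2r\delta^2}D_\omega(u)\Bigr].
\]

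It remains to bound $\cal{E}_f/f$ using the estimate of Proposition~\ref{p:ReducedGenerator11}. The individual terms give
\begin{align*}
r^{-1}|\partial_uf|&\lesssim f/(r|\delta|), & |\partial_r f|&\lesssim f/r, & r^{-2}|\partial_u^2 f|&\lesssim f/(r^2\delta^2),
\end{align*}
and, for $2\le i+j\le 3$ with $(i,j)\ne(2,0)$, $r^{1-i}|\partial_u^i\partial_r^jf|\lesssim f\,r^{1-i-j}|\delta|^{-i}$. Since $|\delta|\le u_+-u_-$ is bounded, $f/r\lesssim f/(r|\delta|)$.

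The mixed and higher terms are handled case by case:
\begin{itemize}
\item $(i,j)=(0,2),(0,3)$ give $r^{-1},r^{-2}$.
\item $(1,1)$ gives $1/(r|\delta|)$.
\item $(1,2)$ gives $1/(r^2|\delta|)$.
\item $(2,1)$ gives $1/(r^2\delta^2)\le 1/(r|\delta|)\cdot 1/(r|\delta|)$.
\item $(3,0)$ gives $1/(r^2|\delta|^3)$.
\end{itemize}
All are $O\bigl(1/(r|\delta|)+1/(r^2|\delta|^3)\bigr)$, using $r|\delta|\ge1$ for large $r$, which absorbs $1/(r^2\delta^2)$ into $1/(r|\delta|)$. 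Constants are uniform in $u\in[u_-,u_+]$ because $F_\omega$, $G_\omega$, $D_\omega$ and the constant $C_K$ are uniform on compacts. This yields \eqref{e:InfGenfau11}.
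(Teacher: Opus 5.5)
Your proposal is correct and follows the paper's route: the paper simply states that this lemma is a direct application of Proposition~\ref{p:ReducedGenerator11}, and your derivative computations and term-by-term bounds on $\mathcal{E}_f$ are exactly what that application amounts to. Your remark that $r|u-u_*|\to\infty$ keeps the Taylor intermediate points on the same side of $u_*$, at distance comparable to $|u-u_*|$, makes explicit a point the paper leaves implicit. It is the right justification for evaluating the derivative bounds at $(u,r)$ even though $f_a^{u_*}$ is singular at $u_*$.
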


\begin{proof}
	This is a direct application of Proposition~\ref{p:ReducedGenerator11}. 
\end{proof}

\subsection{Repulsive, convergent line}
This case is the easiest. We can use the decrease of the norm (i.e. the fact that $G_\omega(u_*)<0$) to show the dissipation near $u_*$. 

For a choice of $a>0$, $\delta>0$, we introduce the candidate energy function on $\cal{H}_\omega(u_-, u_+)$: 
\begin{equation*}
	\widehat{V}_{u_*}:(u,r)\mapsto 
	\begin{cases}
		\frac{r^b}{\delta^a} \quad &\text{for   } |u-u^*|<\delta\\
		f_{a}^{u_*}(u,r)\quad &\text{for   } |u-u^*|\geq \delta
	\end{cases}\, . 
\end{equation*}

The following proposition shows that when $G_\omega(u_*)<0$, i.e. when the invariant line is convergent, this function is point-wise dissipated. 
\begin{proposition}\label{p:DissipRC11}
	Let $u_*=u_i$ as in Proposition~\ref{p:RepLine11}, and fix some $(u_-, u_+)$ such that $u_{i-1}<u_-< u_*<u_+< u_{i+1}$. 
	If $G(u_*)<0$, for $\delta, \beta>0$ small enough and $a>0$ large enough,
	the following inequality holds for $u\in[u_-, u_+]$ and $r$ large enough: 
	\[
		\widehat{\cal{A}}_\omega \widehat{V}_{u_*}(u,r)\leq -\beta r^b.
	\]
\end{proposition}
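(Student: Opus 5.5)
We split $[u_-,u_+]$ into the inner strip $|u-u_*|<\delta$ and the outer region $|u-u_*|\geq\delta$, and bound the reduced generator of Proposition~\ref{p:ReducedGenerator11} in each. The order of constants is: first $\delta$, then $a$, then $\beta$, then the threshold on $r$. Since $\widehat V_{u_*}$ is only piecewise smooth, we also handle states whose jumps cross $|u-u_*|=\delta$. The range $u\in[u_-,u_+]$ contains no other root of $F_\omega$, so $F_\omega(u)/(u-u_*)$ is continuous and strictly positive on $[u_-,u_+]$. Indeed, its value at $u_*$ is $F'_\omega(u_*)>0$, and $F_\omega$ changes sign only at $u_*$ inside $(u_{i-1},u_{i+1})$. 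Let $c_F>0$ be its minimum and $\|G_\omega\|_\infty$ the maximum of $|G_\omega|$ on $[u_-,u_+]$.

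\emph{Inner strip.} Here $\widehat V_{u_*}=r^b\delta^{-a}$ depends only on $r$. Proposition~\ref{p:ReducedGenerator11} with $f=r^b$ gives
\[
\widehat{\cal A}_\omega \widehat V_{u_*}=\delta^{-a}\big(bG_\omega(u)r^b+O(r^{b-1})\big),
\]
since all $u$-derivatives vanish and the error terms are $O(|\partial_r f|+r|\partial_r^2 f|+r|\partial_r^3 f|)=O(r^{b-1})$. Choose $\delta$ small enough that $G_\omega(u)\leq G_\omega(u_*)/2<0$ on $|u-u_*|\leq 2\delta$. Then $\widehat{\cal A}_\omega\widehat V_{u_*}\leq \delta^{-a}\tfrac b2 G_\omega(u_*)r^b(1+o(1))$, which is $\leq -\beta r^b$ once $\beta<\delta^{-a}b|G_\omega(u_*)|/4$ and $r$ is large.

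\emph{Outer region.} Here $|u-u_*|\geq\delta$ is fixed, so the condition $r|u-u_*|\to\infty$ holds and \eqref{e:InfGenfau11} applies:
\[
\widehat{\cal A}_\omega f_a^{u_*}=\frac{r^b}{|u-u_*|^a}\Big[-a\frac{F_\omega(u)}{u-u_*}+bG_\omega(u)+O_\delta(r^{-1})\Big].
\]
The diffusion term is also $O_\delta(r^{-1})$. The bracket is at most $-ac_F+b\|G_\omega\|_\infty+O_\delta(r^{-1})\leq -ac_F/2$ once $a>2b\|G_\omega\|_\infty/c_F$ and $r$ is large. Since $|u-u_*|^{-a}\geq \max(u_+-u_-,1)^{-a}$, we get the bound $-\beta r^b$ for $\beta$ small. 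Note that the convergence assumption $G(u_*)<0$ is not needed here; only the repulsion matters.

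\emph{Interface.} This is the delicate step. For $u$ within $O(1/r)$ of $\{|u-u_*|=\delta\}$, a jump can cross the interface, so the generator is not a Taylor expansion of a single smooth function. The two pieces coincide on the interface, and both are Lipschitz in $u$ with constant $O(r^b)$. The jump sizes in $u$ are $O(1/r)$, so each jump's contribution differs from that of the smooth branch at $u$ by $O(r^{b}\cdot r^{-1})\cdot\widehat\lambda_\nu=O(r^b)$ in absolute size. To control this I would avoid the size bound and use one-sidedness instead: $f_a^{u_*}\leq r^b\delta^{-a}$ outside the strip and equals it on the boundary, so $\widehat V_{u_*}=\min(r^b\delta^{-a},f_a^{u_*})$ on the relevant region. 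Hence $\widehat{\cal A}_\omega\widehat V_{u_*}(x)\leq \widehat{\cal A}_\omega g(x)$, where $g$ is whichever smooth branch equals $\widehat V_{u_*}(x)$ at $x$. That branch is dissipative by the two computations above, which remain valid on slightly enlarged ranges ($|u-u_*|\leq 2\delta$ for the inner branch, $|u-u_*|\geq\delta/2$ for the outer one). This min-argument is the key trick, and checking that each branch's estimate holds uniformly on its enlarged range is the main point to verify.
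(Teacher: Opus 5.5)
Your proposal is correct and follows the paper's proof. You use the same split into the inner strip, where $r^b\delta^{-a}$ dissipates because $G_\omega<G_\omega(u_*)/2<0$ near $u_*$, and the outer cones, where $f_a^{u_*}$ dissipates for large $a$ by positivity of $F_\omega(u)/(u-u_*)$; your interface argument via $\widehat V_{u_*}=\min(r^b\delta^{-a},f_a^{u_*})$ is exactly the content of Lemma~\ref{l:GluyingJonathan}, which the paper cites, and taking $c_F$ as the minimum over all of $[u_-,u_+]$ even makes $a$ independent of $\delta$.
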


\begin{proof}
The proof simply uses the results of Proposition~\ref{p:ReducedGenerator11}. 

Because of the assumption on $F_\omega'(u_*)$ and because $F_\omega$ has no root except $u_*$ in $[u_-, u_+]$, 
\[
	\min\left\{\frac{F_\omega(u)}{(u-u_*)}, \, u\in [u_-, u_*-\delta]\cup[u_*+\delta, u_+]\right\}>0.
\]
Therefore, there exists $a>0$ such that
\[
	\min\left\{a\frac{F_\omega(u)}{(u-u_*)} -bG_\omega(u), \, u\in [u_-, u_*-\delta/2]\cup[u_*+\delta/2, u_+]\right\}>0. 
\]
Choosing $\beta>0$ small enough, Equation~\eqref{e:InfGenfau11} and the definition of $a$ lead to 
\[
	\widehat{\cal{A}}_\omega(f_a^{u_*})(u,r)\leq -\beta r^b, 
\] 
for $u\in [u_-, u_*-3\delta/4]\cup[u_*+3\delta/4, u_+]$ and $r$ large enough.\\
Since $G_\omega(u_*)<0$, there exist a $\delta>0$ such that, for all $u$ such that $|u-u_*|<2\delta$, $G_\omega(u)<G_\omega(u_*)/2<0$. 
Using Equation~\eqref{e:IGReduced}, for $|u-u_*|<4\delta/3$,
\[
	\widehat{\cal{A}}_\omega((u,r)\mapsto r^b/\delta^a)(u,r)\leq -\beta r^b, 
\] 
where $\beta$ can be chosen smaller if needed. 
We conclude using Lemma~\ref{l:GluyingJonathan} of Section~\ref{Tools}
\end{proof}

\subsection{Repulsive, divergent line}
\subsubsection{Point-wise dissipation in the outer cones}
For a choice of parameters $(\delta, a, a', \eta_0)$ such that $\delta<|u_*-u_+|\wedge |u_*-u_-|$, $a'>a>1$ and $\eta_0>0$, we define a candidate energy function $\widehat{V}_{u_*}$ as 
\begin{equation*}
		\widehat{V}_{u_*}:(u,r)\mapsto 
		\begin{cases}
			\delta^{a'-a}f_{a'}^{u_*}(u,r) \quad &\text{for   } u\in [u_-, u_+]: \, |u-u_*|\geq \delta,\\
			f_{a}^{u_*}(u,r)\quad &\text{for   } u\in [u_-, u_+]: \, \frac{\eta_0}{2\sqrt{r}}\leq|u-u_*|<\delta ,\\
			2^a\eta_0^{-a}r^{b+a/2}\quad &\text{for   } u\in [u_-, u_+]: \,  |u-u_*|\leq \frac{\eta_0}{2\sqrt{r}},
		\end{cases}
\end{equation*}
where $f_{a}^{u_*}$ is defined in Relation~\eqref{d:fauStar}.

It is defined in three steps because of the following reasons: first, we need it to decrease when $|u-u_*|$ increases, but to be bounded at $u_*$, which leads to the definition in the inner cone, when $|u-u_*|\leq \frac{\eta_0}{2\sqrt{r}}$. Then, we use a function of the form $f^{u_*}_a$. In the inner cone, this function is of order $r^{b+a/2}$. Because the deviation from the axis takes a while, while it deviates the norm increases, we want the function in the cone $|u-u_*|<\delta$ to have the smallest $a>1$ possible. But then the point-wise dissipation on the side of the set may fail. So we first guarantee dissipation in the small cone with a small $a>1$, then take an $a'\geq a$ as large as needed to guarantee the point-wise dissipation when $|u-u_*|\geq\delta$. 

The following proposition shows that there exist $(\delta, a, a', \eta_0)$ such that $\widehat{V}_{u_*}$ is point-wise dissipated by the process associated to the generator $\widehat{\cal{A}}_\omega$ \emph{outside an inner cone}. 
\begin{proposition}\label{p:DissipRD11}
	In the setting of Proposition~\ref{p:RepLine11}, if $G_\omega(u_*)>0$, we can find parameters $(\delta, a, a', \eta_0)$ and a $\beta>0$ such that, for $u\in[u_-, u_+]$, $|u-u_*|\geq \eta_0 r^{-1/2}$ and $r$ large enough: 
	\[
		\widehat{\cal{A}}_\omega \widehat{V}_{u_*}(u,r)\leq -\beta r^b.
	\]
\end{proposition}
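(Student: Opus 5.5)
The plan is to apply the expansion \eqref{e:InfGenfau11} separately on the two regions where $\widehat{V}_{u_*}$ is given by a single formula, namely the middle cone $\eta_0 r^{-1/2}\leq |u-u_*|<\delta$ and the outer cones $|u-u_*|\geq \delta$. I would then handle the interface $|u-u_*|=\delta$ by a minimum-of-two-functions argument, in the spirit of Lemma~\ref{l:GluyingJonathan}. The parameters are fixed in the order $a$, then $\delta$, then $\eta_0$, then $a'$, and finally $\beta$.

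\emph{Middle cone.} Since $F'_\omega(u_*)>0$ and $F'_\omega(u_*)>bG_\omega(u_*)$, I fix $a>1$ close enough to $1$ that $m:=aF'_\omega(u_*)-bG_\omega(u_*)>0$. This works because $aF'_\omega(u_*)-bG_\omega(u_*)\to F'_\omega(u_*)-bG_\omega(u_*)>0$ as $a\downarrow 1$. Next, by continuity I choose $\delta$ so small that for $|u-u_*|\leq 2\delta$,
\[
aF_\omega(u)/(u-u_*)-bG_\omega(u)\geq 3m/4 .
\]
In \eqref{e:InfGenfau11} the diffusion term equals $a(a+1)(1+u^2)^{3/2}D_\omega(u)/(2r(u-u_*)^2)$. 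On the region $|u-u_*|\geq \eta_0 r^{-1/2}$ we have $r(u-u_*)^2\geq \eta_0^2$, so this term is bounded by $C_a/\eta_0^2$, with $C_a$ uniform on $[u_-,u_+]$. I take $\eta_0$ large enough that $C_a/\eta_0^2\leq m/4$. The error terms satisfy
\[
\frac{1}{r|u-u_*|}\leq \frac{1}{\eta_0\sqrt r}, \qquad \frac{1}{r^2|u-u_*|^3}\leq \frac{1}{\eta_0^3\sqrt r},
\]
so both tend to $0$. In particular $r|u-u_*|\to\infty$ there, which is the validity condition of \eqref{e:InfGenfau11}. Hence for $r$ large,
\[
\widehat{\cal{A}}_\omega f_a^{u_*}\leq -\frac{m}{4}\,\frac{r^b}{|u-u_*|^a}\leq -\frac{m}{4}(2\delta)^{-a} r^b
\]
on $\eta_0 r^{-1/2}\leq |u-u_*|\leq 2\delta$. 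The inner cutoff at $\eta_0/(2\sqrt r)$ plays no role in this region. A jump changes $u$ by $O(1/r)$, which is much smaller than the distance $\eta_0/(2\sqrt r)$ to the cutoff, so the generator only evaluates the $f_a$ branch.

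\emph{Outer cones.} As in the proof of Proposition~\ref{p:DissipRC11}, $F_\omega$ has no other root in $[u_-,u_+]$ and $F'_\omega(u_*)>0$. Therefore $F_\omega(u)/(u-u_*)$ has a positive minimum on $\{u\in[u_-,u_+]:|u-u_*|\geq \delta/2\}$. I then choose $a'>a$ large enough that
\[
a'F_\omega(u)/(u-u_*)-bG_\omega(u)\geq c>0
\]
on that set. There, $|u-u_*|$ is bounded below, so in \eqref{e:InfGenfau11} the diffusion and error terms are $O(1/r)$. This gives
\[
\widehat{\cal{A}}_\omega\bigl(\delta^{a'-a}f_{a'}^{u_*}\bigr)\leq -\frac{c}{2}\,\delta^{a'-a}(u_+-u_-)^{-a'}\,r^b
\]
for $r$ large.

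\emph{Gluing at $|u-u_*|=\delta$.} The two branches coincide at $|u-u_*|=\delta$. For $|u-u_*|>\delta$ we have $\delta^{a'-a}|u-u_*|^{-a'}<|u-u_*|^{-a}$, and the inequality reverses for $|u-u_*|<\delta$. Hence on $\delta/2\leq|u-u_*|\leq 2\delta$,
\[
\widehat{V}_{u_*}=\min\bigl(f_a^{u_*},\,\delta^{a'-a}f_{a'}^{u_*}\bigr).
\]
Both functions are dissipated on this overlap, by the two previous steps. At any point, if $g_1$ is the active function, then $\widehat{V}_{u_*}(x+\xi)-\widehat{V}_{u_*}(x)\leq g_1(x+\xi)-g_1(x)$ for every jump, so $\widehat{\cal{A}}_\omega\widehat{V}_{u_*}\leq \widehat{\cal{A}}_\omega g_1$. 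This is exactly the patching provided by Lemma~\ref{l:GluyingJonathan}. Taking $\beta$ to be the minimum of the constants obtained above completes the argument.

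The step I expect to need the most care is the middle cone. There three small quantities compete: the margin $m$ obtained from $F'_\omega(u_*)>bG_\omega(u_*)$ with $a$ close to $1$, the noise term (which is controlled only through $\eta_0$), and the remainders, which must be uniform down to the $r^{-1/2}$ scale. The ordering $a\to\delta\to\eta_0$ is what makes these choices consistent. By contrast, $a'$ can be taken as large as needed, because it only affects the region $|u-u_*|\geq\delta/2$, where noise is negligible.
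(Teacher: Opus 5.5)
Your proposal is correct and follows essentially the paper's proof. You fix $a>1$ with $aF'_\omega(u_*)-bG_\omega(u_*)>0$, shrink $\delta$ by continuity, enlarge $\eta_0$ so the diffusion term (bounded by $C_a/\eta_0^2$) is absorbed, take $a'$ large on the outer cones, and patch at $|u-u_*|=\delta$ by noting that $\widehat{V}_{u_*}$ is locally the minimum of the two branches, which is exactly the mechanism behind Lemma~\ref{l:GluyingJonathan}. Your version is slightly more careful than the paper's: you control the variation of $G_\omega$ near $u_*$ and check that jumps of size $O(1/r)$ never reach the inner cutoff. Note also that choosing $a$ close to $1$ is not needed here, since $aF'_\omega(u_*)-bG_\omega(u_*)$ increases in $a$; it only matters later for the inner-cone estimate.
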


\begin{proof}
Let $\eps>0$. 
Using the last inequality in Assumption~\ref{a:OmegaEq1}, choosing $a>1$ small enough so that 
\[
	\frac{F'(u_*)}{G(u_*)}>\frac{b}{a}, 
\]
 we can set $\eps>0$ small enough such that
\[
	-aF'_\omega(u^*) +bG_\omega(u^*)<-\eps.
\]
We can choose $\delta$ small enough so that, 
\begin{equation*}
	\sup_{u: \, |u-u_*|\leq \delta}\left|\frac{F_\omega(u)}{(u-u^*)}-F'_\omega(u^*)\right|<\eps/2. 
\end{equation*}
Combining the above relations with the approximation~\eqref{e:InfGenfau11}, for $u\in [u_-, u_+]\setminus\{u_*\}$ such that $|u-u_*|\leq \delta$,  
\[
	\frac{\widehat{\cal{A}}_\omega(f^{u_*}_a)(u,r)}{f_a^{u_*}}\leq -\frac{\eps}{2} + a(a+1)\frac{(1+u^2)^{3/2}}{2r(u-u^*)^2}D_\omega(u)
\]
Also note that when $\eta_0 r^{-1/2}\leq |u-u^*|\leq \delta$, for some $C>0$,  
\[
	f^{u_*}_a(u,r)\geq C r^b.
\]
Therefore, when $\eta_0/\sqrt{r}\leq |u-u^*|\leq \delta$ with $\eta_0>0$ large enough, 
\[
	\widehat{\cal{A}}_\omega(f^{u_*}_a)(u,r)\leq -\frac{\eps C}{4}r^b.
\]

Using the same arguments as in the proof of Proposition~\ref{p:DissipRC11}, we can define $a'>a$ such that the dissipation holds on the exterior cones, i.e. for $u\in [u_-, u_+]\setminus[u_*-3\delta/4, u_*+3\delta/4]$. 

To patch the regions together (excluding the inner cone for now), i.e. to show the dissipation inequality for states with $u(x)\approx u_*\pm \delta$, we use Lemma~\ref{l:GluyingJonathan} of Section~\ref{Tools}. 
\end{proof}

\subsubsection{The inner cone}

\begin{proposition}\label{p:DissipRDFilo11}
	Keeping the notations of Proposition~\ref{p:DissipRD11},
	there exist some $T>0$, $\eta_1>\eta_0$ and $R_0\geq 1$ such that, 
	for any initial point $(u,r)$ with $r\geq R_0$ and $|u-u_*|\sqrt{r}\leq \eta_0$, 
	\[
		\E_{(u,r)}\left(\widehat{V}_{u_*}(U(\tau\wedge T), R(\tau\wedge T))- \widehat{V}_{u_*}(u, r) \right)\leq -1, 
	\]
	where $(U(t), R(t))$ is the Markov process defined by the infinitesimal generator $\widehat{\cal{A}}$, see Relation~\ref{e:GenUR11}, and $\tau$ is the stopping time defined as 
	\begin{equation*}
		\tau:= \inf\{t\geq 0: |U(t)-u_*|\sqrt{R(t)}\geq \eta_1\}. 
	\end{equation*}
\end{proposition}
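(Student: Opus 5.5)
The approach is to rescale the deviation from the line and compare it, on a fixed time window $[0,T]$, with a divergent Ornstein--Uhlenbeck process. Set $Z(t):=(U(t)-u_*)\sqrt{R(t)}$ and $\lambda:=F'_\omega(u_*)+G_\omega(u_*)/2>0$.

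\emph{Step 1: diffusion limit.} Apply Proposition~\ref{p:ReducedGenerator11} to test functions of the form $g(z,\log(R/r))$, with $z=(u-u_*)\sqrt{r}$ and $r$ the initial norm. On $\{|z|\leq \eta_1\}$ the remainder $\mathcal E_f$ is $o(1)$ as $r\to\infty$, uniformly. The rescaled generator therefore converges to
\[
\lambda z\,\partial_z+\tfrac{\sigma^2}{2}\partial_z^2+G_\omega(u_*)\partial_s ,\qquad \sigma^2=(1+u_*^2)^{3/2}D_\omega(u_*).
\]
Here the term $G z/2$ in the drift comes from differentiating $\sqrt{R}$. Hence, uniformly in the starting point $|z_0|\leq\eta_0$, the stopped process $(Z,\log(R/r))$ on $[0,T\wedge\tau]$ converges in law to $(\mathcal Z_t, G_\omega(u_*)t)$, where $\mathcal Z$ is a divergent OU process stopped at $\pm\eta_1$. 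To justify this I would use martingale-problem tightness (Ethier--Kurtz). I also need $\sigma^2>0$. If $D_\omega(u_*)=0$, every dominant reaction vector is parallel to $(1,u_*)$; I would then need either to rule this out or to treat it separately.

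\emph{Step 2: uniform integrability.} Applying the generator to $r^k$ gives $\E R(t\wedge\tau)^k\leq r^k e^{c_k t}$, so $(R(T\wedge\tau)/r)^{b+a/2}$ is uniformly integrable. Consequently the expectation of $\widehat V_{u_*}(U,R)/r^{b+a/2}$ at time $T\wedge\tau$ converges to its OU counterpart.

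\emph{Step 3: comparing the values.} At the start, on $|z_0|\leq\eta_0$, we have $\widehat V_{u_*}\geq \eta_0^{-a}r^{b+a/2}$ (in both the inner piece and the $f_a$ piece). At the end there are two cases.
\begin{itemize}
\item On $\{\tau\leq T\}$: $\widehat V_{u_*}=R^{b+a/2}\eta_1^{-a}\leq r^{b+a/2}e^{G T(b+a/2)}\eta_1^{-a}(1+o(1))$.
\item On $\{\tau>T\}$: $\widehat V_{u_*}\leq R^{b+a/2}\min\bigl(|Z|^{-a},2^a\eta_0^{-a}\bigr)$.
\end{itemize}
For the OU limit, $\mathcal Z_T\approx e^{\lambda T}(z_0+\mathcal N)$ with a nondegenerate Gaussian $\mathcal N$. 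Hence $\E[\min(|\mathcal Z_T|^{-a},C)]\lesssim e^{-\lambda T}$ for $a>1$, and the second contribution is $\lesssim r^{b+a/2}e^{(G(b+a/2)-\lambda)T}$.

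Now take $\eta_1$ exponentially large in $T$, say $\eta_1=e^{\lambda T}$. The two contributions are then of order $e^{(G(b+a/2)-\lambda)T}$ and $e^{(G(b+a/2)-a\lambda)T}$. Both exponents are negative exactly when $a\lambda>G(b+a/2)$, that is $aF'_\omega(u_*)>bG_\omega(u_*)$, and $\lambda>G(b+a/2)$. With $a>1$ chosen close to $1$, as in Proposition~\ref{p:DissipRD11}, both follow from~\eqref{e:RepLine11}. So for $T$ large the limiting expected final value is at most $\tfrac12\eta_0^{-a}r^{b+a/2}$, and then for $R_0$ large the prelimit expectation is below the initial value minus $\tfrac14\eta_0^{-a}r^{b+a/2}\leq$ initial minus $1$.

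The main obstacle is the uniformity in Steps 1--2. We need convergence to the OU law uniformly over initial points in the inner cone. We also need control of $\E[\min(|Z_T|^{-a},C)]$ in the prelimit, which requires the distribution of $Z_T$ not to concentrate near $0$. For this I would rely on the weak convergence together with the continuity and boundedness of $\min(|z|^{-a},C)$, handling the jump discreteness of $Z$ at scale $r^{-1/2}$ via the cap $C$.
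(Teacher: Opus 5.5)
Your proposal is correct. It follows the same architecture as the paper's proof:
\begin{itemize}
\item rescale the deviation from the line by $\sqrt{r}$;
\item pass to the divergent Ornstein--Uhlenbeck limit with drift $\lambda=F'_\omega(u_*)+G_\omega(u_*)/2$ and $\sigma^2=(1+u_*^2)^{3/2}D_\omega(u_*)$, as in Lemma~\ref{l:OU11};
\item control moments of $R/r$ to get uniform integrability;
\item obtain uniformity over the inner cone by a subsequence/contradiction argument;
\item compare $\widehat V_{u_*}/r^{b+a/2}$ at time $\tau\wedge T$ with its limit.
\end{itemize}

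Where you differ is in the OU estimates and the order in which the constants are chosen.

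The paper fixes $\eta_1$ first and relies on Lemma~\ref{l:ConvOU}. It bounds the exit contribution by $\eta_1^{-a}\E(e^{\alpha'(\tau_L\wedge T)})\le C_L(\eta_1+1)\eta_1^{-a}$, where $\alpha'=(b+a/2)G_\omega(u_*)<\lambda$; this is small because $a>1$. On $\{\tau_L>T\}$ it uses only the crude cap $2^a\eta_0^{-a}e^{\alpha'T}$ times the survival bound $\P(\tau_L>T)\lesssim\eta_1 e^{-\lambda T}$, and then lets $T$ be large.

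You instead bound the exit contribution crudely by $e^{\alpha'T}\eta_1^{-a}$ and absorb it by tying $\eta_1=e^{\lambda T}$. On the non-exit event you use that $e^{-\lambda T}\mathcal{Z}_T=z_0+\mathcal{N}$ has a density bounded uniformly in $z_0$ and $T\ge1$. This gives $\E\min(|\mathcal{Z}_T|^{-a},C)=O(e^{-\lambda T})$, and that is where you use $a>1$.

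Each route has its advantages. Yours avoids the exit-time exponential moment and makes the two exponent conditions explicit: $aF'_\omega(u_*)>bG_\omega(u_*)$ and $\lambda>\alpha'$, the second implying the first. The paper's needs only a survival tail and a crude terminal bound, and keeps $\eta_1$ independent of $T$.

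The nondegeneracy $\sigma^2>0$ that you left open is automatic in this case. If $D_\omega(u_*)=0$, then $\xi^\nu_2=u_*\xi^\nu_1$ for every $\nu\in\mathcal{R}_\omega$. Hence $\Delta^\omega_2(1,u)=u_*\Delta^\omega_1(1,u)$ and $F_\omega(u)=(1+u^2)(u_*-u)\Delta^\omega_1(1,u)$. This gives $F'_\omega(u_*)=-(1+u_*^2)\Delta^\omega_1(1,u_*)=-G_\omega(u_*)$, which contradicts $F'_\omega(u_*)>0$ and $G_\omega(u_*)>0$. (The paper simply asserts $\sigma>0$, so this is a point where your write-up should include the short argument.)
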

The proof of this proposition relies on several results. 
The first and most important one shows that the deviation from the repulsive line behaves like a divergent Ornstein Uhlenbeck process in the limit $r\to +\infty$. 
More precisely, introducing the function 
\begin{equation*}
	\ell:(u,r)\mapsto (u-u_*)\sqrt{r},
\end{equation*}
for the convergence in distribution, the process $(\ell(U(t), R(t)))$ starting near the line (when $|\ell(u_0, r_0)|\leq \eta_0$) converges in distribution to the process $(L(t))$ solution of the SDE 
\begin{equation}\label{e:OU11}
	\diff L(t)= \left(F'_\omega(u_*)+\frac{G_\omega(u_*)}{2} \right)L(t)\diff t + (1+u_*^2)^{3/4}\sqrt{D_\omega(u_*)}\diff W_t, 
\end{equation}
with $(W_t)$ a standard Wiener process. 

For technical reasons, the convergence is only shown up to the exit time of $[-\eta_1, \eta_1]$ by $\ell(U(t), R(t))$. 
For a process $(Z(t))$ and some $\eta_1>\eta_0$, we introduce the stopping time 
\begin{equation}\label{e:DefTauZ}
	\tau_{Z}:= \inf\{t\geq 0: \, |Z(t)|\geq \eta_1\}. 
\end{equation}

\begin{lemma}\label{l:OU11}
	Let $(u_N)\in \R^{\Z_{\geq 0}}$ such that $\ell(u_N, N)\to \ell_0$ for some $\ell_0\in [-\eta_0, \eta_0]$. 
	Let $(U^N(t), R^N(t))$ be the Markov process with infinitesimal generator given by Relation~\ref{e:GenUR11}, starting from $(u_N, N)$. We set $L^N(t)=\ell(U_N(t), R_N(t))$. 
	Then for $T>0$, for the convergence in distribution, 
	\begin{multline*}
		\lim_{N\to +\infty}\left(\left(L^N(t\wedge \tau_{L^N}), \frac{R_N(t\wedge \tau_{L^N})}{N}\right)_{t\in [0, T]}, \tau_{L_N}\wedge T\right) \\
		= ((L(t\wedge \tau_L), e^{G_\omega(u_*)(t\wedge \tau_L)})_{t\in [0, T]}, \tau_L\wedge T),
	\end{multline*}
	where $(L(t))$ is a process starting from $\ell_0$ and solution of~\eqref{e:OU11}. 

	Moreover, for $\alpha\geq 1$, for any $\eps>0$, we can find $N_0\geq 0$ such that, for all $N\geq N_0$,
	\begin{equation}\label{e:OU11Lim2}
		\E_{(u_N, N)}\left(\sup_{t\in [0, T\wedge \tau_{L^N}]}\left|\frac{R_N(t)^\alpha}{N^\alpha}-e^{\alpha G_\omega(u_*)t}\right|\right)<\eps.
	\end{equation}
\end{lemma}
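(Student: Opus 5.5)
We prove Lemma~\ref{l:OU11} by the standard martingale-problem route: identify the limiting generator of the pair $(L^N, R^N/N)$ using Proposition~\ref{p:ReducedGenerator11}, establish tightness of the stopped processes, and then upgrade to the moment estimate~\eqref{e:OU11Lim2} using a uniform bound on $R_N$ before $\tau_{L^N}$.

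\textbf{Generator computation.} Consider test functions of the form $g(\ell(u,r), r/N)$ with $g\in C^3_c(\R\times(0,\infty))$, and write $h_N(u,r):=g((u-u_*)\sqrt r, r/N)$. Applying Proposition~\ref{p:ReducedGenerator11} to $h_N$ on the set $|\ell|\leq\eta_1$, $r\in[cN,CN]$, we have $\partial_u h_N=\sqrt r\,\partial_1 g$ and $\partial_r h_N=\frac{u-u_*}{2\sqrt r}\partial_1 g+\frac1N\partial_2 g$, and $\partial_u^2h_N=r\,\partial_1^2 g$. Since $F_\omega(u)=F'_\omega(u_*)(u-u_*)+O(\ell^2/r)$ and $G_\omega(u)=G_\omega(u_*)+O(\ell/\sqrt r)$, the first-order terms give
\[
F'_\omega(u_*)\ell\,\partial_1 g+\tfrac12G_\omega(u_*)\ell\,\partial_1 g+\tfrac rN G_\omega(u_*)\partial_2 g+O(r^{-1/2}),
\]
and the diffusion term gives $\frac12(1+u_*^2)^{3/2}D_\omega(u_*)\partial_1^2g+O(r^{-1/2})$. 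The remainder $\mathcal E_{h_N}$ is $O(r^{-1/2})$: each $u$-derivative contributes $\sqrt r$, each $r$-derivative at most $r^{-1}$ (up to bounded $\ell$), and the weights $r^{1-i}$ in Proposition~\ref{p:ReducedGenerator11} make every listed term vanish. Hence $\widehat{\cal A}_\omega h_N\to\cal L g$ uniformly on the stopped region, where $\cal L$ is the generator of $(L,Y)$ with $L$ solving~\eqref{e:OU11} and $\dot Y=G_\omega(u_*)Y$.

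\textbf{Tightness and identification.} Before $\tau_{L^N}\wedge T$, the process $\ell$ is bounded, so the jump rates of $\widehat{\cal A}_\omega$ are $O(r)$. A Gronwall argument applied to $\widehat{\cal A}_\omega(r)=rG+O(1)$, together with Doob's inequality for the martingale part of $R$, whose quadratic variation is $O(rT)$, gives $\sup_{t\le T\wedge\tau}|R_N(t)/N-e^{G_\omega(u_*)t}|\to0$ in $L^2$. This keeps $r\in[cN,CN]$ with high probability. Jumps of $L^N$ are $O(N^{-1/2})$, so the Aldous criterion with the generator bounds yields tightness in the Skorokhod space and continuity of the limits. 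Uniqueness of the martingale problem for the linear SDE~\eqref{e:OU11}, which has nondegenerate noise since $D_\omega(u_*)>0$ generically, identifies the limit of the stopped processes.

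\textbf{Convergence of exit times (main obstacle).} The delicate step is the convergence of $\tau_{L^N}\wedge T$. The exit time is not a continuous functional everywhere. We handle this by noting that, because the limiting diffusion has nondegenerate noise, almost surely $L$ crosses the level $\pm\eta_1$ immediately after first touching it. Hence the hitting-time functional is almost surely continuous at the limit path, and the continuous mapping theorem gives joint convergence of the stopped paths and of $\tau_{L^N}\wedge T$. If $D_\omega(u_*)$ could vanish, we would need a separate argument, so we rely on non-degeneracy here.

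\textbf{Moment bound~\eqref{e:OU11Lim2}.} We combine the $L^2$ estimate above with the uniform moment bound $\E\sup_{t\le T}(R_N/N)^{2\alpha}\le C$. This bound follows from Gronwall applied to $\widehat{\cal A}_\omega(r^{2\alpha})\le C r^{2\alpha}$. Uniform integrability then turns convergence in probability of $\sup|R_N^\alpha/N^\alpha-e^{\alpha Gt}|$ into convergence in $L^1$, which proves~\eqref{e:OU11Lim2}.
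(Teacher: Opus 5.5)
Your generator computation for $h_N=g(\ell,r/N)$ is correct. The Gronwall/Doob control of $R_N/N$ followed by uniform integrability is essentially the paper's route to~\eqref{e:OU11Lim2}. One small point there: Gronwall applied to $\widehat{\cal A}_\omega(r^{2\alpha})\le Cr^{2\alpha}$ only gives $\sup_t\E$; to get $\E\sup_t$ you need the martingale part and Doob, as in Lemma~\ref{l:ControlR11}.

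\textbf{The gap.} It lies in the step you flag yourself. All your estimates concern the \emph{stopped} processes: the generator limit on $\{|\ell|\le\eta_1\}$, and tightness ``before $\tau_{L^N}\wedge T$''. A subsequential limit $(L_\infty,\theta)$ of $(L^N(\cdot\wedge\tau_{L^N}),\tau_{L^N}\wedge T)$ only solves the OU martingale problem up to the a priori unknown time $\theta$, and is frozen afterwards. Uniqueness for~\eqref{e:OU11} cannot identify this limit unless you already know $\theta=\tau_{L_\infty}\wedge T$, which is exactly what has to be proved. The continuous mapping theorem does not close this either. ``$L$ crosses $\pm\eta_1$ immediately'' is a property of the \emph{unstopped} OU path. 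The paths you actually have convergence for are stopped, and they sit at $\pm\eta_1$ after the exit time. At such paths the exit-time functional is not continuous: $(1-1/n)x\to x$ uniformly, yet $(1-1/n)x$ never reaches $\eta_1$. To invoke continuous mapping you would need $L^N\Rightarrow L$ \emph{beyond} $\tau_{L^N}$, and nothing in your proposal provides this.

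\textbf{How the paper closes it.} It carries $\theta_N=\tau_{L^N}\wedge T$ along in the tight family and shows the limit satisfies the OU martingale problem stopped at $\theta$. It then proves $\theta=\tau_{L_\infty}\wedge T$ by two inequalities:
\begin{enumerate}
\item If $\theta<T$, then $|L_\infty(\theta)|=\eta_1$, because $|L^N(\theta_N)|\ge\eta_1$ and the jumps of $L^N$ are $O(N^{-1/2})$.
\item If $\tau_{L_\infty}<\theta$, the OU dynamics active on $[\tau_{L_\infty},\theta]$ would immediately push $|L_\infty|$ strictly above $\eta_1$, contradicting $|L_\infty|\le\eta_1$.
\end{enumerate}

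\textbf{Rescuing your own route.} You could instead localize at a level $K>\eta_1$. Your generator estimates hold on $\{|\ell|\le K\}$ for every fixed $K$. A Gronwall/Doob bound then gives $\E\sup_{t\le T}|L^N(t\wedge\tau^K_N)|\le C$, uniformly in $K$ and in $N\ge N_0(K)$. Hence exit from $[-K,K]$ before $T$ has probability $O(1/K)$. This yields convergence of $L^N$ on $[0,T]$ without stopping at $\eta_1$, and only after that does your continuous-mapping argument apply.

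Incidentally, nondegeneracy is automatic here, not merely generic. If $D_\omega(u_*)=0$, every dominant reaction vector is parallel to $(1,u_*)$. That forces $F'_\omega(u_*)=-G_\omega(u_*)$, which is incompatible with a repulsive divergent line.
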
	

\begin{proof}
	See the Appendix, Section~\ref{s:ProofOU11}.
\end{proof}

The second result we need is some bounds on the exit time of $[-\eta_1, \eta_1]$ by the divergent Ornstein-Uhlenbeck process. We state the Lemma in a general setting because it will be used with different constants when dealing with the face $\omega\neq (1,1)$.  

\begin{lemma}\label{l:ConvOU}
	Let $\alpha, \sigma>0$, and $(L(t))$ be a divergent Ornstein-Uhlenbeck process starting at $\ell_0$, and solution of the SDE
	\[
		\diff L(t)= \alpha L(t)\diff t + \sigma \diff W(t)
	\]
	where $(W(t))$ is a standard Brownian motion. 
	
	For $\ell_1>|\ell_0|$, we introduce 
	\begin{equation*}
		\tau_L:= \inf\{t\geq 0: |L(t)|\geq \ell_1\}. 
	\end{equation*}
	
	Then, there exist $C_0>0$ independent of $\ell_0$ such that, for $t>0$, 
	\begin{equation}\label{e:ConvOUeq1}
		\P_{\ell_0}(\tau_L>t)\leq \frac{\ell_1 C_0}{\sqrt{e^{2\alpha t}-1}}, 
	\end{equation}
	and for any $0<\alpha'<\alpha$, there exists $C_L(\alpha')$ independent of $\ell_1$ such that
	\begin{equation}\label{e:ConvOUeq2}
		\E_{\ell_0} \left( e^{\alpha'\tau_L}\right)<C_L(\ell_1+1). 
	\end{equation}
\end{lemma}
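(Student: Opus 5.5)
The plan is to make the process explicit and then estimate its exit time from the band directly. The solution of the SDE is
\[
L(t)=e^{\alpha t}\Big(\ell_0+\sigma\int_0^t e^{-\alpha s}\diff W(s)\Big)=e^{\alpha t}\big(\ell_0+M(t)\big),
\]
where $M$ is a continuous Gaussian martingale with variance $v(t)=\sigma^2(1-e^{-2\alpha t})/(2\alpha)$. Hence $L(t)$ is Gaussian with mean $e^{\alpha t}\ell_0$ and variance
\[
s(t)^2=\frac{\sigma^2}{2\alpha}\left(e^{2\alpha t}-1\right).
\]

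For the tail bound~\eqref{e:ConvOUeq1}, use the inclusion $\{\tau_L>t\}\subset\{|L(t)|<\ell_1\}$. A Gaussian variable with variance $s(t)^2$ has density bounded by $1/(\sqrt{2\pi}\,s(t))$, whatever its mean. Therefore
\[
\P_{\ell_0}(|L(t)|<\ell_1)\leq \frac{2\ell_1}{\sqrt{2\pi}\,s(t)}=\frac{2\ell_1\sqrt{\alpha}}{\sqrt{\pi}\,\sigma}\,\frac{1}{\sqrt{e^{2\alpha t}-1}}.
\]
This gives~\eqref{e:ConvOUeq1} with $C_0=2\sqrt{\alpha}/(\sqrt\pi\sigma)$, which does not depend on $\ell_0$. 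The argument is elementary because the bound uses only the spread of the marginal law, not the position of its mean.

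For the exponential moment~\eqref{e:ConvOUeq2}, integrate the tail:
\[
\E_{\ell_0}(e^{\alpha'\tau_L})=1+\int_0^\infty \alpha' e^{\alpha' t}\,\P_{\ell_0}(\tau_L>t)\diff t.
\]
Split the integral at $t=1$. On $[0,1]$, bound the probability by $1$, which contributes at most $e^{\alpha'}-1$. On $[1,\infty)$, use~\eqref{e:ConvOUeq1} together with $e^{2\alpha t}-1\geq c_\alpha e^{2\alpha t}$ for $t\geq1$. The integrand is then at most a constant times $\ell_1\alpha' e^{(\alpha'-\alpha)t}$, which is integrable exactly because $\alpha'<\alpha$. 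This gives
\[
\E_{\ell_0}(e^{\alpha'\tau_L})\leq e^{\alpha'}+\frac{C_0\,\alpha'\,\ell_1}{\sqrt{c_\alpha}\,(\alpha-\alpha')},
\]
which is at most $C_L(\ell_1+1)$ with $C_L$ depending only on $\alpha,\alpha',\sigma$, and not on $\ell_1$ or $\ell_0$.

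The only delicate point is uniformity. The singularity of $(e^{2\alpha t}-1)^{-1/2}$ at $t=0$ is not integrable against the exponential weight near zero, so the split at $t=1$ is necessary rather than cosmetic. One must also keep the constants free of $\ell_0$, which the mean-independent density bound ensures. Beyond these two checks the proof is a direct computation, and no martingale or optional stopping argument is required.
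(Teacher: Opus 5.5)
Your argument is correct and is essentially the paper's proof. Both use the explicit Gaussian marginal of $L(t)$ and the inclusion $\{\tau_L>t\}\subset\{|L(t)|<\ell_1\}$ with a density bound, which gives the same constant $C_0=2\sqrt{\alpha}/(\sqrt{\pi}\sigma)$; both then integrate the tail, where the paper writes $\int_0^\infty \P_{\ell_0}(e^{\alpha'\tau_L}>u)\diff u$ and splits at $u=2$, which is your split after substituting $u=e^{\alpha' t}$. Your mean-independent density bound is slightly cleaner than the paper's detour through the process started at $0$. One correction to your closing remark: since $(e^{2\alpha t}-1)^{-1/2}\sim(2\alpha t)^{-1/2}$ as $t\to0$, the singularity is integrable, so the split at $t=1$ is convenient but not necessary.
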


\begin{proof}
	For $t\geq 0$, $L(t)$ is normally distributed with mean value $\ell_0e^{\alpha t}$ and variance $\sigma^2(e^{2\alpha t}-1)/2\alpha$. 
	Therefore, for $t>0$, 
		\[
			\P_{L(0)=\ell_0}(\tau_L>t)\leq \P_0(\sup_{s\leq t}|L(s)| <\ell_1)\leq \P_0(|L(t)|<\ell_1)\leq \frac{\ell_1 C_0}{\sqrt{e^{2\alpha t}-1}}, 
		\]	
		where $C_0=2\sqrt{\alpha}/(\sigma\sqrt{\pi})$, and in the last inequality we used the explicit expression of the distribution of $L(t)$. This leads to
	\begin{align*}
		\E_0(e^{\alpha'\tau_L})&=\int_0^{+\infty}\P_0(e^{\alpha'\tau_L}>u)\diff u\\
			&\leq 2+ \int_2^{+\infty} \frac{\ell_1 C_0}{\sqrt{u^{2\alpha/\alpha'}-1}}\diff u\\
			&\leq 2+\ell_1C_L. 
	\end{align*}. 
\end{proof}

\begin{proof}[Proof of Proposition~\ref{p:DissipRDFilo11}]
	We introduce 
	\[
		\alpha:=F'_\omega(u_*)+\frac{G_\omega(u_*)}{2}\quad\text{and}\quad \alpha':= (b+\frac{a}{2})G_\omega(u_*)
	\]
	By assumption (see Equation~\eqref{e:RepLine11}), we can choose $a>1$ sufficiently close to 1 such that $\alpha>\alpha'$. 
	
	We introduce $C_L$ as in Relation~\eqref{e:ConvOUeq2} (with $\alpha'$ given above), and set $\eta_1>\eta_0$ large enough so that 
	\[
		\frac{C_L(\eta_1+1)}{\eta_1^a}<\frac{1}{4\eta_0^a}.
	\]
	We introduce $C_0$ as in Relation~\eqref{e:ConvOUeq1}, and set $T>0$ large enough so that 
	\[
		\frac{\eta_1 2^a C_0 e^{\alpha'T}}{\sqrt{e^{2\alpha T}-1}}<\frac{1}{4}. 
	\]
	
	If the statement is false for our choice of constant $T, \eta_1$, there is a sequence $(y_N) =(u_N, r_N)$ of initial states that verify $(r_N)\to +\infty$ and $|\ell(u_N, r_N)|\leq \eta_0$, and such that, for all $N\geq 1$, 
	\begin{equation}\label{e:RD11Contradiction}
		\E_{(u_N,r_N)}\left(V_{u_*}(U(\tau\wedge T), R(\tau\wedge T))- V_{u_*}(u_N, r_N) \right)> -1. 
	\end{equation}
	For all $N\geq 1$, let $(Y_N(t))=(U_N(t), R_N(t))$ be the Markov process defined by the infinitesimal generator $\widehat{\cal{A}}_\omega$ and starting from $(u_N, r_N)$, and let $L^N(t):=\ell(U_N(t), R_N(t))$. Note that $\tau=\tau_{L^N}$. 
	
	By definition of $\widehat{V}_{u_*}$, we have:
 
	\begin{align*}
		\E_{y_N}\left( \frac{\widehat{V}_{u_*}(Y_N(\tau\wedge T))}{r_N^{b+a/2}}\right)&= \E_{y_N}\left( \frac{R_N(\tau\wedge T)^{b+a/2}}{r_N^{b+a/2}}(\frac{\eta_0}{2}\vee |L_N(\tau\wedge T)|)^{-a}\right)\\
		&\leq \left(\frac{2}{\eta_0}\right)^a\E_{y_N}\left(\sup_{t\leq T\wedge \tau}\left|\frac{R_N(t)^{b+a/2}}{r_N^{b+a/2}}-e^{\alpha't}\right|\right)\\
		&\quad +\E_{y_N}\left( \frac{e^{\alpha' (\tau\wedge T)}}{\eta_1^a}\right) + \frac{2^ae^{\alpha'T}}{\eta_0^a}\P_{y_N}(\tau> T). 
	\end{align*}
	Up to a subsequence, we can assume that $(r_N)$ is strictly increasing and that $\ell(u_N, r_N)\to \ell_0\in [-\eta_0, \eta_0]$. We can now use Lemma~\ref{l:OU11}. 
	
	Moreover, $(L^N(t))$ converges in distribution to the divergent Ornstein-Uhlenbeck process $(L(t))$ starting from $\ell_0$ and with SDE given in Relation~\eqref{e:OU11}. Therefore, when $N\to+\infty$, 
	\[
		\E_{y_N}\left( \frac{e^{\alpha' (\tau\wedge T)}}{\eta_1^a}\right) + \frac{2^ae^{\alpha'T}}{\eta_0^a}\P_{y_N}(\tau> T)
		\to \, \E_{\ell_0}\left( \frac{e^{\alpha' (\tau_L\wedge T)}}{\eta_1^a}\right) + \frac{2^ae^{\alpha'T}}{\eta_0^a}\P_{\ell_0}(\tau_L> T).
	\]
	By Equation~\eqref{e:ConvOUeq1} in Lemma~\ref{l:ConvOU}, for $C_0$ independent of $T$ and $\eta_1$, we have
	\[
		\frac{2^ae^{\alpha'T}}{\eta_0^a}\P_{\ell_0}(\tau_L> T)
		\leq \frac{\eta_12^a C_0 e^{\alpha'T}}{\eta_0^a\sqrt{e^{2\alpha T}-1}}\leq\frac{1}{4\eta_0^a}. 
	\]
	Furthermore, because of Relation~\eqref{e:ConvOUeq2} in the same Lemma, for some $C_L$ depending on $\alpha'$, we have 
	\[
		\E_{\ell_0}\left( \frac{e^{\alpha' (\tau_L\wedge T)}}{\eta_1^a}\right)\leq \frac{C_L(\eta_1+1)}{\eta_1^a}\leq \frac{1}{4\eta_0^a}. 
	\]
	Finally, the term on the second line is controlled by Relation~\eqref{e:OU11Lim2}. 
	Using also $|\ell(u_N, r_N)|\leq \eta_0$, for $\eps>0$, we can find $N_0\geq 1$, such that
	\[
		\E_{y_N}\left( \frac{V_{u_*}(Y_N(\tau\wedge T))}{r_N^{b+a/2}}\right)-\frac{V_{u_*}(y_N)}{r_N^{b+a/2}} \leq -\frac{1}{2\eta_0^a}+\eps
	\]
	where $\eps$ contains the error term between the terms in $N$ and their limit when $N\to +\infty$. Taking $\eps$ small enough, since $r_N\to +\infty$, this leads to a contradiction of the inequality~\eqref{e:RD11Contradiction}, which concludes the proof. 
\end{proof}

\subsection{Proof of Proposition~\ref{p:RepLine11}}\label{s:TimeChange}
It remains to show that the time change we made in Relation~\eqref{e:GenUR11} can be reversed without affecting the dissipation.
 
When the dissipation is point-wise, we just write the relation between the original infinitesimal generator $\cal{A}$ defined in Relation~\eqref{e:IGOriginal} and the generator $ \widehat{\cal{A}}_\omega$ of Relation~\eqref{e:GenUR11}. 

For $f\in \cal{C}^3$ and $x\in \cal{H}$, we have 
\begin{equation}\label{e:LinkAfHatAf}
	\cal{A}(f)(x)= \frac{r(x)^{\rho_0-1}}{(1+u(x)^2)^{(\rho_0+1)/2}}\widehat{\cal{A}}_\omega(f\circ \underline{x})(u(x), r(x))
\end{equation}
where $\underline{x}$ is defined in Relation~\eqref{e:RevChangeVarNot11}. 
Since $\rho_0\geq 1$, the point-wise dissipation for $\widehat{\cal{A}}_\omega$ can directly be translated to $\cal{A}$. 
When the dissipation happens after a stopping time $\tau$, i.e. in Proposition~\ref{p:DissipRDFilo11}, we have to be more careful. More precisely, the time change does not change the sample paths, just the speed at which the process covers them.

\subsubsection*{If $G_\omega(u^*)<0$}
	Defining $V_{u_*}(x):= \widehat{V}_{u_*}(u(x), r(x))$ with $\widehat{V}_{u_*}(u,r)$ given in Proposition~\ref{p:DissipRC11}, Relation~\eqref{e:LinkAfHatAf} directly proves Proposition~\ref{p:RepLine11}. 
	
\subsubsection*{If $G_\omega(u_*)>0$}
	Defining $V_{u_*}(x):= \widehat{V}_{u_*}(u(x), r(x))$ with $\widehat{V}_{u_*}(u,r)$ given in Proposition~\ref{p:DissipRD11}, Relation~\eqref{e:LinkAfHatAf} leads to the dissipation in Proposition~\ref{p:RepLine11} for all states $x$ large enough and such that $|u(x)-u_*|\geq \eta_0 r(x)^{-1/2}$. 
	For the state in the inner cone, i.e. states $x$ such that  $|u(x)-u_*|\leq \eta_0 r(x)^{-1/2}$ we use Proposition~\ref{p:DissipRDFilo11}. Let $(X(t))$ be the process associated to the original infinitesimal generator $\cal{A}$.
	Define for $t\geq 0$ the time-change 
	\[
		\phi(t)= \inf\left\{s>0: \int_0^s r(X(s'))^{\rho_0-1}(1+u(X(s'))^2)^{-(\rho_0+1)/2}\diff s'\geq t\right\},  
	\]
	and the process $(\tilde{U}(t), \tilde{R}(t))= (u(X(\phi(t)), r(X(\phi(t))))$, then $(\tilde{U}(t), \tilde{R}(t))$ has the same distribution as $(U(t), R(t))$. See Section~III.21 of~\cite{Rogers1}. 
	Now defining as in Proposition~\ref{p:DissipRDFilo11} the constant $T>0$ and the stopping time 
	\[
		\tilde{\tau}:=\inf\{t\geq 0: |\ell(\tilde{U}(t), \tilde{R}(t))|\geq \eta_1\},
	\]
	we get
	\[
		V_{u_*}(X(\phi(\tilde{\tau}\wedge T)))= \widehat{V}_{u_*}(\tilde{U}(\tilde{\tau}\wedge T), \tilde{R}(\tilde{\tau}\wedge T))
	\]
	and because of Proposition~\ref{p:DissipRDFilo11}, for $\|x\|$ large enough such that $|u(x)-u_*|\leq \eta_0 r(x)^{-1/2}$, we have 
	\[
			\E_{x}\left(V_{u_*}(X(\phi(\tilde{\tau}\wedge T)))\right)-V_{u_*}(x)\leq -1.
	\]
	
	Finally, because $\rho_0\geq 1$ and $u(X(s{-}))\approx u_*$ here, we can find a $C_{u_*}>0$ such that, almost surely  for all $s\geq 0$,  
	\[
		\int_0^s r(X(s'))^{\rho_0-1}(1+u(X(s'))^2)^{-(\rho_0+1)/2}\diff s'\geq s/C_{u_*}
	\]
	which means that almost surely for all $t\leq T$, $\phi(t)\leq tC_{u_*}$, and therefore 
	\[
		\E_x(\phi(\tilde{\tau}\wedge T))\leq C_{u_*}\E_x(\tilde{\tau}\wedge T)\leq C_{u_*}T, 
	\]
	which concludes the proof.

\section{Repulsive curve, $\omega\neq (1,1)$}\label{s:RNeq11}
The goal of this section is to show the following proposition: 
\begin{proposition}\label{p:RepLineNeq11}
	Let $b>0$. Assume $\omega=(1,\gamma)$, $\gamma\in (0,1)$, and that $u_*=u_i\in \cal{P}_\omega$ is such that 
	\begin{equation*}
		F'(u_*)>0.
	\end{equation*}
	Then for any choice of $u_-, u_+$ such that $u_{i-1}<u_-<u^*<u_+<u_{i+1}$, we can find an energy function $V_{u^*}^\omega$ that verifies the $b$-dominance at the boundary of the set (see Definition~\ref{d:b-dominance}), an integrable stopping time $\tau\neq 0$ and some constants $\beta, C_\tau, R_0>0$ such that, \\
	for all $x\in \cal{H}_\omega(u_-, u_+)$ with $\|x\|\geq R_0$, 
	\[
		\E_x\left( V_{u^*}^\omega(X(\tau))-V_{u^*}^\omega(x)\right)\leq -\beta\E_x(\tau)\quad \text{and} \quad \E_x(\tau)\leq C_\tau. 
	\]
\end{proposition}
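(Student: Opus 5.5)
We follow the same architecture as Proposition~\ref{p:RepLine11}, now in the coordinates $(u,r)=(x_2/x_1^\gamma, x_1)$ with the time change suggested by the Poincaré--Lyapunov compactification. We rescale time by $x_1^{\rho_0-\gamma}$ (up to bounded factors, which only slow the process since $\rho_0\geq 1>\gamma$). The first step is an analogue of Proposition~\ref{p:ReducedGenerator11} for this generator $\widehat{\cal{A}}_\omega$. A Taylor expansion gives $u(x+\xi)-u=\xi_2 r^{-\gamma}-\gamma u\xi_1 r^{-1}+O(r^{-2})$ and $r(x+\xi)-r=\xi_1$, while the dominant rates are of order $r^{\rho_0}$. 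Hence
\[
\widehat{\cal{A}}_\omega f \approx F_\omega(u)\partial_u f + r^{\gamma}G_\omega(u)\partial_r f-\gamma u r^{\gamma-1}G_\omega(u)\partial_u f+\frac{\sigma^2_\omega(u)}{2r^{\gamma}}\partial^2_u f+\mathcal{E}_f.
\]
Here $\sigma^2_\omega(u)=\sum_{\cal{R}_\omega}\kappa_\nu u^{y_2^-}(\xi^\nu_2)^2$. The error collects the subdominant reactions, which are smaller by a factor $r^{-\min}$ with a positive gap since $\omega\in\cal{P}$ is an exposed face, together with the higher-order Taylor terms. This is where the intermediate terms mentioned in the strategy (Lemma~\ref{l:Defh}) appear. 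They are of relative order $r^{\gamma-1}$ compared with the linearised drift, so they are negligible for $|u-u_*|\gg r^{\gamma-1}$. Since $\gamma-1<-\gamma/2$ fails in general, I would absorb them with a corrector: replace $u-u_*$ by $h(u,r)=u-u_*-c\,r^{\gamma-1}$ with $c$ chosen to cancel the drift shift at $u_*$.

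Next I build the energy function exactly as in the divergent case at $(1,1)$, but with exponents adapted to the scaling. On $|h|\geq\delta$ take $\delta^{a'-a}r^b|h|^{-a'}$. On $\eta_0 r^{-\gamma/2}/2\leq |h|<\delta$ take $r^b|h|^{-a}$. In the inner cone $|h|\leq \eta_0r^{-\gamma/2}/2$ take the constant-in-$u$ cap $2^a\eta_0^{-a}r^{b+a\gamma/2}$. Applying the expanded generator to $r^b|h|^{-a}$ gives
\[
\frac{r^b}{|h|^{a}}\Big[-aF_\omega'(u_*)+O(\delta)+b\,r^{\gamma-1}G_\omega(u)+a(a+1)\tfrac{\sigma^2}{2r^\gamma h^2}+\dots\Big].
\]
The $G$ term is now of order $r^{\gamma-1}\to0$. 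So, unlike Assumption~\ref{a:OmegaEq1}, no condition linking $F'$ and $G$ is needed, and any $a>0$ works near $u_*$, with $\eta_0$ large to beat the noise term. On the outer cones, positivity of $F_\omega(u)/(u-u_*)$ on $[u_-,u_+]$ away from $u_*$ plus $a'$ large gives pointwise dissipation $\le -\beta r^b$. The two pieces are glued with Lemma~\ref{l:GluyingJonathan}.

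In the inner cone I prove an analogue of Proposition~\ref{p:DissipRDFilo11}. Set $L=(u-u_*)r^{\gamma/2}$, corrected by $h$. Starting from $r=N$, the process $L^N$ stopped at exit of $[-\eta_1,\eta_1]$ converges to the divergent Ornstein--Uhlenbeck process $\diff L=F'_\omega(u_*)L\diff t+\sigma_\omega(u_*)\diff W$. Meanwhile $R/N\to1$ uniformly on $[0,T]$, because $r$ moves at speed $r^\gamma$, so on times $O(1)$ its relative change is $O(N^{\gamma-1})$. The proof is by martingale problem and tightness, as in Lemma~\ref{l:OU11}, using that the generator of $L^N$ converges on smooth functions uniformly on the stopped set. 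Then Lemma~\ref{l:ConvOU} with $\alpha'$ arbitrarily small gives $\E\,\widehat V(\tau\wedge T)\le (\eta_1^{-a}C_L(\eta_1+1)+\text{small}+o(1))N^{b+a\gamma/2}$. This is strictly below $\widehat V(u,N)\geq \eta_0^{-a}N^{b+a\gamma/2}$ once $\eta_1$ and $T$ are large; the argument by contradiction goes as before. Finally the time change is undone as in Section~\ref{s:TimeChange}. The inverse time change is bounded since the rate factor $r^{\rho_0-\gamma}$ is at least $1$. This yields $\E_x(\tau)\le C_\tau$, and the pointwise regions give the inequality with $\tau$ a single jump.

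The main obstacle is the expansion step: controlling the intermediate terms of order $r^{\gamma-1}$, and the subdominant reactions whose order gap may be smaller than $\gamma/2$, so that they are negligible both against $F'_\omega(u_*)(u-u_*)$ in the middle cone and in the OU limit. If the naive corrector $h$ is insufficient, I would expand $F_\omega$ plus the subleading polynomial drift to find the true invariant curve $u_*(r)=u_*+\sum_k c_k r^{-\theta_k}$ to accuracy $o(r^{-\gamma/2})$, and centre all functions on it.
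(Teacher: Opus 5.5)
Your proposal follows the paper's proof: the expansion of Proposition~\ref{p:ReducedGeneratorNeq11}, the energy $r^b|u-h(r)|^{-a}$ capped at $2^a\eta_0^{-a}r^{b+a\gamma/2}$ where $|u-h(r)|r^{\gamma/2}\le\eta_0/2$, pointwise dissipation outside that inner cone, and a divergent Ornstein--Uhlenbeck limit with $R/N\to1$ inside it, followed by undoing the time change. In the inner-cone step, $R/N\to1$ means the exit energy is simply about $\eta_1^{-a}N^{b+a\gamma/2}$, so you do not need the $C_L(\eta_1+1)$ bound from Lemma~\ref{l:ConvOU}, which would force $a>1$.

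The one real caveat is that your fallback is not optional: the one-term corrector $c\,r^{\gamma-1}$ fails in general. Reactions in $\cal{R}'_\omega$ with $\rho_0-\croc{y^-,\omega}<\gamma/2$ move the drift at $u_*$ by far more than $r^{-\gamma/2}$. For $\gamma>4/5$, so do the residual terms of order $r^{2(\gamma-1)}$, including the $r^{\gamma}h'(r)$ term created by the $r$-dependence of the centre. You therefore need exactly the multi-term centre curve of Lemma~\ref{l:Defh}, with Lemma~\ref{l:TechnIneqH} replacing your positivity of $F_\omega(u)/(u-u_*)$.
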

This proposition is proven in several steps. First, similarly to the previous section, we make a change of variable $(u, r)$ to track the deviation to the axis and the norm of the system. Then, in these new coordinates, we introduce the ``local'' infinitesimal generator controlling the behavior of the system in this order of magnitude, and give its first order terms (see Proposition~\ref{p:ReducedGeneratorNeq11}). We define an energy function $\widehat{V}_{u_*}$ point-wise dissipated for $u$ far enough from $u_*$, see Proposition~\ref{p:RDExtConeNeq11}. Finally,  we show the dissipation in the inner cone using a scaling result, see Proposition~\ref{p:DissipRDFiloNeq11}.  

\subsection{Change of variables and infinitesimal generator}
\begin{definition}[Change of coordinates and generator]
	Let $\omega=(1, \gamma)$, $\gamma\in (0,1)$. 
	We introduce the following change of coordinates:
	\begin{equation}\label{e:ChangeVarNeq11}
		(u, r): (x_1, x_2)\mapsto \left(\frac{x_2}{x_1^\gamma}, x_1\right). 
	\end{equation}
	Conversely, we define the function $\underline{x}(u,r)$ as 
	\begin{equation*}
		\underline{x}: (u, r)\mapsto \left(r, ur^{\gamma}\right). 
	\end{equation*}
	We define the generator $\widehat{\mathcal A}_\omega$ of the process $(u(X(t)), r(X(t)))$ after a time-change. For any function $f\in \cal{C}^3$, and $(u, r)\in (0, +\infty)^2$, we set 
	\begin{multline}\label{e:GenURNeq11}
			 \widehat{\cal{A}}_\omega f(u,r):=\frac{1}{r^{\rho_0-\gamma}} \\
			 \sum_{\nu\in\mathcal R}\lambda_\nu(\underline{x}(u,r))\left[f\left(u(\underline{x}(u,r)+\xi^\nu),r(\underline{x}(u,r)+\xi^\nu)\right)-f(u,r)\right].
	\end{multline}
\end{definition}

Unlike in the diagonal case ($\omega=(1,1)$), in the case where $\omega=(1, \gamma)$ and $\gamma<1$, some lower order reactions may have to be taken into account in the Taylor expansion of the generator. Moreover, the invariant curve will not be exactly the curve $x_2=u_*x_1^\gamma$, but will be a curve $(h(r), r)$ where $h(r) \to u_*$ when $r\to +\infty$. 

We first introduce the relevant quantities used in the expansion of the generator.  
The first order involves the functions $F_\omega(u)$ and $G_\omega(u)$ given by the Poincar\'e-Lyapunov compactification, see Definition~\ref{d:CompNeq11}. 
The set of subdominant reactions that appear in the Taylor expansion is $\cal{R}_\omega'$ given by
\begin{equation*}
	\cal{R}_\omega':=\{y^-\to y^+\in \cal{R}\setminus \cal{R}_\omega: \, \croc{y^-, \omega}>\rho_0(\omega)-\gamma\}, 
\end{equation*}
and we introduce the functions $R_\omega(u,r)$ and $B_\omega(u,r)$ (polynomial in the variable $u$) as 
\begin{equation*}
	R_\omega(u,r):= \sum_{\nu \in \cal{R}_\omega'} \kappa_{\nu}r^{\croc{y^-_\nu, \omega}-\rho_0(\omega)}u^{y_{\nu, 2}^-}\left(\xi_2^\nu -\gamma u r^{\gamma-1}\xi_1^\nu\right). 
\end{equation*}
and 
\[
	B_\omega(u,r):= \sum_{\nu \in \cal{R}_\omega'} \kappa_{\nu}\xi_1^\nu r^{\croc{y^-_\nu, \omega}-\rho_0(\omega)}u^{y_{\nu, 2}^-}.
\]
By definition of $\cal{R}_\omega$ and $\cal{R}_\omega'$, $R_\omega(u,r)\to 0$ and $B_\omega(u,r)\to 0$ when $r\to +\infty$ and $u$ bounded. 

The noise term in the generator involves the function $D_\omega(u)$ defined as 
\begin{equation*}
	D_\omega(u)= \sum_{\nu\in \cal{R}_\omega}\kappa_\nu (\xi_2^\nu)^2 u^{y^-_{\nu,2}}. 
\end{equation*}

\begin{proposition}\label{p:ReducedGeneratorNeq11}
Let $u_-<u_+\in (0, +\infty)$. 
For every $f\in C^3$, for $u\in [u_-, u_+]$ and $r\geq 1$,
\begin{multline}\label{e:IGReducedNeq11}
    \widehat{\cal{A}}_\omega f(u,r)
		=\left(F_\omega(u)-\gamma u r^{\gamma-1}G_\omega(u) +R_\omega(u,r)\right)\frac{\partial f}{\partial u}(u,r)\\
		+r^{\gamma} (G_\omega(u)+B_\omega(u,r))\frac{\partial f}{\partial r}(u,r)
		+\frac{D_\omega(u)}{2r^\gamma}\frac{\partial^2 f}{\partial u^2}(u,r)+\mathcal E_f(u,r),
\end{multline}
where $\mathcal E_f$ contains correction terms, of lower order when $r\to +\infty$.

It can be bounded as follows: there exists $C_K>0$, and a non negative function $\eps(r)$ that goes to $0$ when $r\to +\infty$,  such that 
\begin{multline*}
    |\cal{E}_f(u, r)|\leq C_K\big(\frac{1}{r^{\gamma}}\left|\frac{\partial f}{\partial u}\right|
    +\left|\frac{\partial f}{\partial r}\right|
    +\frac{\eps(r)}{r^\gamma}\left|\frac{\partial^2 f}{\partial u^2}\right| \\
		+r^{\gamma}\sum_{\substack{(i,j)\in\{0, \ldots 3\}^2\setminus\{(2,0)\}:\\ 2\leq i+j\leq 3}} r^{-\gamma i}\left|\frac{\partial^{i+j} f}{\partial u^{i}\partial r^{j}}\right|\big).
\end{multline*}
\end{proposition}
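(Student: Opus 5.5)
The plan is to mirror the proof of Proposition~\ref{p:ReducedGenerator11}: a third-order Taylor expansion of each jump term, combined with an expansion of the time-changed rates. Two differences from the diagonal case have to be tracked. First, the two coordinates now live on different scales: $u$ moves by $O(r^{-\gamma})$ per jump while $r$ moves by $O(1)$. Second, the gap $\gamma$ between $\rho_0$ and the time-change exponent $\rho_0-\gamma$ means that subdominant reactions with $\croc{y^-,\omega}>\rho_0-\gamma$ still contribute at or above the error order. These are exactly the reactions collected in $R_\omega$ and $B_\omega$.

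\textbf{Increments.} For a jump $\xi$ with $\|\xi\|\leq K_0$, starting from $\underline{x}(u,r)=(r,ur^\gamma)$, we have $r(\underline{x}+\xi)-r=\xi_1$ exactly. For the angular coordinate,
\[
u(\underline{x}+\xi)-u=\frac{ur^\gamma+\xi_2}{(r+\xi_1)^\gamma}-u=\xi_2 r^{-\gamma}-\gamma u\xi_1 r^{-1}+\eps_u(u,r)\,r^{-1-\gamma},
\]
where $\eps_u$ is bounded on $[u_-,u_+]\times[1,\infty)$. This follows from expanding $(1+\xi_1/r)^{-\gamma}$ to second order.

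\textbf{Taylor expansion of each jump.} Write $\delta_u$ and $\delta_r=\xi_1$ for the two increments and expand $f(u+\delta_u,r+\xi_1)-f(u,r)$ to third order. The first-order terms are $\delta_u\partial_uf+\xi_1\partial_rf$. The second-order terms are $\tfrac12\delta_u^2\partial_u^2f+\delta_u\xi_1\partial_u\partial_rf+\tfrac12\xi_1^2\partial_r^2f$. Third-order terms come with $|\delta_u|^i|\xi_1|^j\lesssim r^{-\gamma i}$. In $\delta_u^2$, only the term $\xi_2^2r^{-2\gamma}$ is kept; the cross and remainder pieces are $O(r^{-1-\gamma})$.

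\textbf{Rates.} Set $\widehat\lambda_\nu=r^{\gamma-\rho_0}\lambda_\nu(\underline{x})$. Since $\lambda_\nu(\underline x)=\kappa_\nu r^{\croc{y^-,\omega}}u^{y_2^-}(1+O(r^{-\gamma}))$, this gives:
\begin{itemize}
\item for $\nu\in\cal{R}_\omega$, $\widehat\lambda_\nu=\kappa_\nu r^\gamma u^{y_2^-}(1+O(r^{-\gamma}))$, where the falling-factorial correction is of relative order $1/x_2\sim r^{-\gamma}$;
\item for $\nu\in\cal{R}'_\omega$, $\widehat\lambda_\nu=\kappa_\nu r^{\gamma}r^{\croc{y^-,\omega}-\rho_0}u^{y_2^-}(1+O(r^{-\gamma}))$;
\item for all other reactions, $\widehat\lambda_\nu=O(1)$.
\end{itemize}

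\textbf{Collecting the leading terms.} Multiply the rates by the increments and sum.
\begin{itemize}
\item The $\partial_uf$ coefficient from $\cal{R}_\omega$ is $\sum\kappa_\nu u^{y_2^-}(\xi_2-\gamma u\xi_1r^{\gamma-1})=F_\omega-\gamma ur^{\gamma-1}G_\omega$. The contribution from $\cal{R}'_\omega$ is exactly $R_\omega$. The other reactions give $O(r^{-\gamma})$, and the $(1+O(r^{-\gamma}))$ rate corrections also give $O(r^{-\gamma})\partial_uf$.
\item The $\partial_rf$ coefficient is $r^\gamma(G_\omega+B_\omega)$, up to $O(1)\partial_rf$.
\item The $\partial_u^2f$ coefficient is $\tfrac12 r^{-\gamma}\sum\kappa_\nu u^{y_2^-}\xi_2^2=D_\omega/(2r^\gamma)$. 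The corrections come from $\cal{R}'_\omega$ (of relative size $r^{\croc{y^-,\omega}-\rho_0}\to0$), from rate corrections, and from the $r^{-1}$ part of $\delta_u^2$. Together they give $\eps(r)r^{-\gamma}|\partial_u^2f|$, with $\eps(r)=\max(r^{\croc{y^-,\omega}-\rho_0},r^{\gamma-1},r^{-\gamma})\to0$.
\item The remaining second- and third-order mixed terms each have total rate $O(r^\gamma)$ times increments $O(r^{-\gamma i})$. This produces the sum $r^\gamma\sum r^{-\gamma i}|\partial_u^i\partial_r^jf|$; the third-order Taylor remainder is handled by the $\cal C^3$ bound at nearby points, uniformly since $u$ stays in a slightly enlarged compact interval.
\end{itemize}

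\textbf{Main obstacle.} The delicate step is the bookkeeping that assigns each subdominant reaction and each rate correction to the correct error term, rather than having it spoil a leading coefficient. The key check is that the $O(1/x_2)$ correction in the falling factorial is $O(r^{-\gamma})$ relative, and not merely $O(r^{-1})$. I would verify this explicitly, together with the uniformity in $u$ (which holds because $u_->0$).
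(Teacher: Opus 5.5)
Your proposal is correct and follows essentially the same route as the paper. You use the exact increment $\xi_1$ in $r$ and the expansion $\xi_2 r^{-\gamma}-\gamma u\xi_1 r^{-1}+O(r^{-1-\gamma})$ in $u$, Taylor-expand each jump, and split the time-changed rates into $\mathcal{R}_\omega$ (order $r^\gamma$ with relative correction $O(r^{-\gamma})$), $\mathcal{R}'_\omega$, and the remaining $O(1)$ reactions; this yields $F_\omega-\gamma u r^{\gamma-1}G_\omega+R_\omega$, $r^\gamma(G_\omega+B_\omega)$ and $D_\omega/(2r^\gamma)$ exactly as in the paper, with your explicit $\eps(r)$ and your remark that the third-order remainder involves derivatives at nearby points (rather than at $(u,r)$) being a slightly more careful version of the same bookkeeping.
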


\begin{proof}
The proof is very similar to the one of Proposition~\ref{p:ReducedGenerator11}. We just have to be very careful about next order reactions. 
Let $f(u,r)$ be a $\cal{C}^3$ function. 

For $\xi\in \R^2$, $\|\xi\|\leq K_0$, 
	\begin{align*}
		&u(x(u, r)+\xi)-u=  \xi_2r^{-\gamma}-\gamma\xi_1ur^{-1}+\frac{\eps_u(u,r)}{r^{1+\gamma}}\\
		&r(x(u,r)+\xi)-r= \xi_1,
	\end{align*}
	where until the end of the proof, for any index $i$, $\eps_i(u,r)$ is a bounded function on $(u, r)\in [u_-, u_+]\times [1, +\infty)$. 

Therefore, using Taylor's expansion, we get 
	\begin{multline*}
		r^{\gamma}(f(u(x(u, r)+\xi), r(x(u,r)+\xi))-f(u, r))\\
		 = \xi_1 r^{\gamma}\frac{\partial f}{\partial r}+ (\xi_2-\gamma\xi_1 u r^{\gamma-1})\frac{\partial f}{\partial u} + \frac{(\xi_2)^2}{2r^\gamma}\frac{\partial^2 f}{\partial u^2} + \cal{E}_{f, \xi}(u,r) 
	\end{multline*}
	where for some $C>0$ large enough, we have 
	\begin{multline*}
		|\cal{E}_{f, \xi}(u,r)|
		\leq C\left(\frac{1}{r}\left|\frac{\partial f}{\partial u}\right|  
		+\frac{1}{r}\left|\frac{\partial^2 f}{\partial u^2}\right|\right) \\
		+Cr^\gamma\sum_{\substack{(i,j)\in\{0, \ldots 3\}^2\setminus\{(2,0)\}:\\ 2\leq i+j\leq 3}} r^{-\gamma i}\left|\frac{\partial^{i+j} f}{\partial u^{i}\partial r^{j}}\right|
	\end{multline*}

	For a reaction $\nu=y_-\to y_+$, we define the rate after the time change:
	\[
		\widehat{\lambda}_\nu(u, r):= r^{\gamma-\rho_0}\lambda_\nu(x(u, r))
	\]

	If $\nu\in \cal{R}_\omega$ and $\nu' \in \cal{R}'_\omega$, we have 
	\begin{multline*}
		\widehat{\lambda}_\nu(u, r)= \kappa_\nu r^\gamma u^{y_{\nu, 2}^-}(1+\frac{\eps_\nu(u, r)}{r^\gamma})\\
		\text{and}\quad
		\widehat{\lambda}_{\nu'}(u, r)= \kappa_{\nu'} r^{\gamma+\croc{\omega, y^-_{\nu'}}-\rho_0} u^{y_{\nu', 2}^-}(1+\frac{\eps_{\nu'}(u, r)}{r^\gamma})
	\end{multline*}
	Finally, if $\nu\notin\cal{R}_\omega\cup \cal{R}_\omega'$, 
	\[
		\widehat{\lambda}_\nu(u, r)= \eps_\nu(u,r). 
	\]
	Combining these results, we get the approximation. 
\end{proof}

\subsection{Central curve of the cone}
In this case we can't use a simple energy function like $f_a^{u_*}$ as in Section~\ref{s:R11}, because $u=u_*$ is not the exact center of the cone, i.e., $x_2=u_*x_1^\gamma$ is not the invariant divergent curve for the deterministic system. 

Intuitively, the curve in the center of the cone is a set $\{(u,r): \, u=h(r)\}$ with $h$ a function that verifies $h(r)\to u_*$ when $r\to +\infty$, and which is invariant for the \emph{leading order deterministic vector field}. 

The ODE approximating the evolution of $(u,r)$ is given by the first order terms (in derivation) in the expansion of the generator, i.e., is given by 
	\[
		\begin{cases}
			\dot u&=F_\omega(u)-\gamma u r^{\gamma-1}G_\omega(u) +R_\omega(u,r)\\
			\dot r&= r^{\gamma} (G_\omega(u)+B_\omega(u,r))
		\end{cases}
	\]
	Note that this ODE is autonomous, and can be written as $\partial_t(u, r)=\Phi (u, r)$. An invariant curve $(h(r), r)$ must verify 
	\begin{equation}\label{e:EqhExact}
		(1, -h'(r))\cdot \Phi (h(r), r)=0. 
	\end{equation}
	
	Because our process $(U(t), R(t))$ is a \emph{jump process}, with jumps in $u$ of size $O(r^{-\gamma})$, we only need an approximation of the solution of Equation~\eqref{e:EqhExact} at the order $r^{-\gamma}$, i.e., we need a function $(h(r))$ such that, for $r\to +\infty$, 
	\begin{multline}\label{e:Defh}
		F_\omega(h(r))-\gamma r^{\gamma-1} h(r) G_\omega(h(r)) +R_\omega(h(r),r)\\
		- r^{\gamma}h'(r) (G_\omega(h(r))+B_\omega(h(r),r))=O(r^{-\gamma}). 
	\end{multline}

\begin{lemma}\label{l:Defh}
	There exist an integer $p\geq 0$, an ordered set $0<\gamma_1<\gamma_2 \ldots <\gamma_p<\gamma$ and some real numbers $(\alpha_i)\in \R^p$ such that, defining 
	\[
		h(r):= u_*+\sum_{i=1}^p \alpha_ir^{-\gamma_i}, 
	\]
	Relation~\eqref{e:Defh} holds.
	Moreover, if $p\geq 1$, when $r\to +\infty$,  
	\[
		\alpha_1r^{-\gamma_1}\sim\frac{\gamma r^{\gamma-1} h(r) G_\omega(h(r))-R_\omega(u_*,r)}{F'_\omega(u_*)}\quad \text{and}\quad r^{\gamma}h'(r)=o(1). 
	\]
\end{lemma}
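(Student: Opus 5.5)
We will construct $h$ by an iterative expansion in powers of $r^{-1}$ with exponents drawn from a finite set. Write $h(r)=u_*+\delta(r)$ with $\delta(r)\to 0$. Then the left-hand side of~\eqref{e:Defh} becomes
\[
F'_\omega(u_*)\delta+O(\delta^2)-\gamma r^{\gamma-1}(u_*+\delta)G_\omega(u_*+\delta)+R_\omega(u_*+\delta,r)-r^\gamma h'(r)\bigl(G_\omega(h)+B_\omega(h,r)\bigr).
\]
The perturbation terms are explicit. The term $\gamma r^{\gamma-1}uG_\omega(u)$ is a power of $r$ with exponent $\gamma-1$, times a polynomial in $u$. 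The function $R_\omega(u,r)$ is a finite sum of terms $r^{-e_\nu}P_\nu(u)$, where $e_\nu=\rho_0-\croc{y_\nu^-,\omega}\in(0,\gamma)$ (and also $e_\nu+1-\gamma$ from the $r^{\gamma-1}$ factor). So every forcing term is of the form $r^{-e}$ times a polynomial in $u$, with $e$ in a finite set $E_0\subset(0,\infty)$.

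\textbf{Ansatz and exponents.} Let $\Gamma$ be the additive semigroup generated by $E_0$, and keep only the finitely many elements of $\Gamma\cap(0,\gamma)$; these are the $\gamma_i$. We look for $\delta(r)=\sum_i\alpha_ir^{-\gamma_i}$. Substituting and expanding $F_\omega,G_\omega,R_\omega,B_\omega$ in Taylor series around $u_*$ (they are polynomials, so the expansion is exact) gives an expansion of the left-hand side as a finite sum of terms $c_\eta r^{-\eta}$ with $\eta\in\Gamma$, plus terms of order $r^{-\gamma}$ or smaller. The term $r^\gamma h'(r)$ is of order $\sum_i\alpha_i\gamma_ir^{\gamma-\gamma_i-1}$, hence $o(1)$ because $\gamma_i>0>\gamma-1$. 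Its exponents $\gamma_i+1-\gamma$ also lie in $\Gamma$ once $1-\gamma$ is included in $E_0$, which is why we add $1-\gamma$ to the generators.

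\textbf{Triangular solve.} Order the exponents $\gamma_1<\dots<\gamma_p$. The coefficient of $r^{-\gamma_k}$ has the form $F'_\omega(u_*)\alpha_k+\Phi_k(\alpha_1,\dots,\alpha_{k-1})$. This holds because any nonlinear or cross term involving $\alpha_j r^{-\gamma_j}$ multiplies it by at least one further factor of positive exponent, so it contributes only to strictly larger exponents. Since $F'_\omega(u_*)>0$, in particular $F'_\omega(u_*)\neq0$, we can solve recursively for each $\alpha_k$ so that all coefficients with $\eta<\gamma$ vanish. The remainder is then $O(r^{-\gamma})$; the Taylor remainders are controlled because $\delta$ is bounded and $u$ ranges over a compact set. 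If $E_0\cap(0,\gamma)=\emptyset$, we take $p=0$ and $h\equiv u_*$, and the left-hand side is already $O(r^{-\gamma})$.

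\textbf{Leading coefficient.} For the leading coefficient, the smallest exponent $\gamma_1$ is the minimum of $E_0$. At that order the equation reads $F'_\omega(u_*)\alpha_1r^{-\gamma_1}=[\gamma r^{\gamma-1}hG_\omega(h)-R_\omega(u_*,r)]_{\text{leading}}$, which is the stated asymptotic equivalence. This asks for care in the case where the leading terms cancel. In that case $\alpha_1$ could be zero, and we simply drop that exponent from the list.

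\textbf{Main obstacle.} The main obstacle is the bookkeeping. We must check that the set of exponents stays finite and below $\gamma$. We must also check that the $r^\gamma h'$ and $B_\omega$ cross terms only feed higher exponents. This is where $\gamma-1<0$ and the strict positivity of every generator of $\Gamma$ are essential.
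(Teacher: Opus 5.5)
Your proposal is correct and follows the same route as the paper. The paper's proof is only a brief induction sketch: identify $\gamma_1,\alpha_1$, then the next exponent and coefficient, and so on, using $F'_\omega(u_*)\neq 0$, with $r^\gamma h'(r)=O(r^{\gamma-\gamma_1-1})\to 0$. Your semigroup $\Gamma\cap(0,\gamma)$ and the triangular form of the coefficient equations simply make explicit the finiteness and solvability that this sketch leaves implicit.
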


\begin{proof}
	The proof can be done by induction, first identifying $\gamma_1$ and $\alpha_1$, and so on. The number of exponents $p$ is finite because the number of degrees in $R_\omega(u)$ is finite. Since $\gamma<1$, for $r\to +\infty$, we clearly have $r^\gamma h'(r)= O(r^{\gamma-\gamma_1-1})\to 0$. 
\end{proof}
Until the end of the section, we consider that $(h(r))$ is fixed, given by Lemma~\ref{l:Defh}. 
\begin{lemma}\label{l:TechnIneqH}
	Define the function $H(u, r)$ as 
	\begin{equation*}
		H(u,r):= F_\omega(u)-\gamma r^{\gamma-1} u G_\omega(u) +R_\omega(u,r)- r^{\gamma}h'(r) (G_\omega(u)+B_\omega(u,r))
	\end{equation*}
	Under the assumptions and notations of Proposition~\ref{p:RepLineNeq11}, there exist an $r_0>0$ and a $\eps>0$ such that, for all $u\in [u_-, u_+]$ that verify $|u-h(r)|r^{\gamma}\to +\infty$ when $r\to +\infty$, for all $r>r_0$,  
	\begin{equation}\label{e:TechnIneqH}
		\frac{H(u,r)}{u-h(r)}>\eps. 
	\end{equation}
\end{lemma}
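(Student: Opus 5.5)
The idea is to compare $H(u,r)$ with the exact cancellation satisfied by $h$. By Lemma~\ref{l:Defh}, the quantity $H(h(r),r)$ is exactly the left-hand side of Relation~\eqref{e:Defh}, so $H(h(r),r)=O(r^{-\gamma})$. I would therefore write
\[
	H(u,r)=H(h(r),r)+\bigl(H(u,r)-H(h(r),r)\bigr)
\]
and apply the mean value theorem in $u$ to the second term. This gives
\[
	\frac{H(u,r)}{u-h(r)}=\partial_u H(\theta,r)+\frac{O(r^{-\gamma})}{u-h(r)},
\]
for some $\theta$ between $u$ and $h(r)$. Under the hypothesis $|u-h(r)|r^{\gamma}\to+\infty$, the last term tends to $0$ uniformly.

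The next step is to control $\partial_u H$. All the correction terms in $H$ are polynomial in $u$ with coefficients that vanish as $r\to\infty$, uniformly on $[u_-,u_+]$:
\begin{itemize}
\item $\gamma r^{\gamma-1}uG_\omega(u)$ vanishes because $\gamma<1$;
\item $R_\omega(u,r)$ and $B_\omega(u,r)$ vanish by the definition of $\cal{R}'_\omega$;
\item $r^\gamma h'(r)$ vanishes by Lemma~\ref{l:Defh}.
\end{itemize}
Hence $\partial_u H(u,r)=F'_\omega(u)+o(1)$ uniformly for $u\in[u_-,u_+]$.

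This already gives the conclusion near the curve. Since $F'_\omega(u_*)>0$, choose $\delta>0$ with $F'_\omega\geq F'_\omega(u_*)/2$ on $[u_*-2\delta,u_*+2\delta]$. Because $h(r)\to u_*$, for $|u-u_*|\leq\delta$ and $r$ large the intermediate point $\theta$ stays in that interval. The ratio is then at least $F'_\omega(u_*)/4$ for $r$ large.

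Away from the curve, for $|u-u_*|\geq\delta$ with $u\in[u_-,u_+]$, I would argue directly rather than by the mean value theorem. The only root of $F_\omega$ in $[u_-,u_+]$ is $u_*$ (since $u_{i-1}<u_-$ and $u_+<u_{i+1}$), and $F_\omega$ is positive to the right of $u_*$ and negative to the left. So $F_\omega(u)/(u-u_*)\geq c>0$ on this set. Since $H=F_\omega+o(1)$ uniformly and $h(r)\to u_*$, we get $H(u,r)/(u-h(r))\geq c/2$ for $r$ large. Taking $\eps=\min(c/2,F'_\omega(u_*)/4)$ and $r_0$ large finishes the proof.

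The main obstacle is bookkeeping rather than ideas. One has to check that every error term is genuinely uniform in $u\in[u_-,u_+]$, in particular the $O(r^{-\gamma})$ in Relation~\eqref{e:Defh}, including the $u$-derivatives of $R_\omega$ and $B_\omega$. One also has to read the hypothesis ``$|u-h(r)|r^\gamma\to\infty$'' as a uniform lower bound $|u-h(r)|\geq \eta(r)r^{-\gamma}$ with $\eta(r)\to\infty$.
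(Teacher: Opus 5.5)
Your proof is correct and uses essentially the same two-region argument as the paper. Near the curve, you combine $H(h(r),r)=O(r^{-\gamma})$ with $\partial_u H=F'_\omega+o(1)$; away from it, you use $F_\omega(u)/(u-u_*)\geq c>0$ together with $u-h(r)\approx u-u_*$.

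The only difference is where the split is made. The paper splits at the shrinking distance $|u-h(r)|=r^{-\gamma_1/2}$ and Taylor-expands at $h(r)$. Your fixed-$\delta$ split with the mean value theorem is slightly more robust, since it only needs the correction terms in $H$ and $\partial_u H$ to be $o(1)$ uniformly, rather than $o(r^{-\gamma_1/2})$.
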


\begin{proof}
	Note that by definition, $F_\omega(u)$, $G_\omega(u)$, $R_\omega(u, r)$ and $B_\omega(u,r)$ are all polynomial functions in the variable $u$, and hence are $C^2$. We carry out the proof in two steps. 

	\emph{Far enough from $h(r)$}: Consider $u\in [u_-, u_+]$ such that $|u-h(r)|>r^{-\gamma_1/2}$, where $\gamma_1$ is given in the definition of $h(r)$, see Lemma~\ref{l:Defh}.  For $r$ large enough, we have 
	\[
		\frac{u-u_*}{u-h(r)}>1/2.
	\]
	Because of the assumption $F'_\omega(u_*)>0$, and the definition of the $u_i$ as roots of the polynomial function $F_\omega$, we have also, for $u\neq u_*$ and some $\eps>0$ small enough, 
	\[
		\frac{F_\omega(u)}{u-u_*}>4\eps
	\]
	and hence 
	\[
		\frac{F_\omega(u)}{u-h(r)}=\frac{F_\omega(u)}{u-u_*}\frac{u-u_*}{u-h(r)}>2\eps.
	\]
	Moreover, by definition of $h(r)$, 
	\[
		\lim_{r\to +\infty}r^{\gamma_1/2}\left|-\gamma r^{\gamma-1} u G_\omega(u) +R_\omega(u,r)- r^{\gamma}h'(r) (G_\omega(u)+B_\omega(u,r))\right|=0, 
	\]
	which leads to the inequality~\eqref{e:TechnIneqH} for $u\in [u_-, u_+]$ such that $|u-h(r)|>r^{-\gamma_1/2}$. 
	
	\emph{Close to $h(r)$:} For $|u-h(r)|\leq r^{-\gamma_1/2}$, since by assumption $|u-h(r)|r^{\gamma}\to +\infty$ we have, when $r\to +\infty$
	\begin{align*}
		\frac{H(u, r)}{u-h(r)}&=\frac{H(h(r), r)}{u-h(r)} +\frac{\partial H}{\partial u}(h(r), r) +o(1)\\
		&=\frac{H(h(r), r)}{u-h(r)} +F'_\omega(h(r))-\gamma r^{\gamma-1}G_\omega(h(r)) -\gamma h(r)r^{\gamma-1}G'_\omega(h(r))\\
		&\quad +\partial_u R_\omega(h(r), r) -h'(r)r^\gamma (G'_\omega(h(r))+\partial_u B_\omega(h(r), r)) +o(1) \\
		&= \frac{O(r^{-\gamma})}{u-h(r)} + (F'_\omega(u_*) +O(r^{-\gamma_1})) +o(1)\\
		&=F'_\omega(u_*) +o(1)
	\end{align*}
	where in the second equality, we used $H(h(r), r)=O(r^{-\gamma})$, and $\gamma<1$. Since $F'_\omega(u^*)>0$, this concludes the proof. 
\end{proof}

\subsection{Dissipation in the outer cones}
As in the proof for $\omega=(1,1)$ (see Proposition~\ref{p:DissipRD11}), we first prove the dissipation point-wise outside an inner cone.  
For a choice of $\eta_0>0$ and $a>0$, we introduce the candidate energy function on $\{(u,r): u\in [u_-, u_+]\}$: 
\begin{equation}\label{e:EnergyOmegaNeq11}
	\widehat{V}_{u_*}: (u,r)\mapsto 
	\begin{cases}
		\frac{r^b}{|u-h(r)|^a} \quad &\text{for   } u\in [u_-, u_+]: \, |u-h(r)|r^{\gamma/2}\geq \eta_0/2,\\
		\frac{2^ar^{b+a\gamma/2}}{\eta_0^a}\quad &\text{for   } u\in [u_-, u_+]: \, |u-h(r)|r^{\gamma/2}< \eta_0/2. 
	\end{cases}
\end{equation} 

The following proposition shows that this Energy function is point-wise dissipated by the process when it is far enough from the invariant curve. 

\begin{proposition}\label{p:RDExtConeNeq11}
	Under the assumption of Proposition~\ref{p:RepLineNeq11}, we can find some $r_0>0$, some $\eta_0>0$ and $\beta>0$ such that, defining $\widehat{V}_{u_*}$ as in Relation~\eqref{e:EnergyOmegaNeq11}
	for all $r\geq r_0$ and $u\in [u_-, u_+]$ with $|u-h(r)|r^{\gamma/2}\geq \eta_0$, 
	\[
		\widehat{\cal{A}}_\omega \widehat{V}_{u_*}(u,r)\leq -\beta r^b,
	\]
	where $\widehat{\cal{A}}_\omega$ is defined in Relation~\eqref{e:GenURNeq11}. 
\end{proposition}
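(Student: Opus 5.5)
The plan is to apply the expansion of Proposition~\ref{p:ReducedGeneratorNeq11} to $f(u,r)=r^b|u-h(r)|^{-a}$ on the region $|u-h(r)|r^{\gamma/2}\geq\eta_0$. The function $f$ is smooth there, and at distance at least $\eta_0 r^{-\gamma/2}$ from the curve a single jump (of size $O(r^{-\gamma})$ in $u$) cannot reach the inner branch, so the generator only sees $f$. Write $w=u-h(r)$. The derivatives are
\[
\partial_u f=-a\,\mathrm{sign}(w)f/|w|,\qquad \partial_r f=f\,(b/r+a h'(r)/w),\qquad \partial_u^2 f=a(a+1)f/w^2,
\]
and the higher mixed derivatives are of order $f\,r^{-j}|w|^{-i}$ up to factors involving $h'$, $h''$, which are $O(r^{-1-\gamma_1})$. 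Grouping the drift terms, the $\partial_u f$ coefficient combined with the $-r^\gamma h'(r)(G_\omega+B_\omega)$ part of the $\partial_r f$ term gives exactly $-aH(u,r)/w$, where $H$ is the function of Lemma~\ref{l:TechnIneqH}. The remaining $r$-derivative part contributes $b\,r^{\gamma-1}(G_\omega+B_\omega)$. We therefore aim at
\begin{equation*}
\frac{\widehat{\cal{A}}_\omega f}{f}= -a\frac{H(u,r)}{u-h(r)} + O(r^{\gamma-1}) + \frac{a(a+1)D_\omega(u)}{2r^\gamma w^2} + \text{(error terms)}.
\end{equation*}

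Next, I would bound the error $\mathcal E_f$ using the estimate of Proposition~\ref{p:ReducedGeneratorNeq11}, term by term, relative to $f$:
\begin{itemize}
\item $r^{-\gamma}|\partial_uf|/f=O(r^{-\gamma}|w|^{-1})$, which is at most $O(r^{-\gamma/2}/\eta_0)\to0$;
\item $|\partial_r f|/f=O(r^{-1}+|h'|/|w|)\to 0$;
\item the $\eps(r)r^{-\gamma}w^{-2}$ term is at most $\eps(r)/\eta_0^2$;
\item the third-order terms $r^{\gamma}r^{-3\gamma}|w|^{-3}\le \eta_0^{-3}r^{-\gamma/2}$, while the mixed terms carry extra powers of $r^{-1}$ and vanish.
\end{itemize}
Since $|w|r^{\gamma}\ge\eta_0 r^{\gamma/2}\to\infty$, Lemma~\ref{l:TechnIneqH} gives $H/w>\eps$ uniformly. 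Hence
\[
\widehat{\cal{A}}_\omega f/f\le -a\eps+o(1)+\frac{a(a+1)\sup D_\omega}{2\eta_0^2}+O(\eta_0^{-3}r^{-\gamma/2}).
\]
Unlike the diagonal case, there is no $bG$ term at leading order (it is $O(r^{\gamma-1})$), so any $a>0$ works. Fix $a=1$, say, then choose $\eta_0$ large so that the noise term is below $a\eps/4$, and $r_0$ large to absorb the $o(1)$ terms. This gives $\widehat{\cal{A}}_\omega f\le -(a\eps/2) f$.

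Finally, on $[u_-,u_+]$ we have $|w|$ bounded above, so $f\ge c\,r^b$, giving $\widehat{\cal{A}}_\omega\widehat V_{u_*}\le-\beta r^b$. One caveat: for $u$ near the edges $u_\pm$ we need $H/w>\eps$ all the way to the boundary of the interval. This follows from the "far enough" part of the proof of Lemma~\ref{l:TechnIneqH}, since $u_\pm$ lie strictly between consecutive roots of $F_\omega$.

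The main obstacle I expect is the bookkeeping showing that the $h$-dependent pieces of $\partial_r f$ recombine exactly into $H$, together with controlling the error terms uniformly down to the scale $|w|\sim\eta_0 r^{-\gamma/2}$. In particular, the third-order Taylor terms must be checked to be $o(1)$ relative to $f$ only because $r^{-2\gamma}|w|^{-3}\ll r^{-\gamma}|w|^{-2}$ at that scale. If the mixed $\partial_u\partial_r$ terms involving $h''$ cause trouble, I would fall back to the cruder bound $h^{(k)}(r)=O(r^{-k})$.
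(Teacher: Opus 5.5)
Your proposal is correct and follows the paper's own proof. You expand $\widehat{\cal{A}}_\omega$ on $r^b|u-h(r)|^{-a}$ via Proposition~\ref{p:ReducedGeneratorNeq11}, and regroup the drift into $-aH(u,r)/(u-h(r))+br^{\gamma-1}(G_\omega+B_\omega)$. You then absorb the noise and error terms by taking $\eta_0$ (and then $r_0$) large, and conclude with Lemma~\ref{l:TechnIneqH} and the lower bound $\widehat{V}_{u_*}\geq Cr^b$.

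Your additional points only make explicit what the paper leaves implicit. These are the check that a single jump from $|u-h(r)|r^{\gamma/2}\geq\eta_0$ cannot reach the inner branch, the term-by-term bookkeeping of the third-order and mixed errors, and the observation that any $a>0$ works since the $bG_\omega$ term is only $O(r^{\gamma-1})$.
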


\begin{proof}
	This proof relies once again on a careful Taylor expansion of the generator. Using the approximation given in Relation~\eqref{e:IGReducedNeq11} with the function defined above, we get  
	\begin{align*}
		\frac{\widehat{\cal{A}}_\omega \widehat{V}_{u_*}(u,r)}{\widehat{V}_{u_*}(u,r)}
		=-a\frac{H(u,r)}{u-h(r)} +br^{\gamma-1}(G_\omega(u)+B_\omega(u,r)) +\frac{a(a+1)D_\omega(u)}{2r^\gamma|u-h(r)|^2}+\overline{\cal{E}}(u,r), 
	\end{align*}	
	where $H(u,r)$ has been defined in Lemma~\ref{l:TechnIneqH}, and
	\[
		|\overline{\cal{E}}(u,r)|\leq C_K\left(\frac{2}{r^\gamma|u-h(r)|} + 2 r^{\gamma-1}+\frac{\eps(r)}{r^\gamma|u-h(r)|^2}\right). 
	\]
	Lemma~\ref{l:TechnIneqH} and the lower bound $\widehat{V}_{u_*}(u,r)>Cr^b$ for some $C>0$ on the region studied conclude the proof, taking $\eta_0$ large enough,. 
\end{proof}

\subsection{Dissipation in the inner cone}

\begin{proposition}\label{p:DissipRDFiloNeq11}
	We keep the notations of Proposition~\ref{p:RDExtConeNeq11}. 
	We can find some $T>0$ and $\eta_1>\eta_0$ and $R_0\geq 1$ such that, 
	for any initial point $(u,r)$ with $r\geq R_0$ and $|u-h(r)|r^{\gamma/2}\leq \eta_0$, 
	\[
		\E_{(u,r)}\left(\widehat{V}_{u_*}(U(\tau\wedge T), R(\tau\wedge T))- \widehat{V}_{u_*}(u, r) \right)\leq -1, 
	\]
	where $(U(t), R(t))$ is the Markov process defined by the infinitesimal generator $\widehat{\cal{A}}_\omega$, see Relation~\ref{e:GenURNeq11}, and $\tau$ is the following stopping time:
	\begin{equation*}
		\tau:= \inf\{t\geq 0: |U(t)-h(R(t))|R(t)^{\gamma/2}\geq \eta_1\}. 
	\end{equation*}
\end{proposition}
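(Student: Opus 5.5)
I will follow the proof of Proposition~\ref{p:DissipRDFilo11}, replacing the deviation variable $\ell(u,r)=(u-u_*)\sqrt r$ by
\[
	\ell_\gamma(u,r):=(u-h(r))\,r^{\gamma/2},
\]
where $h$ is the central curve of Lemma~\ref{l:Defh}.

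\textbf{Step 1: scaling limit.} First I prove a scaling lemma analogous to Lemma~\ref{l:OU11}. Take initial states $(u_N,N)$ with $\ell_\gamma(u_N,N)\to\ell_0\in[-\eta_0,\eta_0]$. I claim that, up to $\tau_{L^N}\wedge T$ with $\tau_{L^N}$ as in~\eqref{e:DefTauZ}, the process $L^N(t)=\ell_\gamma(U_N(t),R_N(t))$ converges in distribution to the divergent Ornstein--Uhlenbeck process
\[
	\diff L=F'_\omega(u_*)L\,\diff t+\sqrt{D_\omega(u_*)}\,\diff W .
\]
Moreover, on this time scale $R_N(t)/N\to 1$ uniformly, and in $L^1$ for powers $R_N^\alpha/N^\alpha$.

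Both claims come from applying Proposition~\ref{p:ReducedGeneratorNeq11} to $f=\ell_\gamma$ and to $f=r$. For $f=r$, the drift is $r^\gamma(G_\omega+B_\omega)$, so $R$ changes by $O(N^\gamma T)=o(N)$ over $[0,T]$; this is where $\gamma<1$ helps, unlike the diagonal case where $R$ grows like $e^{Gt}$. For $f=\ell_\gamma$, the first-order term is
\[
	r^{\gamma/2}H(u,r)+O\big(r^{\gamma-1}|u-h(r)|r^{\gamma/2}\big),
\]
with $H$ as in Lemma~\ref{l:TechnIneqH}. The definition of $h$ gives $H(h(r),r)=O(r^{-\gamma})$, and linearizing gives $\partial_uH(h(r),r)=F'_\omega(u_*)+o(1)$. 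Hence the drift is $F'_\omega(u_*)\ell_\gamma+O(r^{-\gamma/2})+o(1)\ell_\gamma$. The second-order term gives the diffusion coefficient $D_\omega(u)\to D_\omega(u_*)$. The error $\cal{E}_f$ is small because $\partial_u^i\ell_\gamma=O(r^{\gamma/2})$ for $i\le1$ and vanishes for $i\ge2$, while each $r$-derivative brings a factor $r^{-1}$.

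Tightness and identification of the limit then follow by the martingale-problem argument of Section~\ref{s:ProofOU11}, stopped at the exit of $[-\eta_1,\eta_1]$. I expect this step to be the main obstacle. The delicate part is controlling the cross terms $\gamma ur^{\gamma-1}G_\omega$ and $R_\omega$ inside the drift of $\ell_\gamma$. I would absorb them through the exact cancellation encoded in~\eqref{e:Defh} and then bound the remainder uniformly on $|\ell_\gamma|\le\eta_1$.

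\textbf{Step 2: estimate and contradiction.} Choose $\eta_1$ and $T$ from Lemma~\ref{l:ConvOU} with $\alpha=F'_\omega(u_*)$. Since the norm no longer grows exponentially, I take $\alpha'$ to be any small positive number, or simply work with bounded exponentials on $[0,T]$; this is why no condition like $F'>bG$ is needed here. Concretely, pick $\eta_1$ with $C_L(\eta_1+1)/\eta_1^a<1/(4\eta_0^a)$, and then $T$ with $\eta_12^aC_0e^{\alpha'T}/\sqrt{e^{2\alpha T}-1}<1/4$.

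Arguing by contradiction with a sequence $r_N\to\infty$, normalize by $r_N^{b+a\gamma/2}$ and use
\[
	\widehat V_{u_*}=R^{b+a\gamma/2}\big(\tfrac{\eta_0}{2}\vee|L|\big)^{-a}.
\]
At $\tau$ the value is at most $R^{b+a\gamma/2}\eta_1^{-a}$, and on $\{\tau>T\}$ it is at most $(2/\eta_0)^aR^{b+a\gamma/2}$. Combined with $R/r_N\to1$ and the OU estimates, the limit of the normalized increment is at most $-1/(2\eta_0^a)$. The initial value is at least $\eta_0^{-a}$, since $|\ell_0|\le\eta_0$ gives $\widehat V_{u_*}(u,r)\ge\eta_0^{-a}r^{b+a\gamma/2}$. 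Multiplying back by $r_N^{b+a\gamma/2}\to\infty$ yields an increment $\le-1$ for $N$ large, which contradicts the assumed failure of the statement.
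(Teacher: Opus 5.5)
Your proposal is correct and follows the paper's route. The paper proves this statement by transposing Lemma~\ref{l:OU11} and Proposition~\ref{p:DissipRDFilo11} to the variable $(u-h(r))r^{\gamma/2}$: it obtains the divergent Ornstein--Uhlenbeck limit~\eqref{e:OUNeq11} with $R_N/N\to 1$ (Lemma~\ref{l:OUNeq11}) and then runs the same contradiction argument, as you do. Your drift computation via $H(h(r),r)=O(r^{-\gamma})$ and $\partial_u H(h(r),r)\to F'_\omega(u_*)$ correctly fills in the step the paper leaves implicit.

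One detail needs adjusting. Your concrete choice of $\eta_1$ through $C_L(\eta_1+1)/\eta_1^a<1/(4\eta_0^a)$ requires $a>1$, which Proposition~\ref{p:RDExtConeNeq11} does not impose. Use your other option, $\alpha'=0$, instead. Since $R_N/r_N\to 1$, the normalized contribution on $\{\tau\le T\}$ is asymptotically at most $\eta_1^{-a}$, so it suffices to take $\eta_1>4^{1/a}\eta_0$.
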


We set the function $\ell$ as 
\begin{equation*}
	\ell:(u, r)\mapsto (u-h(r))r^{\gamma/2}. 
\end{equation*}

Similarly to the case of $\omega=(1,1)$, the proof of Proposition~\ref{p:DissipRDFiloNeq11} relies on a limit theorem for the sample paths.  
We show that when starting from $|\ell_0|\leq \eta_0$, the deviation to the axis, i.e., the process $(\ell(U(t), R(t)))$ converges to a divergent Ornstein-Uhlenbeck given by the SDE
	\begin{equation}\label{e:OUNeq11}
		\diff L(t)= F'_\omega(u_*)L(t)\diff t + \sqrt{D_\omega(u_*)}\diff W_t, 
	\end{equation}
	with $(W_t)$ a standard Wiener process. 

The convergence is shown until the first time the process exits the interval $[-\eta_1, \eta_1]$ which, given the stopping time chosen in Filonov's criterion in Proposition~\ref{p:DissipRDFiloNeq11}, is enough to estimate the energy.

For a process $(Z(t))$ on $\R$, we use the same notation for $\tau_Z$ as in Relation~\eqref{e:DefTauZ}: 
\[
	\tau_Z=\inf\{t\geq 0: |Z(t)|\geq \eta_1\}. 
\] 

\begin{lemma}\label{l:OUNeq11}
	Set $L^N(t)=\ell(U^N(t),R^N(t))$.
	Let $(u_N)\in \R^{\Z_{\geq 0}}$ such that $\ell(u_N, N)\to \ell_0$ for some $\ell_0\in [-\eta_0, \eta_0]$. 
	Let $(U^N(t), R^N(t))$ be the Markov process with infinitesimal generator given by Relation~\ref{e:GenURNeq11}, starting from $(u_N, N)$. 
	Then for $T>0$, for the convergence in distribution, 
	\begin{multline*}
		\lim_{N\to +\infty}\left(\left(L^N(t\wedge \tau_{L^N}), \frac{R_N(t\wedge \tau_{L^N})}{N}\right)_{t\in [0, T]}, \tau_{L_N}\wedge T\right) \\
		= ((L(t\wedge \tau_L), 1)_{t\in [0, T]}, \tau_L\wedge T),
	\end{multline*}
	where $(L(t))$ is a divergent Ornstein-Uhlenbeck process starting from $\ell_0$ and solution of the SDE~\eqref{e:OUNeq11}. 

	Moreover, for $\alpha\geq 1$, for any $\eps>0$, we can find $N_0\geq 0$ such that, for all $N\geq N_0$,
	\begin{equation}\label{e:OUNeq11Lim2}
		\E_{(u_N, N)}\left(\sup_{t\in [0, T\wedge\tau_{L^N}]}\left|\frac{R_N(t)^\alpha}{N^\alpha}-1\right|\right)<\eps.
	\end{equation}	
\end{lemma}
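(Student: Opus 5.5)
The plan is a standard diffusion approximation for the pair $(L^N,R^N/N)$ stopped at $\tau_{L^N}$. First, compute the generator on the functions $\ell(u,r)=(u-h(r))r^{\gamma/2}$ and $r/N$ using Proposition~\ref{p:ReducedGeneratorNeq11}. For $\ell$ we have $\partial_u\ell=r^{\gamma/2}$, $\partial_u^2\ell=0$ and $\partial_r\ell=-h'(r)r^{\gamma/2}+\frac{\gamma}{2}(u-h(r))r^{\gamma/2-1}$. Substituting into \eqref{e:IGReducedNeq11} gives
\[
\widehat{\cal{A}}_\omega\ell(u,r)=r^{\gamma/2}H(u,r)+\tfrac{\gamma}{2}r^{\gamma-1}(G_\omega(u)+B_\omega(u,r))\,\ell+\cal{E}_\ell ,
\]
with $H$ as in Lemma~\ref{l:TechnIneqH}. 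On the event $|\ell|\le\eta_1$ we have $u-h(r)=O(r^{-\gamma/2})$. By the defining property \eqref{e:Defh} of $h$, $H(h(r),r)=O(r^{-\gamma})$, and the $u$-derivative of $H$ at $h(r)$ equals $F'_\omega(u_*)+o(1)$, as in the second step of the proof of Lemma~\ref{l:TechnIneqH}. Hence $r^{\gamma/2}H(u,r)=F'_\omega(u_*)\ell+o(1)$ uniformly for $|\ell|\le\eta_1$. The term $r^{\gamma-1}\ell$ is $o(1)$ because $\gamma<1$, and the error bound of Proposition~\ref{p:ReducedGeneratorNeq11} gives $\cal{E}_\ell=O(r^{-\gamma/2}+r^{\gamma-1})=o(1)$. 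Thus the drift of $L^N$ is $F'_\omega(u_*)L^N+o(1)$, with no $G/2$ correction, unlike \eqref{e:OU11}.

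Next I control the quadratic variation. Compute $\widehat{\cal{A}}_\omega(\ell^2)-2\ell\widehat{\cal{A}}_\omega\ell$ from the same expansion. The $\partial_u^2$ term gives $D_\omega(u)r^{-\gamma}r^{\gamma}=D_\omega(u_*)+o(1)$, and the remaining terms vanish. The jumps of $\ell$ are $O(r^{-\gamma/2})\to0$, so the limit is continuous.

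For the norm, use $\widehat{\cal{A}}_\omega(r)=r^{\gamma}(G_\omega(u)+B_\omega)$, so that $\widehat{\cal{A}}_\omega(r/N)=O(N^{\gamma-1})$ on compacts. The jumps of $r$ are bounded, so the quadratic variation of $R^N/N$ is $O(N^{\gamma-1}/N)$. Doob's inequality then gives $\sup_{t\le T\wedge\tau}|R^N(t)/N-1|\to0$ in $L^2$. To legitimize this I first need an a priori bound, for instance $R^N(t)\le 2N$ up to $T\wedge\tau$ with high probability, obtained from the same drift bound via a stopping argument. Raising to the power $\alpha$ and using boundedness of $R^N/N$ before the corresponding stopping time (and of all quantities up to $T$) yields \eqref{e:OUNeq11Lim2}. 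I expect this to be routine, because the norm moves on the slower timescale $r^{\gamma-1}$.

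With these estimates, tightness of the stopped $L^N$ in $D[0,T]$ follows from the Aldous criterion, since drift and quadratic variation are bounded up to $\tau_{L^N}$. Any limit point solves the stopped martingale problem for \eqref{e:OUNeq11}, and uniqueness of the Ornstein--Uhlenbeck martingale problem identifies it (Ethier--Kurtz, Theorem 4.8.2 style). Joint convergence with $R^N/N\to1$ is immediate because the second limit is deterministic.

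The main obstacle is the convergence of $\tau_{L^N}\wedge T$, since the exit time is not a continuous functional in general. I plan to handle it by continuity of the hitting time at paths that cross the level $\pm\eta_1$ transversally. The OU limit has nondegenerate noise, $D_\omega(u_*)>0$ (this holds if some dominant reaction moves $S_2$, which is guaranteed since $F_\omega\not\equiv0$). So almost surely $L$ immediately exceeds $\eta_1$ in absolute value after first hitting it, and $\tau_L$ is a.s. a continuity point. The continuous mapping theorem then gives convergence of $\tau_{L^N}\wedge T$ and of the stopped paths. A further technicality is that the limit must be run past $\tau$ before stopping; I will extend the prelimit by freezing the coefficients outside $|\ell|\le\eta_1$, which does not change the stopped processes.
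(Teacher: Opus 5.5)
Your proposal is correct and follows essentially the paper's route. The paper proves this lemma by repeating its appendix argument for Lemma~\ref{l:OU11}: semimartingale decomposition from the generator expansion, Gronwall/Doob control of $R^N/N$, Aldous tightness, and identification through the stopped martingale problem. Your computation $\widehat{\cal{A}}_\omega\ell=r^{\gamma/2}H+\tfrac{\gamma}{2}r^{\gamma-1}(G_\omega+B_\omega)\ell+\cal{E}_\ell=F'_\omega(u_*)\ell+o(1)$ via \eqref{e:Defh} correctly explains why the $G_\omega/2$ term disappears.

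The one difference is the exit time. The paper puts $\tau_{L^N}\wedge T$ into the tight family and identifies its limit as $\tau_L\wedge T$ directly. You instead extend the process past $\tau_{L^N}$ and use a.s.\ continuity of the hitting functional. Both rest on the same nondegeneracy $D_\omega(u_*)>0$.

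For \eqref{e:OUNeq11Lim2}, your appeal to boundedness of all quantities up to $T$ does not hold for $R^N$. What you need is uniform integrability of $\sup_{t\le T\wedge\tau_{L^N}}(R^N/N)^\alpha$, which follows from a Gronwall moment bound as in Lemma~\ref{l:ControlR11}.
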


The proofs of Lemma~\ref{l:OUNeq11} and Proposition~\ref{p:DissipRDFiloNeq11} are very similar to the ones of the ``equivalent'' results for $\omega=(1,1)$, Lemma~\ref{l:OU11} and Proposition~\ref{p:DissipRDFilo11} respectively. They are even simpler since the norm doesn't have time to increase before $\tau_{L^N}$, and the control of $(R^N(t\wedge \tau^N))$ is therefore straightforward, see Equation~\eqref{e:OUNeq11Lim2}.
\section{Attractive curve}\label{s:A}

The goal of this section is to show that along the attractive, convergent curves, the process has time to decrease significantly. It would be possible (and quite easy) to show only point-wise dissipation in this region, but this stronger result will significantly facilitate the ``patching'' between regions of the compactification. 
 
\begin{proposition}\label{p:DissipAC}
	Let $b\geq 1$. Fix $(1, 0)<\omega\leq (1,1)$. Assume that $u_*=u_i\in \cal{P}_\omega$ is such that 
	\begin{equation*}
		F'(u_*)<0\quad \text{and}\quad G(u_*)<0.
	\end{equation*}
	Then we can find some $u_-, u_+$ with $u_{i-1}<u_-<u^*<u_+<u_{i+1}$ such that for any $\eps\in (0,1)$, we can find an integrable stopping time $\tau$ and some constants $R, C>0$ such that for all $x\in \cal{H}_\omega(u_-, u_+)$ with $\|x\|\geq R$, 
	\[
		\E_x\left( \|X(\tau)\|^b\right)\leq \eps^b\|x\|^b \quad \text{and}\quad \E_x(\tau)\leq C
	\]
	and almost surely, $(X(t\wedge\tau))$ starting from $x$ stays in $\cal{H}_\omega(u_-, u_+)$. 
\end{proposition}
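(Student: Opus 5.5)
We work throughout in the coordinates $(u,r)$ of the relevant section: \eqref{e:ChangeVar11} with generator \eqref{e:GenUR11} if $\gamma=1$, and \eqref{e:ChangeVarNeq11} with generator \eqref{e:GenURNeq11} if $\gamma<1$. Reduced dynamics are read off from Propositions~\ref{p:ReducedGenerator11} and~\ref{p:ReducedGeneratorNeq11}. Since $F'_\omega(u_*)<0$ and $G_\omega(u_*)<0$, we choose $u_\pm$ close to $u_*$ such that three conditions hold on $[u_-,u_+]$: $G_\omega\le G_\omega(u_*)/2<0$, $F_\omega(u)/(u-u_*)\le F'_\omega(u_*)/2<0$, and the lower-order terms $\gamma u r^{\gamma-1}G_\omega+R_\omega$ cannot reverse the sign of $F_\omega$ near $u_\pm$ for $r$ large.

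\textbf{Step 1: the process cannot leave the strip.} We apply the generator to a confining function $g(u)=(u-u_*)^2$, or to a function blowing up like $(u-u_\pm)^{-1}$ near the edges. The drift $F_\omega(u)\,g'(u)\le F'_\omega(u_*)(u-u_*)^2<0$ dominates the noise term $D_\omega/(2r^\gamma)$ once $|u-u_*|\gg r^{-\gamma/2}$. A supermartingale argument then shows that the probability of reaching $u_\pm$ before $\tau$ is small. For the almost sure statement in Proposition~\ref{p:DissipAC}, we include the exit time of $\cal{H}_\omega(u_-,u_+)$ in $\tau$ and place the loss on that event into the error. Exiting has small probability, and on that event $r$ has not grown, since $\dot r\le 0$ throughout the strip.

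\textbf{Step 2: decrease of the norm on the accelerated timescale.} In the time-changed scale we have $\widehat{\cal{A}}_\omega(r^b)=b r^{b-1+\gamma}(G_\omega(u)+o(1))+O(r^{b-2+\gamma})\le -c\, r^{b-1+\gamma}$ on the strip. Define $\tau:=\inf\{t: R(t)\le \eps r_0\}\wedge \tau_{\rm exit}\wedge T r_0^{1-\gamma}$. While $R\ge \eps r_0$, the drift of $R^b$ is at most $-c(\eps r_0)^{b-1+\gamma}$. Since $R^b$ is a supermartingale, Dynkin's formula gives
\[
\E(R(\tau)^b)\le r_0^b-c(\eps r_0)^{b-1+\gamma}\E(\tau).
\]
Comparing this with the deterministic solution of $\dot r\approx G_\omega(u_*)r^\gamma$, which reaches $\eps r_0$ in time of order $r_0^{1-\gamma}$, we choose $T$ large enough that $\P(\tau=Tr_0^{1-\gamma})$ is small. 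We would establish this either by a law of large numbers in the spirit of Lemma~\ref{l:OU11} for $R(tr_0^{1-\gamma})/r_0$, or directly from the displayed inequality. Splitting the expectation then gives $\E R(\tau)^b\le (\eps^b+\delta)r_0^b$. Replacing $\eps$ by a smaller $\eps'$ absorbs $\delta$, so $\E_x\|X(\tau)\|^b\le\eps^b\|x\|^b$, using that $r$ is comparable to $\|x\|$ in the strip.

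\textbf{Step 3: returning to the original time.} As in Section~\ref{s:TimeChange}, the time change has rate of order $r^{\rho_0-\gamma}\ge (\eps r_0)^{\rho_0-\gamma}$ before $\tau$. Here $\rho_0-\gamma\ge 0$, and $\rho_0-\gamma\ge 1-\gamma$ whenever $\rho_0\ge1$. Therefore $\E_x(\tau_{\rm orig})\le C\,Tr_0^{1-\gamma}/(\eps r_0)^{\rho_0-\gamma}\le C'$ uniformly in $r_0$.

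\textbf{Main obstacle.} The main obstacle is Step 1 on the long timescale $r_0^{1-\gamma}$ when $\gamma<1$. The $u$-martingale accumulates quadratic variation of order $r^{-\gamma}\cdot r_0^{1-\gamma}$, which does not vanish. The confinement must therefore come from the strong restoring drift rather than from smallness of the noise. We would try an exponential Lyapunov function $e^{\theta (u-u_*)^2 r^{\gamma}}$, or a union bound over time blocks of length $1$. On each block the restoring drift brings $u$ back to an $O(r^{-\gamma/2})$ neighborhood of $u_*$, and the probability of reaching $u_\pm$ within a block is exponentially small in $r^\gamma$. Summing over the $r_0^{1-\gamma}$ blocks still gives a vanishing total.
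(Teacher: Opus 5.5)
Your proposal is correct and follows essentially the paper's route. You confine $U$ near $u_*$ through the restoring drift, with an exit probability per block of fixed length that is exponentially small in $r^{\gamma}$ (the paper obtains this from a Gronwall comparison with $\dot v=F_\omega(v)$ and Freedman's inequality), iterate this over polynomially many blocks by the Markov property, use the negative drift of $R^b$ on the accelerated timescale $r_0^{1-\gamma}$, and finally undo the time change using $\rho_0\geq 1$.

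One adjustment: take starting points in an inner strip and stop on exit from a slightly larger one, as the paper implicitly does with $\eta$ and $2\eta$. With a single strip, the exit probability is not small for starting points near its edges.
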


The consequence of this proposition is that in this region, we can define the energy function freely as soon as it is $b$-dominated. In particular, in the proof of Theorem~\ref{t:Main}, we will not care if the Energy function makes a big jump of order $\|x\|^b$ in these regions, as long as the order of magnitude of the function is preserved. 
\begin{corollary}\label{c:AttractivePatching}
	Under the assumptions of Proposition~\ref{p:DissipAC}, if $W$ defines an energy function on $\cal{H}_\omega(u_-, u_+)$ such that, for some constant $0<c<C$, for all $x\in \cal{H}_\omega(u_-, u_+)$,
	\[
		c\leq \frac{W(x)}{\|x\|^b}\leq C, 
	\]
	then we can find an integrable stopping time $\tau$ and some constants $R,\beta>0$ such that for all $x\in \cal{H}_\omega(u_-, u_+)$ with $\|x\|\geq R$, 
	\[
		\E_x\left( W(X(\tau))\right)-W(x)\leq -\beta \E_x(\tau),
	\]
	and almost surely, $(X(t\wedge\tau))$ starting from $x$ stays in $\cal{H}_\omega(u_-, u_+)$. 
\end{corollary}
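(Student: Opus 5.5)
The plan is to apply Proposition~\ref{p:DissipAC} with a small $\eps$, chosen in terms of the ratio $C/c$, and to use the stopping time $\tau$ it provides. Fix $\eps\in(0,1)$ such that
\[
	C\eps^b\leq c/2 .
\]
With this choice, Proposition~\ref{p:DissipAC} supplies an integrable stopping time $\tau$ and constants $R, C_\tau$ with three properties. First, $\E_x(\|X(\tau)\|^b)\leq\eps^b\|x\|^b$. Second, $\E_x(\tau)\leq C_\tau$. Third, $X(t\wedge\tau)$ stays in $\cal{H}_\omega(u_-,u_+)$ for $\|x\|\geq R$. The confinement property already gives the last claim of the corollary.

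Since $X(\tau)\in\cal{H}_\omega(u_-,u_+)$, the upper bound on $W$ applies at the stopping time. This gives
\[
	\E_x(W(X(\tau)))\leq C\,\E_x(\|X(\tau)\|^b)\leq C\eps^b\|x\|^b\leq \tfrac{c}{2}\|x\|^b .
\]
The lower bound $W(x)\geq c\|x\|^b$ then yields
\[
	\E_x(W(X(\tau)))-W(x)\leq -\tfrac{c}{2}\|x\|^b .
\]

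To convert this into the form $-\beta\E_x(\tau)$, we use the uniform bound $\E_x(\tau)\leq C_\tau$. Take
\[
	\beta:=\frac{c R^b}{2C_\tau}.
\]
For $\|x\|\geq R$ we have $\tfrac{c}{2}\|x\|^b\geq \tfrac{c}{2}R^b\geq \beta\,\E_x(\tau)$, which is the required dissipation inequality. If needed, $R$ can be increased to make $\beta$ larger; only positivity of $\beta$ matters here.

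The argument has no real obstacle once Proposition~\ref{p:DissipAC} is available. The only points to check are two uses of the confinement property. It guarantees that $X(\tau)$ lies in the set where the bounds on $W$ hold. It also ensures that $W$ is only evaluated on $\cal{H}_\omega(u_-,u_+)$. Finally, $\eps$ must be fixed before invoking the proposition, because $\tau$, $R$ and $C_\tau$ depend on $\eps$.
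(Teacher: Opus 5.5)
Your proof is correct and follows the paper's route: the paper justifies the corollary only by saying it follows from taking $\eps>0$ small enough in Proposition~\ref{p:DissipAC}. You have supplied the details that remark leaves implicit, namely fixing $\eps$ with $C\eps^b\le c/2$ before invoking the proposition, using confinement to apply the upper bound on $W$ at $X(\tau)$, and converting via the uniform bound $\E_x(\tau)\le C_\tau$ with $\beta=cR^b/(2C_\tau)$.
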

The proof of this corollary is straightforward, taking $\eps>0$ small enough in Proposition~\ref{p:DissipAC}.

As in the previous sections, we make a change of variables and study the evolution of the deviation from the axis and the norm of the system. 
We introduce the Markov process $(Y(t))=(U(t), R(t))$ with infinitesimal generator given in Relation~\eqref{e:GenUR11} for $\omega=(1,1)$, respectively~\eqref{e:GenURNeq11} for $\omega\neq (1,1)$. 

Let's assume that $(U(0), R(0))=(u_*, N)$. We first need to understand at which timescale the process $(R(t)/N)$ decreases significantly. This can be understood from the term in $\partial r$ of the approximated infinitesimal generators. Relations~\eqref{e:GenUR11} and~\eqref{e:GenURNeq11} tell us that on the timescale $t\mapsto t$, the decrease of $R(t)$ is of order $N^{\omega_2}$. Therefore, to see $(R(t)/N)$ decrease significantly, we need to study the process at an accelerated timescale, and more precisely at the timescale $t\mapsto N^{1-\omega_2}t$. 
The difficulty here is to make sure that on this timescale, the process $(U(t))$ stays close to $u_*$. This is done in the first part of this Section. In the second part, we show the decrease of the process.  

\subsection{Staying close to the curve}
For $R_0>0$ and $\eta>0$, we define: 
\begin{align*}
	\tau_{R_0}^r &:=\inf\{t\geq 0: \, R(t)\leq R_0\},\\
	\tau_{\eta}^u &:= \inf\{t\geq 0: |U(t)-u_*|\geq \eta\}. 
\end{align*}

This first lemma controls the process $(U(t))$, first on a finite time interval $[0,T]$ in Equation~\eqref{e:ACControlU1}, and then on any polynomial timescale. The proof uses two key ideas. First, assuming that $U(0)\in (u_*-\eta, u_*+\eta)$, a uniform control of the martingale part of $(U(t))$ shows that, with high probability, the process does not exit the interval $(u_*-2\eta, u_*+2\eta)$. Then, using a lower bound on the (deterministic) drift that brings $U(t)$ back to $u_*$, we show that, on the same event, at time $T$ the process is back in the interval $(u_*-\eta, u_*+\eta)$. See~\cite{Katzenberger} for a similar model. 
The extension to an interval $[0, R(0)^{\alpha}T]$ simply uses the Markov property. 

\begin{lemma}\label{l:ACControlU}
	We can find some $0<\eta<u_*/2$ and some $T, C, R_0>0$ large enough such that, starting from $(u_0, r_0)$ with $r_0>R_0$ and $u_0\in {(u_*-\eta, u_*+\eta)}$, 
	\begin{multline}\label{e:ACControlU1}
			\P_{(u_0, r_0)}\left(\sup_{t\in [0,T\wedge \tau^r_{R_0}]} |U(t)-u_*|<2\eta, \quad  |U(T)-u_*|\ind{T<\tau_{R_0}^r}<\eta  \right)\\
			\geq 1-\exp\left(-CR_0^{\omega_2}\right). 
	\end{multline}
	
	Therefore, for any $\alpha, \alpha'>0$, for $R_0$ large enough, the following inequality holds: 
	\begin{equation}\label{e:ACControlU2}
		\P_{(u_0, r_0)}\left( \tau_{2\eta}^u< TR_0^\alpha \wedge \tau_{R_0}^r\right) \leq R_0^{-\alpha'}. 
	\end{equation}
\end{lemma}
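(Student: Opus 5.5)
We apply the generator approximations (Proposition~\ref{p:ReducedGenerator11} for $\omega=(1,1)$, Proposition~\ref{p:ReducedGeneratorNeq11} otherwise) to $g(u)=(u-u_*)^2$, which does not depend on $r$. This gives
\[
\widehat{\cal{A}}_\omega g(u,r)=2(u-u_*)F_\omega(u)+O(r^{\omega_2-1})+O(r^{-\omega_2}),
\]
where the $O(r^{\omega_2-1})$ term appears only for $\gamma<1$ and comes from $-\gamma u r^{\gamma-1}G_\omega+R_\omega$. Since $F'_\omega(u_*)<0$ and $u_*$ is the only root of $F_\omega$ nearby, we choose $\eta<u_*/2$ such that $(u-u_*)F_\omega(u)\leq -c(u-u_*)^2$ on $[u_*-2\eta,u_*+2\eta]$. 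We then decompose
\[
U(t)=U(0)+\int_0^t b(Y(s))\diff s+M(t),
\]
where $b=\widehat{\cal{A}}_\omega(u\mapsto u)=F_\omega(u)+O(r^{\omega_2-1}+r^{-\omega_2})$. The jumps of $M$ have size $O(r^{-\omega_2})$, and its predictable quadratic variation has density $O(r^{-\omega_2})$ by the noise term $D_\omega$. As long as $t\leq\tau^r_{R_0}$, we have $R(t)\geq R_0$, so all jumps are bounded by $CR_0^{-\omega_2}$ and the bracket rate is bounded by $CR_0^{-\omega_2}$.

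For \eqref{e:ACControlU1} we use an exponential martingale inequality for jump martingales with bounded jumps (Bernstein/Freedman type). It gives
\[
\P\Bigl(\sup_{t\leq T\wedge\tau^r_{R_0}}|M(t)|\geq \eta/4\Bigr)\leq 2\exp\bigl(-c\,\eta^2 R_0^{\omega_2}/(T+\eta)\bigr),
\]
which is of the form $\exp(-CR_0^{\omega_2})$ for fixed $T$ and $\eta$. On the complement of this event, we compare $U$ with the deterministic flow $\dot u=F_\omega(u)+o(1)$. Because $F_\omega$ points towards $u_*$ on $[u_*-2\eta,u_*+2\eta]\setminus\{u_*\}$, the path $U(t)-M(t)$ cannot leave $(u_*-7\eta/4,u_*+7\eta/4)$. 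Indeed, at the first time $|U-u_*|$ reaches $7\eta/4$, the drift is inward, and the error terms $O(R_0^{\omega_2-1}+R_0^{-\omega_2})$ are negligible when $R_0$ is large. Adding back $|M|<\eta/4$ gives $|U(t)-u_*|<2\eta$ throughout. The same comparison shows the deterministic part contracts exponentially, since $|u(t)-u_*|\lesssim 2\eta e^{-ct}$. Choosing $T$ with $2\eta e^{-cT}<\eta/2$ and using $|M|<\eta/4$ then yields $|U(T)-u_*|<\eta$ on $\{T<\tau^r_{R_0}\}$. The only delicate point here is to run the comparison for a Lipschitz ODE perturbed by a small additive path, via Gronwall on the deviation from $u_*$ using the one-sided bound $(u-u_*)F_\omega(u)\leq -c(u-u_*)^2$. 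This is routine.

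For \eqref{e:ACControlU2}, we iterate \eqref{e:ACControlU1} using the strong Markov property at the times $kT$, $k=0,\dots,\lceil R_0^{\alpha}\rceil$. At each such time, on the good event, $U(kT)\in(u_*-\eta,u_*+\eta)$ and $R(kT)\geq R_0$ unless $\tau^r_{R_0}$ has already occurred, so the hypothesis of \eqref{e:ACControlU1} is restored. A union bound then gives
\[
\P\bigl(\tau^u_{2\eta}<TR_0^\alpha\wedge\tau^r_{R_0}\bigr)\leq (R_0^\alpha+1)\,e^{-CR_0^{\omega_2}}\leq R_0^{-\alpha'}
\]
for $R_0$ large, because the exponential decay beats every polynomial.

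The main obstacle is obtaining uniformity of the martingale estimate in the starting radius $r_0$: the bounds must depend only on $R_0$. This is why everything is stopped at $\tau^r_{R_0}$. The other subtle issue is making the extra $r^{\gamma-1}$ drift terms for $\omega\neq(1,1)$ small uniformly, which again only requires $R_0$ large.
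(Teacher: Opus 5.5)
Your proof is correct and follows the paper's skeleton: decomposition of $U$, Freedman's inequality with jumps and bracket of order $R_0^{-\omega_2}$ up to $\tau^r_{R_0}$, a deterministic comparison, then the Markov property at the times $kT$. Your union bound is equivalent to the paper's product $[1-e^{-CR_0^{\omega_2}}]^{\lfloor R_0^\alpha\rfloor+1}$. The comparison step, however, is done differently.

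\emph{The paper's route.} It compares $U$ with the exact solution $v$ of $\dot v=F_\omega(v)$ by a two-sided Gronwall estimate. This costs a factor $e^{LT}$, so the martingale threshold must be $\eta/(4e^{LT})$. That is harmless, since $T$ is fixed before $R_0\to\infty$. In exchange, $|U-v|<\eta/2$ together with $|v-u_*|\leq\eta e^{-\beta t}$ gives both conclusions at once.

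\emph{Your route.} You use the dissipativity of $F_\omega$ directly on $Z=U-M$, through a barrier argument and a contraction estimate. Your threshold is therefore of order $\eta$, independent of $T$, and the constant $C$ in the exponent improves by roughly $e^{2LT}$.

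State the contraction correctly, though. The drift is evaluated at $U=Z+M$, not at $Z$, so $Z$ only contracts to within $\sup|M|$ of $u_*$. The right inequality is $\frac{d}{dt}|Z-u_*|\leq -c(|Z-u_*|-\sup|M|)+o(1)$ whenever $|Z-u_*|>\sup|M|$. Using $|Z(0)-u_*|=|u_0-u_*|<\eta$, this gives $|Z(T)-u_*|\leq \eta e^{-cT}+\eta/4+o(1)$. With your choice of $T$ this still yields $|U(T)-u_*|<\eta$. Likewise, the barrier argument should be run on $|Z-u_*|$, not on $|U-u_*|$ as you wrote.

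Two further inaccuracies are harmless.
\begin{enumerate}
\item For $\gamma<1$, the term $R_\omega$ decays only like $r^{-\kappa}$ with $\kappa=\min_{\nu\in\cal{R}'_\omega}(\rho_0-\langle y^-_\nu,\omega\rangle)\in(0,\gamma)$. This can be much slower than $r^{\gamma-1}$, which is why the paper uses the crude bound $(\log r)^{-1}$. You only need the error to be uniformly $o(1)$, so this does not affect the argument.
\item The jump and bracket bounds are uniform only for $u$ in a compact window. The martingale should therefore also be stopped at $\tau^u_{2\eta}$, as the paper does.
\end{enumerate}
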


\begin{proof}
	We write the proof for $\omega<(1,1)$. For $\omega=(1,1)$, it can be carried out the same way. 
	Assume that $\omega=(1, \gamma)$ with $\gamma\in(0,1)$.
	
	The result is shown up to $\tau_{R_0}^r$, and therefore until the end of the proof, we can consider that $r\geq R_0$.
	Let $f(u, r)=u$. 
	The process $(U(t))$ is solution of the equation, for $t\geq 0$, 
	\begin{equation}\label{e:DecompU}
		U(t)=u_0+\int_0^t \widehat{\cal{A}}_\omega(f)(Y(s))\diff s + M_u(t), 
	\end{equation}
	where $(M_u(t))$ is a square integrable martingale, with increasing process given by 
	\[
		\croc{M_u}(t)= \int_0^t \widehat{\cal{A}}_\omega(f^2)(Y(s))-2f(Y(s))\widehat{\cal{A}}_\omega(f)(Y(s)) \diff s. 
	\]
	See Appendix, Section~\ref{s:ProofOU11} for further details on these calculations.
	
	Using Proposition~\ref{p:ReducedGeneratorNeq11}, we can find some $C>0$ such that for $u\in [u_*-2\eta, u_*+2\eta]$ and $r\geq R_0$
	\begin{align}
		&|\widehat{\cal{A}}_\omega(f)(u,r)-F_\omega(u)|\leq C (\log r)^{-1}\label{e:BoundAf1} \\ 
		&|\widehat{\cal{A}}_\omega(f^2)(u,r)-2f(u,r)\widehat{\cal{A}}_\omega(f)(u,r)|\leq Cr^{-\gamma}\label{e:BoundAf2}
	\end{align}
	
	Let $v(t)$ be the (deterministic) solution of the ODE
	\[
		\dot v(t)= F_\omega(v(t)), 
	\]
	starting from $u_0$. 
	Given the definition of $F_\omega$ and the assumption on $F'_\omega(u_*)$, taking $\eta>0$ small enough, we can find some $\beta, L>0$ such that, for all $u, u'\in [u_*-2\eta, u_*+2\eta]$,
	\begin{equation}\label{e:FOmegaLipsch}
		 (u-u_*)F_\omega(u)\leq -\beta(u-u_*)^2\quad \text{and}\quad  |F_\omega(u)-F_\omega(u')|\leq L|u-u'|. 
	\end{equation}
	Therefore, since $u_0\in [u_*-\eta, u_*+\eta]$, $v(t)$ stays in the same interval for any $t\geq 0$, and  
	\[
		|v(t)-u_*|\leq e^{-\beta t}|u_0-u_*|\leq \eta e^{-\beta t}\leq \eta. 
	\]
	We set $T>0$ such that $e^{-\beta T}<1/4$.  
	
	For $t\in [0, T]$, assuming $t\leq \tau^r_{R_0}\wedge \tau^u_{4\eta}$, 
	\begin{align*}
		|U(t)-v(t)|&\leq \int_0^t L|U(s)-v(s)|\diff s + CT (\log R_0)^{-1}+ \sup_{t\in[0, T]}|M_u(t)|\\
		&\leq e^{Lt}\left(C T (\log R_0)^{-1}+ \sup_{t\leq T} |M_u(t)|\right),
	\end{align*}
	where we first used Equations~\eqref{e:DecompU},~\eqref{e:BoundAf1} and~\eqref{e:FOmegaLipsch} and the definition of $(v(t))$, then Gronwall's Lemma in the second inequality. 
	We want to upper bound the probability of the event $\{\sup_{t\leq T\wedge \tau^r_{R_0}\wedge \tau^u_{4\eta}} |U(t)-v(t)| >\eta/2\}$. 
	Taking $R_0$ large enough,
	\[
		C T (\log R_0)^{-1} \leq \frac{\eta}{4e^{LT}}.
	\]
	We will now upper bound the probability of the event 
	\[
		\cal{E}_{(u_0, r_0)}^T:=\left\{\sup_{t\leq T} |M_u(t\wedge \tau^r_{R_0}\wedge \tau^u_{4\eta})|>\frac{\eta}{4e^{LT}}\right\}. 
	\]
	We control the martingale using an inequality known as Freedman's inequality, see Proposition~\ref{p:Freedman} in Section~\ref{Tools} of the Appendix, or the reference~\cite{Shorack2009} (page {899-900}). This inequality bounds the probability that $\sup_{t\leq T} |M_t|\geq a$, given that $\croc{M}(T)\leq b$ and \emph{the jumps of the Martingale are upper bounded} by $c$. 
	
	Here, the jumps of the martingale, i.e., the elements of the set $\{|M_u(t)-M_u(t{-})|, \, t\in [0, T]\}$ are uniformly upper bounded by $C_1R_0^{-\omega_2}$ for some $C_1>0$ large enough. Moreover, Equation~\ref{e:BoundAf2} gives, almost surely,
	\[
		\croc{M_u}(T\wedge \tau_{R_0}^r\wedge \tau_{2\eta}^u )\leq \frac{C T}{R_0^{\omega_2}}  
	\]
	Therefore, using Equation~\eqref{e:Freedman} of Proposition~\ref{p:Freedman}, we get for $\eps>0$, 
	\[
		\P\left( \sup_{t\leq T} |M_u(t\wedge \tau_{R_0}^r\wedge \tau_{2\eta}^u)|\geq \eps\right) \leq 2\exp\left( -\frac{\eps^2R_0^{\omega_2}}{C_2T} \psi\left( \frac{\eps C_1}{C T} \right)\right), 
	\]
where for $x>0$, 
\[
	\psi(x):=\frac{2}{x^2}\int_0^x \log(1+y)\diff y\geq \frac{1}{1+x/3}.  
\]
We can find a $C'>0$ small enough, independent of $R_0$, such that for $R_0$ large enough, 
	\[
		\P\left( \sup_{t\leq T} |M_u(t\wedge \tau_{R_0}^r\wedge \tau_{2\eta}^u)|\geq \frac{\eta}{4e^{LT}}\right) \leq e^{-C'R_0^{\omega_2}},
	\]
	which leads to 
	\[
		\P_{(u_0, r_0)}(\cal{E}_{(u_0, r_0)}^T)\leq e^{-C'R_0^{\omega_2}}. 
	\]
	On the event $( \cal{E}_{(u_0, r_0)}^T)^c$, for $t\leq T\wedge \tau_{R_0}^r$, 
	\[
		|U(t)-u_*|\leq |U(t)-v(t)|+|v(t)-u_*| \leq \frac{\eta}{2}+\eta<2\eta
	\]
	and at $T$, if $T<\tau_{R_0}^r$, using $e^{-\beta T}<1/4$, we get
	\[
		|U(T)-u_*|\leq |U(T)-v(T)|+|v(T)-u_*| \leq \frac{\eta}{2}+\frac{\eta}{4}<\eta, 
	\]
	which concludes the proof of Equation~\eqref{e:ACControlU1}.

	The second equation is a consequence of the Markov property of $(U(t), R(t))$. Let $\alpha, \alpha'>0$.
	For $t\geq 0$, we introduce the event
	\[
		\widetilde{\cal{E}}_{t}=\{|U(t)-u_*|\ind{t<\tau_{R_0}^r}<\eta\}. 
	\]
	
	We have 
	\begin{align*}
		\P_{(u_0, r_0)}&\left( \tau_{2\eta}^u\geq TR_0^\alpha \wedge \tau_{R_0}^r\right)\\
		&\geq \P_{(u_0, r_0)}\left(\bigcap_{k=0}^{\lfloor R_0^\alpha\rfloor}\{\sup_{t\in [0,T]} |U(t+kT)-u_*|<2\eta\}\cap \widetilde{\cal{E}}_{(k+1)T} \right)\\
		&\geq \E_{(u_0,r_0)}\left[\hspace{-0.1cm}\prod_{k=0}^{\lfloor R_0^\alpha\rfloor\wedge \lceil\tau_{R_0}^r/T\rceil}
		\P\left(\{\sup_{t\in [0,T]} |U(t+kT)-u_*|<2\eta\}\cap \widetilde{\cal{E}}_{(k+1)T} \Big| \, \widetilde{\cal{E}}_{kT}\right)\right]\\
		&\geq \left[1-\exp\left(-CR_0^{\omega_2}\right)\right]^{\lfloor R_0^\alpha\rfloor+1}\\
		&\geq 1-R_0^{-\alpha'}
	\end{align*}
	where in the third inequality, we used Markov property and Equation~\eqref{e:ACControlU1}, and the last inequality is valid for $R_0$ large enough. This concludes the proof of Relation~\eqref{e:ACControlU2}. 
\end{proof}

\subsection{Proof of dissipation}
\begin{proof}[Proof of Proposition~\ref{p:DissipAC}]
	We keep the notations of Lemma~\ref{l:ACControlU}, and take $\eta>0$ small enough so that we can find a $\beta'>0$ such that 
	\[
		\forall u\in [u_*-2\eta, u_*+2\eta]: \quad G_\omega(u)<-\beta'. 
	\]
	We introduce the scaled process 
	\[
		\overline{R}(t):=\frac{R(tr_0^{1-\omega_2})}{r_0}
	\]
	and the stopping times $\overline\tau_{2\eta}^u:=\tau_{2\eta}^u/r_0^{1-\omega_2}$, $\overline\tau_{\eps r_0}^r:=\tau_{\eps r_0}^r/r_0^{1-\omega_2}$ and $\overline \tau=\overline\tau_{2\eta}^u\wedge \overline\tau_{\eps r_0}^r$. 
	
	For $t\leq \overline\tau$, 
	\[
		\overline{R}(t)^b= 1 +\int_0^t b\overline{R}(s)^{b-1+\omega_2}G_\omega(U(sr_0^{1-\omega_2}))\diff s +\int_0^t E_r(U(sr_0^{1-\omega_2}), R(sr_0^{1-\omega_2}))\diff s+ M_r^b(t), 
	\]
	where $E_r(u,r)$ contains the lower order terms, $E_r(u,r)\to 0$ for $r$ large enough and uniformly in $u$, and $(M_r^b(t))$ is a martingale, also converging to $0$ when $r\to +\infty$. Therefore, for $t\leq \overline \tau$, 
	\[
		\overline{R}(t)^b\leq  1 - \beta'b\eps^{b-1+\omega_2} t +\int_0^t E_r(U(sr_0^{1-\omega_2}), R(sr_0^{1-\omega_2}))\diff s+ M_r^b(t), 
	\]
	Choosing $t_0>2(1-\eps^b)/b\beta'\eps^{b-1+\omega_2}$, the last inequality gives 
	\[
		\lim_{r_0\to +\infty}\P\left(\overline\tau_{\eps r_0}^r\wedge \overline\tau_{2\eta}^u<t_0 \right)=1
	\]
	and because of Lemma~\ref{l:ACControlU}, 
	\[
		\lim_{r_0\to +\infty}\P\left(\overline\tau_{2\eta}^u>t_0 \right)=1. 
	\]
	Combining these two limits concludes the proof of dissipation for the process $(U(t), R(t))$.

	To conclude the Proof of Proposition~\ref{p:DissipAC}, we only have to reverse the time change introduced in the infinitesimal generator~\eqref{e:GenURNeq11}. This can be done using the same arguments as in Section~\ref{s:TimeChange}. 
\end{proof}

\section{Drift regions}\label{s:Drift}

In this section, we show that in the regions where a simple drift dominates (henceforth called drift regions, defined below), a well-chosen linear energy function is dissipated. 
Let $\omega_A<\omega_B\in \Omega$ and $m_A, m_B>0$. The set $\cal{T}(\omega_A, \omega_B; m_A, m_B)$ is a \emph{drift region} if
\begin{itemize}
	\item for all $\omega\in (\omega_A, \omega_B)$, $\cal{P}_\omega=\emptyset$
	\item if $\cal{P}_{\omega_A}\neq \emptyset$, $m_A>\max\{\cal{P}_{\omega_A}\}$,
	\item if $\cal{P}_{\omega_B}\neq \emptyset$, $m_B<\min\{\cal{P}_{\omega_B}\}$.
\end{itemize}
In a drift region, the stochastic system behaves similarly to the deterministic system resulting from a leading order expansion of the generator. Our goal is to show that we can find one linear function that is point-wise dissipated in the whole drift region. The region $\omega=(1,1)$ has again a specific behavior, and must be dealt with separately. We consider first a drift region \emph{below the diagonal}, i.e., such that $\omega_B\leq (1,1)$ and if $\omega_B=(1,1)$, $m_B$ chosen as small as necessary. 
Recalling Remark~\ref{r:NonTrivialCompDrift}, note that except for $\omega=(1,1)$, $\cal{P}_\omega\neq \emptyset$ implies $\omega\in \cal{P}$. The first condition in the definition of drift region only has to be checked for a \emph{finite number of $\omega$}. 

We now show that for each such drift region we can find a $c=(1, c_2)\in \R^2$ such that the linear function $f_c(x)=\croc{x, c}$ is dissipated in the whole region. 
\begin{proposition}\label{p:DissipDrift}
		Under Assumptions~\ref{a:Delta2Neq0} and~\ref{a:DeltaNeq0_11}, if $(\omega_A, \omega_B, m_A, m_B)$ defines a drift region with $\omega_B\leq (1,1)$, we can find some $c=(1, c_2)\in \R^2$, $m_B'\in(0, m_B]$ and $K, K'>0$ such that for all $x\in \cal{T}(\omega_A, \omega_B; m_A, m_B)$, if $\|x\|\geq K$ and $K'\leq x_2\leq m_B'x_1$, 
		\[
			f_{c}(x)\geq 0 \quad \text{and} \quad \cal{A}(f_c)(x)\leq -1. 
		\]
\end{proposition}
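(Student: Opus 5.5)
We plan to compute $\cal{A}(f_c)$ directly. Since $f_c$ is linear, $\cal{A}(f_c)(x)=\sum_\nu\lambda_\nu(x)\croc{\xi^\nu,c}$, with no Taylor remainder. In a drift region, for each $\omega\in[\omega_A,\omega_B]$ the dominant complexes at scale $N^\omega$ are known. By the first condition in the definition of a drift region and Remark~\ref{r:NonTrivialCompDrift}, the only directions in $[\omega_A,\omega_B]$ where two source complexes tie are possibly the endpoints. The plan is to show that, throughout the region, $\cal{A}(f_c)(x)$ is a negative multiple of the dominant monomial $x^{y^-}$ up to lower-order terms. It therefore tends to $-\infty$, and in particular is $\leq -1$ for $\|x\|$ large.

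First, we identify the dominant complexes along the region. The map $\omega\mapsto\cal{C}_\omega$ is piecewise constant on $(\omega_A,\omega_B)$, changing only at directions of $\cal{P}$. By the definition of a drift region, however, a direction $\omega\in\cal{P}\cap(\omega_A,\omega_B)$ would have a nonempty $\cal{P}_\omega$ unless the competing complexes give drifts with $\Delta_2^\omega$ of constant sign. So we should first check, via Lemma~\ref{l:BetweenCompact} (using Assumptions~\ref{a:Delta2Neq0},~\ref{a:DeltaNeq0_11} and the absence of ghost complexes), that for every $\omega$ in the open interval $\Delta_2^\omega(1,u)$ has a fixed sign $s\in\{\pm1\}$, and that this sign agrees with the sign of $\ell_\omega$-type limits at the endpoints. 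Write $\Delta^{y}=\sum_{y\to y^+}\kappa(y^+-y)$ for the drift of a complex $y$. For all complexes $y$ that are dominant somewhere in $[\omega_A,\omega_B]$, including those dominant at the endpoints near $u=m_A$ and $u=m_B$, we aim to show that $\Delta^{y}_2$ has the common sign $s$, with $\Delta^y_2\neq0$.

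Next, we choose $c=(1,c_2)$. We want $\croc{\Delta^{y},c}=\Delta^y_1+c_2\Delta^y_2<0$ for each of these finitely many complexes $y$. Since every $\Delta_2^y$ has sign $s$ and is nonzero, taking $c_2=-sK_c$ with $K_c$ large makes all these quantities strictly negative. Positivity $f_c(x)\geq0$ needs $x_1+c_2x_2\geq0$. This is automatic if $c_2\geq0$. If $c_2<0$ it requires $x_2\leq x_1/|c_2|$, which is exactly why the statement allows shrinking to $m_B'$. When $\omega_B<(1,1)$ we have $x_2=o(x_1)$ in the region and no shrinking is needed. When $\omega_B=(1,1)$ we take $m_B'\leq 1/|c_2|$, at the price of possibly having to enlarge $|c_2|$ first and then reduce $m_B'$. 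Reducing $m_B'$ does not change the set of dominant complexes, because near the diagonal with small ratio the drift region is governed by the same complex, and Assumption~\ref{a:DeltaNeq0_11} guarantees that its vertical drift is nonzero.

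Finally, we bound the lower-order terms uniformly. For $x$ in the region, set $\rho(x)=\max_{y\in\cal{C}^-}x^y$ and split $\cal{A}(f_c)=\sum_{y\ \text{near-maximal}}x^y\croc{\Delta^y,c}+O(\text{rest})$. The main obstacle is uniformity near the endpoint strips $x_2^{\omega_{A,1}}\approx m_Ax_1^{\omega_{A,2}}$, where two complexes are comparable. There the sum of their contributions is $x_1^{\rho_0}$ times a combination of strictly negative terms; since each term has a negative coefficient, no cancellation is possible and the sum is still $\leq -c\rho(x)$. Away from the endpoints one complex dominates by a factor $\to\infty$. A compactness argument over $\omega\in[\omega_A,\omega_B]$, bounding subdominant monomials by $o(\rho(x))$ as $\|x\|\to\infty$ together with $x_2\geq K'$, then gives $\cal{A}(f_c)(x)\leq -c\rho(x)+o(\rho(x))\leq-1$. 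The step we expect to be most delicate is verifying the sign consistency of $\Delta_2^y$ across all complexes, which relies on Lemma~\ref{l:BetweenCompact} and the finiteness of the limits $\ell_\omega$.
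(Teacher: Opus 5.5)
There is a genuine gap. The step you yourself flag as the most delicate is false, and your choice of $c$ rests on it. You need every complex $y$ that is of leading order somewhere in the region to satisfy $s\Delta_2^y>0$, so that each $\croc{\Delta^y,c}$ can be made negative and face contributions become sums of negative terms. Now take an endpoint $\omega_A=(1,\gamma_A)$ with $\cal{P}_{\omega_A}\neq\emptyset$. The polynomial $\Delta_2^{\omega_A}(1,u)=\sum_{y\in\cal{C}_{\omega_A}}\Delta_2^y u^{y_2}$ has a positive root, so its coefficients $\Delta_2^y$ cannot all have the same sign. Yet on the lower edge $x_2\approx m_A x_1^{\gamma_A}$ of the region, all complexes of $\cal{C}_{\omega_A}$ contribute at the same order $x_1^{\rho_0}$. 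The statement must cover this edge for every $m_A>\max\cal{P}_{\omega_A}$, in particular $m_A$ close to that root, and symmetrically at $\omega_B$. For such a $y$ with $s\Delta_2^y<0$, your choice $c_2=-sK_c$ gives $\croc{\Delta^y,c}=\Delta_1^y+K_c|\Delta_2^y|>0$ for large $K_c$. In general no $c$ works complex by complex: if two complexes of $\cal{C}_{\omega_A}$ have drifts $(1,-1)$ and $(1,1)$, so that $\Delta_2^{\omega_A}(1,u)=u^{y'_2}-u^{y_2}$ has the root $u=1$, you would need both $c_2>1$ and $c_2<-1$. The same problem arises at faces $\omega\in\cal{P}\cap(\omega_A,\omega_B)$. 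The drift-region definition only excludes $\cal{P}_\omega\neq\emptyset$, not $\omega\in\cal{P}$, so your opening claim that ties occur only at the endpoints is also wrong. For example, $\Delta_2^\omega(1,u)=1-u^2+u^4$ has no positive root but has a coefficient of the wrong sign. Lemma~\ref{l:BetweenCompact} only controls the extreme complexes of each face, namely those dominating the open intervals between faces. It says nothing about the intermediate ones.

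The missing idea is that cancellation between complexes is unavoidable and has to be controlled through the face polynomials as a whole, which is what Relation~\eqref{e:Defa} does. Take $c_2$ of sign $-s$ with $|c_2|>1+\sup_u \Delta_1^\omega(1,u)/|\Delta_2^\omega(1,u)|$. The supremum runs over $u\in(0,\infty)$ for interior faces, over $u\geq m_A$ at $\omega_A$, and over $u\leq m_B$ at $\omega_B$. It is finite because $\Delta_2^\omega(1,\cdot)$ does not vanish on these ranges and has constant sign by Lemma~\ref{l:PropDeltaDrift}(d), while Assumptions~\ref{a:Delta2Neq0} and~\ref{a:DeltaNeq0_11} make the ratio converge as $u\to0$ or $u\to\infty$. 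In the example above, $\croc{\Delta^{\omega_A}(1,u),c}<0$ for all $u\geq m_A>1$ as soon as $|c_2|>(m_A^d+1)/(m_A^d-1)$ with $d=y'_2-y_2$, even though the complex with drift $(1,-1)$ contributes positively. The leading part of $\cal{A}(f_c)$ at a face is then at most $-x_1^{\rho_0}|\Delta_2^\omega(1,u)|$. The remaining terms must be compared with $|\Delta_2^\omega(x)|$ rather than with $\max_y x^y$, for instance via the paper's subsequence argument on $\log x_{n,2}/\log x_{n,1}$. Separately, near $\omega_A=(1,0)$ the coordinate $x_2$ may stay bounded while $\|x\|\to\infty$. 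The non-top complexes of $\cal{C}_{(1,0)}$ are then not $o(\rho(x))$, only smaller by a factor of order $1/x_2\leq 1/K'$, which is why $K'$ must be fixed after $c$.
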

When the drift region reaches $\omega_B=(1,1)$, to ensure the positivity of $f_c(x)$ when $c_2<0$, we might have to restrict it to $\cal{T}(\omega_A, \omega_B; m_A, m_B')$ with $m_B'<1/|c_2|$. 

\subsection{Structural properties of the drift region and definition of $c$}
Recall that $\cal{C}_\omega=\arg\max\{\croc{y, \omega}, \, y\in \cal{C}^-\}$. 
For $\omega\notin \cal{P}$, the dominant transitions of the Markov process in $\cal{H}_\omega$ are the transitions starting from the \emph{only} complex in $\cal{C}_\omega$. The following Lemma specifies the expression of this complex and relates it to the surrounding regions. 
\begin{lemma}\label{l:BetweenCompact}
	Let $\omega_1<\omega_2\in \Omega$ such that $\cal{P}\cap (\omega_1, \omega_2)=\emptyset$. 
	Then for all $\omega\in (\omega_1, \omega_2)$, $\cal{C}_\omega=\{y\}=\cal{C}_{\omega_1}\cap\cal{C}_{\omega_2}$. 
	Moreover, if $\omega_2\leq (1,1)$, 
	\[
		y=\arg\max\{y_2, \, y\in \cal{C}_{\omega_1}\} = \arg\min\{y_2, \, y\in \cal{C}_{\omega_2}\}
	\]
\end{lemma}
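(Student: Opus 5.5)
The argument is purely convex-geometric: it concerns the maximizers of the linear functional $y\mapsto\croc{y,\omega}$ over the finite set $\cal{C}^-$ as $\omega$ moves along the path $\Omega$. Parametrize the part of $\Omega$ between $\omega_1$ and $\omega_2$ continuously, e.g. by the angle of $\omega$. For each $y\in\cal{C}^-$, the map $\omega\mapsto\croc{y,\omega}$ is then continuous (piecewise linear in a suitable parameter on each of the two branches of $\Omega$). Consequently $\rho_0$ is continuous, and $\cal{C}_\omega$ is upper semicontinuous: if $y\notin\cal{C}_{\omega_0}$, then $y\notin\cal{C}_\omega$ for all $\omega$ near $\omega_0$.

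\textbf{Step 1 (constancy on the open interval).} Since $\cal{P}\cap(\omega_1,\omega_2)=\emptyset$, each $\cal{C}_\omega$ with $\omega\in(\omega_1,\omega_2)$ is a singleton $\{y(\omega)\}$. Suppose $y(\omega)=y$ and $\omega'$ is close to $\omega$. By upper semicontinuity, any $y'\neq y$ has $\croc{y',\omega}<\croc{y,\omega}$, and this strict inequality persists for $\omega'$ near $\omega$; hence $y(\omega')=y$. So $\omega\mapsto y(\omega)$ is locally constant, and since the interval is connected it is constant, equal to some $y$.

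\textbf{Step 2 (the endpoints).} Letting $\omega\to\omega_1$ and using continuity of $\croc{y',\cdot}$: the inequalities $\croc{y,\omega}\geq\croc{y',\omega}$ for all $y'$ pass to the limit, so $y\in\cal{C}_{\omega_1}$. The same argument gives $y\in\cal{C}_{\omega_2}$. Conversely, suppose $y'\in\cal{C}_{\omega_1}\cap\cal{C}_{\omega_2}$ with $y'\neq y$. The difference $\croc{y-y',\cdot}$ is linear, vanishes at $\omega_1$ and $\omega_2$, and is positive in between. If both endpoints lie on the same branch of $\Omega$ (a segment), a linear function vanishing at both ends of a segment vanishes identically on it, which is a contradiction. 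If the interval crosses the corner $(1,1)$, that corner is itself an interior point; I expect that one either argues that $(1,1)$ must then belong to $\cal{P}$ or treats the corner separately. In the situation of interest ($\omega_2\leq(1,1)$) this case does not arise. Hence the intersection is exactly $\{y\}$.

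\textbf{Step 3 (characterization via $y_2$).} Take $\omega_2\leq(1,1)$, write $\omega=(1,\gamma)$, and compare with $\omega_1=(1,\gamma_1)$. For $y'\in\cal{C}_{\omega_1}$ we have $y_1+\gamma_1y_2=y'_1+\gamma_1y'_2$. For $\gamma>\gamma_1$, the maximality of $y$ gives $y_1+\gamma y_2\geq y'_1+\gamma y'_2$. Subtracting yields $(\gamma-\gamma_1)(y_2-y'_2)\geq0$, so $y_2\geq y'_2$, with strict inequality if $y'\neq y$ by uniqueness. Thus $y$ is the unique maximizer of $y_2$ over $\cal{C}_{\omega_1}$. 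The symmetric computation at $\omega_2=(1,\gamma_2)$ with $\gamma<\gamma_2$ shows that $y$ is the unique minimizer of $y_2$ over $\cal{C}_{\omega_2}$.

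\textbf{Main obstacle.} The only delicate point is bookkeeping: the parametrization of $\Omega$ across the corner (Step 2) and the degenerate endpoint $\omega_1=(1,0)$. Neither affects the convexity argument itself.
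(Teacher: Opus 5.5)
Your Steps 1 and 3 follow the paper's proof. Step 1 uses local constancy of the unique maximizer on the connected interval $(\omega_1,\omega_2)$, together with passage to the limit at the endpoints. Step 3 uses the identity $\croc{y-z,(1,\gamma)}=(\gamma-\gamma_i)(y_2-z_2)$ for $z\in\cal{C}_{\omega_i}$. The endpoint $\omega_1=(1,0)$ needs no special care: the computation works verbatim with $\gamma_1=0$.

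The step you leave open is the uniqueness of $\cal{C}_{\omega_1}\cap\cal{C}_{\omega_2}$ when the corner $(1,1)$ lies strictly inside $(\omega_1,\omega_2)$. The first part of the lemma is stated for general $\omega_1<\omega_2$, so this case is covered and your proof is incomplete there. Your segment argument does not work in that case. Along the broken path $\Omega$, the function $\croc{y-y',\cdot}$ is only piecewise affine in the path parameter, so vanishing at both ends while being positive in between is not contradictory in itself.

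The missing observation, which is what the paper uses, is that two distinct points of $\Omega$ are never proportional, since each has maximal coordinate $1$. Hence $\omega_1,\omega_2$ form a basis of $\R^2$. The linear functional $\croc{y-y',\cdot}$ vanishes on both, so it is zero and $y'=y$. This needs no case distinction, no segment argument and no positivity in between.

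Your first suggested repair cannot work. If $(1,1)\in(\omega_1,\omega_2)$, then your own Step 1 gives $\cal{C}_{(1,1)}=\{y\}$, so $(1,1)\notin\cal{P}$.
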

The proof relies on a convexity argument. 
\begin{proof}[Proof of Lemma~\ref{l:BetweenCompact}]
Since $\cal{C}^-$ is finite, a unique maximizer remains unique in a neighborhood of its direction. Since $\cal{P}\cap(\omega_1,\omega_2) =\emptyset$, the function $\omega\mapsto\cal{C}_\omega$ is locally constant on the connected interval $(\omega_1,\omega_2)$. Therefore,
\[
    \forall \omega\in(\omega_1,\omega_2),\quad \cal{C}_\omega=\{y\},
\]
for some $y\in\cal{C}^-$. By continuity, for $i\in \{1,2\}$ and $z\in\cal{C}^-$, 
\[
    \croc{y-z,\omega_i}\geq0,
\]
so $y\in\cal{C}_{\omega_1}\cap\cal{C}_{\omega_2}$. Conversely, every $z$ in this intersection satisfies
\[
    \croc{y-z,\omega_1}=\croc{y-z,\omega_2}=0.
\]
Distinct directions in $\Omega$ are linearly independent, hence $z=y$.

If $\omega_2\leq(1,1)$, write $\omega_i=(1,\gamma_i)$ with $\gamma_1<\gamma_2$, and fix $\gamma\in(\gamma_1,\gamma_2)$. 
For $i=1,2$ and $z\in\cal{C}_{\omega_i}\setminus\{y\}$,
\[
    0<\croc{y-z,(1,\gamma)}
    =\croc{y-z,\omega_i}+(\gamma-\gamma_i)(y_2-z_2)
    =(\gamma-\gamma_i)(y_2-z_2).
\]
Thus, $y_2>z_2$ on $\cal{C}_{\omega_1}\setminus\{y\}$ and $y_2<z_2$ on $\cal{C}_{\omega_2}\setminus\{y\}$, proving both extremal characterizations.
\end{proof}

Assume we are in the setting of Proposition~\ref{p:DissipDrift}. In the drift regions, the system's evolution is well approximated by first order terms of the generator. At a given scaling $\omega$, this term is given by the ODE in Relation~\eqref{ODEOmega}. 
We first give some properties of the function $\Delta^\omega$ when $\cal{P}_\omega=\emptyset$. 
\begin{lemma}\label{l:PropDeltaDrift}
	The function $\Delta^\omega(x,y)$ defined in Relation~\eqref{e:fDelta} satisfies the following properties: 
	\begin{enumerate} 
	\item For $\omega'\notin \cal{P}$, note $\cal{C}_{\omega'}=\{y_{\omega'}\}$, then 
	\[
		\Delta^{\omega'}(x_1, x_2)= x_1^{y_{\omega', 1}}x_2^{y_{\omega', 2}}\Delta^{\omega'}(1,1). 
	\]
	\item Given $\omega_-<\omega<\omega_+\leq (1,1)$ such that $[\omega_-, \omega_+]\cap \cal{P}=\{\omega\}$, 
	\[
		\Delta^\omega(1, u)= \Delta^{\omega_-}(1,1) \, u^{k_-^\omega} +Q_\omega(u) + \Delta^{\omega_+}(1,1)\, u^{k_+^\omega}, 
	\]
	where $k_-^\omega:= \min\{y_2, y\in \cal{C}_\omega\}$ and $k_+^\omega:= \max\{y_2, y\in \cal{C}_\omega\}$, and $Q_\omega(u)$ is a polynomial function constituted of monomials of degrees strictly between $k_-^\omega$ and $k_+^\omega$. 
	\item Under the assumptions of (b),  
	\[
		\Delta_{2}^{\omega_-}(1,1)\neq 0 \quad \Longleftrightarrow \quad \lim_{u\to 0} \frac{\Delta_1^\omega(1,u)}{\Delta_2^\omega (1,u)}= \frac{\Delta_1^{\omega_-}(1,1)}{\Delta_2^{\omega_-}(1,1)}\in \R,
	\]
	and 
	\[
		\Delta_{2}^{\omega_+}(1,1)\neq 0 \quad \Longleftrightarrow \quad \lim_{u\to +\infty} \frac{\Delta_1^\omega(1,u)}{\Delta_2^\omega (1,u)}= \frac{\Delta_1^{\omega_+}(1,1)}{\Delta_2^{\omega_+}(1,1)}\in \R. 
	\]
	\item Under Assumption~\ref{a:Delta2Neq0}, for the drift region defined in Proposition~\ref{p:DissipDrift}, $\omega{\mapsto}\Delta_2^\omega(1, 1)$ is of constant sign on $(\omega_A, \omega_B)$ and for all $\omega\in (\omega_A, \omega_B)$, $\Delta^{\omega}_2(1,1){\neq} 0$. 
	\end{enumerate} 
\end{lemma}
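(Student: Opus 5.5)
Each of the four items follows directly from the definitions and from Lemma~\ref{l:BetweenCompact}, so I would prove them in order.

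For (a), if $\omega'\notin\cal{P}$ then $\cal{C}_{\omega'}=\{y_{\omega'}\}$ is a single complex. Hence every reaction in $\cal{R}_{\omega'}$ has the same source monomial $x_1^{y_{\omega',1}}x_2^{y_{\omega',2}}$. Factoring this monomial out of the sum in~\eqref{e:fDelta} leaves $\sum_{\nu\in\cal{R}_{\omega'}}\kappa_\nu\xi^\nu$, which is exactly $\Delta^{\omega'}(1,1)$.

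For (b), I would write
\[
\Delta^\omega(1,u)=\sum_{y\in\cal{C}_\omega}u^{y_2}\Big(\sum_{\nu:\,y_\nu^-=y}\kappa_\nu\xi^\nu\Big).
\]
Lemma~\ref{l:BetweenCompact} applies to the pair $(\omega_-,\omega)$, since $\cal{P}\cap(\omega_-,\omega)=\emptyset$. It gives that the unique dominant complex $y_-$ on $(\omega_-,\omega)$ is $\arg\min\{y_2:\,y\in\cal{C}_\omega\}$, so $y_{-,2}=k_-^\omega$. Since $\omega_-\notin\cal{P}$, we have $\cal{C}_{\omega_-}=\{y_-\}$, and by (a) the coefficient of $u^{k_-^\omega}$ is $\Delta^{\omega_-}(1,1)$. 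The same argument with the pair $(\omega,\omega_+)$ gives the coefficient $\Delta^{\omega_+}(1,1)$ for $u^{k_+^\omega}$.

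Two points need care. First, all complexes of $\cal{C}_\omega$ lie on the face $\{\croc{y,\omega}=\rho_0\}$, so distinct complexes have distinct $y_2$. This means each monomial degree corresponds to exactly one complex, and the remaining complexes contribute degrees strictly between $k_-^\omega$ and $k_+^\omega$; these form $Q_\omega$. Second, the absence of ghost complexes ensures $\Delta^{\omega_\pm}(1,1)\neq0$, so these extreme coefficients are genuinely present.

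For (c), I compare the lowest-order terms as $u\to0$. By (b), $\Delta^\omega_2(1,u)=\Delta_2^{\omega_-}(1,1)u^{k_-}+o(u^{k_-})$, and similarly for $\Delta^\omega_1$.
\begin{itemize}
\item[$(\Rightarrow)$] If $\Delta_2^{\omega_-}(1,1)\ne0$, the ratio $\Delta_1^\omega/\Delta_2^\omega$ tends to the finite value $\Delta_1^{\omega_-}(1,1)/\Delta_2^{\omega_-}(1,1)$.
\item[$(\Leftarrow)$] If $\Delta_2^{\omega_-}(1,1)=0$, then $\Delta_1^{\omega_-}(1,1)\ne0$ because there are no ghost complexes. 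So $\Delta_1^\omega\sim cu^{k_-}$, while $\Delta_2^\omega$ is either $o(u^{k_-})$ or identically zero. In both cases the ratio is infinite or undefined, so the limit is not a finite real.
\end{itemize}
The case $u\to+\infty$ is symmetric, using the highest degree $k_+$.

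For (d), Lemma~\ref{l:BetweenCompact} applied to $(\omega_A,\omega_B)$ shows that the dominant complex, and hence $\Delta^\omega(1,1)$, is constant on $(\omega_A,\omega_B)$ once that interval contains no point of $\cal{P}$. This settles constancy of sign, provided the value is nonzero.

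For non-vanishing I would use the endpoint $\omega_A$. If $\omega_A\in\cal{P}$, then $\ell_{\omega_A}(+\infty)$ is finite by Assumption~\ref{a:Delta2Neq0}, or by Assumption~\ref{a:DeltaNeq0_11} (via $\ell(0)$) when $\omega_A$ is on the diagonal. For $\omega_A\neq(1,1)$, $G/F=\Delta_1/\Delta_2$, so (c) gives $\Delta_2\neq0$ on the adjacent interval.

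The main obstacle is the diagonal case $\omega=(1,1)$. There $F_\omega=(1+u^2)(\Delta_2-u\Delta_1)$ and $G_\omega=\Delta_1+u\Delta_2$ are not simply $\Delta_2$ and $\Delta_1$. For $\omega_B=(1,1)$, I would handle it by computing the behaviour as $u\to0$: $F\sim\Delta_2^{\omega_-}u^{k_-}$ and $G\sim\Delta_1^{\omega_-}u^{k_-}$. Finiteness of $\ell_{(1,1)}(0)$ then again forces $\Delta_2^{\omega_-}(1,1)\ne0$.

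If $\cal{P}\cap(\omega_A,\omega_B)\neq\emptyset$ but $\cal{P}_\omega=\emptyset$ there, I would split $(\omega_A,\omega_B)$ at those faces. I would apply (c) at each face, using both limits $\ell(0)$ and $\ell(+\infty)$. Since $F_\omega$ has no positive root at such a face, the polynomial $\Delta_2^\omega(1,u)$ has constant sign on $(0,\infty)$. This transmits the sign of $\Delta_2$ across the face, giving a constant nonzero sign on the whole interval.
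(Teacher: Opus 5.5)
Your proposal is correct and follows essentially the paper's route. Items (a)--(c) come from the definition of $\Delta^\omega$ and Lemma~\ref{l:BetweenCompact}, with the absence of ghost complexes giving the reverse implication in (c), and (d) follows from constancy of the dominant complex between faces, non-vanishing via Assumptions~\ref{a:Delta2Neq0} and~\ref{a:DeltaNeq0_11} through (c), and sign transmission across faces with $\cal{P}_\omega=\emptyset$ because $\Delta_2^\omega(1,\cdot)$ has constant sign on $(0,\infty)$. One small slip: since $\omega_A<\omega_B\leq(1,1)$, the lower endpoint is never diagonal, and when it gives no information (e.g.\ $\omega_A=(1,0)$ or $\omega_A\notin\cal{P}$) you must use $\ell(0)$ at the nearest face above the interval, possibly outside $[\omega_A,\omega_B]$, exactly as you do for $(1,1)$.
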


\begin{proof}
	Results (a), (b) and (b) $\Rightarrow$ (c) are consequences of the definition of $\Delta^\omega$ given in Relation~\eqref{e:fDelta} and Lemma~\ref{l:BetweenCompact}. We show (d). Because of Lemma~\ref{l:BetweenCompact}, for $\omega_1<\omega_2$ such that $(\omega_1, \omega_2)\cap \cal{P}=\emptyset$, $\cal{C}_{\omega_1}=\cal{C}_{\omega_2}=\{y^*\}$, and 
	\[
		\Delta^{\omega_1}_2(1,1)=\sum_{y^*\to y'\in \cal{R}} \kappa_{y^*\to y'} (y'_2-y^*_2)=\Delta^{\omega_2}_2(1,1).
	\]
	Therefore, between two elements of $\cal{P}$, $\omega\mapsto\Delta^\omega_2(1,1)$ doesn't change sign. If $\omega\in \cal{P}$, since $\cal{P}_\omega=\emptyset$, $\Delta_2^\omega(1, u)\neq 0$ for all $u\in (0, +\infty)$. Using (c), this shows that $\omega\mapsto\Delta^\omega_2(1,1)$ doesn't change sign even through $\omega\in \cal{P}$. 
	Assumption~\ref{a:Delta2Neq0} and (c) guarantees that for $\omega\notin \cal{P}$, $\Delta^{\omega}_2(1,1)\neq 0$. 
\end{proof}

Using these properties, we can now define the coefficient $c\in \R^2$ of the energy function. Since $c$ is defined up to a constant, we can set $c_1=1$. We choose $c_2$ of opposite sign to the sign of $\omega \mapsto \Delta_2^\omega(1,1)$. Finally, we choose $|c_2|$ such that the leading order terms applied to $x\mapsto \croc{c, x}$ in the generator are negative. 
In the direction $\omega$, for $\omega \in (\omega_A, \omega_B)$, the generator can be approximated by 
\begin{itemize}
	\item If $\omega\notin \cal{P}$, for $x\in \cal{H}^\omega$ we have 
	\[
		\cal{A}(f_c)(x)\approx x^{y_\omega} (\Delta_1^{\omega}(1,1)- |c_2\Delta_2^\omega(1,1)|)
	\]
	\item If $\omega\in \cal{P}$, $\omega<(1,1)$, for $x\in \cal{H}^\omega$, setting $u=x_2/x_1^{\omega_2}$ we have 
	\[
		\cal{A}(f_c)(x)\approx x_1^{\rho_0(\omega)}\left(\Delta_1^{\omega}\left(1, u\right)-|c_2 \Delta_2^{\omega}\left(1, u\right)|\right)
	\]
\end{itemize}
This leads us to set $|c_2|>1$ such that the following inequalities hold:
\begin{equation}\label{e:Defa}
	\begin{aligned}
		|c_2|&> 1+\sup_{\omega\in(\omega_A, \omega_B)} \sup_{u\in \R_+}\frac{\Delta_1^{\omega}\left(1, u\right)}{|\Delta_2^{\omega}\left(1, u\right)|}, \\
		|c_2|&>1+\sup_{u\in (0, m_B]}\frac{\Delta_1^{\omega_B}\left(1, u\right)}{|\Delta_2^{\omega_B}\left(1, u\right)|},\\
		|c_2|&>1+\sup_{u\in [m_A, +\infty)}\frac{\Delta_1^{\omega_A}\left(1, u\right)}{|\Delta_2^{\omega_A}\left(1, u\right)|}\quad \text{if}\quad  \omega_A\neq (1,0),\\
		|c_2|&>1+\sup_{u\in [m_A\vee K', +\infty)}\frac{\Delta_1^{\omega_A}\left(1, u\right)}{|\Delta_2^{\omega_A}\left(1, u\right)|}\quad \text{if}\quad  \omega_A= (1,0)\\
	\end{aligned}
\end{equation}
where $K'$ will be set later as large as needed, so that the last inequality will be well-defined. The existence of such a $|c_2|<+\infty$ is a consequence of Lemma~\ref{l:PropDeltaDrift}.

\subsection{Point-wise dissipation under the diagonal}

\begin{proof}[Proof of Proposition~\ref{p:DissipDrift}]
If $c_2\geq 0$, set $m_B'=m_B$.
	If $c_2<0$, set $m_B'=\min\{m_B,1/(2|c_2|)\}$, and write 
	\[
		\overline{m}_B=m_B\delta_{\omega_B<(1,1)}+ m_B'\delta_{\omega_B=(1,1)}.
	\] 
	We prove the proposition by contradiction. Assume there is some sequence $(x_n)\in \cal{T}(\omega_A, \omega_B; m_A, \overline{m}_B)$ such that $\|x_n\|\to +\infty$, $x_{n,2}\leq m_B'x_{n,1}$ and for all $n\geq 1$, $\cal{A}(f_c)(x_n)>-1$.
	We show that a subsequence of $(x_n)$ verifies $\lim_{n\to +\infty}\cal{A}(f_c)(x_n)=-\infty$.
	
	We introduce the sequence $(\rho_n)=(\log(x_{n,2})/\log(x_{n,1}))$, with value in $[\omega_{A,2}+o(1), \omega_{B, 2}+o(1)]$. Up to taking a subsequence, we can consider that $(\rho_n)$ converges, to $\rho$. We now study separately the cases depending on the value of $\rho$. 

	If $\omega=(1,\rho)\notin \cal{P}\cup \{(1,0)\}$, set $y_\omega\in \cal{C}_\omega$. 
	For $y\in \cal{C}^-$ with $y\neq y_\omega$,  
	\begin{multline}\label{e:DominantComplex}
		\frac{x_n^y}{x_n^{y_\omega}}=\exp\left(\log(x_{n,1})(\croc{y, (1, \rho_n)}-\croc{y_\omega, (1, \rho_n)}) \right)\\
					=\exp\left(\log(x_{n,1})(\underbrace{\croc{y-y_{\omega}, \omega}}_{<0} +(y_{2}-y_{\omega, 2})\underbrace{(\rho_n-\rho)}_{\to 0}) \right)\to 0
	\end{multline}
	where in the last line, we take the limit $n\to +\infty$. 
	Therefore, 
	\begin{align*}
		\left|\frac{\cal{A}(f_c)(x_n)}{x_n^{y_\omega}}-\croc{\Delta^\omega(1, 1), c}\right|&\leq 2\|c\|K_0\sum_{y\to y'\in \cal{R}\setminus\cal{R}_\omega}\kappa_{y\to y'}\frac{x_n^y}{x_n^{y_\omega}}+O\left(x_{n,1}^{-1}+x_{n,2}^{-1}\right),\\
			&\to 0, 
	\end{align*}
	where in the second line, the $O(x_{n, 1}^{-1}+ x_{n, 2}^{-1})$ stands for the correction between the discrete rate and $x^y$.
	Since $\croc{\Delta^\omega(1, 1), c}<0$ by definition of $c$, see Relation~\eqref{e:Defa}, this leads to a contradiction. 
	
	We now assume that $\omega=(1, \rho)\in \cal{P}\cup\{(1,1)\}$. Note that the case $\omega=(1,1)$ can only happen if $\omega_B=(1,1)$. 
	We set $y^*=\arg\min\{y_2, \, y\in \cal{C}_\omega\}$. Using the decomposition given in (b) of Lemma~\ref{l:PropDeltaDrift}, we have 
	\[
		|\Delta_{2}^\omega(x_n)|= c_*x_n^{y^*}(1+ R(\frac{x_{n,2}}{x_{n,1}^{\rho}}))
	\]
	where $c_*>0$ and $1+R(u)$ is lower bounded by some $\eta>0$. 
	For the same reasons as in Relation~\eqref{e:DominantComplex}, for $y\notin \cal{C}_\omega$, $(x_n^{y-y^*})\to 0$ when $n\to +\infty$, which leads to 
	\[
		\lim_{n\to +\infty}\frac{x_n^y}{|\Delta^\omega_2(x_n)|}=0.
	\]
	Therefore, 
	\begin{align*}
		\frac{\cal{A}(f_c)(x_n)}{|\Delta^\omega_2(x_{n,1}, x_{n,2})|}&=-|c_2|+\frac{\Delta^\omega_1(1, x_{n,2}/x_{n,1}^{\rho})}{|\Delta^\omega_2(1, x_{n,2}/x_{n,1}^\rho)|} +o(1)\\
			&\leq -1,
	\end{align*}
	where we used the definition of $c$ given in Relation~\eqref{e:Defa} for the last line. Since $|\Delta_{2}^\omega(x_n)|\to +\infty$ when $n\to +\infty$, this leads to a contradiction. 
	
	Finally, when $\rho=0$ and $\omega=(1, \rho)$, we set $y^*=\arg\max\{y_2, y\in \cal{C}_\omega\}$ and take $\omega^+$ such that $((1,0), \omega^+]\cap\cal{P}=\emptyset$. 
	Here, all the $y\in \cal{C}_\omega$ share the same $y_1$ by definition. Therefore, $\Delta^\omega(x_n)= x_{n,1}^{y_1^*}\Delta^\omega(1, x_{n,2})$.  
	Using the decomposition given in (b) of Lemma~\ref{l:PropDeltaDrift}, for every $\eps>0$ we can find $K'$ large enough such that for all $u\geq K'$, 
	\[
		\left\|\frac{\Delta^\omega(1, u)}{u^{y_2^*}}-\Delta^{\omega^+}(1,1)\right\|<\eps. 
	\]
	Therefore, when $x_{n,2}\geq K'$, 
	\begin{align*}
		\frac{\cal{A}(f_c)(x_n)}{x_{n,1}^{y_1^*}}&\leq  x_{n,2}^{y_2^*}(\croc{\Delta^{\omega^+}(1,1), c} +\eps) +o(1)
	\end{align*}
	where the ``o(1)'' contains the lower order reactions that can be dealt with as before. Using the definition of $c$ given in Relation~\eqref{e:Defa} and taking $\eps$ small enough, this leads again to a contradiction, which concludes the proof of Proposition~\ref{p:DissipDrift}. 
\end{proof}

\subsubsection*{$b$-dominated energy function}
In Theorem~\ref{t:Main}, the energy function is of order $\|x\|^b$. The following straightforward Corollary extends the dissipation in drift regions to the function $g^b_c(x)$ defined as follows:
\begin{equation*}
    g^b_c(x)= (\croc{x, c})^b. 
\end{equation*}

\begin{corollary}\label{c:DriftPowerb}
    For $b\geq 1$, for some $\beta>0$, up to an increase of the constants $K, K'>0$, for all $x\in \cal{T}(\omega_A, \omega_B; m_A, m_B)$, if $\|x\|\geq K$ and $K'\leq x_2\leq m_B'x_1$, 
		\[
			\cal{A}(g^b_c)(x)\leq -\beta. 
		\]
\end{corollary}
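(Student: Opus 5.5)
The plan is to compute $\cal{A}(g_c^b)$ directly by a Taylor expansion of $t\mapsto t^b$ around $f_c(x)=\croc{x,c}$, and then to use Proposition~\ref{p:DissipDrift} for the first-order term. For a jump $\xi^\nu$ with $\|\xi^\nu\|\leq K_0$, the increment $\croc{\xi^\nu,c}$ is bounded by $K_0\|c\|$. Hence
\[
	g_c^b(x+\xi^\nu)-g_c^b(x)=b f_c(x)^{b-1}\croc{\xi^\nu,c}+E_\nu(x),\qquad |E_\nu(x)|\leq C_b\,(f_c(x)+K_0\|c\|)^{b-2}\vee f_c(x)^{b-2}.
\]
This holds as soon as $f_c(x)\geq 2K_0\|c\|$, so that $f_c$ stays comparable along the jump. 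For $1\leq b<2$ the remainder is bounded by a constant times $f_c(x)^{b-2}$, which is bounded. Summing against the rates gives
\[
	\cal{A}(g_c^b)(x)=b f_c(x)^{b-1}\cal{A}(f_c)(x)+\sum_\nu\lambda_\nu(x)E_\nu(x).
\]

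The first step is to check that $f_c(x)$ is large in the region considered. The definition of $m_B'$ guarantees this: $f_c(x)\geq x_1-|c_2|x_2\geq x_1/2$ when $c_2<0$, and $f_c(x)\geq x_1$ otherwise. Also $x_2\leq m_B'x_1$ implies $\|x\|\leq Cx_1$, so $f_c(x)\geq c'\|x\|\to\infty$. By Proposition~\ref{p:DissipDrift}, the first term is therefore at most $-b f_c(x)^{b-1}$.

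The main obstacle is that $\cal{A}(f_c)\leq -1$ alone does not dominate the second-order term $\sum_\nu\lambda_\nu E_\nu$, which is of order $\Lambda(x)f_c(x)^{b-2}$ with $\Lambda(x)=\sum_\nu\lambda_\nu(x)$. The fix is to strengthen the output of the proof of Proposition~\ref{p:DissipDrift}. That proof by contradiction actually shows more: along any sequence in the region, $\cal{A}(f_c)(x_n)$ is bounded above by $-\beta_0$ times the dominant monomial ($x_n^{y_\omega}$, $|\Delta_2^\omega(x_n)|$, or $x_{n,1}^{y_1^*}x_{n,2}^{y_2^*}$). Each of these is comparable to $\Lambda(x_n)$, since all other source monomials are negligible relative to it. Rerunning the same subsequence argument therefore yields
\[
	\cal{A}(f_c)(x)\leq -\beta_0\Lambda(x)
\]
for $\|x\|\geq K$ and $K'\leq x_2\leq m_B'x_1$, after possibly enlarging $K,K'$. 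Then
\[
	\cal{A}(g_c^b)(x)\leq \Lambda(x)f_c(x)^{b-1}\bigl(-b\beta_0+C f_c(x)^{-1}\bigr)\leq -\tfrac{b\beta_0}{2}\Lambda(x)f_c(x)^{b-1}
\]
for $\|x\|$ large. Since $\rho_0\geq1$ and each species appears in a source complex, $\Lambda(x)\geq c>0$ in the bulk, and $f_c^{b-1}\geq 1$. This gives $\cal{A}(g_c^b)(x)\leq-\beta$, and in fact the stronger bound $-\beta\|x\|^{b-1}$.
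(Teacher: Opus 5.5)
Your proof is correct. It is the natural argument behind the paper's remark that this corollary is "straightforward"; the paper itself writes no proof. You expand $t\mapsto t^b$ to second order around $f_c(x)$, and you correctly note that $\mathcal{A}(f_c)\leq -1$ alone cannot absorb the second-order term of size $\Lambda(x)f_c(x)^{b-2}$ when $b>1$. The fix you give, that the contradiction argument of Proposition~\ref{p:DissipDrift} actually yields $\mathcal{A}(f_c)(x)\leq-\beta_0\Lambda(x)$, is exactly the detail the paper leaves implicit.

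One justification needs correcting. On a face $\omega\in\mathcal{P}$, the bound $|\Delta_2^\omega(x)|\geq c\,\Lambda(x)$ does not come from the other monomials of $\mathcal{C}_\omega$ being negligible, since they are of the same order as $x^{y^*}$. It comes instead from $|\Delta_2^\omega(1,u)|\geq c\sum_{y\in\mathcal{C}_\omega}u^{y_2}$ on the relevant range of $u$. That inequality holds because $\Delta_2^\omega(1,\cdot)$ has no zero on that range, and the relevant extreme coefficients $\Delta_2^{\omega_\pm}(1,1)$ are nonzero by Lemma~\ref{l:PropDeltaDrift}(d).
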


\subsection*{Dissipation on the boundaries}
To facilitate the patching of the drift regions and the non-trivial compactifications, we show a stronger dissipation inequality near the boundary of the drift region. 

For $\omega= (1,1)$, we use the change of variable $(u, r)$ introduced in Relation~\eqref{e:ChangeVar11} Section~\ref{s:R11}, associated to the local infinitesimal generator~\eqref{e:IGReduced} (after time-change). 
For $(1,0)<\omega<(1,1)$, we use the change of variable $(u,r)$ of Relation~\eqref{e:ChangeVarNeq11} of Section~\ref{s:RNeq11} associated to the local infinitesimal generator~\eqref{e:IGReducedNeq11}. 
In both case, and as defined in the respective sections, $\underline{x}$ is the function reversing the change of coordinates: $\underline{x}(u(x), r(x))=x$. 

\begin{lemma}\label{l:DriftBoundary}
	Under the conditions of Proposition~\ref{p:DissipDrift}, we can find some $\beta>0$ small enough such that 
	\begin{itemize}
		\item if $\omega_A>(1, 0)$, for $M_A>m_A$, on $\cal{H}_{\omega_A}(m_A, M_A)$, for $r$ large enough, 
		\[
			\widehat{\cal{A}}_{\omega_A}(g^b_c\circ \underline{x})\leq -\beta r^{b+\omega_{A, 2}-1}
		\]
		\item for $M_B<m_B'$, on $\cal{H}_{\omega_B}(M_B, m_B')$, for $r$ large enough, 
		\[
			\widehat{\cal{A}}_{\omega_B}(g^b_c \circ \underline{x})\leq -\beta r^{b+\omega_{B, 2}-1}
		\]
	\end{itemize}
\end{lemma}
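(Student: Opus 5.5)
We apply the reduced generators of Propositions~\ref{p:ReducedGenerator11} and~\ref{p:ReducedGeneratorNeq11} to $f=g^b_c\circ\underline{x}$ and show that the first-order term dominates and is negative. Take $\omega=\omega_B$ and write $\omega=(1,\gamma)$.

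\textbf{Case $\gamma<1$.} Here $\underline{x}(u,r)=(r,ur^\gamma)$, so
\[
f(u,r)=(r+c_2ur^\gamma)^b=r^b\,(1+c_2ur^{\gamma-1})^b .
\]
Hence
\[
\partial_r f=b\,r^{b-1}\bigl(1+O(r^{\gamma-1})\bigr),\qquad \partial_u f=b\,c_2\,r^{b-1+\gamma}\bigl(1+O(r^{\gamma-1})\bigr).
\]
Each further $u$-derivative gains a factor $r^\gamma$, and each further $r$-derivative loses a factor $r$. Plugging into~\eqref{e:IGReducedNeq11}, the leading terms are
\[
b\,r^{b-1+\gamma}\bigl(F_\omega(u)c_2+G_\omega(u)\bigr)=b\,r^{b-1+\gamma}\,\croc{\Delta^\omega(1,u),c},
\]
using $F_\omega=\Delta_2^\omega(1,\cdot)$ and $G_\omega=\Delta_1^\omega(1,\cdot)$. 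The remaining contributions are all $o(r^{b-1+\gamma})$:
\begin{itemize}
\item the terms $\gamma ur^{\gamma-1}G_\omega$, $R_\omega$ and $B_\omega$ are $o(1)$ relative to the leading drift;
\item the noise term is $r^{-\gamma}\,\partial_u^2f=O(r^{b-2+\gamma})$;
\item the error $\cal{E}_f$ contributes $O(r^{b-1})$ from $\partial_rf$, and $r^\gamma\cdot r^{-\gamma i}\,|\partial_u^i\partial_r^jf|=O(r^{b-1+\gamma-j})$ for $j\ge1$ and $i+j\ge2$.
\end{itemize}
So it suffices to show that $\croc{\Delta^\omega(1,u),c}\le -\beta'<0$ uniformly for $u$ in the relevant interval. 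For $\omega_B$ that interval is $u\in[M_B,m_B']\subset(0,m_B]$. Writing $\Delta_2^\omega$ with a fixed sign opposite to $c_2$ (Lemma~\ref{l:PropDeltaDrift}(d), extended to $\omega_B$ on $(0,m_B)$ since that interval contains no root of $F_{\omega_B}$), the second line of~\eqref{e:Defa} gives
\[
\croc{\Delta^\omega(1,u),c}=\Delta_1^\omega-|c_2||\Delta_2^\omega|\le -|\Delta_2^\omega(1,u)|.
\]
This is bounded away from $0$ on the compact interval $[M_B,m_B']$ because $F_{\omega_B}$ has no root there. The exponent obtained is $r^{b+\gamma-1}$, as claimed. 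The case $\omega_A$ on $[m_A,M_A]$ is identical, using the third line of~\eqref{e:Defa} and $m_A>\max\cal{P}_{\omega_A}$.

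\textbf{Case $\omega_B=(1,1)$.} Here $\underline{x}(u,r)=r(1,u)/\sqrt{1+u^2}$, so
\[
f=r^b\,\phi(u),\qquad \phi(u)=\Bigl(\frac{1+c_2u}{\sqrt{1+u^2}}\Bigr)^b .
\]
The base $1+c_2u$ is positive on $u\le m_B'\le 1/(2|c_2|)$. The first-order part of~\eqref{e:IGReduced} is $r^b\bigl(F\phi'+bG\phi\bigr)$. By homogeneity this equals $\sqrt{1+u^2}\cdot r$ times the Euclidean gradient of $\croc{x,c}^b$ paired with $\Delta^\omega(x)$ at $|x|=1$. Up to positive factors it is therefore $b\,r^b\croc{x,c}^{b-1}\croc{\Delta^\omega(1,u),c}$ times a power of $(1+u^2)$, which is $\le -\beta r^b$ by the same argument. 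The noise and error terms are $O(r^{b-1})$ since $\phi$ is smooth on the compact $u$-interval.

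\textbf{Main obstacle.} The delicate step is the uniform sign control at the endpoints of the interval. We must check that the sign of $\Delta_2^{\omega}(1,u)$ near $\omega_B$ for $u\in(0,m_B)$ agrees with the sign on $(\omega_A,\omega_B)$. This follows from Lemma~\ref{l:PropDeltaDrift}(b)--(c): as $u\to0$, $\Delta^{\omega_B}(1,u)\sim\Delta^{\omega_-}(1,1)u^{k_-}$, and $\Delta_2$ cannot vanish before the first root $\min\cal{P}_{\omega_B}>m_B$. The exact computation verifying that the first-order terms combine into $\croc{\Delta,c}$ is routine chain rule.
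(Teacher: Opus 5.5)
Your proposal is correct and follows the paper's route. The paper's proof is the one-line remark that the lemma is a direct application of the reduced generators (Propositions~\ref{p:ReducedGenerator11} and~\ref{p:ReducedGeneratorNeq11}) together with the choice of $c$ in~\eqref{e:Defa}, and you carry this out, correctly identifying the first-order term as a positive multiple of $r^{b+\gamma-1}\croc{\Delta^\omega(1,u),c}$ in both the sub-diagonal and diagonal cases. There are two harmless slips. First, the mixed error terms are $O(r^{b+\gamma-i-j})$, not $O(r^{b-1+\gamma-j})$; this is still $o(r^{b+\gamma-1})$. Second, for $\omega_B=(1,1)$, the non-vanishing of $\Delta_2^{(1,1)}(1,\cdot)$ on $(0,m_B]$ does not follow from the absence of roots of $F_{(1,1)}(u)=(1+u^2)\bigl(\Delta_2^{(1,1)}(1,u)-u\Delta_1^{(1,1)}(1,u)\bigr)$; it follows from your $u\to0$ asymptotics together with the paper's convention that $m_B$ is taken small on the diagonal.
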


\begin{proof}
	This is a direct application of the definition of $c$ given in Relation~\eqref{e:Defa}, and the approximation of the generator $\widehat{\cal{A}}_\omega$ given in Proposition~\ref{p:ReducedGenerator11} and~\ref{p:ReducedGeneratorNeq11}. 
\end{proof}

\subsection{The diagonal region}
In the diagonal region, once again, we have to proceed differently. We assume here that $\cal{P}_{(1,1)}=\emptyset$. 
In $\cal{H}_{(1,1)}$, the order of magnitude of the states is fixed, we can therefore use the reduced generator introduced in Relation~\eqref{e:IGReduced} (after time-change), associated to the change of variable $(u, r)$ introduced in Relation~\eqref{e:ChangeVar11} Section~\ref{s:R11}.

Our goal is to show that the system exits the drift region. 
Until now, dissipation was captured by the monotonic (increasing or decreasing) behavior of the quantity $x_2$ in time. Here, the natural variable capturing dissipation through its monotonic evolution is the \emph{angle}. 
Note that the constant sign of $\Delta_2^{(1,1)}(1,u)$ is not enough to guarantee this. 

The easiest and most intuitive way to construct the energy function here is to take a function $V_d(u,r)=r^b\Phi(u)$ where $\Phi$ is a monotonic function of the angle $u$. More specifically, the function $u\mapsto \Phi(u)$ should be increasing if the $u(t)$ decreases, i.e., if the system turns clock-wise on the Poincar\'e disk, and decreasing if the system turns anti-clock-wise on the Poincar\'e disk. 

Taking the constant $M$ of Theorem~\ref{t:Main}, we choose the energy function $\Phi$ as $\Phi(u)=(1+u)^a$ with $a$ such that $aF_\omega(1)<0$, and 
\begin{equation*}
	|a|\geq 1+ \max\left\{0, \sup_{u\in (0, M+1]}\frac{(1+u)bG_\omega(u)}{|F_\omega(u)|}\right\}. 
\end{equation*}

\begin{lemma}\label{l:DriftDiag}
	Let $b>0$. If  $\cal{P}_\omega=\emptyset$ for $\omega=(1, 1)$ and Assumption~\ref{a:DeltaNeq0_11} holds, then for any $m\in (0, M)$ we can find $\beta, K>0$ such that, for all $(u,r)$ with $u\in[m, M]$ and $r\geq K$, 
	\[
		\widehat{\cal{A}}_\omega(V_d)(u,r)\leq - \beta r^{b}. 
	\]
\end{lemma}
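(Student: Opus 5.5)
Apply Proposition~\ref{p:ReducedGenerator11} to $V_d(u,r)=r^b\Phi(u)$ with $\Phi(u)=(1+u)^a$; then read off the sign of the leading term, using that $F_\omega$ has no zero on $[m,M]$.

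\textbf{Expansion.} The relevant derivatives are
\[
\partial_u V_d = a r^b (1+u)^{a-1}, \qquad \partial_r V_d = b r^{b-1}(1+u)^a ,
\]
so the leading part of $\widehat{\cal{A}}_\omega V_d$ is
\[
r^b(1+u)^{a-1}\bigl(aF_\omega(u)+b(1+u)G_\omega(u)\bigr).
\]
The noise term is $\frac{(1+u^2)^{3/2}}{2r}D_\omega(u)\,a(a-1)r^b(1+u)^{a-2}=O(r^{b-1})$. Every mixed and higher derivative in the error bound of Proposition~\ref{p:ReducedGenerator11} carries an extra factor $r^{-1}$ relative to $r^b$ on the compact set $u\in[m,M]$. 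Hence $\mathcal E_{V_d}=O(r^{b-1})$ uniformly in $u\in[m,M]$.

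\textbf{Sign of the leading term.} Since $\cal{P}_\omega=\emptyset$, $F_\omega$ has no positive root, so it has constant sign on $(0,\infty)$. In particular $\operatorname{sign}F_\omega(u)=\operatorname{sign}F_\omega(1)$, and $aF_\omega(u)=-|a||F_\omega(u)|<0$ for all $u>0$. By the choice of $|a|$, for $u\in[m,M]\subset(0,M+1]$,
\[
b(1+u)G_\omega(u)\le (|a|-1)|F_\omega(u)|,
\]
and therefore
\[
aF_\omega(u)+b(1+u)G_\omega(u)\le -|F_\omega(u)|\le -\min_{[m,M]}|F_\omega|<0.
\]
The minimum is positive by continuity and compactness. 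The factor $(1+u)^{a-1}$ is bounded below on $[m,M]$, whatever the sign of $a$. This gives $\widehat{\cal{A}}_\omega V_d\le -2\beta r^b+O(r^{b-1})\le-\beta r^b$ for $r\ge K$.

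\textbf{Main obstacle.} The key step is the finiteness of the supremum in the definition of $|a|$. The interval $(0,M+1]$ reaches down to $0$, where $F_\omega(u)\to0$ since $F_\omega(0)=\Delta_2^\omega(1,0)$ may vanish. Finiteness there is exactly what Assumption~\ref{a:DeltaNeq0_11} provides: $\ell_\omega(0)=\lim_{u\to0^+}G_\omega/F_\omega$ is finite. Away from $0$, $F_\omega$ is bounded away from zero on compacts of $(0,M+1]$. So the ratio is bounded and $a$ is well-defined.

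For the lemma itself only $[m,M]$ matters, so the compact-set argument above suffices. The assumption is needed so that the same $a$ works uniformly as $m\to0$, which is what patching with the sub-diagonal drift region requires.
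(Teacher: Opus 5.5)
Your proof is correct and follows the paper's argument. The paper's proof only remarks that Assumption~\ref{a:DeltaNeq0_11} makes the exponent $a$ well defined and that the inequality follows directly from the expansion~\eqref{e:IGReduced}; you have supplied the omitted details, namely the bound $r^b(1+u)^{a-1}\bigl(aF_\omega(u)+b(1+u)G_\omega(u)\bigr)\le -r^b(1+u)^{a-1}|F_\omega(u)|$ from the choice of $a$, and the $O(r^{b-1})$ bound on the noise and remainder terms.
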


\begin{proof}
	$a$ is well-defined because of Assumption~\ref{a:DeltaNeq0_11}. Analougously to what was done in the proof of Proposition~\ref{p:DissipDrift}, the inequality is a direct application of the approximation of the generator given in Relation~\eqref{e:IGReduced}. 
\end{proof}

\section{Patching of the different regions}\label{Gluing}

In this section, we patch the different regions together, completing the final steps in the proof of Theorem~\ref{t:Main}. Recall that $b\geq 1$ is fixed. 
We patch the regions starting from the diagonal $\omega=(1,1)$ down to the boundary $\omega=(1,0)$. This procedure can be carried out on the other half of the state space by exchanging the two species, see Remark~\ref{r:SwitchSpecies}.  

In the spirit of what has been done in Section~4 of~\cite{Agazzi25}, the idea is to multiply by a positive constant the energy function of the ``lower'' set, so that the energies can be patched near the boundary between the lower and upper set. When we meet an attractive, convergent curve, the dissipation is so strong that the patching of the function does not necessitate much adjustment, see Proposition~\ref{p:DissipAC}. When we meet a repulsive curve however, the dissipation has to be shown more carefully. In this case, we use a smoothing argument, see Lemma~\ref{l:SmoothingGluing}. 

The patching is always carried out in a narrow region of the form $\cal{H}_\omega(p_1, p_2)$ for fixed $\omega$. Therefore, we can show the dissipation in the region of the patching using the ``local'' generator, defined by Proposition~\ref{p:ReducedGenerator11} (for $\omega=(1,1)$) and Proposition~\ref{p:ReducedGeneratorNeq11} (for $\omega<(1,1)$).

We use $F_\omega,G_\omega$ from Definitions~\ref{d:Comp11} and~\ref{d:CompNeq11}, and the generator $\widehat{\cal A}_\omega$ after the time change of Sections~\ref{s:R11} and~\ref{s:RNeq11}. For $\omega=(1,\gamma)$, $0<\gamma\leq 1$, recall the change of coordinates: 
\[
    u=\frac{x_2}{x_1^\gamma},\qquad
    r=\begin{cases}
        x_1,&\gamma<1,\\
        \|x\|_2,&\gamma=1.
    \end{cases}
\]

\subsection{Patching with a repulsive region: smoothing lemma}
We state a lemma that will be used to carry out the patching between a repulsive region and a drift region. It is stated in a general setting so that it can be used on the diagonal region as well as sub-diagonal region. The smoothing is always carried out in a cone $\cal{H}_\omega(p_1, p_2)$ for some $0<p_1<p_2$. 

Introduce the interval $I:=[p_1, p_2]$ and an interval $J:=[q_1, q_2]$ such that $I\subset (q_1, q_2)$ and 
\[
    \min_{u\in J}|F_\omega(u)|>0.
\]
Let $V_1,V_2$ be positive $C^2$ functions of $(u,r)$ and $\beta_0,\beta_1,\beta_2,C$ positive constants.

\begin{lemma}\label{l:SmoothingGluing}
If, for $j=1,2$, $u\in J$ and for $r$ large enough, it holds
\begin{equation}\label{e:SmoothingGluing}
    \begin{aligned}
    \widehat{\cal A}_\omega V_j(u, r)\leq-\beta_jr^{b+\gamma-1},\\
    V_1(u,r)-V_2(u,r)\geq\beta_0r^b,\\
    |V_j|+|\partial_uV_j|+r|\partial_rV_j|\leq Cr^b,
    \end{aligned}
\end{equation}
then we can define a function $V$, positive and $C^2$, that verifies, for $u\in J$ and $r$ large enough,
\[
    V_2(u, r)\leq V(u,r)\leq V_1(u,r),\qquad
    V(u,r)=
    \begin{cases}
        V_1(u,r),&u\leq p_1,\\
        V_2(u,r),&u\geq p_2,
    \end{cases}
    \quad\text{if }F_\omega>0,
\]
with the two last equalities interchanged if $F_\omega<0$, and such that for $u\in I$ and $r$ large enough, 
\[
    \widehat{\cal A}_\omega (V)(u, r)\leq-\beta r^{b+\gamma-1},
\]
for some $\beta>0$. This inequality also holds for states $(u,r)$ that can reach $I\times \R_+$ in one jump. 
\end{lemma}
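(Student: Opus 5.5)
We take $V$ to be a convex-type interpolation of $V_1$ and $V_2$ across the strip $I$. Treat the case $F_\omega>0$ on $J$; the case $F_\omega<0$ is symmetric after swapping the roles of $p_1,p_2$. Fix a smooth nonincreasing cutoff $\chi:\R\to[0,1]$ with $\chi=1$ on $(-\infty,p_1]$ and $\chi=0$ on $[p_2,+\infty)$, and set
\[
V(u,r)=\chi(u)V_1(u,r)+(1-\chi(u))V_2(u,r)=V_2+\chi(u)(V_1-V_2).
\]
Positivity, the $C^2$ regularity, the sandwich $V_2\leq V\leq V_1$, and the equalities $V=V_1$ for $u\leq p_1$ and $V=V_2$ for $u\geq p_2$ are then immediate. (If a plain cutoff turns out not to give enough room, we would instead use the smooth maximum of $V_2$ and $V_1-\theta(u)r^b$ with a decreasing $\theta$; the mechanism is the same.)

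The key step is to compute $\widehat{\cal A}_\omega V$ on $I$ using Proposition~\ref{p:ReducedGenerator11} or Proposition~\ref{p:ReducedGeneratorNeq11}. Since the first-order part of the generator is a derivation, we get
\[
\widehat{\cal A}_\omega V=\chi\,\widehat{\cal A}_\omega V_1+(1-\chi)\widehat{\cal A}_\omega V_2+F_\omega(u)\chi'(u)(V_1-V_2)+\text{(second order and error terms)}.
\]
The first two terms are bounded by $-\min(\beta_1,\beta_2)r^{b+\gamma-1}$. Before the time change the two $\widehat{\cal A}_\omega$ normalisations differ, so we must track the scaling: in the normalisation of Proposition~\ref{p:ReducedGeneratorNeq11} the $u$-drift is $O(1)$ and the $r$-drift is $O(r^\gamma)$. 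Hence the bound $\widehat{\cal A}_\omega V_j\leq-\beta_j r^{b+\gamma-1}$, together with $r|\partial_r V_j|\leq Cr^b$, is consistent. The cross term is favourable: $F_\omega>0$ and $\chi'\leq 0$ give $F_\omega\chi'(V_1-V_2)\leq 0$, and it is even of order $-r^b$ where $\chi'\neq 0$.

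The remaining terms are those involving $\chi''(V_1-V_2)$, $\chi'\partial_u(V_1-V_2)$ with second-order weights, and the error $\cal{E}_f$. All of them carry a factor $r^{-\gamma}$ (or $r^{-1}$) relative to $r^b$, by the bounds $|V_j|+|\partial_u V_j|+r|\partial_rV_j|\leq Cr^b$. This is the step we expect to be the main obstacle. The hypotheses only control first derivatives, whereas the expansion involves $\partial_u^2 V_j$, $\partial_u\partial_r V_j$ and third derivatives. To handle this we will use the product structure directly. Write $\widehat{\cal A}_\omega(\chi W)-\chi\widehat{\cal A}_\omega W$ as a sum over jumps of $\widehat\lambda_\nu\,(\chi(u')-\chi(u))\,W(u',r')$, where $W=V_1-V_2$. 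Then only the $C^3$ norm of $\chi$ and the size $|W|\leq 2Cr^b$, together with the first-order variation of $W$ over a jump, are needed. Expanding $\chi(u')-\chi(u)=\chi'(u)\Delta u+O(\Delta u^2)$ with $\Delta u=O(r^{-\gamma})$ and rates $O(r^\gamma)$, the term $\chi'\cdot$(mean jump)$\cdot W$ reproduces $F_\omega\chi' W+O(r^{b+\gamma-1})$. The rest is $O(r^{b-\gamma})$ from the quadratic terms plus $O(r^{b-\gamma})$ from $W(u',r')-W(u,r)$, controlled by the first-derivative bounds. For $\gamma=1$ these are $O(r^{b-1})$. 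For $\gamma<1$ they are $o(r^b)$, and on the inner part of $I$ they are dominated by the strictly negative term $F_\omega\chi'\beta_0 r^b$.

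The difficulty lies where $\chi'$ vanishes, near $p_1$ and $p_2$. There the leftover term $O(r^{b-\gamma})$ must be beaten by $r^{b+\gamma-1}$, which requires $\gamma\geq 1/2$ in general. To avoid this, we choose $\chi$ with $-\chi'\geq c\,(\chi(1-\chi))^{1/2}$ and also bound the quadratic term by $|\chi''|\leq C(\chi(1-\chi))^{1/2}$; alternatively we bound $\chi(u')-\chi(u)$ near the endpoints by the fact that $\chi$ is flat there. We then absorb $r^{b-\gamma}|\chi''|$ into $\tfrac12|F_\omega\chi'|\beta_0r^b$ where $\chi'$ is not small, and into $\min(\beta_1,\beta_2)r^{b+\gamma-1}$ where it is. Finally, states one jump from $I$ lie in $J$, where the same expansion applies, which gives the last claim.
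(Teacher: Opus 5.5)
Your construction and decomposition match the paper's. You take $V=\chi V_1+(1-\chi)V_2$ and write
\[
\widehat{\mathcal{A}}_\omega V=\chi\,\widehat{\mathcal{A}}_\omega V_1+(1-\chi)\,\widehat{\mathcal{A}}_\omega V_2+(V_1-V_2)\,\widehat{\mathcal{A}}_\omega\chi+\sum_\nu\widehat{\lambda}_\nu\,(\partial_\nu\chi)\,\partial_\nu(V_1-V_2).
\]
This uses only the hypothesis on $\widehat{\mathcal{A}}_\omega V_j$ and first derivatives of $V_j$. It gives the favourable cross term $-\tfrac{\beta_0}{2}\min_J|F_\omega|\,r^b|\chi'(u)|$ and errors of size $r^{b-\gamma}\sup_{|v-u|\leq Cr^{-\gamma}}(|\chi'(v)|+|\chi''(v)|)$. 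You also locate the obstruction correctly: where $\chi'$ is small, the crude bound $O(r^{b-\gamma})$ on these errors is not beaten by $r^{b+\gamma-1}$ when $\gamma\leq 1/2$, and sub-diagonal faces can have any $\gamma\in(0,1)$. A minor point: for $\gamma<1$, $\widehat{\mathcal{A}}_\omega u-F_\omega$ is only $o(1)$ because of $R_\omega$, not $O(r^{\gamma-1})$. This is harmless, since it multiplies $\chi'(V_1-V_2)$ and is absorbed by the cross term.

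The gap is in how you resolve that obstruction, because the cutoff you propose does not exist. Let $g=1-\chi$ and let $u_0$ be the point where $\chi$ leaves $1$. The condition $-\chi'\geq c(\chi(1-\chi))^{1/2}$ gives $(\sqrt g)'\geq c'/2$ with $c'=c/\sqrt2$. Hence $g'(u)\geq \tfrac{c'^2}{2}(u-u_0)$, so $g''(u_0)\geq c'^2/2>0$, which contradicts $\chi\in C^2$ with $\chi\equiv1$ to the left. Adding $|\chi''|\leq C(\chi(1-\chi))^{1/2}$ even gives $c'\sqrt{g(u)}\leq g'(u)\leq C(u-u_0)\sqrt{g(u)}$, which forces $g\equiv0$ near $u_0$.

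Your fallback, ``$\chi$ is flat there'', is not quantified, and that quantification is the real content of the lemma. You must bound $|\chi'|$ and $|\chi''|$ at the \emph{shifted} points $v$ with $|v-u|\leq Cr^{-\gamma}$ by $|\chi'(u)|$ plus a power of $r^{-\gamma}$ whose exponent exceeds $1-\gamma$. This needs flatness of an order that depends on $\gamma$.

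The paper takes $|\chi'|\propto\exp[-1/((u-p_1)(p_2-u))]$, so that $|\chi''|\leq C_n|\chi'|^{1-1/n}$ for every $n$. Then $|\chi'|^{1/n}$ is Lipschitz, which gives
\[
\sup_{|v-u|\leq Cr^{-\gamma}}(|\chi'(v)|+|\chi''(v)|)\leq C_n'\bigl(|\chi'(u)|^{1-1/n}+r^{-(n-1)\gamma}\bigr).
\]
Young's inequality then gives $r^{b-\gamma}|\chi'(u)|^{1-1/n}\leq\eps r^b|\chi'(u)|+C_{\eps,n}r^{b-n\gamma}$. The first piece is absorbed by the cross term, and choosing $n\gamma>1-\gamma$ makes the second $o(r^{b+\gamma-1})$. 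A polynomial bump $|\chi'|\propto(u-p_1)^n(p_2-u)^n$ with $n\geq 2$ and $n\gamma>1-\gamma$ works equally well. Without a step of this kind, your argument only closes for $\gamma>1/2$.
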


When $F_\omega>0$ on the smoothing region, the vector field tends to increase $u$, and the system would cross the patching region from the lower part ($u=p_1$) to the above part ($u=p_2$). In this case, $V_1$ is the function below the region of patching, and $V_2$ the function above. We chose $V_2\leq V_1$, and therefore the vector field brings us towards lower energy values. 
When $F_\omega<0$, the function with lowest values is on the bottom, and we again go toward the low energy region. 

When using this lemma, we will see that the expression of $V_1$ and $V_2$ as well as the first inequality required in~\eqref{e:SmoothingGluing} are consequences of the work we have done until now. The last inequality can be verified using explicit definitions of the functions $V_i$, whereas the $b$-dominated condition gives the comparison of their values. The second inequality will be obtained by multiplying the lower-function by a positive constant.  

\begin{proof}
We set $I'\subset J$ such that $I\subset I'\setminus\partial I'$, to ensure that the dissipation inequality holds also for states at the boundary of the set $I$ that can reach $I^c$ in one jump. 
For $\nu\in \cal{R}$ and function $H$, we write $\partial_\nu H= H(x+\xi_\nu)-H(x)$. 
Recall the estimations (and notations) calculated in the proofs of Propositions~\ref{p:ReducedGenerator11} and~\ref{p:ReducedGeneratorNeq11}: uniformly on $J$ and for all $\nu\in \cal{R}$, 
\[
    |\partial_\nu u|\leq cr^{-\gamma}, \quad |\partial_\nu r|\leq c, \quad  \widehat{\lambda}_\nu(u,r) \leq cr^\gamma
    \quad\text{and} \quad \widehat{\cal{A}}_\omega u =F_\omega(u) +o(1).
\]  
Set for $u\in J$ and $r\geq 1$,  
\[
    V(u,r)=\chi(u) V_1(u,r)+(1-\chi(u))V_2(u,r),
\]
where $\chi$ is chosen so that $0\leq\chi\leq 1$, and $F_\omega\chi'\leq0$ (which ensures the right ``order'' for $V_1$ and $V_2$, as mentioned before) and 
\[
    |\chi'(u)|\propto
    \begin{cases}
        \exp[-1/((u-p_1)(p_2-u))],&p_1<u<p_2,\\
        0,&u\notin(p_1,p_2),
    \end{cases}
    \qquad \int_{p_1}^{p_2}|\chi'(u)|\diff u=1.
\]
For $u\in I'$ and $r$ large enough, 
\begin{equation}\label{e:GenDecomposition1}
    \widehat{\cal A}_\omega V(u,r)= \chi \widehat{\cal A}_\omega V_1(u,r)+(1-\chi)\widehat{\cal A}_\omega V_2(u,r) +(V_1-V_2)\widehat{\cal A}_\omega\chi(u,r) +E_1(u,r),
\end{equation}
where $E_1(u,r)$ is defined as  
\[
    E_1(u,r):=\sum_{\nu\in \cal{R}}\widehat{\lambda}_\nu(u,r)[(\partial_\nu \chi)(\partial_\nu (V_1-V_2)) ], 
\]
Therefore, Taylor expansion and the last inequality of the Lemma lead to 
\[
    |E_1(u,r)|\leq C'r^{\gamma}(cr^{-\gamma}\sup_{|v-u|\leq Cr^{-\gamma}}|\chi'(v)|)\left(Ccr^{b-\gamma}+Ccr^{b-1}\right)\leq C''r^{b-\gamma}\sup_{|v-u|\leq Cr^{-\gamma}}|\chi'(v)|. 
\]
Moreover, using again a Taylor expansion and the approximations recalled above, we have 
\[
    (V_1-V_2)\widehat{\cal A}_\omega\chi(u,r)=(V_1-V_2)\chi'\widehat{\cal A}_\omega u +E_2(u,r), 
\]
where 
\[
    E_2(u,r):=(V_1-V_2)\sum_{\nu\in \cal{R}}\widehat{\lambda}_\nu(u,r)(\partial_\nu\chi-\chi'\partial_\nu u), 
\]
and can be upper bounded using Taylor expansion and the last inequality of the Lemma as: 
\[
    |E_2(u,r)|\leq 2Cr^b\left(cr^\gamma\right)(\frac{(cr^{-\gamma})^2}{2}\sup_{|v-u|\leq Cr^{-\gamma}}|\chi''(v)|)\leq C'r^{b-\gamma}\sup_{|v-u|\leq Cr^{-\gamma}}|\chi''(v)|. 
\]
Therefore, using the assumptions of the Lemma, gathering all these inequalities in Relation~\eqref{e:GenDecomposition1}, we get: 

\begin{multline}\label{e:neg1}
    \widehat{\cal A}_\omega V(u,r)\leq -(\beta_1\wedge \beta_2)r^{b+\gamma-1} -\frac{\beta_0}{2}\min_{v\in J}|F_\omega(v)|r^b |\chi'(u)| \\
    +C'r^{b-\gamma}(\sup_{|v-u|\leq Cr^{-\gamma}}|\chi'(v)|+\sup_{|v-u|\leq Cr^{-\gamma}}|\chi''(v)|). 
\end{multline}
For $p_1<u<p_2$,
\[
    \frac{\chi''(u)}{\chi'(u)}=\frac{p_1+p_2-2u}{(u-p_1)^2(p_2-u)^2}.
\]
The exponential factor vanishes faster than any power at either endpoint. Thus, for every integer $n\geq2$, we can find $C_n>0$ such that for $u\in I$, $|\chi''(u)|{\leq} C_n|\chi'(u)|^{1-1/n}$, and therefore, on I, 
\[
    \left|\frac{\diff}{\diff u}|\chi'|^{1/n}\right|
    =\frac{|\chi''|}{n|\chi'|^{1-1/n}}\leq\frac{C_n}{n}.
\]
Extending $|\chi'|^{1/n}$ by zero outside $I$ therefore gives
\[
    \bigl||\chi'(v)|^{1/n}-|\chi'(u)|^{1/n}\bigr|\leq C_n|v-u|.
\]
This leads to 
\[
    \sup_{|v-u|\leq Cr^{-\gamma}}|\chi'(v)|^{1/n}\leq |\chi'(u)|^{1/n}+C_nr^{-\gamma}
\]
and therefore
\[
    \sup_{|v-u|\leq Cr^{-\gamma}}|\chi'(v)|\leq C_n'(|\chi'(u)|^{1-1/n}+r^{-(n-1)\gamma}).
\]
Using also the boundedness of $|\chi'(v)|$, we have 
\[
     \sup_{|v-u|\leq Cr^{-\gamma}}|\chi''(v)|\leq C_n \sup_{|v-u|\leq Cr^{-\gamma}}|\chi'(v)|^{1-1/n}\leq C_n'(|\chi'(u)|^{1-1/n}+r^{-(n-1)\gamma}). 
\]
Since
\[
		r^{b-\gamma}|\chi'|^{1-1/n} =(r^b|\chi'|)^{(n-1)/n}(r^{b-n\gamma})^{1/n},
\]
Young's inequality with exponents $n/(n-1)$ and $n$ gives, for any $\eps>0$, 
\[
C'r^{b-\gamma}(\sup_{|v-u|\leq Cr^{-\gamma}}|\chi'(v)|+\sup_{|v-u|\leq Cr^{-\gamma}}|\chi''(v)|)\leq\eps r^b|\chi'(u)|+C_{\eps,n}r^{b-n\gamma}. 
\]
This remains valid when $\chi'(u)=0$, including starting points in $I'\setminus I$. 

Choose $\eps<\beta_0\min_J|F_\omega|/4$ and $n\gamma>1-\gamma$. For $r$ large enough,
\[
    \widehat{\cal A}_\omega V
    \leq-\frac{1}{2}\min\{\beta_1,\beta_2\}r^{b+\gamma-1},
\]
which concludes the proof. 
\end{proof}

\subsection{Proof of Theorem~\ref{t:Main}}
The final proof consists in patching step by step the different dissipation results we have shown until now. Let $b\geq 1$ and $M$ be given by Theorem~\ref{t:Main}. 

We start from the $S_2$ boundary (states where $x_2\leq K_2$), increasing $\omega$ until the diagonal region. 
We introduce $\omega_1<\omega_2<\ldots<\omega_q$ such that 
\[
	\{\omega_i\}=\{\omega:\, (1, 0)<\omega\leq (1,1)\quad \text{and}\quad \cal{P}_\omega\neq \emptyset\}. 
\]
The first region is a drift region defined as $\cal{T}((1,0), \omega_1; K_2, v_-)$. Proposition~\ref{p:DissipDrift} gives a linear function $V_B$ (``B'' for ``bottom'') dissipated in this region. 

First assume that $\omega_1\neq(1,1)$. This case is treated below.
We define $u_*=\min\{\cal{P}_{\omega_1}\}$. Because $\omega_1\neq (1,1)$, see Proposition~\ref{p:DissipDrift}, we can choose $v_-$ as close as $u_*$ as needed. 
We distinguish two cases: if $u_*$ corresponds to an attractive curve (i.e., $F'_{\omega_1}(u_*)<0$), we use \emph{Method 1}. If it corresponds to a repulsive curve (i.e., $F'_{\omega_1}(u_*)>0$), we use \emph{Method 2}. 

\smallbreak

\emph{Method 1:} The region of an attractive curve \emph{is used as a region of patching}: we will patch the function $V_B$ to the function defined by the region above using the strong dissipation of the attractive convergent curve. Looking above this curve, we encounter either a repulsive curve (if $\cal{P}_{\omega_1}\setminus\{u_*\}\neq 0$) or another drift region (if $\cal{P}_{\omega_1}=\emptyset$). Both situations allow us to define a function $V_T$ (``T'' for ``top'') that dissipates above the attractive curve, until $v_+>u_*$ as close to $u_*$ as needed. See Proposition~\ref{p:RepLineNeq11} if we meet a repulsive line, and Proposition~\ref{p:DissipDrift} if we meet a drift region. 
Summing up these results, we can find $u_-<v_-<u_*<v_+<u_+$ such that 
\begin{itemize}
	\item The function  $V_B$ dissipates for $u\leq v_-$
	\item The function $V_T$ dissipates for $u\geq v_+$
	\item Both functions can be extended until $u_*$, and are of order $r^b$ in $\cal{H}^{(1,1)}(u_-, u_+)$. 
\end{itemize}
The function $W$ on $\cal{H}^{(1,1)}(u_-, u_+)$ defined as 
\[
	W(x)= V_T(x)\ind{u(x)\geq u_*} +V_B(x)\ind{u(x)< u_*},  
\]
verifies the conditions of Corollary~\ref{c:AttractivePatching}, which guarantees dissipation using Filonov's criterion. 

\smallbreak

\emph{Method 2:} We use here Lemma~\ref{l:SmoothingGluing}. This time, the patching occurs \emph{between} the region of drift and the next region (not around an invariant line: below the curve $u_*$). The function $V_T$ is defined in Proposition~\ref{p:RepLineNeq11}, and is dissipative until $v_-<u_*$, chosen as far as $u_*$ as necessary. Summing up these results, we can find $u_-<v_-<u_*$ such that, defining the patching region $I\subset(u_-,v_-)$ in Lemma~\ref{l:SmoothingGluing}, the inequalities~\eqref{e:SmoothingGluing} are verified:
\begin{itemize}
    \item The first inequalities are given by Proposition~\ref{p:DissipRC11} for $V_T$ and Lemma~\ref{l:DriftBoundary} for $V_B$. 
    \item Multiplying $V_T$ by a constant $\lambda$ ($\lambda<1$ if the system goes towards larger $u$ and $\lambda>1$ otherwise), we get the second inequality. 
    \item The last inequality is given by the explicit definitions of the functions $V_B$ and $V_T$. 
\end{itemize}
The two regions are now patched with a consistent dissipation inequality. 

\smallbreak

This procedure can be iterated until we reach the diagonal. More precisely, in a compactification, repulsive and attractive lines alternate, and we can always use Method~1 here. Patching the top of the compactification to the next drift region uses Method 1 if $u_*'=\max\{\cal{P}_{\omega_1}\}$ is attractive, and Method 2 if $u_*'$ is repulsive. 

When we reach the diagonal region, we are in a drift zone. Write $V_B$ the linear function given by Proposition~\ref{p:DissipDrift}, after all the previous patching have been done (which corresponds to the function of Proposition~\ref{p:DissipDrift}, maybe multiplied by a constant). If $\cal{P}_{(1,1)}\neq \emptyset$, we can use exactly the same methods as before, replacing the Propositions of Section~\ref{s:RNeq11} by the corresponding propositions of Section~\ref{s:R11}. 

If $\cal{P}_{(1,1)}= \emptyset$, we use Method 2 to patch the two drift regions together. 
The function $V_B$ is given by Proposition~\ref{p:DissipDrift}, and is dissipative until $u=m'$ small enough. 
The function $V_T$ is given by Lemma~\ref{l:DriftDiag}, and dissipate until $u=m>0$ as small as needed. We fix  $m<m'$ and the interval of patching $I\subset (m, m')$. The first inequalities of Relation~\eqref{e:SmoothingGluing} are given by Lemma~\ref{l:DriftDiag} and Lemma~\ref{l:DriftBoundary}. 

In the case where $q=0$, the diagonal is patched with the drift region that covers the whole half space. 
The $b$-dominance on the boundaries of the state space is preserved by definition of the energy functions. 

This concludes the proof of Theorem~\ref{t:Main}.\qed

\section*{Acknowledgements}
The authors thank Lukas Baumgartner, Jonathan C. Mattingly and Philippe Robert for helpful discussions. AA acknowledges partial support of the Swiss National Science Foundation project grant 2000-1-243054, and of PRIN 2022 project ConStRAINeD (2022XRWY7W). AA and LL are grateful to the Mathematics Department at the University of Pisa for the hospitality during the development of this project.

\section*{AI disclosure}
Artificial intelligence tools were used to spell-check and improve readability of the paper and to streamline the proof of Lemmas~\ref{l:BetweenCompact},~\ref{l:PropDeltaDrift}, and~\ref{l:SmoothingGluing}. The authors take full responsibility for the content and the presentation of this work.

\bibliographystyle{plain}
\bibliography{ref}

\appendix
\section{Technical tools}\label{Tools}

\subsection{Energy function and Lyapunov-type criteria}\label{s:Lyapunov}

 An \emph{energy function} $V: \N^2\to \R_+$ is a function that verifies
	\begin{equation*}
		\lim_{\|x\|_1\to +\infty} V(x)=+\infty. 
	\end{equation*}

Let $(X(t))$ be a CTMC on $\N^2$ with infinitesimal generator $\cal{A}$. 

\begin{proposition}[Foster-Lyapunov criterion]
	If there exists an energy function $V$ and some constants $\beta, K>0$ such that 
	\[
		\cal{A}(V)(x)\leq -\beta \qquad \text{for all } \|x\|\geq K\,,
	\]
	then the process $(X(t))$ is positive recurrent on each closed irreducible class. 
\end{proposition}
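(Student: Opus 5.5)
The approach is the classical one: build a return-time estimate from the dissipation inequality and conclude with Dynkin's formula. Let $K_V:=\{x:\|x\|<K\}$, a finite set, and let $\tau_K:=\inf\{t\geq 0:\|X(t)\|<K\}$ be the hitting time of $K_V$. Fix $x$ with $\|x\|\geq K$ and the localizing times $T_n:=\inf\{t: V(X(t))\geq n\}$. Since the jump rates are bounded on the finite set $\{V<n\}$, $T_n\to\infty$ almost surely (no explosion before $\tau_K$ can accumulate without $V$ blowing up, because $V$ is an energy function). Dynkin's formula applied to $V$ at the bounded stopping time $t\wedge\tau_K\wedge T_n$ gives
\[
\E_x\big(V(X(t\wedge\tau_K\wedge T_n))\big)=V(x)+\E_x\Big(\int_0^{t\wedge\tau_K\wedge T_n}\cal{A}(V)(X(s))\diff s\Big)\leq V(x)-\beta\,\E_x(t\wedge\tau_K\wedge T_n).
\]
The inequality uses $\|X(s)\|\geq K$ before $\tau_K$. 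Because $V\geq 0$, this yields $\beta\E_x(t\wedge\tau_K\wedge T_n)\leq V(x)$. Letting $n\to\infty$ and then $t\to\infty$ by monotone convergence, we obtain $\E_x(\tau_K)\leq V(x)/\beta<\infty$. This also shows the process does not explode, since $\tau_K<\infty$ almost surely and the same bound applies from every subsequent excursion start.

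Next, fix a closed irreducible class $\cal{I}$. If $\cal{I}$ is finite, positive recurrence is immediate. Otherwise $\cal{I}\cap K_V$ is nonempty, since every state of $\cal{I}$ reaches $K_V$ almost surely and $\cal{I}$ is closed. Let $\tau^+$ be the first return time to $K_V\cap\cal{I}$ after the first jump. For $z\in K_V\cap\cal{I}$, we condition on the first jump, whose destinations lie in the finite set $z+\{\xi^\nu\}$. We then bound $\E_z(\tau^+)$ by the mean holding time plus $\max_{w}\big(\mathbb{1}_{\|w\|\geq K}V(w)/\beta\big)$, and this bound is finite. The chain restricted to $\cal{I}$ observed at successive visits to the finite set $K_V\cap\cal{I}$ is an irreducible finite-state chain, hence positive recurrent. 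Combining this with the uniform bound on the mean excursion times gives a finite expected return time to some fixed state $z_0\in\cal{I}$, so $X$ is positive recurrent on $\cal{I}$. I would phrase this last step via the standard fact that a finite set with uniformly bounded expected return times in an irreducible chain yields positive recurrence, e.g.\ by Wald's identity over the i.i.d.-like cycle structure, or simply cite~\cite{Rogers1}.

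The main obstacle is the bookkeeping in the final step, namely passing from "$K_V$ is reached in finite mean time" to "a single state has finite mean return time." The cleanest route is to observe that the embedded chain on $K_V\cap\cal{I}$ visits $z_0$ after a geometric number of returns, with a uniformly bounded mean duration for each return. The non-explosion and localization issues in the Dynkin step are routine.
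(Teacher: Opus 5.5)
Your proposal is correct up to two repairable points below, but it takes a different route from the paper, which gives no proof of this proposition. The paper quotes it as a standard tool and observes that it is the special case $\tau=t_1$ of Filonov's criterion, since $\bigl(\E_x V(X(t_1))-V(x)\bigr)/\E_x(t_1)=\cal{A}(V)(x)$; Filonov's criterion is in turn taken from the literature. That reduction is a one-liner, but it inherits everything from the cited result: non-explosion, and the passage from returns to a finite set to positive recurrence of each closed class. You instead give a self-contained proof. Dynkin's formula yields $\E_x(\tau_K)\le V(x)/\beta$, and the induced chain on the finite set $K_V\cap\cal{I}$ turns this into a finite mean return time to a single state. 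This makes the non-explosion issue explicit, which matters because mass-action chains can explode in general.

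The first point to repair is the localization step. Bounded rates on the finite set $\{V<n\}$ do not imply $T_n\to\infty$, since every chain has this property, explosive or not. So deducing non-explosion afterwards from a bound that assumed it is circular. The fix uses the same Dynkin identity. On $\{T_n\le t\wedge\tau_K\}$ you have $V(X(t\wedge\tau_K\wedge T_n))\ge n$, hence $n\,\P_x(T_n\le t\wedge\tau_K)\le V(x)$. This rules out explosion before $\tau_K$ and justifies letting $n\to\infty$.

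Global non-explosion also needs that infinitely many excursions cannot fit in finite time. This holds because each return to $K_V$ is followed by a holding time at a state of the finite set $K_V$. Writing $q(z)=\sum_\nu\lambda_\nu(z)$, each such holding time has rate at most $\max_{z\in K_V}q(z)$, so infinitely many of them sum to $+\infty$ almost surely.

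The second point is that the excursions in your last step are not i.i.d., so Wald's identity does not apply as stated. Let $\sigma_k$ be the successive return times to $K_V\cap\cal{I}$ and $N$ the number of returns until $z_0$ is hit. What you need is $\E\bigl(\sigma_k-\sigma_{k-1}\mid\cal{F}_{\sigma_{k-1}}\bigr)\le C$, which is your uniform bound via the strong Markov property, together with $\{N\ge k\}\in\cal{F}_{\sigma_{k-1}}$. Summing over $k$ then bounds the mean return time to $z_0$ by $C\,\E(N)<\infty$.
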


We introduce the first instant of jump of the process as follow: 
\[
	t_1:=\inf\{t>0: X(t)\neq X(t{-})\}
\]
\begin{proposition}[Filonov's criterion]
	If there exists an energy function $V$, some constants $\eta, \beta, K>0$ and an integrable stopping time $\tau\geq t_1\wedge \eta$, such that 
	\[
		\E_x\left(V(X(\tau))\right)-V(x)\leq -\beta\E_x(\tau) \qquad \text{for all } \|x\|\geq K\,,
	\]
	then the process $(X(t))$ is positive recurrent on each closed irreducible class. 
\end{proposition}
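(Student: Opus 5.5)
The plan is to reduce the continuous-time statement to a discrete-time Foster-type drift argument along the iterated stopping times generated by $\tau$. Set $B:=\{x\in\N^2:\|x\|<K\}$, which is finite. Define $\tau_0:=0$ and $\tau_{n+1}:=\tau_n+\tau\circ\theta_{\tau_n}$, where $\theta$ denotes the shift operator. By the strong Markov property, the sequence $Z_n:=X(\tau_n)$ is a (time-inhomogeneous in general, but Markov along these times) embedded chain. Let $N:=\inf\{n\geq 0: Z_n\in B\}$. The goal is to show $\E_x(\tau_N)\leq V(x)/\beta$ for $\|x\|\geq K$.

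\emph{Step 1 (supermartingale).} On $\{n<N\}$, the strong Markov property at $\tau_n$ together with the hypothesis give
\[
\E_x\bigl(V(Z_{n+1})+\beta\tau_{n+1}\,\big|\,\cal{F}_{\tau_n}\bigr)\leq V(Z_n)+\beta\tau_n .
\]
Hence $V(Z_{n\wedge N})+\beta\tau_{n\wedge N}$ is a nonnegative supermartingale. Taking expectations at the bounded index $n\wedge N$ and using $V\geq 0$ yields $\beta\E_x(\tau_{n\wedge N})\leq V(x)$. Letting $n\to\infty$ and applying monotone convergence gives $\E_x(\tau_N)\leq V(x)/\beta$. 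In particular $\tau_N<\infty$ a.s. Since $X(\tau_N)\in B$, the hitting time $T_B:=\inf\{t\geq 0: X(t)\in B\}$ satisfies $T_B\leq\tau_N$, so $\E_x(T_B)\leq V(x)/\beta$.

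\emph{Step 2 (no accumulation and nonexplosion).} This is the step I expect to be the main obstacle. We must make sure that the bound actually controls a real hitting of $B$, rather than a degenerate situation in which the $\tau_n$ accumulate at a finite time, possibly after an explosion. This is where the condition $\tau\geq t_1\wedge\eta$ enters. Each increment $\tau\circ\theta_{\tau_n}$ is at least either $\eta$ or one genuine jump of the chain. On $\{N=\infty\}$ we have $\sup_n\tau_n=\lim_n\tau_n$, and this limit has finite expectation by Step 1 applied without stopping at $N$. Finiteness of $\lim_n\tau_n$ then forces infinitely many jumps in finite time, i.e.\ an explosion. To exclude this, I would use $V(Z_n)\leq V(x)+\beta\,\E(\cdot)$-type bounds. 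Specifically, the nonnegative supermartingale converges a.s., so $V(Z_n)$ stays bounded along the path. Since $V\to\infty$, the states $Z_n$ then stay in a finite set, on which jump rates are bounded, and this contradicts accumulation of jumps. Consequently $N<\infty$ a.s., and the argument of Step 1 is legitimate.

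\emph{Step 3 (return to $B$ and conclusion).} For $x\in B$, apply one step of $\tau$ and then Step 1 from $X(\tau)$:
\[
\E_x(\text{return time to }B)\leq \E_x(\tau)+\E_x\bigl(V(X(\tau))\bigr)/\beta .
\]
This is finite because $B$ is finite, $\tau$ is integrable, and the drift inequality (or the finiteness of $V(x)$ together with it) bounds $\E_x V(X(\tau))$. A finite set with uniformly integrable return times implies positive recurrence of every closed irreducible class that it meets. Every closed class meets $B$, since $T_B<\infty$ a.s. from any starting state. The argument follows the classical proof, as in Robert's book; the Foster–Lyapunov criterion is the special case $\tau=t_1$.
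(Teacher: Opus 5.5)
The paper does not prove this criterion; it only refers to Filonov's original work and to Laurence--Robert. Your overall scheme (iterate $\tau$, use the supermartingale $V(Z_{n\wedge N})+\beta\tau_{n\wedge N}$, then conclude from a finite set with uniformly bounded expected return times) is the classical one. There is, however, a genuine gap in Step~3.

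The drift inequality is assumed only for $\|x\|\geq K$, i.e.\ outside $B$. For $x\in B$ the statement imposes nothing on $\E_x(V(X(\tau)))$, which may be infinite. So neither ``the drift inequality'' nor ``finiteness of $V(x)$'' bounds it, and your estimate $\E_x(\text{return time})\leq\E_x(\tau)+\E_x(V(X(\tau)))/\beta$ is not shown to be finite. The remedy is not to use $\tau$ from $B$ at all. For $x\in B$ with $\lambda(x):=\sum_\nu\lambda_\nu(x)>0$, restart after the first jump instead. Since the network has finitely many reactions, $X(t_1)$ takes only the finitely many values $x+\xi^\nu$, and Steps~1--2 give
\[
\E_x\bigl(t_1+T_B\circ\theta_{t_1}\bigr)\leq\frac{1}{\lambda(x)}+\frac{1}{\beta}\max_{\nu:\,\lambda_\nu(x)>0}V(x+\xi^\nu)<\infty,
\]
uniformly over the finite set $B$. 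States with $\lambda(x)=0$ are absorbing and trivially positive recurrent. This is exactly where the finiteness of the set of transitions out of each state is used. For a general chain you would need an extra hypothesis such as $\sup_{x\in B}\E_x(V(X(t_1)))<\infty$.

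Step~2 also needs tightening. Boundedness of the sampled values $V(Z_n)$ does not by itself rule out accumulation of jumps: between $\tau_n$ and $\tau_{n+1}$ the path may visit states outside $\{V\leq m\}$ with arbitrarily large rates. What you should use is $\tau_{n+1}-\tau_n\geq\eta\wedge e_n$, where $e_n:=t_1\circ\theta_{\tau_n}$ is the holding time at $Z_n$. Conditionally on $\mathcal{F}_{\tau_n}$, $e_n$ is exponential with rate at most $\max_{\{V\leq m\}}\lambda$ on $\{\sup_nV(Z_n)\leq m\}$. A conditional Borel--Cantelli argument then gives $\sum_n(\eta\wedge e_n)=+\infty$ a.s.\ on that event. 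This contradicts $\E_x\bigl(\tau_\infty\ind{N=\infty}\bigr)\leq V(x)/\beta$, where $\tau_\infty:=\lim_n\tau_n$. Taking the union over $m$ yields $N<\infty$ a.s.
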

See~\cite{Filonov1989} for the historical reference, and~\cite{LR23} for the introduction of this criterion for CRNs. 

Note that when we set $\tau=t_1$ we get: 
\[
	\frac{\E_x\left(V(X(\tau))\right)-V(x)}{\E_x(\tau)}=\cal{A}(V)(x). 
\]
Foster-Lyapunov inequality corresponds to Filonov's inequality when $\tau=t_1$. When the first one is verified, we say that \emph{the dissipation is point-wise}.

\subsection{Control of martingales: Freedman's Inequality}
We state the result as in~\cite{Shorack2009} (page {899-900}). See also~\cite{Dzhaparidze2001}, or~\cite{Freedman1975} for the historical reference, with discrete time martingales. 
\begin{proposition}\label{p:Freedman}
	If $(M(t))$ is a real valued local Martingale starting from $0$ such that 
	\[
		\sup_{t\geq 0}|M(t)-M(t{-})|\leq c
	\]
	for some $c>0$, then for all $t\geq 0$, $\lambda>0$ and $\sigma>0$, we have 
	\begin{equation}\label{e:Freedman}
		\P\left(\sup_{s\leq t}|M(s)|\geq \lambda \right)
		\leq 2\exp\left(-\frac{\lambda^2}{2\sigma}\psi\left(\frac{\lambda c}{\sigma}\right)\right) +\P\left(\croc{M}(t)\geq \sigma\right), 
	\end{equation}
	where for $x> 0$, 
	\[
		\psi(x):=\frac{2}{x^2}\int_0^x \log(1+y)\diff y.  
	\]
\end{proposition}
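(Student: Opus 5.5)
The statement to prove is Freedman's inequality for continuous-time local martingales with bounded jumps. I would reduce it to the standard exponential supermartingale argument (Bennett/Freedman type) combined with a stopping-time localization on the predictable quadratic variation.

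\emph{Step 1: the exponential supermartingale.} For $\theta>0$ set
\[
\phi_c(\theta):=\frac{e^{\theta c}-1-\theta c}{c^2}.
\]
I would show that
\[
Z_\theta(t):=\exp\big(\theta M(t)-\phi_c(\theta)\croc{M}(t)\big)
\]
is a nonnegative local supermartingale, hence a supermartingale with $\E Z_\theta(t)\leq 1$. The key elementary fact is that $x\mapsto (e^{\theta x}-1-\theta x)/x^2$ is increasing, so for jumps $|\Delta M|\leq c$ we get
\[
e^{\theta\Delta M}-1-\theta\Delta M\leq \phi_c(\theta)(\Delta M)^2.
\]
Apply Itô's formula for semimartingales to $Z_\theta$, or in our Markov jump setting the generator identity. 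Then compensate the jump part, using that $\croc{M}$ is the compensator of $\sum(\Delta M)^2$ plus the continuous part $\croc{M^c}$. The drift of $Z_\theta$ is then nonpositive. The identical argument applies to $-M$.

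\emph{Step 2: localization.} Let $T_\lambda:=\inf\{s: M(s)\geq\lambda\}$ and $S_\sigma:=\inf\{s:\croc{M}(s)\geq\sigma\}$. On the event $\{\sup_{s\leq t}M(s)\geq\lambda,\ \croc{M}(t)<\sigma\}$ we have $T_\lambda\leq t$ and $\croc{M}(T_\lambda)\leq\sigma$. Optional stopping at $T_\lambda\wedge t$ then gives
\[
\P\big(\sup_{s\leq t}M(s)\geq\lambda,\ \croc{M}(t)<\sigma\big)\leq \exp\big(-\theta\lambda+\phi_c(\theta)\sigma\big).
\]
Here we use $M(T_\lambda)\geq\lambda$ by right-continuity; no overshoot bound is needed since we only need $\geq$.

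\emph{Step 3: optimization.} Minimizing over $\theta$ gives $\theta=c^{-1}\log(1+\lambda c/\sigma)$. The exponent becomes $-\frac{\sigma}{c^2}h(\lambda c/\sigma)$ with $h(x)=(1+x)\log(1+x)-x$. Since $\int_0^x\log(1+y)\diff y=h(x)$, this equals
\[
-\frac{\lambda^2}{2\sigma}\psi\Big(\frac{\lambda c}{\sigma}\Big).
\]
Adding the symmetric bound for $-M$ and the probability $\P(\croc{M}(t)\geq\sigma)$ yields \eqref{e:Freedman}.

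The main obstacle is Step 1 in full local-martingale generality. The difficulties are justifying that $\croc{M}$ (predictable) is the right compensator for the exponential, and passing from a local to a true supermartingale. I would handle the latter by Fatou's lemma applied to a localizing sequence. Alternatively, I would simply cite \cite{Shorack2009}, since the paper only uses the result for the jump martingales $M_u$ of our Markov chains. For those, $\croc{M_u}$ is explicitly absolutely continuous and the generator computation is direct.
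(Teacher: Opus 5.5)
Your proposal is correct and follows the same route the paper relies on: the paper gives no proof and cites Shorack--Wellner, noting only that $\exp(rM(t)-\phi_c(r)\croc{M}(t))$ is a supermartingale, which is your Step~1; your optional stopping at $T_\lambda\wedge t$ and the Bennett-type optimization over $\theta$ then give exactly the stated bound. One small fix in Step~2: for c\`adl\`ag paths, $\sup_{s\leq t}M(s)\geq\lambda$ does not force $T_\lambda\leq t$, since the supremum may only be a left limit before a downward jump, so run the argument at levels $\lambda'<\lambda$ and let $\lambda'\uparrow\lambda$, using that the right-hand side is continuous in $\lambda$.
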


Note that the proof of this result consists in showing that the process $(\exp(rM(t)-\phi_c(r)\croc{M}(t)))$ where $\phi_c(r)=(e^{rc}-1-rc)/c^2$, for $r\geq 0$, is a super-martingale.

\subsection{Patching two functions easily}\label{PatchingJonathan}
The following Lemma gives a simple argument to patch energy functions near a curve where their value match, and conserve the dissipation property. It can only be applied when the functions are ordered the right way on each side of the curve. 
\begin{lemma}\label{l:GluyingJonathan}
	Assume there is a curve $x_2=h(x_1)$ strictly increasing, going to infinity, separating $\Z_{\geq 0}^2$ in two domains, $L$ and $R$. 
	Moreover, assume that 
	\begin{itemize}
		\item there is an Energy function $V_L$ on $L\cup \partial^{K_0}(L)$ such that for all $x\in L\cup \partial^{K_0}(L)$ with $\|x\|$large enough, $\cal{A}(V_L)(x)\leq -1$,
		\item there is an Energy function $V_R$ on $R\cup \partial^{K_0}(R)$ such that for all $x\in R\cup \partial^{K_0}(R)$ with $\|x\|$ large enough, $\cal{A}(V_R)(x)\leq -1$, 
		\item for $x\in \partial^{K_0}(L)\cap R$ with $\|x\|$ large enough, $V_R(x)\leq V_L(x)$
		\item for $x\in \partial^{K_0}(R)\cap L$ with $\|x\|$ large enough, $V_L(x)\leq V_R(x)$. 
	\end{itemize}
	then setting the energy function 
	\[
		V(x)=\begin{cases}
			V_L(x) \quad \text{when} \quad x\in L\\
			V_R(x) \quad \text{when} \quad x\in R
		\end{cases}\, , 
	\]
	for all $x\in \Z_{\geq 0}^2$ with $\|x\|$ large enough, $\cal{A}(V)(x)\leq -1$. 
\end{lemma}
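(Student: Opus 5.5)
We prove the statement directly by comparing $\cal{A}(V)(x)$ with $\cal{A}(V_L)(x)$ or $\cal{A}(V_R)(x)$ according to the position of $x$. Fix $x$ with $\|x\|$ large and, to fix ideas, $x\in L$; the case $x\in R$ is symmetric. The generator is
\[
\cal{A}(V)(x)=\sum_{\nu\in\cal{R}}\lambda_\nu(x)\bigl(V(x+\xi^\nu)-V(x)\bigr),
\]
with $V(x)=V_L(x)$. We split the reactions according to whether the jump stays in $L$ or crosses into $R$.

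If $x+\xi^\nu\in L$ for all reactions $\nu$ with $\lambda_\nu(x)>0$, then $V$ and $V_L$ coincide at all points involved. Hence $\cal{A}(V)(x)=\cal{A}(V_L)(x)\leq -1$ by the first hypothesis.

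Otherwise some jump $x+\xi^\nu$ lands in $R$. We first record where the two points lie.
\begin{itemize}
\item Since $\|\xi^\nu\|\leq K_0$, we have $\operatorname{dist}(x,R)\leq K_0$. Together with $x\in L$, which gives $\operatorname{dist}(x,L)=0\leq K_0$, this means $x\in\partial^{K_0}(R)\cap L$. In particular $x\in L\cup\partial^{K_0}(L)$, so $V_L$ is defined there and dissipates.
\item Symmetrically, the landing point satisfies $x+\xi^\nu\in\partial^{K_0}(L)\cap R$.
\end{itemize}
The third hypothesis then gives $V(x+\xi^\nu)=V_R(x+\xi^\nu)\leq V_L(x+\xi^\nu)$. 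Here $V_L(x+\xi^\nu)$ is defined because $x+\xi^\nu\in\partial^{K_0}(L)$.

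The key step is the termwise comparison: for every $\nu$,
\[
V(x+\xi^\nu)-V(x)\leq V_L(x+\xi^\nu)-V_L(x).
\]
This holds with equality when the jump stays in $L$, and by the inequality above when it lands in $R$. Multiplying by $\lambda_\nu(x)\geq 0$ and summing gives $\cal{A}(V)(x)\leq\cal{A}(V_L)(x)\leq -1$. For $x\in R$, the same argument uses the fourth hypothesis and $\cal{A}(V_R)\leq -1$ on $R\cup\partial^{K_0}(R)$.

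The main point needing care is that every value used is well defined and that the inequalities hold at large norm. For the first, both $V_L$ and $V_R$ are defined on the $K_0$-thickened regions, which contain all one-jump neighbours across the curve. For the second, the boundary inequalities are only assumed for $\|x\|$ large, but $\|x+\xi^\nu\|\geq\|x\|-K_0$, so taking $\|x\|$ larger than all the thresholds plus $K_0$ suffices.

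The monotonicity and unboundedness of the curve $h$ are used only to ensure that $L$ and $R$ partition the state space and that each of $V_L$, $V_R$ is an energy function. Then $V\to\infty$ as well, so $V$ is an energy function.
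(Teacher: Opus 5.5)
Your proof is correct. For $x\in L$ you compare termwise, $V(x+\xi^\nu)-V(x)\leq V_L(x+\xi^\nu)-V_L(x)$, using the third hypothesis for jumps that land in $\partial^{K_0}(L)\cap R$, and you argue symmetrically with the fourth hypothesis for $x\in R$. This is the standard argument (near the curve $V$ coincides with $\min(V_L,V_R)$) that the paper relies on by deferring to the proof of Lemma~4.2 in \cite{Agazzi25}, and your bookkeeping of where $V_L$, $V_R$ are evaluated and of the large-norm thresholds (shifted by $K_0$) is sound.
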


\begin{proof}
	The proof is given in the proof of Lemma 4.2. of \cite{Agazzi25}. 
\end{proof}

\section{Proof of Lemma~\ref{l:OU11}}\label{s:ProofOU11}

With the notations of Lemma~\ref{l:OU11}, we set 
\[
	(Y^N(t))=(U^N(t), R^N(t)), \quad y_N=(u_N, N) \quad \text{and}\quad \tau_N=\tau_{L^N}.
\]
We introduce the stopped processes as follows: for all $t\geq 0$, 
\begin{equation*}
	L^N_{\eta}(t):=L^N(t\wedge \tau_N)\quad \text{and}\quad \overline{R}_\eta^N(t):=\frac{R^N(t\wedge \tau_N)}{N}. 
\end{equation*}	

Then, for $t\geq 0$, by Theorem 4.6.1 in \cite{KurtzEthier}, we can write 
\begin{equation}\label{e:sdeL}
	L^N_\eta(t)=L^N_\eta(0)+\int_0^{t\wedge \tau_N} \widehat{\cal{A}}_\omega(\ell)(Y^N(s))\diff s +M_L^N(t), 
\end{equation}
where $(M_L^N(t))$ is a square integrable martingale with increasing process given by, for $t\geq 0$, 
\[
	\croc{M_L^N}(t)=\int_0^{t\wedge \tau_N} \Lambda(\ell)(Y^N(s))\diff s, 
\]
with $\Lambda$ that verifies, for $f\in \cal{C}^1$, 
\begin{align*}
	\Lambda(f)(u, r)&:= \frac{(1+u^2)^{(\rho_0+1)/2}}{r^{\rho_0-1}} \sum_{\nu\in\mathcal R}\lambda_\nu(x(u, r))\left[f\left(u(x(u, r)+\xi^\nu),r(x(u, r)+\xi^\nu)\right)-f(u,r)\right]^2\\
	&= \widehat{\cal{A}}_\omega (f^2)(u,r)-2f(u, r)\widehat{\cal{A}}_\omega (f)(u,r). 
\end{align*}
In the last line we used the decomposition
\[
	(f(u', v')-f(u,v))^2=(f(u', v')^2-f(u,v)^2)-2f(u,v)(f(u', v')-f(u,v)).
\]

Similarly, for $\beta\in \R\setminus\{0\}$, setting $g_\beta(u, r)=r^\beta$, 
\begin{equation}\label{e:sdeR}
	(\overline{R}_\eta^N(t))^\beta = 1+ \frac{1}{N^\beta}\int_0^{t\wedge \tau_N} \widehat{\cal{A}}_\omega(g_\beta)(Y^N(s))\diff s +M_{R^\beta}^N(t), 
\end{equation}
where $(M_{R^\beta}^N(t))$ is a square integrable martingale with increasing process given by, for $t\geq 0$, 
\begin{equation}\label{e:sdeMR}
	\croc{M_{R^\beta}^N}(t)=\frac{1}{N^{2\beta}}\int_0^{t\wedge \tau_N} \Lambda(g_\beta)(Y^N(s))\diff s. 
\end{equation}

\subsection{Control of $\overline{R}_\eta^N(t)$}
\begin{lemma}\label{l:ControlR11}
	For $\beta\in \R\setminus\{0\}$, the following limit holds:
	\begin{equation}\label{e:EsupMR}
		\lim_{N\to +\infty}\E_{y_N}\left(\sup_{t\leq T}|M_{R^\beta}^N(t)|\right)=0. 
	\end{equation}
	Moreover, we can find a $C_\beta\geq 0$ such that
	\begin{equation}\label{e:EsupR}
		\sup_{N\geq 1}\E_{y_N}\left(\sup_{t\leq T}(\overline{R}_\eta^N(t))^{\beta} \right)\leq 2e^{C_\beta T}. 
	\end{equation}
\end{lemma}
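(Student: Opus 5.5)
We work directly with the semimartingale decompositions \eqref{e:sdeR}--\eqref{e:sdeMR}. The basic input is Proposition~\ref{p:ReducedGenerator11} applied to $g_\beta(u,r)=r^\beta$. Before $\tau_N$ we have $|U^N(t)-u_*|\leq \eta_1 R^N(t)^{-1/2}$, so $u$ stays in a compact interval $[u_-,u_+]$ once $R^N$ is large. Since $\partial_u g_\beta=0$ and $|\partial_r^j g_\beta|\leq C r^{\beta-j}$, the expansion gives
\[
|\widehat{\cal A}_\omega(g_\beta)(u,r)|\leq C_\beta r^\beta
\quad\text{and}\quad
\Lambda(g_\beta)(u,r)\leq C r\,\bigl(r^{\beta-1}\bigr)^2=C r^{2\beta-1}.
\]
The second bound holds because each jump changes $r$ by $O(1)$, hence changes $r^\beta$ by $O(r^{\beta-1})$, and the time-changed rates are $O(r)$. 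For $\beta<0$ we need $R^N$ bounded below. This comes for free, since jumps of $r$ are bounded and $r$ starts at $N$; alternatively one localizes with $\tau^r_{N/2}$.

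\textbf{Bound \eqref{e:EsupR}.} Introduce the additional localization $\sigma_K:=\inf\{t: \overline R^N_\eta(t)\notin[1/K,K]\}$. For $t\leq\sigma_K$, \eqref{e:sdeR} together with the drift bound gives
\[
(\overline R^N_\eta(t))^\beta\leq 1+C_\beta\int_0^t(\overline R^N_\eta(s))^\beta\diff s+M^N_{R^\beta}(t\wedge\sigma_K).
\]
Taking expectations and applying Gronwall yields $\E(\overline R^N_\eta(t\wedge\sigma_K))^\beta\leq e^{C_\beta t}$. To control the supremum, bound $\E\sup_{t\leq T}|M^N_{R^\beta}(t\wedge\sigma_K)|$ with Doob's $L^2$ inequality:
\[
\E\Bigl(\sup_{t\leq T}|M^N_{R^\beta}(t\wedge\sigma_K)|^2\Bigr)\leq 4N^{-2\beta}\,\E\int_0^{T\wedge\sigma_K} C R^N(s)^{2\beta-1}\diff s\leq \frac{C'}{N}\int_0^T\E(\overline R^N_\eta(s\wedge\sigma_K))^{2\beta}\diff s.
\]
Applying the Gronwall step with exponent $2\beta$ shows the right side is $O(e^{C_{2\beta}T}/N)$. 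Next use Gronwall pathwise: $\sup_{t\leq T}(\overline R^N_\eta)^\beta\leq (1+\sup_{t\le T}|M|)e^{C_\beta T}$. Choosing constants so that the martingale term contributes at most $1$ in expectation for $N\geq1$ (for small $N$, adjust $C_\beta$) and letting $K\to\infty$ by monotone convergence gives \eqref{e:EsupR}.

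\textbf{Limit \eqref{e:EsupMR}.} The same Doob estimate, now without $\sigma_K$ and using \eqref{e:EsupR} with exponent $2\beta$, gives
\[
\E\Bigl(\sup_{t\le T}|M^N_{R^\beta}(t)|\Bigr)^2\leq \frac{C}{N}\,T\,2e^{C_{2\beta}T}\to0.
\]

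\textbf{Main obstacle.} The main difficulty is circularity: bounding the martingale requires moment control of $\overline R$, and controlling $\overline R$ requires bounding the martingale. The localization $\sigma_K$ followed by Gronwall on expectations, and only afterwards taking the supremum, is how I would break this. A secondary point is checking that the remainder term $\mathcal E_f$ of Proposition~\ref{p:ReducedGenerator11} for $g_\beta$ is indeed $O(r^{\beta})$ uniformly for $u$ in the cone, which holds because only the $r$-derivatives are nonzero.
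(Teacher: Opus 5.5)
Your proposal is correct and follows the paper's proof essentially step for step: drift and bracket bounds for $g_\beta$ from Proposition~\ref{p:ReducedGenerator11}, Gronwall on $\E(\overline R^N_\eta)^\beta$, Doob's inequality on $\croc{M^N_{R^\beta}}$, then a pathwise Gronwall for the supremum. Your localization by $\sigma_K$ is a useful addition, since it supplies the finiteness of expectations and the true-martingale property that the paper uses without comment.

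One slip needs fixing. The bracket integrand is $N^{-2\beta}R^N(s)^{2\beta-1}=N^{-1}(\overline R^N_\eta(s))^{2\beta-1}$. Replacing the exponent $2\beta-1$ by $2\beta$ requires a lower bound on $\overline R^N_\eta$ that is uniform in $N$ and $K$. Under $\sigma_K$ you only have $\overline R^N_\eta\geq 1/K$, so the bound degrades as $K\to\infty$. Keep the exponent $2\beta-1$ instead, as the paper does via $C_{2\beta-1}$ (the case $\beta=1/2$ is trivial), both in the localized estimate and in your final display.

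Also, bounded jumps alone do not keep $R^N$ away from $0$ on $[0,T]$, so the lower bound does not "come for free". What is actually used is that the drift and bracket bounds hold at every lattice state of the cone, where $r\geq 1$. The $\beta<0$ Gronwall estimate then controls small values of $\overline R^N_\eta$.
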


Note that when $\beta<0$, this controls excursions towards zero of the process $(\overline{R}_\eta^N(t))$. 

\begin{proof}
Let $\beta\in \R$, $\beta\neq 0$. 
Since until $\tau_N$, the state $(u,r)$ of the process verifies $|\ell(u, r)|\leq \eta_1$, we have $|u-u_*|\leq \eta_1/\sqrt{r}$. Therefore, using Proposition~\ref{p:ReducedGenerator11} and the fact that $G_\omega(u)$ is a polynomial function, we can find $C_G, C_{\cal{E}}, C_\beta'>0$ such that 
\begin{align}
	\begin{split}\label{e:AgRleq}
		\widehat{\cal{A}}_\omega(g_\beta)(u, r) 
		&=\beta r^\beta G_\omega(u) +\cal{E}_{g_\beta}(u,r)\\
		& \leq r^\beta(\beta G_\omega(u_*) + C_G\eta_1r^{-1/2}+C_{\cal{E}}r^{-1/2})\\
		&\leq C_\beta r^\beta,
	\end{split} 
	\\
	\Lambda(g_\beta)(u, r)&\leq C_{\beta}' r^{2\beta-1},\nonumber
\end{align}
where $C_\beta:=|\beta| G_\omega(u_*) + C_G\eta_1+ C_{\beta}'$, and we used the fact that $r\geq 1$. 

Therefore, for $t\geq 0$, Relation~\eqref{e:sdeR} leads to 
\[
	\sup_{t\leq T}\E_{y_N}\left(\overline{R}_\eta^N(t)^\beta \right)\leq 1 + C_\beta \int_0^T \sup_{s\leq t}\E_{y_N}\left(\overline{R}_\eta^N(s)^\beta \right)\diff t
\]
and Gronwall's Lemma gives 
\[
	\sup_{t\leq T}\E_{y_N}\left(\overline{R}_\eta^N(t)^\beta \right)\leq e^{C_\beta T}. 
\]

Equation~\eqref{e:sdeMR} gives 
\[
	\E_{y_N}\left(\croc{M_{R^\beta}^N}(T)\right)\leq \frac{C_\beta'}{N}\int_0^T \E_{y_N}\left((\overline{R}_\eta^N(s))^{2\beta-1}\right)\diff s\leq \frac{3C_{\beta}'Te^{C_{2\beta-1}T}}{N}\to 0, 
\]
and using Doob's inequality, we get Relation~\eqref{e:EsupMR}.

Applying Equation~\eqref{e:AgRleq} to Relation~\eqref{e:sdeR} again, we get for all $t\in [0,T]$,
\[
	\sup_{s\leq t}(\overline{R}_\eta^N(s))^\beta \leq 1+C_\beta \int_0^t \sup_{s'\leq s}\overline{R}_\eta^N(s')^\beta\diff s+\sup_{s\leq t}|M_{R^\beta}^N(s)|, 
\]
and using the previous limit and Gronwall's Lemma once again, we get Relation~\eqref{e:EsupR}. 
\end{proof}

\subsection{Tightness}
Set $\theta_N=\tau_N\wedge T$.

\begin{lemma}\label{l:Tightness11}
	The sequence $((L^N_\eta(t), \overline{R}^N_\eta(t))_{t\in[0,T]},\theta_N)$ is tight for the product of the $J_1$-Skorokhod topology on the path coordinate and the usual topology on $[0,T]$.
	Moreover, any limit of the process is continuous. 
\end{lemma}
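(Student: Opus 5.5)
We plan to prove tightness with Aldous' criterion, applied to the semimartingale decompositions~\eqref{e:sdeL} and~\eqref{e:sdeR} (the latter with $\beta=1$). Since $[0,T]$ is compact, $\theta_N$ is automatically tight, and tightness of a product follows from tightness of each marginal. So it is enough to treat the two path coordinates separately. For each of them we check the two Aldous conditions: (i) compact containment of $\sup_{t\leq T}|Z^N(t)|$, and (ii) for every $\eps>0$,
\[
\lim_{\delta\to 0}\limsup_{N\to\infty}\sup_{S\leq S'\leq (S+\delta)\wedge T}\P\bigl(|Z^N(S')-Z^N(S)|\geq \eps\bigr)=0,
\]
with the inner supremum taken over stopping times $S\leq S'$.

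\textbf{The $L^N_\eta$ coordinate.} Compact containment is immediate. Because the process is stopped at $\tau_N$, $|L^N_\eta|\leq \eta_1+O(N^{-1/2})$, where the last term accounts for the overshoot of one jump. Indeed a jump changes $u$ by $O(1/r)$ and $r$ by $O(1)$, so $\ell=(u-u_*)\sqrt r$ changes by $O(r^{-1/2})$. For the drift, apply Proposition~\ref{p:ReducedGenerator11} to $\ell$ as long as $|\ell|\leq\eta_1$. Then $u-u_*=O(r^{-1/2})$, $F_\omega(u)=F'_\omega(u_*)(u-u_*)+O(r^{-1})$, and $\partial_u\ell=\sqrt r$, $r\partial_r\ell=\ell/2$, $\partial^2_u\ell=0$. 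This gives
\[
\widehat{\cal A}_\omega\ell=\Bigl(F'_\omega(u_*)+\tfrac{G_\omega(u_*)}{2}\Bigr)\ell+O(r^{-1/2}),
\]
so the drift is bounded uniformly in $N$ (using Lemma~\ref{l:ControlR11} with $\beta<0$ to keep $r\geq cN$ in probability). For the bracket, $\Lambda(\ell)=(1+u_*^2)^{3/2}D_\omega(u_*)+o(1)$ is also bounded. Condition (ii) then follows: the drift integral over $[S,S+\delta]$ is $O(\delta)$, and the martingale increment has $\E|M(S')-M(S)|^2\leq C\delta$ by optional stopping.

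\textbf{The $\overline R^N_\eta$ coordinate.} Here we use Lemma~\ref{l:ControlR11}. Compact containment comes from~\eqref{e:EsupR}. The drift term $\frac1N\int\widehat{\cal A}_\omega(g_1)$ is bounded by $C\int_S^{S'}\overline R^N_\eta$, which is $O(\delta)$ on the compact-containment event. The martingale part vanishes uniformly by~\eqref{e:EsupMR}.

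\textbf{Continuity of the limits.} All jumps of $L^N_\eta$ are $O(N^{-1/2})$ and all jumps of $\overline R^N_\eta$ are $O(N^{-1})$; this uses $r\geq cN$ with high probability, again from Lemma~\ref{l:ControlR11} with $\beta=-1$. Since $\sup_t|\Delta Z^N(t)|\to 0$ in probability, every $J_1$ limit point is continuous (Ethier--Kurtz, Thm.~3.10.2).

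\textbf{Main obstacle.} The delicate step is making the expansion of $\widehat{\cal A}_\omega\ell$ and $\Lambda(\ell)$ uniform. The error terms $\cal E_\ell$ of Proposition~\ref{p:ReducedGenerator11} involve $r^{1-i}\partial^{i}_u\partial^j_r\ell$, and for the first-order terms one must check that the cross term $r\,\partial_u\partial_r\ell\sim r^{1/2}\cdot r^{-1}$ and the remaining error terms are all $O(r^{-1/2})$. Only then are they $o(1)$ uniformly on $\{r\geq cN\}$, which lets the bounds on $\{r\geq cN\}$ be transferred to the whole path up to $\tau_N$.
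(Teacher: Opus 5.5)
Your proposal is correct and is essentially the paper's argument: Aldous' criterion for $L^N_\eta$ from the decomposition~\eqref{e:sdeL}, with drift $\alpha\ell+O(r^{-1/2})$ and bracket $\sigma^2+O(r^{-1/2})$; Lemma~\ref{l:ControlR11} for $\overline R^N_\eta$; and vanishing jump sizes for the continuity of limit points. The paper handles $\overline R^N_\eta$ with a modulus-of-continuity estimate rather than Aldous' criterion, and you are slightly more careful than the paper in two places: you use~\eqref{e:EsupR} with $\beta=-1$ to guarantee $r\geq cN$ when bounding the jumps of $L^N_\eta$, and joint tightness in $D([0,T],\R^2)$, not just coordinatewise, does hold here because both coordinates are C-tight.
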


Note that we include $(\theta_N)$ in the sequence to be able to choose a subsequence that converges for all these processes, but the tightness of $(\theta_N)$ is trivial since the sequence lives in $[0, T]$ compact.  

\begin{proof}
We use the criterion of continuity to show the tightness of $(\overline{R}^N_\eta(t))$. 
For $\delta>0$, define
\[
	\omega_R^N(\delta):= \sup\{|\overline{R}^N_\eta(t)-\overline{R}^N_\eta(s)|, \, s, t\in [0, T], \, |s-t|<\delta\}. 
\] 

For $0\leq t<t'\leq T$ with $|t-t'|<\delta$, Relations~\eqref{e:sdeR} and~\eqref{e:AgRleq} give
\begin{align*}
	|\overline{R}^N_\eta(t')-\overline{R}^N_\eta(t)|
	&\leq \frac{1}{N}\int_{t\wedge \tau_N}^{t'\wedge \tau_N} |\widehat{\cal{A}}_\omega(g_1)(Y^N(s))|\diff s + 2\sup_{s\leq T}|M_R^N(s)|\\
	&\leq \delta C_R \sup_{t\leq T}\overline{R}_\eta^N(t)+ 2\sup_{s\leq T}|M_R^N(s)|
\end{align*}

Hence, 
\begin{align*}
	\P_{y_N}\left(\omega_R^N(\delta)>\eps\right)&\leq 
	\P_{y_N}\left(\sup_{t\leq T}\overline{R}_\eta^N(t)> \frac{\eps}{2\delta C_R}\right)+ \P_{(u_N, N)}\left(\sup_{s\leq T}|M_R^N(s)|>\eps/4\right)\\
	&\leq \frac{4\delta C_Re^{C_RT}}{\eps}+ \frac{4}{\eps}\E_{y_N}\left(\sup_{t\leq T}|M_R^N(t)|\right)
\end{align*}
and using Lemma~\ref{l:ControlR11}, for any $\eps, \eps'>0$, we can find some $\delta_0>0$, $N_0\geq 1$ such that, for $\delta\in (0, \delta_0)$ and $N\geq N_0$, 
\[
	\P_{y_N}\left(\omega_R^N(\delta)>\eps\right)\leq \eps'.  
\]
This result and Relation~\eqref{e:EsupR} allow us to apply Theorem 15.1 in~\cite{Billingsley} which gives the tightness of the stopped process $(\overline{R}_\eta^N(t))$, and guarantees that any limit of this process is continuous. 

We now show that $(L_\eta^N(t))$ is tight, using Aldous' criterion. 
We set 
\[	
	\alpha:= F'_\omega(u_*)+\frac{G_\omega(u_*)}{2}\quad \text{and}\quad \sigma^2= (1+u_*^2)^{3/2}D_\omega(u_*). 
\]
Note that $\alpha>0$ and $\sigma>0$ by assumption.

Applying Proposition~\ref{p:ReducedGenerator11} to the functions $\ell$ and $\ell^2$, we can find some $C_L>0$ such that, for $r\geq 1$, and $u$ such that $|\ell(u, r)|\leq \eta_1$, 
\begin{align*}
	&|\widehat{\cal{A}}_\omega(\ell)(u,r)-\alpha\ell(u,r)|\leq \frac{C_L}{\sqrt{r}},\\
	&|\Lambda(\ell)(u, r)-\sigma^2|\leq \frac{C_L}{\sqrt{r}}.
\end{align*} 

For a stopping time $\bar\tau\leq T$, for $\eps>0$ and $\delta>0$, we have 
\begin{align*}
	&\P_{y_N}\left(|M_L^N(\bar\tau+\delta)-M_L^N(\bar\tau)| > \eps\, |\, \cal{F}_{\bar\tau} \right)\\
	&\quad \leq \frac{1}{\eps^2}\E_{y_N}\left(\int_0^\delta |\Lambda(\ell)(Y^N((s+\bar \tau)\wedge \tau_N))|\diff s \, \big{|} \, \cal{F}_{\bar\tau}\right) \\
	&\quad \leq \frac{\delta}{\eps^2}(\sigma^2+C_L),
\end{align*} 
where we first used Doob's inequality, then the estimation of $\Lambda(\ell)$ given above. 

Therefore, 
\begin{align*}
	\P&\left(|L_\eta^N(\bar\tau+\delta)-L_\eta^N(\bar\tau)|\geq \eps\right)\\
	&\quad \leq \P\left(\int_{\bar\tau\wedge \tau_N}^{(\bar\tau+\delta)\wedge \tau_N}|\widehat{\cal{A}}_\omega(\ell)(U_N(s), R_N(s))|\diff s\geq \eps/2\right) \\
	&\quad \quad+ \E_{y_N}\left(\P_{y_N}\left(|M_L^N(\bar\tau+\delta)-M_L^N(\bar\tau)| > \eps\, |\, \cal{F}_{\bar\tau} \right)\right)\\
	&\quad \leq \frac{2\delta}{\eps}(\alpha \eta_1 + C_L) + \frac{4\delta}{\eps^2}(\sigma^2+C_L),
\end{align*}
where we used first Relation~\eqref{e:sdeL}, then Markov's inequality and the estimation on $\widehat{\cal{A}}_\omega(\ell)$ given above, to deal with the drift part. This proves Aldous' criterion as stated in (16.23) of~\cite{Billingsley}. 

Given the fact that for all $t\leq T$, $|L_\eta^N(t)|\leq \eta$ almost surely, Theorem 16.10 in~\cite{Billingsley} proves that the process is tight for the $J_1$-Skorokhod topology. To ensure the convergence to a continuous process, we simply have to show that 
$$(\sup_{t\leq T}|L^N_\eta(t)-L^N_\eta(t{-})|)$$
converges in probability to $0$ (see for example Proposition VI.3.26 in \cite{Jacod}), which is immediate given that at $(u, r)$ the height of the jump of $(L^N_\eta(t))$ is of order $r^{-1/2}$. This concludes the proof of Lemma~\ref{l:Tightness11}. 
\end{proof}

\subsection{Identification of the Ornstein-Uhlenbeck}

We introduce the infinitesimal generator $\cal{L}_{u_*}$ as, for $f\in \cal{C}^2$, and $x\in \R$, 
\[
	\cal{L}_{u_*}(f)(x)= \alpha xf'(x)+\frac{\sigma^2}{2}f''(x). 
\]

Because of Lemma~\ref{l:Tightness11}, for some subsequence $(N_k)$, for the convergence in distribution, 
\[
	\lim_{k\to +\infty} ((L^{N_k}_\eta(t), \overline{R}^{N_k}_\eta(t))_{[0, T]}, \theta_{N_k})=((L_\infty(t), R(t)), \theta). 
\]
We now identify the limit.

\begin{lemma}\label{l:IdLOU}
	The limit process $(L_\infty(t))$ verifies the following stopped martingale  problem: for all $f\in \cal{C}^\infty_c$, 
	\[
		f(L_\infty(t))-f(\ell_0)-\int_0^{t\wedge \theta} \cal{L}_{u_*}(f)(L_\infty(s))\diff s
	\]
	is a martingale. 
\end{lemma}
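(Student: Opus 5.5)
The strategy is to pass to the limit in the semimartingale decomposition of $f(L^{N}_\eta(t))$ along the subsequence $(N_k)$, using the generator approximations already computed in the proof of Lemma~\ref{l:Tightness11}. Fix $f\in\cal{C}^\infty_c$. By Theorem 4.6.1 in~\cite{KurtzEthier}, applied to the function $f\circ\ell$,
\[
M^{N}_f(t):=f(L^N_\eta(t))-f(L^N_\eta(0))-\int_0^{t\wedge\tau_N}\widehat{\cal{A}}_\omega(f\circ\ell)(Y^N(s))\diff s
\]
is a martingale.

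\textbf{Step 1: generator estimate.} I first show that, uniformly on $\{|\ell(u,r)|\leq\eta_1\}$ and as $r\to\infty$,
\[
\widehat{\cal{A}}_\omega(f\circ\ell)(u,r)=\cal{L}_{u_*}(f)(\ell(u,r))+O(r^{-1/2}).
\]
This follows from Proposition~\ref{p:ReducedGenerator11} applied to $f\circ\ell$, or equivalently from a Taylor expansion of $f$ to second order around $\ell(u,r)$.

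\begin{itemize}
\item The first-order part gives $f'(\ell)\,\widehat{\cal{A}}_\omega(\ell)$, which equals $\alpha\ell f'(\ell)+O(r^{-1/2})$.
\item The second-order part gives $\frac12 f''(\ell)\,\Lambda(\ell)$, which equals $\frac{\sigma^2}{2}f''(\ell)+O(r^{-1/2})$.
\item The third-order remainder is bounded by $\|f'''\|_\infty$ times the rate $O(r)$ times the cube of the jump size of $\ell$, which is $O(r^{-1/2})$. Hence it contributes $O(r^{-1/2})$.
\end{itemize}

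Here I use the two bounds on $\widehat{\cal{A}}_\omega(\ell)$ and $\Lambda(\ell)$ obtained in the proof of Lemma~\ref{l:Tightness11}. Combined with Lemma~\ref{l:ControlR11} for $\beta=-1/2$, which gives $\E\bigl(\sup_{t\le T} R^N(t\wedge\tau_N)^{-1/2}\bigr)=O(N^{-1/2})$, this yields
\[
\E\Bigl(\sup_{t\leq T}\Bigl|M_f^N(t)-\Bigl[f(L^N_\eta(t))-f(L^N_\eta(0))-\int_0^{t\wedge\theta_N}\cal{L}_{u_*}(f)(L^N_\eta(s))\diff s\Bigr]\Bigr|\Bigr)\to0 .
\]

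\textbf{Step 2: passage to the limit in the martingale property.} It suffices to prove that
\[
\E\Bigl[\Bigl(\Psi_t-\Psi_s\Bigr)\prod_{i}h_i(L_\infty(s_i),\theta\wedge s_i)\Bigr]=0
\]
for $s_i\leq s<t$ and bounded continuous $h_i$, where $\Psi_t:=f(L_\infty(t))-f(\ell_0)-\int_0^{t\wedge\theta}\cal{L}_{u_*}(f)(L_\infty(u))\diff u$.

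\begin{itemize}
\item The corresponding identity holds for the prelimit with $N_k$, by the martingale property of $M^{N_k}_f$ together with Step 1.
\item The functional mapping the triple (path, $\theta$) to $\Psi_t$ is continuous at limit points, because the limit path is continuous (Lemma~\ref{l:Tightness11}). Evaluation at fixed times is therefore continuous, and $(x,\vartheta)\mapsto\int_0^{t\wedge\vartheta}\cal{L}_{u_*}f(x(u))\diff u$ is continuous in the product topology.
\item All quantities are bounded: $f$ has compact support and $|L^N_\eta|\leq\eta_1$ plus one jump.
\end{itemize}

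Dominated convergence then passes the identity to the limit. The filtration to use is the one generated by $(L_\infty,\theta\wedge\cdot)$.

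\textbf{Main obstacle.} The delicate point is the coupling between the stopping time and the limit. The integral runs up to $\theta_N$, and $\theta_N$ converges only jointly in law with the path; it is not a priori a hitting time of the limit path. This is why $\theta$ was included in the tight sequence. With that inclusion, the joint continuity argument above suffices for this lemma. Identifying $\theta$ with $\tau_L\wedge T$ is deferred: it is not needed here.
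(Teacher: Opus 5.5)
Your proposal is correct and follows essentially the same route as the paper: the generator estimate $\widehat{\cal{A}}_\omega(f\circ\ell)=\cal{L}_{u_*}(f)(\ell)+O(r^{-1/2})$, Lemma~\ref{l:ControlR11} with $\beta=-1/2$ to make the error vanish, and passage to the limit in the tested martingale identity using continuity of the limit path together with the joint convergence with $\theta$. The differences are only technical. You use the continuous mapping theorem instead of the Skorokhod representation, so the paper's separate $|\theta_N-\theta|$ error term is absorbed into the continuity of $(x,\vartheta)\mapsto\int_0^{t\wedge\vartheta}\cal{L}_{u_*}(f)(x(s))\diff s$. You also include $\theta\wedge s_i$ in the test functions, which makes the process adapted to the filtration in which the martingale property is asserted, a point the paper does not address explicitly.
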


\begin{proof}
	We use a similar setting as in Theorem 4.8.2 in~\cite{KurtzEthier}. Let $k\geq 0$, $0\leq t_1<\ldots <t_k\leq t<t'$, and $f\in\cal C_c^\infty(\R)$ and $h_1,\ldots,h_k\in\cal C_c(\R)$.
	
	Using Skorokhod representation theorem we can assume that $((L^{N_k}_\eta(t), R^{N_k}_\eta(t))_{[0, T]}, \theta_{N_k})$ converges almost surely to $((L_\infty(t), R(t)), \theta)$, and that they all live in the same probability space. We now work on this subsequence, but the index $k$ is dropped to avoid heavy notations. 
	Since $(L_\infty(t))$ is continuous, almost surely, 
	\begin{equation}\label{e:LConvCInfty}
		\lim_{N\to +\infty}\sup_{t\leq T}|L_\eta^N(t)-L_\infty(t)|=0. 
	\end{equation}
	
	The following holds: 
	\begin{align*}
		f(L_\eta^N(t'))-&f(L_\eta^N(t))-\int_{t\wedge \theta}^{t'\wedge\theta} \cal{L}_{u_*}(f)(L^N_\eta(s))\diff s\\
		&\, =f(L_\eta^N(t'))-f(L_\eta^N(t))-\int_{t\wedge \theta_N}^{t'\wedge\theta_N} \widehat{\cal{A}}_\omega(f\circ \ell)(Y^N(s))\diff s\\
		&\quad + \int_{t\wedge \theta_N}^{t'\wedge\theta_N} (\widehat{\cal{A}}_\omega(f\circ \ell)(Y^N(s))-\cal{L}_{u_*}(f)(L^N_\eta(s)))\diff s + \cal{E}_\theta^N
	\end{align*}
	where $\cal{E}_\theta^N$ stands for the corrective term due to the difference between $\theta_N$ and $\theta$. 
	Since $\cal{L}_{u_*}(f)\in \cal{C}_c(\R)$, we have 
	\[
		|\cal{E}_\theta^N|\leq \|\cal{L}_{u_*}(f)\|_\infty |\theta_N-\theta|. 
	\]
	
	Relation~\ref{e:IGReduced} yields the local convergence of the generator: we can find $C_f>0$ such that, for $r\geq 1$ and $u$ s.t. $|\ell(u, r)|\leq \eta_1$, 
	\[
		|\widehat{\cal{A}}_\omega(f\circ \ell)(u,r)-\cal{L}_{u_*}(f)(\ell(u,r))|\leq \frac{C_f}{\sqrt{r}}. 
	\]
	Therefore, 
	\[
		\int_{t\wedge \theta_N}^{t'\wedge\theta_N} \left|\widehat{\cal{A}}_\omega(f\circ \ell)(Y^N(s))-\cal{L}_{u_*}(f)(L^N_\eta(s))\right|\diff s
		\leq \frac{(t'-t)C_f}{\sqrt{N}}\sup_{t\leq T} \frac{1}{\sqrt{\overline{R}^N_\eta(t)}}. 
	\]
	
	Hence, we have
	\begin{align*}
		&\left|\E_{y_N}\left[\left(f(L_\eta^N(t'))-f(L_\eta^N(t))-\int_{t\wedge \theta}^{t'\wedge\theta} \cal{L}_{u_*}(f)(L^N_\eta(s))\diff s\right)\prod_{i=1}^k h_i(L^N_\eta(t_i))\right]\right|\\
		&\leq \left|\E_{y_N}\left[\left(f(L_\eta^N(t'))-f(L_\eta^N(t))-\int_{t\wedge \theta_N}^{t'\wedge\theta_N} \widehat{\cal{A}}_\omega(f\circ \ell)(U^N(s), R^N(s))\diff s\right)\prod_{i=1}^k h_i(L^N_\eta(t_i))\right]\right|\\
		&\quad + \frac{(t'-t)C_f}{\sqrt{N}}\prod_{i=1}^k\|h_i\|_\infty \E_{y_N}\left(\sup_{t\leq T} (\overline{R}^N_\eta(t))^{-1/2}\right) \\
		&\quad+\|\cal{L}_{u_*}(f)\|_\infty \prod_{i=1}^k\|h_i\|_\infty \E_{y_N}\left(|\theta_N-\theta|\right).
	\end{align*}
	The first term of the right-hand side is equal to $0$ by definition of $(L^N_\eta(t))$. The second term goes to $0$ when $N\to +\infty$ by Lemma~\ref{l:ControlR11}, and so does the last one by convergence almost sure of $(\theta_N)$ to $\theta$. 
	
	Finally, since the function  
	\[
		x\to \left(f(x(t'))-f(x(t))-\int_t^{t'}\cal{L}_{u_*}(f)(x(s))\diff s\right)\prod_{i=1}^k h_i(x(t_i))
	\]
	is continuous for the topology of the uniform norm, Relation~\eqref{e:LConvCInfty} leads to
	\[
		\E\left[\left(f(L_\infty(t'))-f(L_\infty(t))-\int_{t\wedge \theta}^{t'\wedge\theta} \cal{L}_{u_*}(f)(L_\infty(s))\diff s\right)\prod_{i=1}^k h_i(L_\infty(t_i))\right]=0
	\]
	and the result is shown.
\end{proof}

\subsection{Identification of the stopping time $\theta$}

\begin{lemma}
	Defining $\tau:=\inf\{t\geq 0: |L_\infty(t)|\geq \eta_1\}$, we have $\theta=T\wedge \tau$. 
\end{lemma}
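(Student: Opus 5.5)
The identity $\theta=T\wedge\tau$ splits into the two inequalities $\theta\le T\wedge\tau$ and $\theta\ge T\wedge\tau$. Both follow from the almost sure convergences given by the Skorokhod representation already used in Lemma~\ref{l:IdLOU}: $\sup_{t\le T}|L^N_\eta(t)-L_\infty(t)|\to0$ by \eqref{e:LConvCInfty}, and $\theta_N\to\theta$. By construction, $\theta_N=\tau_N\wedge T$, where $\tau_N$ is the first time $|L^N|\ge\eta_1$.

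For the first inequality, $\theta\le\tau$, I would argue as follows. The jumps of $L^N$ are of order $N^{-1/2}$, because $\overline R^N_\eta$ is bounded below in probability by Lemma~\ref{l:ControlR11} with $\beta<0$. Hence $|L^N_\eta(\theta_N)|\ge\eta_1-CN^{-1/2}$ on $\{\tau_N\le T\}$. Passing to the limit, using uniform convergence and continuity of $L_\infty$, gives $|L_\infty(\theta)|\ge\eta_1$ on $\{\theta<T\}$. This event contains $\liminf_N\{\tau_N<T\}$, and the case $\theta=T$ is trivial. Therefore $\tau\le\theta$ whenever $\theta<T$, so $\theta\ge\tau\wedge T$.

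For the reverse inequality, fix $t<\theta$. Then $t<\theta_N$ for $N$ large, so $|L^N(s)|<\eta_1$ for $s\le t$. Taking limits gives $|L_\infty(s)|\le\eta_1$ on $[0,\theta)$. This does not yet exclude that $L_\infty$ touches $\eta_1$ before $\theta$ without $L^N$ crossing it. This tangency issue is the main obstacle, and I would handle it through the nondegenerate noise of the limit. By Lemma~\ref{l:IdLOU}, the stopped process $L_\infty(\cdot\wedge\theta)$ solves the martingale problem for $\cal L_{u_*}$, which has diffusion coefficient $\sigma^2>0$. I would then appeal to well-posedness of this martingale problem up to $\theta$ (a Lipschitz linear SDE, so we can couple with a divergent Ornstein--Uhlenbeck process $L$ started from $\ell_0$). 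It follows that $L_\infty$ agrees in law with $L$ on $[0,\theta]$.

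For a one-dimensional diffusion with $\sigma>0$, the level $\eta_1$ is regular. By the strong Markov property at the hitting time $\tau$ and the law of the iterated logarithm, on $\{\tau<T\}$ the path exceeds $\eta_1$ in every interval $(\tau,\tau+\delta)$ almost surely. If we had $\tau<\theta$, the process would therefore reach a value strictly larger than $\eta_1$ before $\theta$. This contradicts $|L_\infty|\le\eta_1$ on $[0,\theta)$. One subtlety is that the martingale problem is only known up to $\theta$, so the strict exceedance must occur before $\theta$. I would take $\delta<\theta-\tau$ on the event $\{\tau<\theta\}$, which is legitimate because the stopped martingale problem determines the law on $[0,\theta]$.

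Hence $\tau\ge\theta$ almost surely on $\{\theta\le T\}$, that is, $\theta\le\tau\wedge T$. Combining the two inequalities gives $\theta=T\wedge\tau$.
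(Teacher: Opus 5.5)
Your proposal is correct and follows essentially the same route as the paper. The inequality $\theta\ge T\wedge\tau$ comes from passing to the limit at $\theta_N$ (uniform convergence, continuity of $L_\infty$, vanishing jumps), and $\theta\le T\wedge\tau$ comes by contradiction: the nondegenerate Ornstein--Uhlenbeck dynamics up to $\theta$ push $|L_\infty|$ strictly above $\eta_1$ immediately after $\tau$, whereas the limit satisfies $|L_\infty|\le\eta_1$.

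There are only cosmetic slips. Your first step proves $\theta\ge\tau\wedge T$, not $\theta\le\tau$ as labelled. The inclusion you actually need is $\{\theta<T\}\subseteq\liminf_N\{\tau_N<T\}$, not the reverse. Finally, $|L^N(\tau_N)|\ge\eta_1$ holds exactly by right-continuity of $L^N$, so the $-CN^{-1/2}$ correction is unnecessary.
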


\begin{proof}
First show that $\theta\geq T\wedge \tau$. 
Since $\theta^N=\tau_N\wedge T$, for all $N\geq 1$, $\theta_N\leq T$ almost surely, and therefore $\theta\leq T$. 
Now assume that $\theta<T$. We have to show that $\tau\leq\theta$. It is sufficient to show that $|L_\infty(\theta)|=\eta_1$. 

Keeping the setting of the proof of Lemma~\ref{l:IdLOU} (with the Skorokhod representation), on the event $\{\theta<T\}$, we have: 
\begin{equation}\label{e:DecompLInfty}
	||L_\infty(\theta)|-\eta_1|\leq |L_\infty(\theta)-L_\infty(\theta^N)| +|L_\infty(\theta^N)-L_\eta^N(\theta^N)|+ ||L_\eta^N(\theta^N)|-\eta_1|
\end{equation}
By continuity of $(L_\infty(t))$, the first term on the right-hand side goes to $0$ when $N\to +\infty$. The second term also goes to $0$ because of Equation~\eqref{e:LConvCInfty}. Finally, because almost surely, $\theta_N\to\theta$, for $N$ large enough, we have $\theta_N<T$ on the event $\{\theta<T\}$. Therefore, $||L_\eta^N(\theta^N)|-\eta_1|\leq \sup_{t\leq T}|L^N_\eta(t)-L^N_\eta(t{-})|\to 0$, and we proved the first inequality. 

Now let's show that $\theta\leq \tau\wedge T$. If $\tau<\theta$, since $(L_\infty(t))$ verifies the stopped martingale problem of a divergent O.U. until $\theta$, we have for any $\eps>0$,  
\[
	\sup_{s\in [\tau, \tau +\eps]}|L_\infty(s)|>\eta_1.
\] 
But using the same decomposition as in Relation~\eqref{e:DecompLInfty}, we can show that $|L_\infty(t)|\leq \eta_1$ for all $t\geq 0$, which leads to a contradiction. Therefore, $\theta\leq \tau$, which concludes the proof. 
\end{proof}

\subsection{Limit of $(\overline{R}^N_\eta(t))$}

The proof of the convergence of $(\overline{R}^N_\eta(t))$ to $(e^{G_\omega(u_*) (t\wedge \tau)})$ can be deduced from Equation~\eqref{e:sdeR}, the bound~\eqref{e:EsupMR} and the convergence of the stopping time. It is left to the reader. 

Finally, Relation~\eqref{e:OU11Lim2} comes from this convergence and Relation~\eqref{e:EsupR}.

\end{document}